\numberwithin{equation}{section}
\def\R{\mathbb{R}}
\def\Lip{\mathcal Lip}
\def\II{\parbox[][0.6cm][c]{0cm}{\ }}
\def\bJ{{\mathbb J}}
\def\bS{{\mathbb S}}
\newtheorem{remark}{Remark}[section]
\newtheorem{definition}{Definition}
\newtheorem{theorem}{Theorem}
\newtheorem{lemma}{Lemma}[section]
\newtheorem{proposition}{Proposition}[section]
\newtheorem{corollary}{Corollary}[section]
\title{On the controllability of the non-isentropic $1$-D Euler equation}
\author{Olivier Glass\footnote{CEREMADE, UMR 7534,
Universit\'e Paris-Dauphine \& CNRS, 
Place du Mar\'echal de Lattre de Tassigny,
75775 Paris Cedex 16, France. E-mail: {\tt glass{@}ceremade.dauphine.fr}}}
\begin{document}
\maketitle
\begin{abstract}
In this paper, we examine the question of the boundary controllability of the one-dimensional non-isentropic Euler equation for compressible polytropic gas, in the context of weak entropy solutions. We consider the system in Eulerian coordinates and the one in Lagrangian coordinates. We obtain for both systems a result of controllability toward constant states (with the limitation $\gamma < \frac{5}{3}$ on the adiabatic constant for the Lagrangian system). The solutions that we obtain remain of small total variation in space if the initial condition is itself of sufficiently small total variation.
\end{abstract}
\tableofcontents
%
%
%%%%%%%%%%%%%%%%%%%%%%%%%%%%%%%%%%%%%%%%%%%%%%%%%%%%%%%%%%%%%%%%%%%%%%%%%%%%%%%%%%%%%%%%%%%%%%%%%%%%%%%%%%%%%%%%%%%%%%%%%%%%%%%%%%%%
%
%
\section{Introduction}
\subsection{General presentation}
This paper examines the question of boundary controllability of the non-isentropic Euler equation for polytropic compressible fluids in one space dimension, in both Eulerian and Lagrangian forms. 
The two systems under view are the following hyperbolic $3 \times 3$ systems of conservation laws, which in our problem are considered in a space interval $(0,L)$. First, the usual form of the system, in Eulerian coordinates, is as follows:
\begin{equation} \label{Eq:Euler}
\left\{ \begin{array}{l}
 \partial_t \rho + \partial_x(\rho v) =0, \\
 \partial_t (\rho v) + \partial_x(\rho v^{2} + P)=0, \\
\displaystyle \partial_t \left(\frac{\gamma-1}{2} \rho v^{2} + P\right) + \partial_x \left(\frac{\gamma-1}{2} \rho v^{3} + \gamma Pv\right)=0.
\end{array} \right.
\end{equation}
In this system, $\rho=\rho(t,x) > 0$ describes the local density of the
fluid at time $t \in (0,T)$ and position $x \in (0,L)$, $v$ is the
local velocity of the fluid, $P>0$ is the pressure. Here $\gamma > 1$ is the adiabatic constant. These three equations describe respectively the conservation of mass, momentum and energy. In particular the specific total energy $E$ of the system is described as 
\begin{equation*}
E = \frac{1}{2} v^{2} + e, 
\end{equation*}
the internal energy $e$ being connected to the pressure $P$ by
\begin{equation} \label{Eq:Pe}
e = \frac{P}{(\gamma-1)\rho} .
\end{equation}
In Lagrangian coordinates, the system is written as follows:
\begin{equation}
\label{Eq:Lagrangian}
\left\{ \begin{array}{l}
 \partial_t \tau - \partial_y v =0, \\
 \partial_t v + \partial_y P=0, \\
 \partial_t (e + \frac{v^{2}}{2}) + \partial_y( Pv)=0.
\end{array} \right.
\end{equation}
Here $\tau:=1/\rho$ is the specific volume and $e$ is consequently written as $e=\frac{P \tau}{\gamma-1} $.
This system is obtained from \eqref{Eq:Euler} through the change of variable
\begin{equation} \label{Eq:ChgtVarE->L}
y = \int_{x(t)}^{x} \rho(t,s) \, ds,
\end{equation}
where $x(t)$ is a time-dependent path satisfying
\begin{equation*}
x'(t) = v(t,x(t)).
\end{equation*}
Regular solutions of \eqref{Eq:Euler} and of \eqref{Eq:Lagrangian} are equivalent through the above change of coordinates, but this turns out to be true even in the case of weak (entropy) solutions (see Wagner \cite{W}) that are under view in this paper. However the controllability problems described below are different, since they occur in the fixed space domain $(0,L)$, with boundary controls. This domain is not invariant through the change of variables \eqref{Eq:ChgtVarE->L}. \par
Of particular importance in the study of compressible fluids is the physical entropy function. Setting without loss of generality the usual coefficient $c_{v}$ to $1$, this function of state reads:
\begin{equation} \label{Eq:S}
S := \log \left( \frac{P}{(\gamma-1) \rho^{\gamma}} \right) = \log \left( \frac{P \tau^{\gamma}}{\gamma-1} \right).
\end{equation}
Regular solutions of \eqref{Eq:Euler} and \eqref{Eq:Lagrangian} then satisfy respectively the systems
\begin{equation} \label{Eq:EulerS}
\left\{ \begin{array}{l}
 \partial_t \rho + \partial_x(\rho v) =0, \\
 \partial_{t} v + v \partial_{x} v + (\partial_x P) / \rho =0, \\
  \partial_{t} S + v \partial_{x} S =0,
\end{array} \right.
\end{equation}
and
\begin{equation}
\label{Eq:LagrangianS}
\left\{ \begin{array}{l}
 \partial_t \tau - \partial_y v =0, \\
 \partial_t v + \partial_y P=0, \\
 \partial_t S =0.
\end{array} \right.
\end{equation}
However, in the context of weak entropy solutions, Systems \eqref{Eq:EulerS} and \eqref{Eq:LagrangianS} are no longer equivalent to \eqref{Eq:Euler} and \eqref{Eq:Lagrangian}. \par
\ \par
Before describing the problem in details, let us specify the type of solutions that we consider. As is well-known, these two systems belong to the class of nonlinear hyperbolic systems of conservation laws
\begin{equation} \label{Eq:SCL}
U_{t} + f(U)_{x} =0, \ \ f: \Omega \subset \R^{n} \rightarrow \R^{n},
\end{equation}
satisfying the (strict) hyperbolicity condition that at each point $df$ has $n$ distinct real eigenvalues $\lambda_{1}, \dots, \lambda_{n}$. These scalar functions are the characteristic speeds at which the system propagates. It is classical that in such systems, singularities may appear in finite time even if the initial condition is smooth. Hence it is natural from both mathematical and physical viewpoints to consider weak solutions, in which discontinuities such as shock fronts may appear. But since uniqueness is in general lost at this level of regularity, one has to consider solutions that satisfy additional {\it entropy conditions} aimed at singling out the physically relevant solution. This paper focuses on {\it entropy solutions with bounded variation}. The initial state will be supposed to have small total variation as in the framework of Glimm \cite{G}. \par
\ \par
We investigate these two systems from the point of view of control theory, and more precisely we consider the issue of controllability through boundary controls. 
This problem is to determine, given an initial state of the system $u_{0}=(\rho_{0},v_{0},P_{0})$ or $u_{0}=(\tau_{0},v_{0},P_{0})$, which final states $u_{1}$ can be reached at some time $T>0$ by choosing relevant boundary conditions at $x=0$ and $x=L$ (given a notion of such boundary conditions). We emphasize that in our problem, boundary conditions on both sides of the domain can be prescribed.
However the question of determining exactly the set of reachable states seems very difficult, since the nature of the system suggests that one has to require additional conditions on $u_1$ for it to be reachable (this is in particular connected to an effect of these systems known as the {\it decay of positive waves}, see in particular Bressan's monograph \cite{B}.) Here we will concentrate on the question of {\it controllability to constant states}. In other words, we aim at proving that given an initial data $u_{0}$ in some functional class, it is possible to find a solution bringing the state to a constant. Moreover, we would like to focus on the property that the solution should remain {\it of small total variation whenever the initial data is of small variation}. \par
%
%
%
%
%
%%%%%%%%%%%%%%%%%%%%%%%%%
%
%
%
%
\subsection{Mathematical framework}
As mentioned before, we consider in this paper weak entropy solutions, which may present discontinuities, in particular shock waves. Let us describe exactly this type of solutions by recalling the basic definitions. \par
It will be useful to work with both conservative variables $U=(\rho,\rho v, \rho E)$ and $U=(\tau,v, E)$ (respectively for Systems \eqref{Eq:Euler} and \eqref{Eq:Lagrangian}) and primitive variables $u=(\rho,v,P)$ and $u=(\tau,v,P)$. \par
The solutions that we consider will be of bounded variation in space uniformly in time, that is in the space $L^{\infty}(0,T;BV(0,L))$ and will not meet the vacuum in the sense that $\rho$ will be strictly separated from $0$ (and $\tau$ bounded). The regularity will be automatically valid for both conservative and primitive variables since $BV$ is an algebra. Using the equation leads to the fact that these solutions have a time-regularity of class $\Lip(0,T;L^{1}(0,L))$. We will denote $\Omega$ the domain where the solutions live. It is given by $\{ (\rho,v,P) \ / \ \rho >0 \text{ and } P >0 \}$ for System~\eqref{Eq:Euler}, and by $\{ (\tau,v,P) \ / \ \tau >0 \text{ and } P >0 \}$ for System~\eqref{Eq:Lagrangian}. With a slight abuse of notations, we will write $U \in \Omega$ for the conservative variables as well. \par
Now we can consider weak solutions of \eqref{Eq:SCL} in the sense of distributions; but as mentioned before we have to add {\it entropy conditions} to the solution in order to retrieve the correct solution. First, recall that an entropy/entropy flux couple for a hyperbolic system of conservation laws \eqref{Eq:SCL} is defined as a couple of
regular functions $(\eta,q) : \Omega \rightarrow \R$ satisfying:
\begin{equation}
\label{Def:CoupleEntropie}
\forall U \in \Omega, \ \ D\eta(U) \cdot Df(U) = Dq(U).
\end{equation}
Of course $(\eta,q)=(\pm \mbox{Id}, \pm f)$ are entropy/entropy flux couples. Then we have the following definition:
\begin{definition} \label{Def:SolutionEntropie}
A function $U \in L^\infty(0,T;BV(0,L)) \cap \Lip(0,T ;L^1(0,L))$ is called an {\it entropy solution} of \eqref{Eq:SCL} when,
for any entropy/entropy flux couple $(\eta,q)$, with $\eta$ convex, one has in the sense of measures
\begin{equation} \label{Def:SolEntrop1}
\eta(U)_t + q(U)_x \leq 0,
\end{equation}
that is, for all $\varphi \in {\mathcal D}((0,T) \times (0,L))$ with $\varphi \geq 0$, 
\begin{equation} \label{InegEntropie}
\int_{(0,T) \times (0,L)} \big( \eta(U(t,x)) \varphi_t(t,x) + q(U(t,x)) \varphi_x(t,x) \big) \, dx \, dt \geq 0.
\end{equation}
\end{definition}
Now we notice that in Definition~\ref{Def:SolutionEntropie}, we do not mention boundary conditions, which are however of primary importance since they compose the {control} in our problem. Boundary conditions for hyperbolic systems of conservation laws are a tedious question, especially when considering entropy solutions. A precise meaning of such boundary conditions can be given, see in particular Dubois-LeFloch \cite{DLF}, Sabl\'e-Tougeron \cite{ST} and Amadori \cite{Amadori}. However in order to avoid this issue, we will rephrase the problem into an equivalent one which does not use boundary conditions explicitly. 
We fix an initial condition as above, and consider \eqref{Eq:Euler} and \eqref{Eq:Lagrangian} as
under-determined systems (without boundary conditions). The question is to determine for which states $u_{1}$ there exists {\it a solution} $u=(\rho,v,P)$ or $u=(\tau,v,P)$ in $(0,T) \times (0,L)$, with initial state $u_{0}$ and with $u_{1}$ as final state at time $T$. The corresponding boundary values can then be retrieved by taking the corresponding traces at $x=0$ and $x=L$. At the level of regularity that we consider, this is not problematic. \par
%
%
%
%
%
%%%%%%%%%%%%%%%%%%%%%%%%%
%
%
%
%
\subsection{Results}
The two results that we establish in this paper are the following. We begin with the result concerning the system in Eulerian coordinates.
\begin{theorem} \label{ThmEu1}
Let $\overline{u}_{0}:=(\overline{\rho}_{0},\overline{v}_{0}, \overline{P}_{0}) \in \R^{3}$ with $\overline{\rho}_{0}, \overline{P}_{0} >0$. Let $\eta>0$. There exists $\varepsilon >0$ such that for any $u_{0}=(\rho_{0},v_{0},P_{0}) \in BV(0,L;\R^{3})$ such that
\begin{equation} \label{Eq:SmallIC}
\| u_{0} - \overline{u}_{0} \|_{L^{\infty}(0,L)} + TV(u_{0}) \leq \varepsilon,
\end{equation}
for any $\overline{u}_{1}=(\overline{\rho}_{1},\overline{v}_{1},\overline{P}_{1})$ with $\overline{\rho}_{1}, \overline{P}_{1}>0$,
there exist $T>0$ and a weak entropy solution of System \eqref{Eq:Euler} $u \in L^{\infty}(0,T;BV(0,L)) \cap \Lip([0,T];L^{1}(0,L))$ such that
\begin{equation} \label{Eq:GoalE}
u_{|t=0} =u_{0} \ \text{ and } \ u_{|t=T}=\overline{u}_{1},
\end{equation}
and
\begin{equation} \label{Eq:GoalE2}
TV(u(t,\cdot)) \leq \eta, \ \ \forall t \in (0,T).
\end{equation}
\end{theorem}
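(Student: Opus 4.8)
The plan is to build the controlling solution in three temporal phases, exploiting the mixed hyperbolic/linearly-degenerate structure of \eqref{Eq:Euler}: recall that the $3\times 3$ system has characteristic speeds $v-c$, $v$, $v+c$ (with $c$ the sound speed), the middle field being linearly degenerate and carrying the entropy $S$ as a Riemann invariant, while the two outer fields are genuinely nonlinear. The key point is that $S$ is transported by the fluid velocity $v$ and is constant across the outer (acoustic) waves; hence, if we can flush the fluid through the domain at a roughly uniform positive velocity, the entropy profile will be swept out through the boundary $x=L$ and replaced by whatever entropy we inject at $x=0$.

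\smallskip
\noindent\textbf{Step 1 (Setup and existence of BV solutions with prescribed data).} First I would recall that for data of sufficiently small total variation near a constant state $\overline{u}_0 \in \Omega$, the Cauchy problem for \eqref{Eq:Euler} on $(0,L)$ — completed by \emph{any} BV boundary data of small variation at $x=0$ and $x=L$ — admits a global-in-time entropy solution of small total variation, by the standard Glimm/front-tracking theory for initial-boundary value problems (Glimm \cite{G}, Bressan \cite{B}, Amadori \cite{Amadori}). The controllability statement is then equivalent, as explained in the excerpt, to exhibiting such a solution on a strip $(0,T)\times(0,L)$ matching $u_0$ at $t=0$ and the constant $\overline{u}_1$ at $t=T$. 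Throughout, I would work with the Riemann-invariant / wave-strength coordinates and keep track of the interaction potential à la Glimm to guarantee \eqref{Eq:GoalE2}; the smallness of $\eta$ is obtained by taking $\varepsilon$ small and exploiting that all wave generation below is quantitatively controlled by the incoming data plus the (freely chosen, small) boundary data.

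\smallskip
\noindent\textbf{Step 2 (Phase I: drive all waves out and homogenize $S$).} The core construction: on an initial time interval $(0,T_1)$, prescribe at $x=0$ a boundary state corresponding to a constant with a definite positive velocity $v_* > 0$ and entropy $S_1 := S(\overline u_1)$ (more precisely, inject through $x=0$ the left half of a Riemann problem that creates essentially no outgoing waves into the domain, i.e. put the domain on one side of a contact/simple-wave pattern), while at $x=L$ one simply lets waves exit. Because the characteristic speeds are, up to $O(\varepsilon)$, equal to $v_*-c$, $v_*$, $v_*+c$ with $c$ bounded below, \emph{all three families travel to the right} once $v_* > c$ — alternatively, if one does not wish to assume supersonic flow, one first performs a preliminary phase accelerating the gas to $v_* > c$ by a boundary push at $x=0$, at the cost of $O(\varepsilon)$ extra waves. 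Once the flow is supersonic and to the right, in time $T_1 = O(L/(v_*-c))$ every wave present at $t=0$, and every wave created by the corner interaction at $x=0$ between the old data and the new boundary state, has exited through $x=L$. At the end of Phase I the solution on $(0,L)$ is the constant boundary state, i.e. the constant $\overline{w}$ with velocity $v_*$, entropy $S_1$, and some pressure/density. A technical point requiring care: the traces of $u_0$ at $x=0,L$ need not match the chosen constants, so at the two corners $(0,0)$ and $(0,L)$ one must resolve Riemann problems, and one must check these produce only $O(\varepsilon)$ new wave strength — true because $|u_0(0^+)-\overline u_0| + |u_0(L^-)-\overline u_0| \leq C\varepsilon$ — and, crucially, that the wave fronts entering from $x=0$ can be chosen to consist only of the linearly-degenerate contact (carrying the jump from $S(\overline u_0)$ to $S_1$) plus arbitrarily weak acoustic waves, by solving the relevant half-Riemann problem; the genuine-nonlinearity lets us pick the incoming $1$- and $3$-waves as tiny rarefactions or compressions of our choosing.

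\smallskip
\noindent\textbf{Step 3 (Phase II: transition from $\overline w$ to $\overline u_1$).} Now the state on the whole segment is a single constant $\overline w$ sharing the entropy of $\overline u_1$. On a further interval $(T_1, T_1 + T_2)$ I would connect $\overline w$ to $\overline u_1$ by a simple scheme: since both have the same entropy, they are joined by a path in the $2$-dimensional isentropic slice, hence by a combination of a $1$-wave and a $3$-wave (isentropic Riemann problem for the Euler system), which are generically not small. This is the only place where \emph{large} data/solutions are unavoidable — but $\overline w$ and $\overline u_1$ are \emph{constants}, so we are free to realize the transition by, e.g., emitting from $x=0$ a rarefaction-type boundary control that sweeps the new constant in, or by a self-similar centered-wave construction; since there is no BV-smallness constraint \emph{connecting} two constants (the constraint \eqref{Eq:GoalE2} only bounds $TV$ of each time slice $u(t,\cdot)$, and one can design the transition so that at each fixed $t$ the profile is a single constant outside a short transition layer, whose total variation is $|\overline w - \overline u_1|$, a \emph{fixed} number — wait, this must be $\leq \eta$). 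To respect \eqref{Eq:GoalE2} for \emph{arbitrary} $\overline u_1$ and \emph{arbitrarily small} $\eta$, the transition must instead be done gradually: interpose finitely many intermediate constants $\overline w = \overline\omega_0, \overline\omega_1, \dots, \overline\omega_N = \overline u_1$ with consecutive distances $< \eta$, and between $\overline\omega_{k}$ and $\overline\omega_{k+1}$ run a short sub-phase in which, as above, the old constant is flushed out at positive speed and the new one injected from $x=0$; each such sub-phase contributes only $O(\eta)$ to $TV(u(t,\cdot))$. Concatenating the $N$ sub-phases (each of duration $O(L/(v_*-c))$) yields a solution reaching $\overline u_1$ exactly at some time $T$, constant in $x$ at that time, with $TV(u(t,\cdot))\le C\eta$ throughout.

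\smallskip
\noindent\textbf{Main obstacle.} The serious difficulty — and the reason this is a theorem rather than a remark — is reconciling the \emph{global} reach-any-constant requirement with the \emph{uniform smallness} requirement \eqref{Eq:GoalE2}: naively flowing from $u_0$ to a far-away $\overline u_1$ forces large waves. The device above (chaining through a fine net of intermediate constants, each transition flushing the domain by a supersonic rightward flow so that no wave interaction accumulates between sub-phases, and using linear degeneracy of the middle field to move the entropy "for free" via contacts) is what makes both constraints compatible. The technically delicate verification is the Glimm interaction-potential bookkeeping across the corner interactions at $x=0$ (where the boundary data jumps between sub-phases) and at $x=L$ (where waves exit), showing the functional $TV + C\cdot(\text{interaction potential})$ stays below $\eta$ uniformly; this uses that each corner injects only $O(\eta)$ of new wave strength and that, the flow being supersonic to the right, no wave is ever reflected back to compound with another. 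A secondary point to check carefully is that the first (acceleration) sub-phase, needed if the initial flow is not supersonic, can indeed be realized with $O(\varepsilon)$ waves — this follows from solving a boundary Riemann problem with the target state $v_*$ chosen once and for all depending only on $\overline u_0$.
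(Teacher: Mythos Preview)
Your proposal has a genuine gap in Phase~I, located exactly where you flag it as ``a secondary point to check carefully.'' If $\overline{u}_0$ is subsonic (say $\lambda_1(\overline{u}_0)<0<\lambda_2(\overline{u}_0)$, the generic interesting case), then accelerating to a supersonic state $v_*>c$ by a boundary push at $x=0$ does \emph{not} cost $O(\varepsilon)$ waves: the corner Riemann problem at $(0,0)$ between your injected state (velocity $v_*$) and the interior trace $u_0(0^+)\approx\overline{u}_0$ (velocity $\overline v_0$) has strength of order $|v_*-\overline v_0|\ge c(\overline u_0)-\overline v_0$, a fixed positive number independent of $\varepsilon$. Those waves sit inside $(0,L)$ for positive time and violate \eqref{Eq:GoalE2}. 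The same defect propagates through your Phase~II: each ``flushing'' sub-step relies on all three speeds being positive, but the intermediate constants $\overline\omega_k$ are not all supersonic (in particular not if $\overline u_1$ is subsonic), so the sub-steps cannot be executed as described. Chaining through small steps does not rescue the argument either, since in the subsonic regime $\lambda_1<0$ and $1$-waves do not flush rightward; you never return to a clean constant between steps.

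What the paper actually does for the hard (subsonic) case is quite different and does not attempt to make the flow supersonic. It first reduces Theorem~\ref{ThmEu1} to two pieces: (i) drive $u_0$ to \emph{some} nearby constant (Theorem~\ref{ThmE}), and (ii) connect any two constants by a $C^1$ solution of small $C^1$ norm (Proposition~\ref{Pro:CstE}, obtained from the Li--Rao and Coron--Glass--Wang $C^1$ controllability results, and globalized by a compactness/path argument close in spirit to your chaining idea). The substantive construction is (i): one lets a \emph{single} $2$-contact discontinuity of strength $|\overline\sigma_2|\sim\eta$ enter at $x=0$ and cross the domain at speed $\approx\lambda_2(\overline u_0)>0$; as $1$-waves from the interior hit it from the right, rarefactions are simply transmitted, while for each incoming $1$-shock one manufactures, on the \emph{left} of the discontinuity, a $3$-compression ``cancellation wave'' (Proposition~\ref{Pro:CW3J>}) sized so that the outgoing $1$-wave vanishes. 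This leaves, above the strong contact, only forward $1$-rarefaction fronts and backward $3$-compression fronts, which (by the Frobenius relation $\ell_2\cdot[r_1,r_3]=0$) interact without producing $2$-waves; a sideways front-tracking in the pseudo-time $L-x$ shows they all exit through $x=0$, leaving a constant. The whole BV bookkeeping is done along curves adapted to the strong discontinuity (Lemmas~\ref{Lem:DecrFctGlimm}--\ref{Lem:DecrFctGlimm2}), and the $TV\le\eta$ bound comes from taking $|\overline\sigma_2|$ small \emph{after} the analysis, then $\varepsilon$ small accordingly. Your supersonic-flushing picture is essentially the paper's trivial Case~2; the content of the theorem lies in the other cases.
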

%
%  = \varepsilon(\overline{u}_{0},\eta)  %  =T(\overline{u}_{0},\overline{u}_{1})  % Moreover, there exists a continuous function $\eta : \R^{+} \rightarrow \R^{+}$, increasing and satisfying $\eta(0)=0$, such that such a solution $u$ can be chosen to satisfy
%
%
Our second result concerns the system with Lagrangian coordinates. This result is different from at least two viewpoints: the range of admissible $\gamma$, and the role played by the physical entropy.
\begin{theorem} \label{ThmLu1}
Suppose that $\gamma \in \left(1, \frac{5}{3}\right)$. Let $\eta>0$. Let $\overline{u}_{0}:=(\overline{\tau}_{0},\overline{v}_{0}, \overline{P}_{0}) \in \R^{3}$ with $\overline{\tau}_{0}, \overline{P}_{0} >0$ and let $\overline{u}_{1}=(\overline{\tau}_{1},\overline{v}_{1},\overline{P}_{1})$ with $\overline{\tau}_{1}, \overline{P}_{1}>0$, such that
\begin{equation} \label{Eq:CondSLag}
S(\overline{u}_{1}) > S(\overline{u}_{0}).
\end{equation}
There exist $\varepsilon>0$ and $T>0$ such that for any $u_{0}=(\tau_{0},v_{0},P_{0}) \in BV(0,L;\R^{3})$ such that
\begin{equation} \label{Eq:SmallICL}
\| u_{0} - \overline{u}_{0} \|_{L^{\infty}(0,L)} + TV(u_{0}) \leq \varepsilon,
\end{equation}
there is a weak entropy solution $u \in L^{\infty}(0,T;BV(0,L)) \cap \Lip([0,T];L^{1}(0,L))$ of System \eqref{Eq:Lagrangian} such that
\begin{equation} \label{Eq:GoalL}
u_{|t=0} =u_{0} \ \text{ and } \ u_{|t=T}=\overline{u}_{1},
\end{equation}
and
\begin{equation} \label{Eq:GoalL2}
TV(u(t,\cdot)) \leq \eta, \ \ \forall t \in (0,T).
%\eta(\varepsilon)
\end{equation}
\end{theorem}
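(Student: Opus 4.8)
The plan is to exploit the very simple structure of the Lagrangian system \eqref{Eq:Lagrangian}: for smooth solutions it reduces to \eqref{Eq:LagrangianS}, so the physical entropy $S$ is constant along particle paths (here the paths $y=\mathrm{const}$, since the Lagrangian flux has no transport in the $S$-equation), and the remaining $(\tau,v)$-dynamics is the $p$-system with an entropy that is frozen in time. The two genuinely nonlinear characteristic fields have speeds $\pm\sqrt{-P_\tau}$, which are well separated from the linearly degenerate $0$-speed field carrying $S$. I would first reduce to a return-method / two-step strategy: (i) on a short time interval use the system to flush the initial $BV$ perturbation out through the two lateral boundaries, arriving at a constant state $\overline u_{1/2}$ with $S(\overline u_{1/2})=S(\overline u_0)$; (ii) on a second interval connect the constant $\overline u_{1/2}$ to the target $\overline u_1$ by an explicit piecewise-constant solution. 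For step (i), the standard tool is a front-tracking / Glimm scheme with a boundary-damping choice of controls: since the initial data is close to a constant and has small total variation, one picks boundary traces so that no waves are reflected back into the domain, and after a time $T_1$ comparable to $L/\min|\lambda_i|$ all waves (including the stationary $S$-discontinuities, which must be carried out by a nonzero transport obtained by first shifting to a moving frame, i.e. by giving $v$ a small nonzero boundary value) have left $(0,L)$; uniform $BV$ bounds along the way follow from Glimm-type interaction estimates exactly as in the quoted framework of Glimm \cite{G} and Bressan \cite{B}, giving \eqref{Eq:GoalL2}.

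The crux is step (ii): connecting two constant states of the $3\times3$ Lagrangian system by an admissible (entropy) weak solution. Here is where the hypotheses $S(\overline u_1)>S(\overline u_0)$ and $\gamma<\tfrac53$ enter. A single stationary contact discontinuity cannot change $P$ and $v$ only changes $S$ and $\tau$; a $1$-shock or $3$-shock raises $S$ (entropy inequality for compressive shocks), while rarefactions keep $S$ constant. So the natural construction is to solve a Riemann-type problem: from $\overline u_{1/2}=(\overline\tau_{1/2},\overline v_{1/2},\overline P_{1/2})$ follow a $1$-wave, then a $0$-contact, then a $3$-wave to reach $(\overline\tau_1,\overline v_1,\overline P_1)$. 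The constraint is that the shock parts must be compressive (hence entropy-admissible) and that the net change in $S$ across the two shocks equals the prescribed positive jump $S(\overline u_1)-S(\overline u_0)$ this is solvable precisely when the target entropy is \emph{larger}, explaining \eqref{Eq:CondSLag}. The role of $\gamma<\tfrac53$ is to guarantee that the shock curves behave well globally (no secondary phenomena such as loss of genuine nonlinearity or the shock set failing to be monotone in $S$), so that the three-wave composition can be closed for target states \emph{arbitrarily far} from $\overline u_{1/2}$; this is the analogue of the adiabatic-constant restriction appearing in known results on large-data Riemann problems for the non-isentropic $p$-system (e.g. the Smoller–Temple-type analysis). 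I expect verifying this global solvability of the wave-curve composition, with the correct entropy sign and the sharp $\gamma$ range, to be the main technical obstacle.

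Finally I would glue the two pieces: on $[0,T_1]$ take the solution from step (i), on $[T_1,T_1+T_2]$ take the piecewise-constant Riemann-type solution of step (ii) (shifting it in $y$ so that at time $T_1+T_2=T$ all its internal fronts have crossed $(0,L)$ and the state is the constant $\overline u_1$ on all of $(0,L)$), choosing $T_2$ of order $L/\min|\lambda_i|$ evaluated at the intermediate states. Both pieces are entropy solutions and have total variation bounded by the initial smallness plus the fixed (but $\varepsilon$-independent) amount introduced in step (ii); choosing $\varepsilon$ small and then, if necessary, noting that $\eta$ can be attained because the step-(ii) contribution lives only on the second time interval and can be made to occupy arbitrarily little of the domain at any fixed time — or, more carefully, by reordering so that the large Riemann fans are launched from the boundary and swept across — one obtains \eqref{Eq:GoalL2}. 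Matching the traces at $t=0$, $t=T_1$, $t=T$ in $L^1$ uses the $\Lip(0,T;L^1)$ time regularity noted in the mathematical-framework section, so \eqref{Eq:GoalL} holds and the proof is complete.
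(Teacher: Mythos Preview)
Your proposal has a fundamental gap in step (i) that invalidates the whole strategy. You write that the stationary $S$-discontinuities ``must be carried out by a nonzero transport obtained by first shifting to a moving frame, i.e.\ by giving $v$ a small nonzero boundary value.'' But in the Lagrangian system \eqref{Eq:Lagrangian} the second characteristic speed is $\lambda_2\equiv 0$ \emph{identically}, independently of the value of $v$: see \eqref{Eq:VCL}. There is no frame shift or choice of boundary value that makes $2$-contact discontinuities move; they are frozen at fixed $y$. This is exactly the obstruction the paper highlights after the statement of Theorem~\ref{ThmLu1}, and it is why the equivalent $C^1$ result is \emph{false}. So ``flushing out'' the $S$-variation by choosing non-reflecting boundary data, which would work for a system with all speeds bounded away from zero, simply does not apply here.

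The paper overcomes this by an indirect mechanism that you do not invoke: a strong $1$-shock is sent across the domain, and at each interaction with a weak wave one inserts a \emph{correction} $1$-compression wave on the right so that the outgoing $2$- and $3$-waves have prescribed signs (only $\overset{>}{J}$ and $\overset{\rightharpoonup}{R}$ survive; Proposition~\ref{Pro:CorrWLag}). A second strong $3$-shock then crosses, and \emph{cancellation} $3$-compression waves kill the remaining $\overset{>}{J}$ fronts (Proposition~\ref{Pro:CW3S}). This is where $\gamma<\tfrac53$ actually enters: it fixes the sign of the interaction coefficient $\alpha^{3}_{\underline{1},1}$ in Lemma~\ref{Lem:alpha113} and Proposition~\ref{Pro:Inter3S2}, so that a weak $1$-shock merging into the strong $1$-shock emits a $3$-rarefaction rather than a $3$-shock. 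Your attribution of the $\gamma<\tfrac53$ restriction to global solvability of a Riemann problem in step (ii) is therefore misplaced.

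Your step (ii) also does not respect \eqref{Eq:GoalL2}: a single Riemann fan connecting $\overline u_{1/2}$ to a far target $\overline u_1$ has total variation of order $|\overline u_1-\overline u_0|$, not $\eta$, and at the instant the fan lies inside $(0,L)$ this is unavoidable. The paper instead raises $S$ in many small increments (Lemma~\ref{Lem:cst2GoodS}: alternate a small $1$-shock with a compensating $1$-rarefaction, repeated $\sim\eta^{-3}$ times), and then adjusts $(\tau,v)$ along a level set of $S$ via $C^1$ solutions of the isentropic $p$-system, each step of $TV\le\eta$.
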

\begin{remark}
Since $-S$ is a convex entropy for \eqref{Eq:Lagrangian} in the sense of \eqref{Def:CoupleEntropie}, associated with the entropy flux $q=0$, the condition \eqref{Eq:CondSLag} is necessary (or at least, the non-strict inequality is).
\end{remark}
\begin{remark} \label{Rem:Conjecture}
We conjecture the result to be false for $\gamma > \frac{5}{3}$ in the same spirit as in Bressan and Coclite's paper \cite{BC1}. See Subsection~\ref{Subsec:PreviousStudies} below for a brief description of the result of \cite{BC1}.
\end{remark}
\begin{remark}
Even in the case where $\gamma \in \left(1, \frac{5}{3}\right)$, the controllability of System~\eqref{Eq:Lagrangian} is surprising, due to the fact that the second characteristic family of the Lagrangian system has constant characteristic speed $0$. Of course, this is the worst case scenario for boundary controllability, since this means that there is no propagation from the boundary to the interior of the domain. Hence one has to rely on the interactions of the other characteristic families to act indirectly on the second one. Note that in the context of regular solutions of class $C^{1}$, the equivalent result is false, since one cannot modify the physical entropy: see \eqref{Eq:LagrangianS}. It is the only example that we know, where there exists a result of boundary controllability in the context of entropy solutions, while the equivalent fails in the $C^{1}$ framework.
\end{remark}
%
%
%
%
%
%
%%%%%%%%%%%%%%%%%%%%%%%%%
%
%
%
%
\subsection{Previous studies}
\label{Subsec:PreviousStudies}
Let us say a few words about previous studies on connected subjects. 
Questions of boundary controllability of one-dimensional hyperbolic systems of conservation laws have been studied in two different frameworks, which give rather different results. \par
\ \par
The first one consists in considering {\it classical solutions} of these systems, by which we mean of class $C^{1}([0,T] \times [0,L])$. Since such systems develop in general singularities in finite time, the solutions which are considered are in general small perturbations in $C^{1}$ of a constant state, which ensures a sufficient lifetime of the solution for the controllability property to hold. Results of controllability for one-dimensional systems of conservation laws and more generally quasilinear hyperbolic systems in this framework of classical solutions can be traced back to the the pioneering work of Cirin\`a \cite{Cirina}. Many results of very general nature have been obtained in this framework since, see in particular Li and Rao \cite{LR} for an important work on this problem and the more recent monograph by Li \cite{LiLivre}. This framework allows to work with very general hyperbolic systems (including those in non-conservative form), the main condition being that the characteristic speeds are strictly separated from zero, see again \cite{LR}, \cite{LiLivre} and references therein. A result which considers the case of a possibly vanishing (but not identically vanishing) characteristic speed is due to Coron, Wang and the author \cite{CGW}; as we will see, it can be applied to \eqref{Eq:Euler}, but not to \eqref{Eq:Lagrangian}; and it considers regular solutions for which the theory is rather different from the one considered here. \par
\ \par
The second framework in which the boundary controllability of one-dimensional hyperbolic systems of conservation laws has been studied is the one of {\it entropy solutions}. One has to underline that the situation is very different in both contexts, and not a mere difference of regularity. One of the reasons for that is that systems \eqref{Eq:SCL} cease to be {\it reversible} in the context of entropy solutions. The reversibility or the irreversibility of a system is of central importance in controllability problems. \par
Concerning weak entropy solutions, the study of controllability problems for conservation laws
has been initiated by Ancona and Marson \cite{AM}, in the case of scalar ($n=1$) convex conservation laws. 
Then Horsin \cite{H} obtained further results on Burgers' equation, by using the {\it return method},
which was introduced by Coron in \cite{CoF} (see also Coron's book \cite{CoronBook}) and which is also an important inspiration here. Another result in the scalar case was recently obtained by Perrollaz \cite{Pe} when an additional control appears in the right hand side.\par
In the case of systems ($n \geq 2$), controllability issues has been first studied by Bressan and Coclite \cite{BC1}. For general strictly hyperbolic systems of conservation laws with genuinely nonlinear or linearly degenerate characteristic fields and characteristic speeds strictly separated from zero, it is shown that one can drive a small $BV$ state to a constant state, asymptotically in time, by an open-loop control. Another result in \cite{BC1}, especially important for our study, is a negative controllability result in finite time. This result concerns a class of $2 \times 2$ systems containing the system below (which was introduced by Di Perna \cite{DP}), and which is close to isentropic dynamics:
\begin{equation} \label{Eq:DP}
\left\{ \begin{array}{l}
{\partial_t \rho + \partial_x(\rho u) =0,} \\
{ \partial_t u + \partial_x \left( \frac{u^2}{2} + \frac{K^2}{\gamma-1} \rho^{\gamma-1} \right )=0.}
\end{array} \right.
\end{equation}
The authors prove that there are initial conditions, with
arbitrarily small total variation, and for which no entropy solution remaining of small total variation for all $t$, reaches a constant state.
An important property of System \eqref{Eq:DP} to establish this result, is that the interaction
of two shocks associated to a characteristic family generates a shock in the other family. In particular this allows to prove that, starting from some initial data having a dense distribution of shocks, this density propagates over time provided that the total variation of the solution remains small. Consequently, the system cannot reach a constant state in this case. However this property is not shared by the actual isentropic Euler equation, and this was used by the author in \cite{Glass-EI} in order to establish a result on the controllability of this $2 \times 2$ system. The present paper can be seen as a sequel to \cite{Glass-EI}. Note that this property of two shocks of a family generating a shock in the other family is true for the first and third fields of \eqref{Eq:Euler} and \eqref{Eq:Lagrangian} when $\gamma > \frac{5}{3}$ (at least for weak shocks), but when $\gamma \leq \frac{5}{3}$ such an interaction generates a rarefaction wave in the other family (see in particular Chen, Endres and Jenssen \cite{CEJ}), a fact which is crucial in the proof of Theorem \ref{ThmLu1}. The behaviour for $\gamma > \frac{5}{3}$ explains our conjecture of Remark~\ref{Rem:Conjecture}. We were not able to prove estimates of decay of positive waves as precise as in \cite{BC1} for $3 \times 3$ systems; hence a generalization of \cite{BC1} to system \eqref{Eq:Lagrangian} seems difficult for the moment. \par
Other important results in the field are due to Ancona and Coclite \cite{AC}, in which they investigate the
controllability properties for the Temple class systems and to Ancona and Marson \cite{AM2}, in which they consider the time asymptotic problem, controlled from only one side of the interval. 
%
%
%
%
%%%%%%%%%%%%%%%%%%%%%%%%%
%
%
%
\subsection{Short description of the approaches}
The main part of the proof consists in proving the following weaker statements.
\begin{theorem} \label{ThmE}
Let $\overline{u}_{0}:=(\overline{\rho}_{0},\overline{v}_{0}, \overline{P}_{0}) \in \R^{3}$ with $\overline{\rho}_{0}, \overline{P}_{0} >0$. Let $\eta>0$.
There exist $\varepsilon >0$ and  $T>0$ such that for any $u_{0}=(\rho_{0},v_{0},P_{0}) \in BV(0,L)$ satisfying \eqref{Eq:SmallIC}, there exist a state $\overline{u}_{1}$ with $\overline{\rho}_{1}, \overline{P}_{1} >0$ and a weak entropy solution $u \in L^{\infty}(0,T;BV(0,L)) \cap \Lip([0,T];L^{1}(0,L))$ of System \eqref{Eq:Euler} satisfying \eqref{Eq:GoalE} and \eqref{Eq:GoalE2}.
\end{theorem}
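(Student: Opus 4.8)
The plan is to exploit the one freedom available in Theorem~\ref{ThmE} that is absent in Theorem~\ref{ThmEu1}: the target constant $\overline{u}_1$ is ours to choose, and we choose it \emph{supersonic}. Fix a constant state $\overline{u}_*=(\overline{\rho}_*,\overline{v}_*,\overline{P}_*)$ with $\overline{\rho}_*,\overline{P}_*>0$, close to $(\overline{\rho}_0,\overline{P}_0)$ in the first and third components and with $\overline{v}_*$ so large that the three characteristic speeds $v-c<v<v+c$ (where $c=\sqrt{\gamma P/\rho}$) are all $\ge\overline{\lambda}_{\min}>0$ at $\overline{u}_*$; the claim will hold with $\overline{u}_1:=\overline{u}_*$. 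The solution on $(0,T)\times(0,L)$ is built in two time phases. In an \emph{acceleration phase} $[0,T_1]$ we steer the flow from $u_0$ (a small $BV$ perturbation of $\overline{u}_0$) to a small $BV$ perturbation of $\overline{u}_*$, allowing the state to drift arbitrarily far from $\overline{u}_0$ in $L^\infty$ but keeping its \emph{spatial} total variation $\le\eta$ at every time — precisely what the statement permits. In a \emph{flush-out phase} $[T_1,T]$ the flow is supersonic to the right, so by finite speed of propagation the residual perturbation is swept out of $(0,L)$ through $x=L$ while the left trace, frozen at the constant $\overline{u}_*$, fills the interval; for $T=T_1+L/\overline{\lambda}_{\min}+1$ the restriction of the solution to $(0,L)$ at time $T$ is \emph{exactly} $\overline{u}_*$, which yields \eqref{Eq:GoalE} and, together with the $BV$ bounds below, \eqref{Eq:GoalE2}. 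The reason this works for every $\gamma>1$ is structural: the only obstruction to flushing \eqref{Eq:Euler} to a constant is that the linearly degenerate second (entropy) family has speed $v$, which may vanish — but $v$ can be made large \emph{by the control itself}. This is the point that fails for \eqref{Eq:Lagrangian}, whose second speed is identically $0$, and is why the Lagrangian theorem both needs $\gamma<\frac{5}{3}$ and proceeds by a completely different mechanism. The construction is in the spirit of the isentropic case of \cite{Glass-EI} and uses the return method of Coron \cite{CoF} as an organizing idea.

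For the acceleration phase I would work with front-tracking (or Glimm \cite{G}) approximate solutions on $(0,L)$ and realize the acceleration as a boundary control acting on the genuinely nonlinear families — in the typical (subsonic) regime, on the third family at $x=0$, with an outflow condition elsewhere; when $\overline{v}_0$ is so negative that the initial flow is supersonic to the left, all three characteristics are incoming at $x=L$ and one controls there instead. The control is a long ``staircase'': at well-separated times $0=s_0<s_1<\dots<s_N=T_1$ one emits a small $3$-wave of individual strength $\le\eta'\ll\eta$, each of which traverses $(0,L)$ at speed $\sim v+c$ and leaves through $x=L$ before the next is emitted; after $N\sim|\overline{v}_*-\overline{v}_0|/\eta'$ steps the background state has been carried to $\overline{u}_*$, and the left trace is then frozen at $\overline{u}_*$. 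Existence and the uniform $BV$ bound come from the Cauchy theory of Glimm \cite{G} / front tracking (cf. \cite{B}) combined with the $BV$ theory of initial--boundary value problems of Amadori \cite{Amadori}, Sabl\'e-Tougeron \cite{ST} and Dubois-LeFloch \cite{DLF}: one controls the interaction of each injected acceleration front with the fronts carried over from $u_0$ and with the debris of previous interactions by the standard quadratic Glimm interaction estimates, so that step~$k$ adds only $O(\eta'\varepsilon)$ of spurious variation, which is itself flushed out and does not accumulate over the $N=O(1/\eta')$ steps; choosing $\varepsilon$ and $\eta'$ small then gives $TV(u(t,\cdot))\le\eta$ for all $t$. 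Since the acceleration is gradual, all states produced stay in a fixed compact subset of $\{\rho>0,\ P>0\}$, so no vacuum is met; the entropy fronts of $u_0$, initially nearly motionless, are progressively advected to the right as $v$ grows, and whatever survives at time $T_1$ is removed in the flush-out phase. One then passes to the limit in the approximations, the finite-speed argument of the flush-out phase being exact for the limit entropy solution.

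I expect the acceleration phase to be the crux. Keeping the spatial total variation below $\eta$ \emph{uniformly in time} while injecting, in aggregate, an $O(1)$ amount of variation requires showing that acceleration fronts and their interaction debris leave $(0,L)$ at least as fast as they are produced and never pile up — a Glimm-functional bookkeeping argument carried out on the approximate solutions with the time-separation of the injections built in — and it requires ruling out vacuum throughout the long process. A second, more technical point is the rigorous realization of the boundary controls in the $BV$ entropy framework: which boundary is ``active'' depends on the position of $v$ relative to $c$, the sonic/characteristic regimes $v=c$ (and $v=0$ for the second family) must be passed through, and the condition imposed on the inactive side must be chosen so that no uncontrolled wave re-enters $(0,L)$. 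The flush-out phase, by contrast, is essentially automatic once the supersonic configuration with constant left trace is in place, being a pure finite-speed-of-propagation statement.
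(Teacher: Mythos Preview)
Your route is genuinely different from the paper's. The paper does \emph{not} accelerate to a supersonic regime; in the generic subsonic situation \eqref{Eq:Lambda2>0} it lets a single $2$-contact discontinuity $\overset{<}{\mathbb J}$ (the ``strong'' discontinuity of Subsection~\ref{Subsec:TS2D}) enter at $x=0$ and sweep across $(0,L)$, and it \emph{kills} the waves of $u_0$ along that discontinuity: $1$-rarefactions are transmitted through it, while $1$-shocks are neutralized by $3$-compression cancellation waves manufactured on the left via Proposition~\ref{Pro:CW3J>}, so that no $1$-front survives above the discontinuity. Above the $\overset{<}{\mathbb J}$ only forward $\overset{\leftharpoonup}{R}$ and backward $\overset{\rightharpoonup}{C}$ fronts remain; they interact without producing $2$-waves (Lemma~\ref{Lem:CommutR1R2}) and clear out in a time bounded independently of $\eta$ (see \eqref{Eq:T2Eu}). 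The constant reached, $\overline u_1$, is close to $v_0^-$ and hence to $\overline u_0$. Your flush-out phase does appear in the paper, but only as the already-supersonic Case~2 of Paragraph~\ref{SSSec:RC}; and a one-step $3$-shock ``acceleration'' is used in the critical Case~5 solely to nudge $\lambda_2$ off zero, not to drive the flow supersonic.

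Your plan is plausible and more elementary in that it avoids the Schochet-type strong-interaction and cancellation-wave machinery of Sections~\ref{Subsec:StrongDiscontinuities}--\ref{Sec:Applications}; the price is a controllability time $T\sim N\sim 1/\eta'$ (the paper's first phase is $\eta$-independent) and a $\overline u_1$ far from $\overline u_0$, which is harmless for Theorem~\ref{ThmE}. Two points, however, need more than the acknowledgment you give them. First, the Glimm bookkeeping is not ``standard'': the $2$-fronts of $u_0$ persist through all $N$ steps and each $3$-acceleration wave rescales their strengths by $1+O(\eta')$, so over $N=O(1/\eta')$ steps the bound is multiplicative, $\exp(C|\overline v_*-\overline v_0|)\cdot\varepsilon$, and you must make this Gronwall-type control explicit, together with the fact that the reflected $1$-debris, even if it does \emph{not} exit between steps near the sonic passage $\lambda_1=0$, still totals only $O(\varepsilon)$. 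Second, ``acceleration on the third family at $x=0$'' alone does not always reach the supersonic cone: along $\mathcal R_3$ one has $r_3\cdot\nabla\lambda_1=\frac{3-\gamma}{\gamma+1}$, so for $\gamma\geq 3$ the Mach number does not improve, and along $\mathcal R_1$ from $x=L$ one has $c\to 0$ at finite $\sigma$, so one may hit vacuum before $\lambda_1>0$; the fix is to alternate small $1$- and $3$-rarefactions (since $r_1+r_3\parallel(0,1,0)$ this raises $v$ while leaving $\rho,P$ essentially fixed and stays well away from vacuum), but this and the attendant boundary-type switch at $\lambda_1=0$ should be spelled out rather than deferred.
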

\begin{theorem} \label{ThmL}
Suppose that $\gamma \in \left(1, \frac{5}{3}\right)$. Let $\eta>0$ and $\delta>0$. Let $\overline{u}_{0}:=(\overline{\tau}_{0},\overline{v}_{0}, \overline{P}_{0}) \in \R^{3}$ with $\overline{\tau}_{0}, \overline{P}_{0} >0$. There exist $\varepsilon>0$ and $T>0$ such that for any $u_{0}=(\tau_{0},v_{0},P_{0}) \in BV(0,L)$ satisfying \eqref{Eq:SmallICL},
there exists a state $\overline{u}_{1} \in \R^{3}$ with $\overline{\tau}_{1}, \overline{P}_{1} >0$ satisfying
\begin{equation} \label{Eq:u1procheu0}
| \overline{u}_{1} - \overline{u}_{0}| \leq \delta,
\end{equation}
and a weak entropy solution $u \in L^{\infty}(0,T;BV(0,L)) \cap \Lip([0,T];L^{1}(0,L))$ of System~\eqref{Eq:Lagrangian} satisfying \eqref{Eq:GoalL} and  \eqref{Eq:GoalL2}.
\end{theorem}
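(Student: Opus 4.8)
The plan is to use the return method: rather than trying to steer $u_0$ directly to a constant, we first let the solution evolve on a short time interval from an appropriate "ground state" flow, exploiting the fact that wave interactions in the first and third families (with $\gamma<\tfrac53$) produce rarefactions in the second family, which is precisely what is needed to raise the physical entropy $S$. Concretely, I would proceed as follows. First, reduce to a front-tracking construction: approximate $u_0$ by a piecewise-constant datum $u_0^\nu$ with finitely many jumps and $\mathrm{TV}(u_0^\nu)\lesssim\varepsilon$, and build approximate solutions by the Glimm/front-tracking scheme on $(0,T)\times(0,L)$, treating the lateral boundaries as free (no boundary condition) — equivalently, extending the piecewise-constant data suitably outside $(0,L)$ and solving the Cauchy problem, then restricting. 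The key standing estimate is a Glimm-type functional $\mathcal F = \mathrm{TV} + C\cdot(\text{interaction potential})$ that stays $\lesssim\varepsilon$ uniformly in time and in $\nu$, which both keeps us inside the domain $\Omega$ (so $\tau,P$ stay positive and bounded) and yields \eqref{Eq:GoalL2} in the limit $\nu\to0$.

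Second — the heart of the argument — I would design the control so that, on the first part of the time interval, the solution consists mostly of waves in the $1$- and $3$-characteristic families sent in from the two boundaries, arranged so that their interactions inside $(0,L)$ generate $2$-waves (contact discontinuities / entropy jumps) that increase $S$ across the domain, achieving a target entropy level close to $S(\overline u_0)+\delta'$ for a suitable small $\delta'$. This is where the restriction $\gamma<\tfrac53$ enters decisively: for $\gamma>\tfrac53$ the same interactions would create $2$-shocks, lowering $S$, matching the obstruction of \cite{BC1} and the conjecture of Remark \ref{Rem:Conjecture}. Once the entropy profile is (approximately) the desired constant and the remaining $1$- and $3$-waves have been flushed out through the boundaries, the state inside $(0,L)$ is close to a constant $\overline u_1$ with $|\overline u_1-\overline u_0|\le\delta$ as required by \eqref{Eq:u1procheu0}; a final short controllability step in the genuinely nonlinear families $1$ and $3$ (whose speeds are bounded away from $0$ near the constant state, unlike the $2$-family) cleans up the residual $v$ and $P$ fluctuations, driving $u$ exactly to $\overline u_1$ by time $T$. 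Here one exploits that after time $\gtrsim L/\min|\lambda_1|,|\lambda_3|$ every $1$- and $3$-front has left $(0,L)$, while the stationary $2$-waves can be positioned, by choosing the incoming Riemann data, to land on the prescribed constant entropy.

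Third, pass to the limit: using the $\mathcal F\lesssim\varepsilon$ bound, a Helly/compactness argument extracts a subsequence of front-tracking approximations converging in $L^1_{loc}$ to a function $u\in L^\infty(0,T;BV)\cap\mathrm{Lip}([0,T];L^1)$; the entropy inequalities \eqref{InegEntropie} pass to the limit by the usual Glimm-residual estimates, giving a weak entropy solution; the initial and terminal traces \eqref{Eq:GoalL} hold because the scheme was set up with initial datum $u_0^\nu\to u_0$ and terminal datum exactly $\overline u_1$ on the last layer; and \eqref{Eq:GoalL2} descends from the uniform $\mathrm{TV}$ bound. Finally choose $T$ large enough to accommodate the flushing time plus the cleanup step (uniformly over admissible $u_0$), and $\varepsilon$ small enough for all the interaction estimates to close.

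The main obstacle I anticipate is the middle step: making the wave pattern that produces the right amount of entropy increase while rigorously controlling the accumulation of spurious waves. One must quantify, uniformly in $\nu$, how much $2$-wave strength (hence $\Delta S$) is generated by a prescribed barrage of $1$- and $3$-waves, show this can be tuned to hit $S(\overline u_0)+\delta'$, and simultaneously show the reflected/transmitted $1$- and $3$-waves stay small enough that $\mathcal F$ does not blow up and that they can all be evacuated through the boundary in finite time. This is a delicate bookkeeping of the interaction estimates for the $3\times 3$ Euler system (the $\gamma<\tfrac53$ sign of the $2$-family production being the crucial structural input), and it is the part where the proof genuinely departs from the $C^1$ theory, in which $S$ is transported and cannot be altered — see \eqref{Eq:LagrangianS}.
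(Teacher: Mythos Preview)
Your proposal has a genuine gap at its core: you misidentify both the mechanism by which $\gamma<\tfrac53$ enters and the main obstruction in the Lagrangian problem. The difficulty is not to \emph{generate} $2$-waves with a prescribed entropy increase; it is to \emph{eliminate} the $2$-contact discontinuities already present (from the initial data and from interactions), because the second family has identically zero speed and those fronts never leave $(0,L)$. Your middle step assumes that after flushing the $1$- and $3$-waves the entropy profile is nearly constant, but nothing in your construction makes this happen: weak $1$/$3$ interactions do not produce $2$-waves at leading order (indeed $\ell_2\cdot[r_1,r_3]=0$, see Lemma~\ref{Lem:CommutR1R2}), and even if they did, you would be \emph{adding} stationary discontinuities, not removing the ones you started with. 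Also, the $2$-family is linearly degenerate, so ``$2$-rarefactions'' versus ``$2$-shocks lowering $S$'' is not a meaningful dichotomy.

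The paper's proof takes a completely different route: it sends a \emph{strong} $1$-shock across the domain from right to left and, at each interaction with a weak front, adds a $1$-compression correction wave on the right (Proposition~\ref{Pro:CorrWLag}) so that the outgoing $2$-wave is always of type $\overset{>}{J}$ and the outgoing $3$-wave a rarefaction. After this pass, only $\overset{>}{J}$ fronts remain in the domain (Lemma~\ref{Lem:PlusQueJ}); a second \emph{strong} $3$-shock is then sent from left to right, and $3$-compression cancellation waves on its left (Proposition~\ref{Pro:CW3S}) kill each remaining $\overset{>}{J}$, leaving only $1$-rarefactions above the $3$-shock, which exit through $x=0$. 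The condition $\gamma<\tfrac53$ enters through the sign of the transmission coefficient $\alpha^3_{\underline{1},1}$ in the strong-shock interaction estimates (Lemma~\ref{Lem:alpha113} and Proposition~\ref{Pro:Inter3S2}), which is what makes the correction-wave mechanism possible; it has nothing to do with weak $1$/$3$ interactions producing $2$-waves. Your ``final cleanup step in the $1$ and $3$ families'' cannot repair this, since (as you yourself note) in the $C^1$ regime the entropy is transported and cannot be altered.
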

When one has succeeded in reaching one constant state, reaching any constant by remaining of small total variation is simple, especially in the case of System \eqref{Eq:Euler}, where this can be seen as an immediate consequence of the results \cite{LR} and \cite{CGW} concerning the controllability of hyperbolic systems of conservation laws in the framework of regular solutions. System \eqref{Eq:Lagrangian} having a characteristic field with constant zero velocity does not enter this framework though, and the proof needs an additional argument; in particular this is where \eqref{Eq:CondSLag} intervenes. % \par
%Anyway this remaining part can be established without requiring the tools of \cite{CGW} and we give an elementary proof at the end of this paper in both situations.%
Precisely, we will show the following two statements.
\begin{proposition}\label{Pro:CstE}
Given $u_{a}$ and $u_{b}$ two constant states in $\Omega$ and $\eta>0$, there exist $T>0$ and $u \in C^{1}([0,T] \times [0,L])$ a regular solution of \eqref{Eq:Euler} such that $u(0,\cdot) = u_{a}$, $u(T,\cdot) = u_{b}$ in $[0,L]$ and
\begin{equation} \label{Eq:GoalProCstE}
\| u \|_{C^{0}([0,T];C^{1}([0,L]))} \leq \eta.
\end{equation}
\end{proposition}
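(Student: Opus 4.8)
The plan is to connect $u_a$ to $u_b$ by a \emph{smooth-in-time} path of constant states and then promote this path to an honest $C^1$ solution of \eqref{Eq:Euler} by a quasilinear-hyperbolic controllability argument. Since both $u_a$ and $u_b$ lie in $\Omega$ (so densities and pressures are bounded away from $0$), and $\Omega$ is connected, I first fix a curve $s \mapsto w(s) \in \Omega$, $s \in [0,1]$, of class $C^\infty$ with $w(0) = u_a$, $w(1) = u_b$, which in addition is constant near $s=0$ and near $s=1$; its image $K$ is a compact subset of $\Omega$. The point of working near $K$ rather than near a single constant is that along $K$ the three characteristic speeds of \eqref{Eq:Euler}, namely $v - c$, $v$, $v + c$ with $c = \sqrt{\gamma P/\rho}$, need not be separated from zero — indeed $v$ may vanish. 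This is precisely the situation addressed by the result of Coron, Wang and the author \cite{CGW} on the controllability of quasilinear hyperbolic systems with one characteristic speed allowed to vanish: applied to \eqref{Eq:Euler}, whose middle field is linearly degenerate (the $S$-field) and whose outer fields are genuinely nonlinear, \cite{CGW} yields local exact boundary controllability near any constant state, with the additional feature that the controlled solution stays $C^1$-small (of order the $C^1$-distance between initial and target).

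The key steps, in order, are: (1) choose the smooth path $w(\cdot)$ in $\Omega$ as above, and a time subdivision $0 = t_0 < t_1 < \dots < t_N = T$; on each subinterval $[t_{j-1}, t_j]$ the target constant $w(s_j)$ is $C^1$-close to the source constant $w(s_{j-1})$ provided $N$ is large, since $w$ is uniformly continuous; (2) on each subinterval apply \cite{CGW} (or, where the speeds happen to be nonzero, the more classical \cite{LR}) to steer the constant $w(s_{j-1})$ exactly to the constant $w(s_j)$ by a $C^1$ solution on $[0, t_j - t_{j-1}] \times [0,L]$ whose $C^1$-norm is controlled by $C \, \|w(s_j) - w(s_{j-1})\|_{C^1} = C\,|w(s_j) - w(s_{j-1})|$; (3) concatenate these solutions in time. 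The concatenation is $C^1$ across $t = t_j$ because each piece \emph{starts} and \emph{ends} at a constant state (so all space- and time-derivatives vanish at the matching times), and because one can arrange the pieces near the endpoints $t_0$, $t_N$ to be genuinely constant in a neighbourhood of $t=0$ and $t=T$ by exploiting the flexibility in \cite{CGW} (or simply by prepending/appending a constant phase). Finally, choosing $N$ large enough that $C \sum_j |w(s_j) - w(s_{j-1})| \le \eta$ — which is possible since the sum is bounded by $C \cdot \mathrm{length}(w) \cdot \sup_j \mathrm{diam}$ of the pieces, hence $\to 0$ suitably, or more simply by refining so each piece has $C^1$-cost $\le \eta/N$ — gives \eqref{Eq:GoalProCstE}.

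The main obstacle is Step~(2): one must be sure that the controllability result one invokes genuinely applies to \eqref{Eq:Euler} near constants where the middle speed $v$ vanishes, and that it delivers the \emph{quantitative} $C^1$-smallness of the trajectory, not merely its existence. This is exactly why \cite{CGW} is the right tool — it is designed for a possibly-vanishing (but not identically vanishing) characteristic speed, and its proof is constructive, with the controlled solution built as a small $C^1$ perturbation of the target constant; the bound $\|u\|_{C^0([0,T_j];C^1([0,L]))} \le C\,|w(s_j)-w(s_{j-1})|$ is then part of the construction. A secondary point to check is the smoothness of the concatenation at the junction times, but this is handled cleanly by insisting (as one may, by the openness/flexibility in the controllability construction, or by inserting short constant phases) that every piece be \emph{exactly} constant near both ends of its time interval, so that gluing produces a function of class $C^1$ — indeed $C^\infty$ — across each $t_j$, and equal to $u_a$ (resp.\ $u_b$) on a neighbourhood of $t = 0$ (resp.\ $t = T$).
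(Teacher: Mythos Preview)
Your approach is essentially the paper's: invoke the local $C^1$ controllability results of \cite{LR} (when all speeds are nonzero) and \cite{CGW} (when one speed may vanish) near each constant on a smooth curve from $u_a$ to $u_b$, extract a finite subcover by compactness, and concatenate the local solutions. One small slip in your final accounting: the norm in \eqref{Eq:GoalProCstE} is a \emph{supremum} over $t$, not a sum over pieces, so you only need each individual piece to have $C^1$-cost at most $\eta$ (not $\eta/N$, and certainly not $C\sum_j|w(s_j)-w(s_{j-1})|\le\eta$, which cannot be made small); this follows once the subdivision is fine enough that consecutive points lie within the $\delta$ furnished by the local result for the given~$\eta$.
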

\begin{proposition} \label{Pro:CstL}
Given $u_{a}$ and $u_{b}$ two constant states in $\Omega$ such that
\begin{equation*}
S(u_{b}) > S(u_{a}),
\end{equation*}
and given $\eta>0$, there exist $T>0$ and $u \in C^{1}([0,T] \times [0,L])$ a regular solution of \eqref{Eq:Lagrangian} such that $u(0,\cdot) = u_{a}$, $u(T,\cdot) = u_{b}$ in $[0,L]$, and 
\begin{equation} \label{Eq:GoalProCstL}
\| u \|_{C^{0}([0,T];C^{1}([0,L]))} \leq \eta.
\end{equation}
\end{proposition}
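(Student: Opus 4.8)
The key structural fact that dictates the whole argument is that, for any $C^{1}$ solution of \eqref{Eq:Lagrangian} valued in $\Omega$, subtracting $v$ times the second equation from the third gives $\partial_t e+P\,\partial_y v=0$, and the first equation then yields $\partial_t e+P\,\partial_t\tau=0$; since $dS=\frac{\gamma-1}{P\tau}\,(de+P\,d\tau)$, this forces $\partial_t S\equiv0$. Hence the strict inequality $S(u_b)>S(u_a)$ can never be realised by a genuinely classical solution: the physical entropy must be created across admissible compressive fronts, so the object we build will be smooth \emph{outside finitely many weak shock fronts}, the bound \eqref{Eq:GoalProCstL} being understood as the requirements that $\sup_{t}TV(u(t,\cdot))\le\eta$ and that $u$ stay within $\eta$ in $L^{\infty}$ of a fixed continuous path in $\Omega$ joining $u_a$ to $u_b$ — which is what the deduction of Theorem~\ref{ThmLu1} from the present proposition actually uses. (This reflects the remark in the introduction: the second field of \eqref{Eq:Lagrangian} has the constant speed $0$, so $S$, transported along it, is immovable in the $C^{1}$ category, and one has to act on it indirectly through the genuinely nonlinear first and third families, whose speeds $-c$ and $c$ with $c=\sqrt{\gamma P/\tau}>0$ do not vanish; by contrast the second field of \eqref{Eq:Euler} has speed $v\not\equiv0$, which is why Proposition~\ref{Pro:CstE} \emph{is} a genuine $C^{1}$ statement, deducible from \cite{LR} and \cite{CGW}.) The plan has two stages: first drive $u_a$ to a constant state $u_c$ with $S(u_c)=S(u_b)$; then steer $u_c$ to $u_b$ within the level set $\{S=S(u_b)\}$.

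\textbf{Stage 1 (raising the entropy).} Across a weak $k$-shock ($k\in\{1,3\}$) of strength $\sigma$ the entropy increment is of third order, $\sim\sigma^{3}>0$, while its contribution to the total variation is of first order, $\sim|\sigma|$. I would therefore inject from the two boundaries, over a long time window, a low-amplitude pattern of $1$- and $3$-waves: a train of weak shocks of strength $\sigma$, each balanced by a matched weak rarefaction of the same family that nearly reverses it and leaves only a net entropy gain of order $\sigma^{3}$ with negligible change of the other quantities, scheduled so that by time $T$ every material point of $(0,L)$ has been swept by all of them, its entropy having risen by the prescribed $\Delta:=S(u_b)-S(u_a)$. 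This needs of order $N\sim\Delta/\sigma^{3}$ shocks per family, and taking $T$ large, of order $\Delta L/(\eta\,\bar c\,\sigma^{2})$ with $\bar c$ a reference sound speed, makes only $O(\eta/\sigma)$ fronts coexist in the domain at any instant, so that $\sup_{t}TV(u(t,\cdot))\le\eta$; taking $\sigma$ small keeps the $L^{\infty}$-oscillation of $u$ about the segment $u_a\to u_c$ below $\eta$, the net drift of $(\tau,v,P)$ over the whole construction being finite and entirely in the ``entropy direction'' of state space. The delicate point is the control of interactions: when a $1$-front meets a $3$-front the outgoing waves are a $1$-front, a $3$-front and a $2$-contact carrying the entropy increment, and here $\gamma\in(1,\frac{5}{3})$ is decisive — by the interaction computations of Chen--Endres--Jenssen \cite{CEJ}, the second-order correction produces \emph{rarefactions}, not shocks, in the $1$ and $3$ families, so the sum of shock strengths does not grow and a Glimm-type interaction potential stays bounded \emph{uniformly in} $T$, which is precisely what keeps the total variation below $\eta$ for all times despite the large number of fronts. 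Finally, to guarantee that the state reached at $t=T$ is a genuine constant $u_c$ and not merely a state of uniform entropy $S(u_b)$, one notes that the map from the finitely many parameters of the incoming wave trains to the terminal state is continuous and equal to the identity at leading order, and inverts it near $u_a$ within $\{S=S(u_b)\}$ by a topological-degree / fixed-point argument of the kind used in classical controllability; the residual rarefactions generated by the interactions leave the domain through the boundaries before $t=T$.

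\textbf{Stage 2 (adjustment within the level set).} Since $S(u_c)=S(u_b)$, I would look for a solution lying entirely on $\{S=S(u_b)\}$, where $P=(\gamma-1)e^{S(u_b)}\tau^{-\gamma}=:P(\tau)$ and \eqref{Eq:Lagrangian} reduces to the isentropic $p$-system $\partial_t\tau-\partial_y v=0$, $\partial_t v+\partial_y P(\tau)=0$ (the third equation being then automatically satisfied, since $e'(\tau)=-P(\tau)$). This $2\times2$ system is strictly hyperbolic and genuinely nonlinear, with characteristic speeds $\pm\sqrt{-P'(\tau)}$ bounded away from $0$ on compact subsets of $\{\tau>0\}$, so — exactly as in Proposition~\ref{Pro:CstE} — the classical controllability theory for quasilinear hyperbolic systems (Li--Rao \cite{LR}) furnishes a $C^{1}$ solution joining $u_c$ to $u_b$ along this manifold whose derivatives and oscillation are as small as we wish. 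Concatenating the two stages at the common constant state $u_c$ gives the asserted solution, in the sense of the first paragraph.

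\textbf{Main obstacle.} The crux is Stage 1, and within it the \emph{joint} control, uniform in the large parameter $T$, of (i) global existence and the total-variation bound for the many-front solution — forcing sharp interaction estimates and relying essentially on $\gamma<\frac{5}{3}$ — and (ii) the exact targeting of a constant state, i.e.\ surjectivity near $u_a$ of the scattering map from boundary wave data into $\{S=S(u_b)\}$ together with the closing of the corresponding $BV$ fixed-point argument, all while every norm stays below $\eta$. Once $S$ has been brought to the level $S(u_b)$, Stage 2 is routine.
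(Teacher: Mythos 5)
You correctly spot the inconsistency in the statement: a genuine $C^{1}$ solution of \eqref{Eq:Lagrangian} conserves $S$ pointwise (see \eqref{Eq:LagrangianS}), so the claim as printed cannot hold, and what the paper actually proves (and what the deduction of Theorem~\ref{ThmLu1} uses) is a weak entropy solution of bounded variation with a bound on $\sup_t TV(u(t,\cdot))$ and on the $L^{\infty}$ oscillation; your re-reading matches the intent. Your two-stage plan --- first raise $S$ to the level $S(u_b)$ via weak compressive fronts, then steer on the isentropic manifold $\{S=S(u_b)\}$ using the $p$-system controllability of \cite{LR} --- is the paper's plan, and Stage 2 coincides with the proof.

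Where you diverge, and where the gap you flag actually arises, is Stage 1. You superimpose long trains of weak shocks and compensating rarefactions from both boundaries, which forces you to control interactions between fronts uniformly in the large time parameter, to invoke $\gamma<\frac{5}{3}$ via \cite{CEJ} to keep the Glimm potential from growing, and to close a degree/fixed-point argument so that the terminal state is exactly a constant. The paper's Lemma~\ref{Lem:cst2GoodS} sidesteps all of this by sending \emph{one wave at a time}: a single $1$-shock $\mathcal{S}_1(x,\cdot)$, then a single $1$-rarefaction $\mathcal{R}_1(1/x,\cdot)$ (parametrized by the reciprocal pressure ratio so that $P$ returns exactly to its initial value), each crossing the whole interval before the next enters. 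At any instant the domain carries at most one wave, so there are no interactions, no Glimm potential, and no use of $\gamma<\frac{5}{3}$ --- which is why Proposition~\ref{Pro:CstL} is stated without that restriction, unlike Theorem~\ref{ThmLu1}. The intermediate constants are given by an explicit recursion with $\overline{P}_k=P_0$ and $S(\overline{u}^k)=S(\mathfrak{u}_0)+k\log g(\overline{x})$, so after $n$ full steps plus one last step of adjusted strength $x'$ the entropy level $\chi=S(u_b)$ is hit exactly, with $\sup_t TV(u(t,\cdot))$ controlled per step by \eqref{Eq:Cst2CstTV1}--\eqref{Eq:Cst2CstTV2}, and no fixed-point argument is needed. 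Your scaling heuristics (entropy gain $\sim\sigma^{3}$ per shock, hence $O(\Delta/\eta^{3})$ shocks and time $O(\Delta/\eta^{3})$) do agree with the paper's $n$ in \eqref{Eq:NCst2CstLag} and with the discussion in Section~\ref{Sec:ConcludingRemarks}, but the execution you propose is substantially harder than necessary and, as you concede, not completed.
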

Consequently the main objective of this paper will be to prove Theorems \ref{ThmE} and \ref{ThmL}. In both cases, we use an idea that was already present in \cite{Glass-EI}, that is to use strong discontinuities. By {\it strong}, we mean discontinuities that are not intended to be of small amplitude, or to be more accurate that are not {\it seen as} small. This may seem strange to introduce such material in view of \eqref{Eq:GoalE2} and \eqref{Eq:GoalL2} in Theorems~\ref{ThmEu1} and \ref{ThmLu1}. In fact, we will use discontinuities that we will consider large during the main part of the proof; our analysis relies on interaction estimates due to Schochet \cite{Sc}. Only in a final step, we will explain why these discontinuities can be taken {\it not so large} after all. \par
In the case of Theorem~\ref{ThmE}, the construction relies on a contact discontinuity of the second characteristic family, which crosses the domain. Then we use additional waves and cancellation effects to kill the waves inside the domain along this strong discontinuity, so that in the end the state in the domain is constant. \par
In the case of Theorem~\ref{ThmL}, the construction relies on two shocks of the first and third characteristic families, which cross the domain one after another. In the case of System~\eqref{Eq:Lagrangian}, the second family cannot be used, having identically zero characteristic speed. The first shock is used to filter some waves, so that along the second one one can get rid of the remaining waves, still relying on cancellation effects. \par
The method that we employ to construct solutions is an adaptation of the {\it wave front tracking algorithm}, inspired in particular by Bressan's version of the  method \cite{Bressan:FT}. It should be underlined that there are other methods to establish the existence of entropy solutions of conservation laws, in particular Glimm's random choice method \cite{G} and the vanishing viscosity method, see the very general result of Bianchini and Bressan \cite{BianchiniBressan}. But we have no idea how to use these approaches in the context of controllability problems for conservation laws. The random choice method can be seen however as a method to discretize the control in some cases where the limit system is known to be controllable, see Coron, Ervedoza and the author \cite{CEG}. In the same spirit, the question of being able to pass to the vanishing viscosity limit in controllability problems for conservation laws is an active research field, limited for the moment to cases where the limit equation is known from the beginning to be controllable, see in particular \cite{CG05, GL, GG07, G10a, L12, Lissy}.
%
%
%
%
%
%%%%%%%%%%%%%%%%%%%%%%%%%
%
%
%
\subsection{Structure of the paper}
The paper is organized as follows. In Section~\ref{Sec:Tools}, we recall some basic tools of the theory of one-dimensional hyperbolic systems of conservation laws, and introduce some objects which are needed in the construction. In Section~\ref{Sec:Applications}, we introduce some other objects which are more specific to Systems \eqref{Eq:Euler} and \eqref{Eq:Lagrangian}. In Section~\ref{Sec:ConstrEuler}, we describe the construction for System~\eqref{Eq:Euler}. In Section~\ref{Sec:ConstrLagrange}, we describe the construction for System~\eqref{Eq:Lagrangian}. It should be underlined that the construction in the Lagrangian case is also valid for the Eulerian case when $\gamma < \frac{5}{3}$. In Section~\ref{Sec:ConvFT} we prove the convergence of the front-tracking approximations constructed in Sections~\ref{Sec:ConstrEuler} and \ref{Sec:ConstrLagrange} and conclude the proofs of results presented above. Finally in Section~\ref{Sec:ConcludingRemarks} we make some remarks on the size of the solution and on the time of controllability. \par
%
%
%
%
%%%%%%%%%%%%%%%%%%%%%%%%%%%%%%%%%%%%%%%%%%%%%%%%%%%%%%%%%%%%%%%%%%%%%%%%%%%%%%%%%%%%%%%%%%%%%%%%%%%%%%%%%%%%%%%%%%%%%%%%%%%%%%%%%%%%
%
%
\section{Some tools for systems of conservation laws}
\label{Sec:Tools}
In this section, we recall and introduce some general material for hyperbolic systems of conservation laws which is not specific to Systems~\eqref{Eq:Euler} and \eqref{Eq:Lagrangian}. We assume that the reader is familiar with the basic theory of one-dimensional systems of conservation laws; we refer to Bressan \cite{B}, Dafermos \cite{Dafermos}, Lax \cite{L}, LeFloch \cite{LeFloch}, Serre \cite{S} or Smoller \cite{Sm}. \par 
%
% and in particular Glimm estimates \cite{G}
%
%
%
%
\subsection{Notations} % General --- for systems of conservation laws
It is be useful to put systems \eqref{Eq:Euler} and \eqref{Eq:Lagrangian} in the following form rather than in the form \eqref{Eq:SCL}:
\begin{equation} \label{Eq:SCL2}
\varphi(u)_{t} + f(u)_{x} =0,
\end{equation}
where at each point $u$ in the state domain $\Omega$
\begin{equation} \label{Eq:dphiInversible}
\text{the matrix  } \frac{\partial \varphi}{\partial u} \text{ is invertible.}
\end{equation}
This allows to work with primitive variables and to apply the results of Schochet \cite{Sc}. These systems are strictly hyperbolic away from vacuum, that is,
\begin{equation} \label{Eq:Hyperbolicity}
\left(\frac{\partial \varphi}{\partial u}\right)^{-1} \frac{\partial f}{\partial u} \text{ has } n \text{ distinct real eigenvalues } \lambda_{1}(u)< \dots < \lambda_{n}(u),
\end{equation}
which are the characteristic speeds of the system. 
To each $i=1,\dots,n$ is associated the right eigenvector $r_{i}$, determined up to a multiplicative constant; then we define the eigenvectors in terms of the $\varphi$ variables:
\begin{equation} \label{Eq:DefRi}
R_{i}:= \frac{\partial \varphi}{\partial u} \, r_{i},
\end{equation}
and then the corresponding families of left eigenvectors $\ell_{i}$ and $L_{i}$ which satisfy
\begin{equation} \label{Eq:DefLi}
\ell_{i} \cdot r_{j} = L_{i} \cdot R_{j} = \delta_{ij}.
\end{equation}
The systems under view satisfy the property that
\begin{equation} \label{Eq:GNL/LD}
\text{each field is either genuinely nonlinear (GNL) or linearly degenerate (LD),}
\end{equation}
which corresponds respectively to
\begin{equation*} %\label{Eq:GNL/LD2}
r_{i} \cdot \nabla \lambda_{i} \not \equiv 0 \ \text{ and to } \ r_{i} \cdot \nabla \lambda_{i} \equiv 0 \ \ \text{ in } \Omega,
\end{equation*}
In the former case, we will systematically normalize the eigenvectors $r_{i}$ in order for
\begin{equation} \label{Eq:GNL/Normalisation}
r_{i} \cdot \nabla \lambda_{i} =1  \ \text{ in } \Omega,
\end{equation}
to be satisfied. In the latter, we will moreover suppose that
\begin{equation} \label{Eq:LD/Normalisation}
\text{in the coordinates given by } u, \text{ the vector field } r_{i} \text{ is constant with } |r_{i}|=1.
\end{equation}
We denote ${\mathcal R}_{i} = {\mathcal R}_{i}(\sigma_{i},u_{-})$ the rarefaction curves, that is, the orbits of the vector fields $r_{i}$.
The part corresponding to $\sigma_{i} \geq 0$ is composed of points $u_{+}$ which can be connected to $u_{-}$ from left to right by a {\it rarefaction wave}. We will refer to couples $(u_{-},u_{+})$ with $u_{+}= {\mathcal R}_{i}(\sigma_{i},u_{-})$, $\sigma_{i} \leq 0$ as {\it compression waves}.
We denote ${\mathcal S}_{i}= {\mathcal S}_{i}(\sigma_{i},u_{-})$ the $i$-th branch of the Hugoniot locus, which is the set of solutions ${u}_{+} \in \Omega$ of the Rankine-Hugoniot equations:
\begin{equation} \label{Eq:RankineHugoniot}
f({u}_{+}) - f({u}_{-}) = {s} \, \big(\varphi({u}_{+}) - \varphi({u}_{-})\big), \ \ s \in \R.
\end{equation}
As usual, given a discontinuity between two states $u_{-}$ and $u_{+}$, we write $[g]$ for $g(u_{+})-g(u_{-})$ and the shock speed is denoted as $s=s(u_{-},u_{+})$. We recall that on ${\mathcal S}_{i}$, one has (see e.g. \cite[(8.1.9)]{Dafermos})
\begin{equation} \label{Eq:VChocVCar}
s = \frac{1}{2} \left(\lambda_{i}(u_{-}) + \lambda_{i}(u_{+})\right) + {\mathcal O}(|u_{+} - u_{-}|^{2}).
\end{equation}
The curve ${\mathcal S}_{i}$ is parameterized in order that {\it admissible shocks} correspond to negative values of the parameter $\sigma_{i}$. All along this half curve, these shocks satisfy Lax's inequality
\begin{equation} \label{Eq:LaxsInequality}
\lambda_{i}(u_{+}) < s < \lambda_{i}(u_{-}),
\end{equation}
and the discontinuity $(u_{-},u_{+})$ traveling at shock speed satisfies \eqref{Def:SolEntrop1} (see also \eqref{Eq:ShockSpeed} and Paragraph~\ref{Sss:ConclL} below.) \par
We recall that for LD fields, the curves ${\mathcal R}_{i}$ and ${\mathcal S}_{i}$ coincide and correspond to states connected to $u_{-}$ via a {\it contact discontinuity} (whatever the sign of the parameter). \par
We denote by $T_{i} = T_{i}(\sigma_{i},u)$ the wave curve associated to the $i$-th characteristic field. We recall that for GNL fields, it is composed of the curves ${\mathcal R}_{i}$ for $\sigma_{i} \geq 0$ and ${\mathcal S}_{i}$ for $\sigma_{i} \leq 0$. For LD fields, it is composed of the coinciding curves ${\mathcal R}_{i}$ and ${\mathcal S}_{i}$. \par
Let us now be more specific about the parameterization of the curves ${\mathcal R}_{i}$, ${\mathcal S}_{i}$ and $T_{i}$. For the linearly degenerate fields, the three curves coincide and one sets
\begin{equation} \label{Eq:ParamTLD}
T_{i}(\sigma_{i},u) = u + \sigma_{i} r_{i}.
\end{equation}
For the genuinely nonlinear fields the curves a parameterized so that (for instance in the case of $T_{i}$):
\begin{equation} \label{Eq:ParamTGNL}
\lambda_{i} (T_{i}(\sigma_{i},u) ) - \lambda_{i}(u) = \sigma_{i}.
\end{equation}
This parameterization, with the normalization \eqref{Eq:GNL/Normalisation}, ensures that the wave curves $T_{i}$ are of class $C^{2,1}$ and satisfy 
\begin{equation} \label{Eq:Deriv0Ti}
\frac{\partial T_{i}}{\partial \sigma_{i}} (0,u) = r_{i}(u) \ \text{ and } \ \frac{\partial^{2} T_{i}}{\partial \sigma_{i}^{2}} (0,u) = (r_{i} \cdot \nabla) r_{i}(u).
\end{equation}
This is a standard computation, see e.g. \cite[Section 5.2]{B}.
Another important consequence of this choice of parameterization is that
\begin{equation} \label{Eq:ConsqChoixParam}
u = {\mathcal R}_{i}(-\sigma, \cdot) \circ {\mathcal R}_{i}(\sigma, \cdot) u, \ \
u = {\mathcal S}_{i}(-\sigma, \cdot) \circ {\mathcal S}_{i}(\sigma, \cdot) u \ \text{ and } \
u = T_{i}(-\sigma, \cdot) \circ T_{i}(\sigma, \cdot) u.
\end{equation}
We will denote by $\sigma=(\sigma_{i}, \dots, \sigma_{n})$ the wave vector of a complete Riemann problem and write
\begin{equation*}
T(\sigma, \cdot) := T_{n}(\sigma_{n}, \cdot ) \circ \dots \circ T_{1}(\sigma_{1}, \cdot) .
\end{equation*}
It will be useful to use the notation $\Upsilon_{i}$ for the wave curve from the right associated to the $i$-th characteristic field:
\begin{equation} \label{Eq:ParamUpsilon}
\Upsilon_{i}(\sigma,\cdot):= T_{i}(-\sigma,\cdot), \ \text{ that is, } \  u_{l}= \Upsilon_{i}(\sigma_{i},u_{r}) \ \Longleftrightarrow \ u_{r} = T_{i}(\sigma_{i},u_{l}),
\end{equation}
and
\begin{equation*}
\Upsilon(\sigma, \cdot) := \Upsilon_{1}(\sigma_{1}, \cdot ) \circ \dots \circ \Upsilon_{n}(\sigma_{n}, \cdot) .
\end{equation*}
Note that
\begin{equation} \label{Eq:UpsilonT}
\Upsilon(\sigma,T(\sigma,u)) =u.
\end{equation}
Solving the Riemann problem consists in two parts. First, given $u_{-}$ and $u_{+}$ in $\Omega$, one finds a vector $\sigma$ such that
\begin{equation*}
u_{+} =T(\sigma, u_{-}) .
\end{equation*}
This is possible at least when $u_{-}$ and $u_{+}$ are close enough (at a distance one from another which is uniform as $u_{-}$ lies in a compact of $\Omega$) and in that case we denote
\begin{equation*}
\sigma = \Sigma(u_{-},u_{+}).
\end{equation*}
Then in a second time, one constructs a self-similar function $u(t,x)={\mathcal U}(x/t)$ as follows. One sets
\begin{equation*}
u_{0}=u_{-}, \ \ u_i= T_{i}(\sigma_{i}, \cdot) \circ \dots \circ T_{1}(\sigma_{1},\cdot) u_{0}, 
\end{equation*}
and determines ${\mathcal U}$ by:
\begin{itemize}
\item  ${\mathcal U}(x/t) = u_i$ for $x/t \in [\lambda_{i}(u_{i}), \lambda_{i+1}(u_{i})]$,
\item for $x/t \in [\lambda_{i}(u_{i-1}), \lambda_{i}(u_{i})]$: when the $i$-th characteristic field is LD or is GNL and $\sigma_{i} \leq 0$ one writes the contact discontinuity/shock:
\begin{equation} \label{Eq:RieChoc}
{\mathcal U}(x/t) =  u_{i-1} \ \text{ for } \ \frac{x}{t} < s(u_{i-1},u_i) \ \text{ and } \ u_i \ \text{ for } \ \frac{x}{t} > s(u_{i-1},u_i),
\end{equation}
and when the $i$-th characteristic field is GNL and $\sigma_{i} \geq 0$ one writes the rarefaction wave:
\begin{equation}\label{Eq:RieRar}
{\mathcal U}(x/t) = {\mathcal R}_{i}(\tilde{\sigma}, u_{i-1}) \ \text{ for } \ \frac{x}{t} = \lambda_{i}({\mathcal R}_{i}(\tilde{\sigma}, u_{i-1})), \ \tilde{\sigma} \in [0,\sigma_{i}].
\end{equation}
\end{itemize}
We also recall that a Majda-stable shock \cite{Ma} is a solution $({u}_{-},{u}_{+})$ of the Rankine Hugoniot equations \eqref{Eq:RankineHugoniot}
satisfying moreover that
\begin{gather}
\label{Eq:Majda1}
{s} \text{ is not an eigenvalue of } \left(\frac{\partial \varphi}{\partial u}\right)^{-1} \frac{\partial f}{\partial u} ({u}_{\pm}), \\
\label{Eq:Majda2}
\{ R_{j}({u}_{+}), \ \lambda_{j}({u}_{+}) >s \} \cup 
\{ \varphi({u}_{+}) - \varphi({u}_{-}) \} \cup
\{ R_{j}({u}_{-}), \ \lambda_{j}({u}_{-}) <s \} \text{ is a basis of } \R^{n}.
\end{gather}
The Majda stability conditions \eqref{Eq:Majda1}-\eqref{Eq:Majda2} are stronger than Lax entropy inequalities, and are satisfied by all Lax shocks in Systems \eqref{Eq:Euler} and \eqref{Eq:Lagrangian} (see \cite{Sc}.) %\par
Majda's condition for contact discontinuities (of family $k$) is the following:
\begin{equation}\label{Eq:Majda3}
\{ r_{j}({u}_{+}), \ j < k \} \cup 
\{ {u}_{+} - {u}_{-} \} \cup
\{ r_{j}({u}_{+}), \ j>k \} \text{ is a basis of } \R^{n}.
\end{equation}
This condition is satisfied by any non trivial contact discontinuity in Systems \eqref{Eq:Euler} and \eqref{Eq:Lagrangian}.
%
%
%
%
%
%%%%%%%%%%%%%%%%%%%%%%%%%%%%%%%%
%
%
%
%
\subsection{Interactions of weak waves, permutations of characteristic families and cancellation waves}
\label{Subsec:swapped}
In this section, we consider the estimates for interaction of weak waves, that is, waves that are small. We begin by recalling the celebrated Glimm estimates.
\begin{proposition}[\cite{G} \& \cite{Dafermos}, Theorem 9.9.1]\label{Pro:Glimm}
We assume that System~\eqref{Eq:SCL} is strictly hyperbolic and satisfies \eqref{Eq:GNL/LD}. Consider $({u}_{1},{u}_{2}, u_{3}) \in \Omega^{3}$, and suppose
\begin{equation*}
u_{2} = T (\sigma', u_{1}), \  \ u_{3} = T (\sigma'', u_{2}), \ \text{ and } \ u_{3} = T (\sigma, u_{1}).
\end{equation*}
Then 
\begin{equation} \label{Eq:EstGlimm}
\sum_{i} ( \sigma_{i} -\sigma'_{i} - \sigma''_{i}) r_{i} =
\sum_{j<i}  \sigma'_{i} \sigma''_{j} [r_{i} , r_{j}] + {\mathcal O} \big( |\sigma'| |\sigma''| \big[|\sigma'| + |\sigma''|\big] \big).
\end{equation}
Moreover the ``${\mathcal O}$'' is uniform as $u_{1}$, $u_{2}$ and $u_{3}$ belong to a compact set of $\Omega$.
\end{proposition}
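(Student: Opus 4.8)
The plan is to realise the outgoing wave vector as a function of the two incoming ones and to compute its second-order Taylor expansion. Write $\sigma=E(\sigma',\sigma''):=\Sigma\big(u_1,\,T(\sigma'',T(\sigma',u_1))\big)$. Since each $T_i$ is of class $C^{2,1}$ and $T(0,\cdot)=\mathrm{Id}$, the map $\sigma\mapsto T(\sigma,u_1)$ is a $C^{2,1}$ local diffeomorphism near the origin, so $\Sigma$ and hence $E$ are $C^{2,1}$, with bounds uniform as $u_1$ stays in a compact subset of $\Omega$. Moreover $E(\sigma',0)=\sigma'$ — taking $\sigma''=0$ forces $u_3=u_2=T(\sigma',u_1)$, whence $\sigma=\sigma'$ by the very definition of $\Sigma$ — and, symmetrically, $E(0,\sigma'')=\sigma''$. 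Consequently $\Delta:=E(\sigma',\sigma'')-\sigma'-\sigma''$ vanishes on $\{\sigma'=0\}\cup\{\sigma''=0\}$; applying Taylor's formula with integral remainder first in $\sigma''$ and then in $\sigma'$, and using that the mixed second derivatives of the $C^{2,1}$ map $\Delta$ are Lipschitz, one gets
$$\Delta(\sigma',\sigma'')=B(\sigma',\sigma'')+\mathcal{O}\big(|\sigma'|\,|\sigma''|\,(|\sigma'|+|\sigma''|)\big),$$
uniformly on compacts, where $B$ is the bilinear form given by the mixed Hessian of $E$ at the origin. It remains to identify $B$, for which only second-order data are needed.

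For that I use the second-order expansion of the wave curves. By \eqref{Eq:Deriv0Ti} — which, through \eqref{Eq:ParamTLD}, also holds for a linearly degenerate family, whose curve is affine with $(r_i\cdot\nabla)r_i\equiv0$ — one has $T_i(\sigma_i,u)=u+\sigma_i r_i(u)+\tfrac12\sigma_i^2(r_i\cdot\nabla)r_i(u)+\mathcal{O}(|\sigma_i|^3)$, and composing the $T_i$ in increasing order of $i$ yields
$$T(\sigma,u)=u+\sum_i\sigma_i r_i+\tfrac12\sum_i\sigma_i^2(r_i\cdot\nabla)r_i+\sum_{i<j}\sigma_i\sigma_j\,(r_i\cdot\nabla)r_j+\mathcal{O}(|\sigma|^3),$$
the off-diagonal terms arising because $T_j$ with $j>i$ evaluates $r_j$ at a point already displaced by $\sigma_i r_i(u)$. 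Applying this twice and Taylor-expanding $r_k(u_2)$ about $u_1=u_2+\mathcal{O}(|\sigma'|)$, one finds that the mixed-quadratic part (terms carrying exactly one $\sigma'$ and one $\sigma''$ factor) of $u_3-u_1=T(\sigma'',T(\sigma',u_1))-u_1$ equals $\sum_{i,k}\sigma'_i\sigma''_k\,(r_i\cdot\nabla)r_k(u_1)$ — with no ordering constraint on $i,k$, since every $\sigma'$-wave is applied before every $\sigma''$-wave.

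It remains to equate this with the one-step expansion of $u_3-u_1=T(\sigma,u_1)-u_1$, where now $\sigma=\sigma'+\sigma''+\Delta$ with $\Delta$ as above. The pure-$\sigma'$ and pure-$\sigma''$ contributions coincide with those of the composition side (by the vanishing of $\Delta$ on the coordinate hyperplanes), so only the mixed-quadratic coefficients matter. On the one-step side those are $\sum_i B_i(\sigma',\sigma'')\,r_i$, the same-family term $\sum_i\sigma'_i\sigma''_i(r_i\cdot\nabla)r_i$ (from $\tfrac12\sigma_i^2(r_i\cdot\nabla)r_i$), and $\sum_{i<j}(\sigma'_i\sigma''_j+\sigma''_i\sigma'_j)(r_i\cdot\nabla)r_j$ (from $\sum_{i<j}\sigma_i\sigma_j(r_i\cdot\nabla)r_j$). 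Splitting $\sum_{i,k}\sigma'_i\sigma''_k(r_i\cdot\nabla)r_k$ according to $i<k$, $i=k$, $i>k$ and cancelling, the diagonal and the two triangular symmetric pieces drop out, leaving
$$\sum_i B_i(\sigma',\sigma'')\,r_i=\sum_{j<i}\sigma'_i\sigma''_j\big[(r_i\cdot\nabla)r_j-(r_j\cdot\nabla)r_i\big]=\sum_{j<i}\sigma'_i\sigma''_j\,[r_i,r_j].$$
Together with the expansion of $\Delta$ and $\Delta=\sigma-\sigma'-\sigma''$, this is precisely \eqref{Eq:EstGlimm}, the uniformity being inherited from the first step. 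This is the classical computation underlying Glimm's interaction estimate (see \cite[Thm.~9.9.1]{Dafermos}); the one genuinely delicate point is the remainder bookkeeping — that the error is $\mathcal{O}(|\sigma'|\,|\sigma''|\,(|\sigma'|+|\sigma''|))$ rather than merely $\mathcal{O}(|\sigma'|\,|\sigma''|)$ — which relies on the $C^{2,1}$ regularity of the wave curves and on the vanishing of $\Delta$ along the coordinate hyperplanes.
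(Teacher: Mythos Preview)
The paper does not give its own proof of this proposition; it is stated with attribution to Glimm and to Dafermos (Theorem~9.9.1) and used as a black box. Your argument is correct and is precisely the classical computation underlying that reference: realise the interaction map $E(\sigma',\sigma'')=\Sigma(u_1,T(\sigma'',T(\sigma',u_1)))$ as a $C^{2,1}$ function, observe that $\Delta=E-\sigma'-\sigma''$ vanishes on both coordinate hyperplanes so that a double integral-remainder Taylor expansion isolates the mixed bilinear Hessian with the sharp $\mathcal{O}(|\sigma'||\sigma''|(|\sigma'|+|\sigma''|))$ error, and then identify that Hessian by matching the second-order expansions of $T(\sigma'',T(\sigma',u_1))$ and $T(\sigma'+\sigma''+\Delta,u_1)$. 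The bookkeeping of the cross terms and the cancellation yielding $\sum_{j<i}\sigma'_i\sigma''_j[r_i,r_j]$ are carried out correctly, with the Lie-bracket convention matching the one the paper fixes just after the statement.
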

By the uniformity of the ``${\mathcal O}$'' as $u_{1}$, $u_{2}$ and $u_{3}$ belong to a compact set of $\Omega$, we mean that, for some constant $C>0$ depending only on the compact $K \subset \Omega$ where $u_{1}$, $u_{2}$ and $u_{3}$ are chosen, one has
\begin{equation*}
\left| \sum_{i} ( \sigma_{i} -\sigma'_{i} - \sigma''_{i}) r_{i} - \sum_{j<i}  \sigma'_{i} \sigma''_{j} [r_{i} , r_{j}] \right|
\leq C |\sigma'| |\sigma''| \big[|\sigma'| + |\sigma''|\big].
\end{equation*}
Note that the ``$-$'' sign in the right hand side in \cite[Theorem 9.9.1]{Dafermos} comes from its convention (7.2.15) on the Lie bracket: $[r_{i},r_{j}] = (r_{j}\cdot \nabla) r_{i} - (r_{i}\cdot \nabla) r_{j}$; here we prefer (as in \cite{G} for instance) the convention
\begin{equation*}
[r_{i},r_{j}] = (r_{i}\cdot \nabla) r_{j} - (r_{j}\cdot \nabla) r_{i}.
\end{equation*}
\begin{remark}
The point where $r_{i}$ and $[r_{i} , r_{j}]$ are evaluated (among $u_{1}$, $u_{2}$ and $u_{3}$) has no importance since the difference can be included in the term ${\mathcal O} \big( |\sigma'| |\sigma''| \big[|\sigma'| + |\sigma''|\big] \big)$.
\end{remark}
Now an essential remark for the analysis developed here, is that it has no importance in Proposition~\ref{Pro:Glimm}, that the characteristic families $(\lambda_{i},r_{i})$ and the Lax curves $T_{i}$ are sorted {\it in increasing order of the characteristic speed}. This ordering of characteristic speeds only matters when translating the relation ``$u_{r} = T_{n}(\sigma_{n},\cdot) \circ \dots \circ T_{1}(\sigma_{1},\cdot) u_{l}$'' into an actual Riemann problem ``find an self-similar entropy solution of the system with initial data $u(0,x) = u_{l}$ for $x<0$ and $u(0,x) = u_{r}$ for $x>0$''. Incidentally, it is not important either that we use the usual wave curves $T_{i}$ rather than the wave curves from the right $\Upsilon_{i}$ (which is clear with our parameterization, see \eqref{Eq:ConsqChoixParam}) or the rarefaction curves ${\mathcal R}_{i}$. \par
A consequence of this is that one can permute the characteristic families, replace some $T_{i}$ by $\Upsilon_{i}$ or ${\mathcal R}_{i}$, and still get a result in terms of compositions of $T_{k}(\cdot)$, $\Upsilon_{k}(\cdot)$ or ${\mathcal R}_{k}(\cdot)$. Let us state this precisely. Let $S_{n}$ the  $n$-th symmetric group. Given a permutation $\pi \in S_{n}$ and $\xi \in \{-1,0,1\}^{n}$, we let
\begin{equation*}
T^{\pi,\xi}(\sigma,u) = T^{\xi_{\pi(n)}}_{\pi(n)}(\sigma_{\pi(n)}, \cdot ) \circ \dots \circ T^{\xi_{\pi(1)}}_{\pi(1)}(\sigma_{\pi(1)}, \cdot ) u, \ \ \sigma\in \R^{n}, \ u \in \Omega,
\end{equation*}
where we denote
\begin{equation*}
T^{1}_{i} := T_{i}, \ \ T^{-1}_{i} := \Upsilon_{i} \ \text{ and } T^{0}_{i} := {\mathcal R}_{i}.
\end{equation*}
One can locally solve the ``$(\pi,\xi)$-swapped'' Riemann problem exactly as in the classical case $\pi=\mbox{id}$, $\xi=(1,\dots,1)$ (this case corresponds also to $\pi : k \mapsto n-k$, $\xi=(-1,\dots,-1)$): given $u_{-}$ and $u_{+}$ in $\Omega$, close enough (at a distance which is uniform as $u_{-}$ lies in a compact of $\Omega$), there exists $\sigma \in \R^{n}$ such that
\begin{equation*}
u_{+} =T^{\pi,\xi}(\sigma, u_{-}) .
\end{equation*}
Indeed, all the maps involved are $C^{2,1}$-regular as in the classical case. The argument relying on the implicit function theorem can be used without change.
We denote
\begin{equation*}
\sigma = \Sigma^{\pi,\xi}(u_{-},u_{+}), \ \ \sigma_{i} = \Sigma_{i}^{\pi,\xi}(u_{-},u_{+}).
\end{equation*}
Now one can follow the proof of Proposition~\ref{Pro:Glimm} in \cite[Theorem 9.9.1]{Dafermos} to check that the ordering according to the characteristic speeds does not intervene, and that the replacement of $T_{i}$ with $\Upsilon_{i}$ merely implies to put a $-$ sign before $r_{i}$ in the estimates (due to the parameterization \eqref{Eq:ParamUpsilon} of $\Upsilon_{i}$). We obtain the following.
\begin{proposition}\label{Pro:Glimm2}
Let $\pi \in S_{n}$ and $\xi \in \{-1,0,1\}^{n}$. We assume that System~\eqref{Eq:SCL} is strictly hyperbolic and satisfies \eqref{Eq:GNL/LD}. Consider $({u}_{1},{u}_{2}, u_{3}) \in \Omega$, and suppose
\begin{equation*}
u_{2} = T^{\pi,\xi} (\sigma', u_{1}), \  \ u_{3} = T^{\pi,\xi} (\sigma'', u_{2}), \ \text{ and } \ u_{3} = T^{\pi,\xi} (\sigma, u_{1}).
\end{equation*}
Then 
\begin{equation} \label{Eq:EstGlimm2}
\sum_{i} ( \sigma_{i} -\sigma'_{i} - \sigma''_{i}) r_{i} =
\sum_{\pi(j)<\pi(i)}  (-1)^{\delta_{-1,\xi_{\pi(i)}} + \delta_{-1,\xi_{\pi(j)}}} \sigma'_{\pi(i)} \sigma''_{\pi(j)} [r_{\pi(i)} , r_{\pi(j)}] + {\mathcal O} \big( |\sigma'| |\sigma''| \big[|\sigma'| + |\sigma''|\big] \big).
\end{equation}
Moreover the ``${\mathcal O}$'' is uniform as $u_{1}$, $u_{2}$ and $u_{3}$ belong to a compact set of $\Omega$.
\end{proposition}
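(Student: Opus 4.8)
The plan is to reduce Proposition~\ref{Pro:Glimm2} to Proposition~\ref{Pro:Glimm} by inspecting the proof of the latter (as given in \cite[Theorem 9.9.1]{Dafermos}) and checking that none of its steps actually uses the ordering of the characteristic speeds, and that replacing some $T_i$ by $\Upsilon_i$ or $\mathcal R_i$ only affects the quadratic term in a controlled way. First I would recall the structure of that proof: one defines the two maps $\sigma' \mapsto T^{\pi,\xi}(\sigma',u_1)$ and $(\sigma',\sigma'') \mapsto \Sigma^{\pi,\xi}\big(u_1, T^{\pi,\xi}(\sigma'',T^{\pi,\xi}(\sigma',u_1))\big)$, both of which are $C^{2,1}$ by the same implicit-function-theorem argument used in the classical case (this is already noted in the paragraph preceding the statement), and Taylor-expands the composition $T^{\pi,\xi}(\sigma'',\cdot)\circ T^{\pi,\xi}(\sigma',\cdot)$ to second order around $\sigma'=\sigma''=0$. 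The zeroth and first order terms give $\sum_i(\sigma'_i+\sigma''_i)$-type contributions that cancel against $\sum_i\sigma_i r_i$ up to the error, and the genuinely nonlinear content is entirely in the second-order mixed term.

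The key computation is the second-order cross term $\partial^2/\partial\sigma'_a\partial\sigma''_b$ of the composed map at the origin. Using \eqref{Eq:ParamTLD}, \eqref{Eq:ParamTGNL} and \eqref{Eq:Deriv0Ti}, one has for each single family $\partial T_i^{\xi_i}/\partial\sigma_i(0,u) = \varepsilon_i r_i(u)$ where $\varepsilon_i = +1$ for $\xi_i\in\{0,1\}$ and $\varepsilon_i=-1$ for $\xi_i=-1$ (the rarefaction curve $\mathcal R_i$ and the Lax curve $T_i$ have the same tangent and, by the parameterization, the same first-order parameterization at $0$; only $\Upsilon_i=T_i(-\cdot)$ flips the sign). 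Differentiating the composition one gets, exactly as in the classical derivation, a term of the form $\varepsilon_a\varepsilon_b\,(r_b\cdot\nabla) r_a(u)$ applied when the $b$-front (the inner one, i.e. $\sigma''_b$) sits to the left of the $a$-front in the composition order; because the composition order is now $\pi$, ``$b$ to the left of $a$'' means $\pi(j)<\pi(i)$ when $a=\pi(i)$, $b=\pi(j)$. Symmetrizing in $a\leftrightarrow b$ (the other order contributes $(r_a\cdot\nabla)r_b$) and using the Lie bracket convention $[r_i,r_j]=(r_i\cdot\nabla)r_j-(r_j\cdot\nabla)r_i$ fixed just before the statement, the mixed terms with $\pi(j)<\pi(i)$ collapse to $\varepsilon_{\pi(i)}\varepsilon_{\pi(j)}\,\sigma'_{\pi(i)}\sigma''_{\pi(j)}\,[r_{\pi(i)},r_{\pi(j)}]$. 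Writing $\varepsilon_{\pi(i)}\varepsilon_{\pi(j)} = (-1)^{\delta_{-1,\xi_{\pi(i)}}+\delta_{-1,\xi_{\pi(j)}}}$ gives precisely the sum on the right of \eqref{Eq:EstGlimm2}; all remaining terms are genuinely cubic and are absorbed into $\mathcal O(|\sigma'||\sigma''|(|\sigma'|+|\sigma''|))$ using the uniform $C^{2,1}$ bounds on compact subsets of $\Omega$, which yields the uniformity statement.

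I expect the only real subtlety — the ``hard part'' in an otherwise bookkeeping argument — to be getting the sign factor $\varepsilon_i$ right for the three curve types simultaneously and making sure that, at second order, replacing $T_i$ by $\mathcal R_i$ really does not introduce an extra genuinely quadratic contribution: one must check that $\partial^2 T_i/\partial\sigma_i^2(0,u)$ and $\partial^2\mathcal R_i/\partial\sigma_i^2(0,u)$ (and the $\Upsilon_i$ analogue) differ only by a multiple of $r_i$, since any such difference is a ``single-family'' correction that changes $\sigma_i-\sigma'_i-\sigma''_i$ but not the bracket terms, and in fact it is still of the claimed order. For the LD families \eqref{Eq:ParamTLD} makes the curve affine, so its second derivative vanishes and no issue arises; for the GNL families one uses \eqref{Eq:Deriv0Ti} together with the fact that $\mathcal R_i$ shares the same $1$-jet and has second-order jet $(r_i\cdot\nabla)r_i$ as well along the orbit (by definition of the flow), so again the discrepancy is harmless. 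Since all of these are already standard facts recorded in \cite[Section 5.2]{B} and the implicit-function argument is literally the same, the proof amounts to: (i) quote the $C^{2,1}$ regularity of the swapped wave curves and of $\Sigma^{\pi,\xi}$; (ii) Taylor-expand to second order; (iii) identify and symmetrize the cross terms with the stated signs; (iv) absorb the rest into the uniform cubic error. I would present it in roughly that order, emphasizing (iii) as the place where the permutation $\pi$ and the vector $\xi$ enter.
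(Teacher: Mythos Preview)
Your proposal is correct and takes essentially the same approach as the paper: the paper's entire argument is the sentence preceding the statement, which says to follow the proof of \cite[Theorem 9.9.1]{Dafermos}, observe that the ordering by characteristic speed never enters, and that replacing $T_i$ by $\Upsilon_i$ merely flips the sign on $r_i$. Your sketch is in fact more explicit than the paper's own justification --- in particular your check that the second-order jets of $T_i$, $\mathcal R_i$, $\Upsilon_i$ differ only along $r_i$ (via \eqref{Eq:Deriv0Ti}) spells out a point the paper leaves to the reader.
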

When $\xi=(0,\dots,0)$, this is the classical formula for permutations of flows of vector fields. \par
\ \par
An immediate corollary is that many waves conserve their nature (shock/rarefaction, increasing/decreasing contact discontinuity, compression wave/rarefaction wave) across an ``interaction''. Actually, concerning Systems \eqref{Eq:Euler} and \eqref{Eq:Lagrangian}, one knows now in great details the result of the interaction of waves with large size: see in particular Chen and Hsiao \cite{CH} and Chen, Enders and Jenssen \cite{CEJ}. One of the issues in these papers is the possible appearance of vacuum, which is avoided here. 
\begin{corollary} \label{Cor:Glimm2}
There is some $\kappa>0$ uniform as $u_{1}$ belongs to a compact set of $\Omega$, for which if
\begin{equation*}
u_{2} = T_{i}^{\pi,\xi} (\sigma'_{i}, u_{1}), \  \ u_{3} = T_{j}^{\pi,\xi} (\sigma''_{j}, u_{2}), \ \text{ and } \ u_{3} = T^{\pi,\xi} (\sigma, u_{1}).
\end{equation*}
with $\max(|\sigma'_{i}|,|\sigma''_{j}|) \leq \kappa$, and if $i \neq j$ (resp. if $i=j$ and $\sigma'_{i}$, $\sigma''_{j}$ have the same sign), then $\sigma_{i}$ has the same sign as $\sigma'_{i}$ (resp. as $\sigma'_{i}$, $\sigma''_{j}$).
\end{corollary}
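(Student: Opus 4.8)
The plan is to deduce Corollary~\ref{Cor:Glimm2} directly from the interaction estimate~\eqref{Eq:EstGlimm2} of Proposition~\ref{Pro:Glimm2}, applied in the degenerate situation where only one incoming wave belongs to family $i$ and only one to family $j$. Concretely, I would take $\sigma' = \sigma'_i e_i$ and $\sigma'' = \sigma''_j e_j$ (all other components zero), so that $u_2 = T^{\pi,\xi}_i(\sigma'_i,u_1)$ and $u_3 = T^{\pi,\xi}_j(\sigma''_j,u_2)$ exactly as in the statement, and $\sigma = \Sigma^{\pi,\xi}(u_1,u_3)$ is the full outgoing wave vector. The key point is that~\eqref{Eq:EstGlimm2} then reads $\sum_k(\sigma_k - \sigma'_k - \sigma''_k)r_k = \mathcal{O}(|\sigma'_i||\sigma''_j|(|\sigma'_i|+|\sigma''_j|))$ when $i=j$ (the bracket sum is empty since there is no pair with $\pi(j')<\pi(i')$ among a single index, or the two indices are equal), and equals a single bracket term $\pm\,\sigma'_i\sigma''_j[r_i,r_j]$ plus the same cubic remainder when $i\neq j$. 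Projecting onto the $i$-th coordinate using the dual basis $\{\ell_k\}$ of~\eqref{Eq:DefLi} — recall $\ell_k\cdot r_m=\delta_{km}$ — isolates $\sigma_i$.

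For the case $i\neq j$: projecting~\eqref{Eq:EstGlimm2} onto $\ell_i$ gives
\begin{equation*}
\sigma_i - \sigma'_i = \ell_i\cdot\big(\pm\,\sigma'_i\sigma''_j[r_i,r_j]\big) + \mathcal{O}\big(|\sigma'_i||\sigma''_j|(|\sigma'_i|+|\sigma''_j|)\big) = \mathcal{O}\big(|\sigma'_i||\sigma''_j|\big),
\end{equation*}
since $\sigma''_i = 0$ and since $|\ell_i|$ and $|[r_i,r_j]|$ are bounded on the compact set. Hence $|\sigma_i - \sigma'_i| \leq C|\sigma'_i||\sigma''_j| \leq C\kappa|\sigma'_i|$, so choosing $\kappa < 1/C$ forces $\sigma_i$ to have the same sign as $\sigma'_i$ (and in particular $\sigma_i\neq 0$ when $\sigma'_i\neq 0$; if $\sigma'_i=0$ there is nothing to prove). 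For the case $i=j$ with $\sigma'_i,\sigma''_i$ of the same sign: now $\sigma'_i + \sigma''_i = \sigma'_i + \sigma''_j$ has the same sign as each of $\sigma'_i,\sigma''_j$ and satisfies $|\sigma'_i+\sigma''_i| = |\sigma'_i|+|\sigma''_i| \geq \max(|\sigma'_i|,|\sigma''_i|)$, while projecting~\eqref{Eq:EstGlimm2} onto $\ell_i$ gives $|\sigma_i - \sigma'_i - \sigma''_i| \leq C|\sigma'_i||\sigma''_i|(|\sigma'_i|+|\sigma''_i|) \leq C\kappa|\sigma'_i||\sigma''_i|$; shrinking $\kappa$ further (so that $C\kappa^2 < 1$, say, comparing with $\max(|\sigma'_i|,|\sigma''_i|) \le |\sigma'_i+\sigma''_i|$) makes the perturbation strictly smaller in absolute value than $|\sigma'_i+\sigma''_i|$, so $\sigma_i$ keeps that common sign.

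The bookkeeping I must be careful with is the uniformity of the constant $C$ (hence of $\kappa$): it comes only from the $\mathcal{O}$ in Proposition~\ref{Pro:Glimm2}, which is uniform as $u_1,u_2,u_3$ range over a fixed compact $K\subset\Omega$, together with the sup of $|\ell_i|$ over $K$; since $u_2,u_3$ stay within a controlled distance of $u_1$ once $\kappa$ is small (the wave curves $T^{\pi,\xi}_k$ being $C^{2,1}$ through $u_1$), one fixes a slightly enlarged compact neighbourhood $K'$ of the original compact set and works there. There is no real obstacle here — the only mildly delicate point, and what I would call the "main" step, is phrasing the $i=j$ case so that the sign of $\sigma'_i+\sigma''_i$ is genuinely protected: this needs the hypothesis that $\sigma'_i$ and $\sigma''_i$ share a sign (so that cancellation between the two incoming waves of the same family cannot already reduce $|\sigma'_i+\sigma''_i|$ below the cubic error budget), which is exactly why the corollary excludes the opposite-sign case $i=j$.
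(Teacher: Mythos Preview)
Your argument is correct and is exactly the intended one: the paper states Corollary~\ref{Cor:Glimm2} as an \emph{immediate corollary} of Proposition~\ref{Pro:Glimm2} without writing out the details, and your projection of~\eqref{Eq:EstGlimm2} onto $\ell_i$ together with the elementary sign comparison is precisely the computation that unpacks ``immediate''. The only cosmetic point is that in the $i=j$ bound you tacitly absorb a harmless factor of $2$ into $C$ when passing from $|\sigma'_i|+|\sigma''_i|\le 2\kappa$ to $C\kappa|\sigma'_i||\sigma''_i|$, which of course does not affect the conclusion.
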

\ \par
We can deduce from Proposition~\ref{Pro:Glimm2} the existence of {\it cancellation waves}. By cancellation wave, we mean here a simple wave $(u_{r},\tilde{u}_{r})$,  associated to two simple waves $(u_{l},u_{m})$ and $(u_{m},u_{r})$ and designed in order that, in the outgoing Riemann problem $(u_{l},\tilde{u}_{r})$, the wave associated to the characteristic family $k$ vanishes. Here this simple wave takes the form of a rarefaction or compression wave. Precisely we obtain the following.
\begin{corollary} \label{Cor:CancellationWave1}
Suppose that $n \geq 3$. We assume that System~\eqref{Eq:SCL} is strictly hyperbolic and satisfies \eqref{Eq:GNL/LD}. Consider $({u}_{1},{u}_{2}, u_{3}) \in \Omega$, and suppose
\begin{equation*}
u_{2} = T_{i} (\alpha_{i}, u_{1}), \  \ u_{3} = T_{j} (\beta_{j}, u_{2}),
\end{equation*}
with $\alpha_{i}$ and $\beta_{i}$ small.
Let $k \in \{ 1,\dots, n \} \setminus \{ i, j \}$. There exists $\gamma_{k} \in \R$ such that, denoting 
${\sigma} = \Sigma(u_{1}, {\mathcal R}_{k}(\gamma_{k},u_{3}))$, one has
\begin{equation*}
{\sigma}_{k}=0,
\end{equation*}
and additionally
\begin{equation} \label{Eq:EstCW1}
|\sigma_{i} - \alpha_{i}| + |\sigma_{j} - \beta_{j}| = {\mathcal O} ( |\alpha_{i}| |\beta_{j}| ), \ \ 
 \gamma_{k}  = - \alpha_{i} \beta_{j} \, \ell_{k} \cdot [r_{i},r_{j}] +  {\mathcal O} \left(  |\alpha_{i}| |\beta_{j}| \big[ |\alpha_{i}| + |\beta_{j}|\big] \right).
\end{equation}
Moreover the ``${\mathcal O}$'' is uniform as $u_{1}$, $u_{2}$ and $u_{3}$ belong to compact sets of $\Omega$. \par
\end{corollary}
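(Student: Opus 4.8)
The plan is to reduce the statement to an application of Proposition~\ref{Pro:Glimm2} with a well-chosen swapped ordering, and then extract $\gamma_k$ by an implicit function argument on the $k$-th component. First, I would observe that the composition $u_3 = T_j(\beta_j,\cdot)\circ T_i(\alpha_i,\cdot)\,u_1$ is an instance of a ``two-wave'' configuration in which only the $i$-th and $j$-th elementary curves are activated; all other wave parameters are zero. To compare this with an outgoing configuration in which the $k$-th family is isolated (as a rarefaction/compression curve $\mathcal R_k$) at the end, I would introduce the permutation $\pi$ and the vector $\xi$ designed so that $T^{\pi,\xi}$ lists the families in an order with $k$ last and uses ${\mathcal R}_k$ (i.e. $\xi_k=0$) for that family, while keeping $T_i,T_j$ (i.e. $\xi_i=\xi_j=1$) for the others. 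The precise choice of $\pi$ and of the remaining entries of $\xi$ is immaterial for the estimate since, by the remark following Proposition~\ref{Pro:Glimm}, the ordering and the replacement of $T_\ell$ by $\Upsilon_\ell$ or ${\mathcal R}_\ell$ only affect signs, which are harmless here.

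Next, set $\tilde u_r := {\mathcal R}_k(\gamma_k,u_3)$ for a parameter $\gamma_k$ to be chosen, and let $\sigma = \Sigma(u_1,\tilde u_r)$ be the (ordinary) solution of the Riemann problem $(u_1,\tilde u_r)$, which exists for $u_1$ in a compact set and $\alpha_i,\beta_j,\gamma_k$ small by the local solvability recalled in Section~\ref{Sec:Tools}. The equation ${\mathcal R}_k(\gamma_k,u_3) = T(\sigma,u_1)$ can be read componentwise in the $(\ell_1,\dots,\ell_n)$ frame. The key point is that the map $(\gamma_k,\sigma)\mapsto$ (residual) is $C^{2,1}$ (all curves involved are $C^{2,1}$, by \eqref{Eq:Deriv0Ti} and \eqref{Eq:ParamTLD}), and its linearization at $\alpha_i=\beta_j=\gamma_k=0$ is invertible: at the base point one has $\sigma=0$ and the derivatives span $\R^n$ because $\partial_{\sigma_\ell}T(\sigma,u_1)|_0 = r_\ell(u_1)$ and $\partial_{\gamma_k}{\mathcal R}_k|_0 = r_k$, with $\{r_1,\dots,r_n\}$ a basis. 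Hence by the implicit function theorem there is a unique small $\gamma_k=\gamma_k(\alpha_i,\beta_j)$ for which $\sigma_k = 0$, together with the corresponding $\sigma_i,\sigma_j$ and the (automatically small, indeed quadratically small) remaining components.

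For the quantitative estimates \eqref{Eq:EstCW1}, I would apply Proposition~\ref{Pro:Glimm2} to the three states with the $(\pi,\xi)$ described above: comparing $u_3 = T^{\pi,\xi}(\sigma',u_1)\circ$ trivial with $\tilde u_r = T^{\pi,\xi}(\sigma'',u_3)$ where $\sigma''$ has only its $k$-component equal to $\gamma_k$, and reading off $\Sigma^{\pi,\xi}(u_1,\tilde u_r)$. Since among the active incoming waves only $\alpha_i$ (family $i$) and $\beta_j$ (family $j$) are present, the quadratic interaction term in \eqref{Eq:EstGlimm2} reduces to the single contribution $\pm\,\alpha_i\beta_j\,[r_i,r_j]$, with the cubic remainder ${\mathcal O}(|\alpha_i||\beta_j|(|\alpha_i|+|\beta_j|))$; projecting onto $\ell_k$ and imposing $\sigma_k=0$ then forces $\gamma_k$ to cancel that projection, giving $\gamma_k = -\alpha_i\beta_j\,\ell_k\cdot[r_i,r_j] + {\mathcal O}(|\alpha_i||\beta_j|(|\alpha_i|+|\beta_j|))$; the sign works out because $\xi_k=0$ while $\xi_i=\xi_j=1$. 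Projecting onto $\ell_i,\ell_j$ and using that $\gamma_k$ is itself of order $|\alpha_i||\beta_j|$ yields $|\sigma_i-\alpha_i|+|\sigma_j-\beta_j| = {\mathcal O}(|\alpha_i||\beta_j|)$. Uniformity of the constants as $u_1,u_2,u_3$ range over compacts is inherited directly from Proposition~\ref{Pro:Glimm2}.

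The main obstacle I anticipate is purely bookkeeping: getting the sign in the $\gamma_k$-estimate exactly right through the permutation/orientation conventions (the $(-1)^{\delta_{-1,\xi_{\pi(i)}}+\delta_{-1,\xi_{\pi(j)}}}$ factor in \eqref{Eq:EstGlimm2} and the orientation $\Upsilon_i$ vs.\ $T_i$ vs.\ ${\mathcal R}_i$), and making sure the outgoing configuration is genuinely of the form $T(\sigma,u_1)$ with ${\mathcal R}_k$ glued on the right, rather than inside the composition — but since ${\mathcal R}_k$ and $T_k$ agree to first order and $\gamma_k$ is quadratically small, any discrepancy is absorbed in the stated ${\mathcal O}$ terms. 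There is no genuine analytic difficulty beyond Propositions~\ref{Pro:Glimm} and \ref{Pro:Glimm2}, which are taken as given.
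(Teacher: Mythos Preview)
Your approach is correct and is essentially the one the paper takes: introduce a swapped ordering with $k$ placed last and $\xi_k=0$ (so the last factor is $\mathcal{R}_k$), and read off the estimates from Proposition~\ref{Pro:Glimm2}. Two points are worth noting, however.

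First, the implicit function step is unnecessary. With the specific permutation
\[
\pi = \begin{pmatrix} 1 & \cdots & k-1 & k & \cdots & n-1 & n \\ 1 & \cdots & k-1 & k+1 & \cdots & n & k \end{pmatrix},\qquad \xi=(1,\dots,1,0),
\]
one can simply \emph{define} $\gamma_k := -\Sigma^{\pi,\xi}_{k}(u_1,u_3)$, i.e.\ minus the $k$-component of the swapped solver applied directly to $(u_1,u_3)$. This is what the paper does. The reason this works exactly (not just to leading order) is that $T^{\pi,\xi}(\hat\sigma,u_1)=u_3$ means $\mathcal{R}_k(\hat\sigma_k,\cdot)\circ T_n(\hat\sigma_n,\cdot)\circ\cdots\circ T_{k+1}\circ T_{k-1}\circ\cdots\circ T_1(\hat\sigma_1,\cdot)\,u_1=u_3$; inverting the outer $\mathcal{R}_k$ via \eqref{Eq:ConsqChoixParam} and inserting the trivial factor $T_k(0,\cdot)$ in its natural slot yields precisely $T(\sigma,u_1)=\mathcal{R}_k(\gamma_k,u_3)$ with $\sigma_k=0$ and $\sigma_\ell=\hat\sigma_\ell$ for $\ell\neq k$. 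So there is no residual discrepancy to absorb.

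Second, the estimate for $\gamma_k$ and for $\sigma_i,\sigma_j$ then comes from a single application of Proposition~\ref{Pro:Glimm2} to $u_1\to u_2\to u_3$ (not to $u_1\to u_3\to\tilde u_r$ as you write): with $\sigma'=\alpha_i e_i$ and $\sigma''=\beta_j e_j$, formula \eqref{Eq:EstGlimm2} gives the $k$-component of $\Sigma^{\pi,\xi}(u_1,u_3)$ directly as $\alpha_i\beta_j\,\ell_k\cdot[r_i,r_j]+\mathcal{O}(|\alpha_i||\beta_j|(|\alpha_i|+|\beta_j|))$, and the $i$- and $j$-components as $\alpha_i+\mathcal{O}(|\alpha_i||\beta_j|)$ and $\beta_j+\mathcal{O}(|\alpha_i||\beta_j|)$.
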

We represent the result of Corollary \ref{Cor:CancellationWave1} in Figure~\ref{fig:CW1}, where the waves are represented as single discontinuities. There can also be outgoing waves of ``uninvolved'' families, though we did not represent them. The case $i=j$ is included in the result. \par
\begin{figure}[htb]
\begin{center}
\input{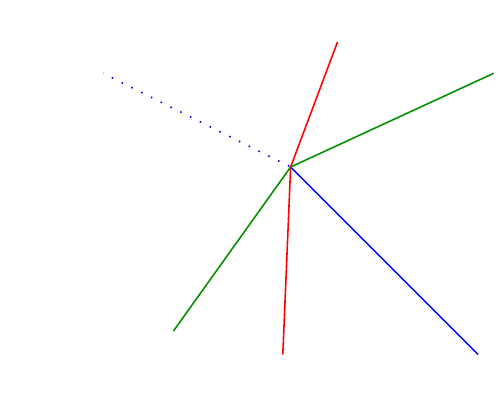_t}
\end{center}
\caption{A cancellation $k$-wave}
\label{fig:CW1}
\end{figure}
\begin{remark}\label{Rem:CancellationWave1}
As for Corollary~\ref{Cor:Glimm2}, we can deduce from Proposition~\ref{Cor:CancellationWave1} some information on the nature of the additional wave $(u_{3},T_{k}(u_{3}))$: assuming that $\ell_{k} \cdot [r_{i},r_{j}]$ does not vanish on some connected compact set ${\mathcal K}$ of $\Omega$, there is some $\kappa>0$ such that if $\max(|\alpha_{i}|,|\beta_{j}|) \leq \kappa$ and $u_{1} \in {\mathcal K}$, then $\gamma_{k}$ has the same sign as $- \alpha_{i} \beta_{j} \, \ell_{k} \cdot [r_{i},r_{j}]$. In the same way, reducing $\kappa$ if necessary, $\sigma_{i}$ (resp. $\sigma_{j}$) has the same sign as $\alpha_{i}$ (resp. $\beta_{j}$).
\end{remark}
\ \par
We introduce also another type of cancellation wave in a {\it sideways} framework, which makes an ``incoming'' wave vanish.
\begin{corollary} \label{Cor:CancellationWave2}
We assume $n=3$ and that System~\eqref{Eq:SCL} is strictly hyperbolic and satisfies \eqref{Eq:GNL/LD}. Consider $({u}_{1},{u}_{2}, u_{3}) \in \Omega$, and suppose
\begin{equation*}
u_{2} = {\mathcal R}_{1} (-\alpha_{1}, u_{1}), \  \ u_{3} = T_{2} (\beta_{2}, u_{2}). % , \ \ i \neq j
\end{equation*}
Then for some $\sigma \in \R^{3}$, one has
\begin{equation} \label{Eq:RiemannReverse}
u_{3} = {\mathcal R}_{1}(-\sigma_{1}, \cdot) \, \circ \, T_{3}(\sigma_{3}, \cdot) \, \circ \, T_{2}(\sigma_{2}, \cdot) \, u_{1},
\end{equation}
and additionally
\begin{equation} \label{Eq:EstCW2}
|\sigma_{1} - \alpha_{1}| + |\sigma_{2} - \beta_{2}| = {\mathcal O} ( |\alpha_{1}| |\beta_{2}| ), \ \ 
 \sigma_{3}  = \alpha_{1} \beta_{2} \, \ell_{3} \cdot [r_{1},r_{2}] +  {\mathcal O}\left(  |\alpha_{1}| |\beta_{2}| \big[ |\alpha_{1}| + |\beta_{2}|\big] \right).
\end{equation}
Moreover the ``${\mathcal O}$'' is uniform as $u_{1}$, $u_{2}$ and $u_{3}$ belong to compact sets of $\Omega$.
\end{corollary}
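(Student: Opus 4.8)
\textbf{Plan of proof for Corollary~\ref{Cor:CancellationWave2}.}

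The plan is to reduce this ``sideways'' cancellation statement to an application of Proposition~\ref{Pro:Glimm2} with an appropriate permutation $\pi$ and sign vector $\xi$, exactly in the spirit of Corollary~\ref{Cor:CancellationWave1}. The point of the sideways framework is that the roles of ``incoming'' and ``outgoing'' waves are swapped for the first family: the $1$-rarefaction appears on the \emph{left} of the incoming configuration and is required to reappear, with a prescribed strength, on the \emph{left} of the outgoing configuration. Concretely, first I would rewrite the hypothesis $u_2 = {\mathcal R}_1(-\alpha_1,u_1)$ as $u_1 = {\mathcal R}_1(\alpha_1,u_2)$, i.e. as a statement that $u_1$ is obtained from $u_2$ by the flow of $r_1$; this is the composition-inverse identity contained in \eqref{Eq:ConsqChoixParam}. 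Then the chain $u_1 \to u_2 \to u_3$ with $u_2 = {\mathcal R}_1(-\alpha_1,u_1)$ and $u_3 = T_2(\beta_2,u_2)$ should be re-read as: starting from $u_1$, apply ${\mathcal R}_1^{-1}$ (i.e. ${\mathcal R}_1$ with parameter $-\alpha_1$, which in the $T^{\pi,\xi}$ notation is $T_1^0$ with a sign flip absorbed into the parameter), then $T_2$. The desired conclusion \eqref{Eq:RiemannReverse} is precisely the statement that the composed map equals $T^{\pi,\xi}(\sigma,\cdot)$ for the permutation $\pi$ that orders the factors as $(\,2,\,3,\,1\,)$ read right-to-left — i.e. $T_2$ innermost, then $T_3$, then ${\mathcal R}_1$ with a reversed parameter outermost — and $\xi=(0,1,1)$ up to the bookkeeping of which parameter carries the minus sign.

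The key steps, in order, are: (1) set up the $(\pi,\xi)$-swapped wave curve $T^{\pi,\xi}$ matching the right-hand side of \eqref{Eq:RiemannReverse}, observing that its leftmost/outermost factor is the rarefaction curve of family $1$ traversed with the opposite parameter and that solvability of the corresponding swapped Riemann problem is guaranteed by the discussion preceding Proposition~\ref{Pro:Glimm2} (the implicit function theorem argument goes through since all curves are $C^{2,1}$); (2) apply Proposition~\ref{Pro:Glimm2} to the interaction of the single $1$-wave $(u_1,u_2)$ of strength $-\alpha_1$ and the single $2$-wave $(u_2,u_3)$ of strength $\beta_2$, which produces the outgoing swapped wave vector $\sigma$; (3) read off from \eqref{Eq:EstGlimm2} that the only nonzero commutator contribution comes from the pair $(1,2)$, whose relative order is exactly the one for which the bracket term $[r_1,r_2]$ is generated, and project onto $\ell_3$ to extract $\sigma_3$; the signs in the leading term of \eqref{Eq:EstGlimm2} then produce exactly the coefficient $\alpha_1\beta_2\,\ell_3\cdot[r_1,r_2]$ in \eqref{Eq:EstCW2} (note the plus sign here, as opposed to the minus sign in \eqref{Eq:EstCW1}, reflecting the different placement of the reversed family); (4) the estimates $|\sigma_1-\alpha_1|+|\sigma_2-\beta_2| = {\mathcal O}(|\alpha_1||\beta_2|)$ follow from the $i$ and $j$ components of the same formula, since $r_1,r_2,r_3$ are linearly independent and hence projecting \eqref{Eq:EstGlimm2} onto $\ell_1$ and $\ell_2$ kills the bracket term to the order quoted.

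The main obstacle — really the only subtle point — is the careful bookkeeping of signs and of the permutation: one must make sure that the ``reversed'' first family is placed on the correct side so that the pair $(1,2)$ appears in the order that generates $+[r_1,r_2]$ rather than $-[r_1,r_2]$, and that the parameter sign conventions of ${\mathcal R}_1(-\cdot,\cdot)$ versus $T_1^0 = {\mathcal R}_1$ in Proposition~\ref{Pro:Glimm2} are tracked consistently (the factor $(-1)^{\delta_{-1,\xi_{\pi(i)}}+\delta_{-1,\xi_{\pi(j)}}}$ in \eqref{Eq:EstGlimm2} handles the $\Upsilon$-type reversals, but here the reversal is on a rarefaction curve entered with a minus parameter, so one should express it honestly as an ${\mathcal R}_1$-flow and absorb the sign into $\alpha_1 \mapsto -\alpha_1$ before invoking the proposition). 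Everything else is a direct transcription of the argument already used to prove Corollary~\ref{Cor:CancellationWave1}, with $k$ and the sideways geometry replaced by family $3$ and the reversed-family-$1$ geometry. As in Remark~\ref{Rem:CancellationWave1}, one could moreover add that when $\ell_3\cdot[r_1,r_2]$ does not vanish on a compact connected set, the sign of $\sigma_3$ is that of $\alpha_1\beta_2\,\ell_3\cdot[r_1,r_2]$ for $\max(|\alpha_1|,|\beta_2|)$ small, though this is not needed for the statement itself.
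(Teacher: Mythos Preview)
Your approach is correct and is essentially the same as the paper's: both reduce Corollary~\ref{Cor:CancellationWave2} to Proposition~\ref{Pro:Glimm2} with the permutation $\pi:(1,2,3)\mapsto(2,3,1)$ (so that $T_2$ is innermost, then $T_3$, then the family-$1$ rarefaction curve outermost) together with a $\xi$ that tags the family-$1$ factor as a rarefaction curve. The paper's proof is a two-line specification of this $(\pi,\xi)$; your write-up is a more explicit version of the same argument, with the sign bookkeeping spelled out.
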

See Figure~\ref{fig:LIJS} below for a graphic representation --- there $u_{l}$, $u_{m}$ and $u_{r}$ replace $u_{1}$, $u_{2}$ and $u_{3}$. 
\begin{remark}\label{Rem:CancellationWave2}
As before, assuming that $\ell_{3} \cdot [r_{1},r_{2}]$ does not vanish on some connected compact set ${\mathcal K}$ of $\Omega$, there is some $\kappa>0$ such that if $\max(|\alpha_{1}|,|\beta_{2}|) \leq \kappa$ and $u_{1} \in {\mathcal K}$, then $\sigma_{1}$, $\sigma_{2}$ and $\sigma_{3}$ have the same sign as $\alpha_{1}$, $\beta_{2}$ and $- \alpha_{1} \beta_{2} \, \ell_{3} \cdot [r_{1},r_{2}]$, respectively.
\end{remark}
\begin{proof}[Proof of Corollaries \ref{Cor:CancellationWave1} and \ref{Cor:CancellationWave2}]
Corollary~\ref{Cor:CancellationWave1} is obtained by using the permutation 
\begin{equation*}
\pi = \left( \begin{array}{ccccccc}
1 & \dots & k-1 & k & \dots & n-1 & n \\
1 & \dots & k-1 & k+1 & \dots & n & k 
\end{array} \right),
\end{equation*}
and the vector $\xi = (1, \dots, 1, 0)$. Note in particular that one has $u_{2} = T^{\pi,\xi}(\sigma',u_{1})$ with $\sigma'_{k} = \delta_{ki} \alpha_{i}$ and $u_{3} = T^{\pi,\xi}(\sigma'',u_{2})$ with $\sigma''_{k} = \delta_{kj} \beta_{j}$. Then 
\begin{equation*}
\gamma_{k}:= -\Sigma^{\pi,\xi}_{n}(u_{1},u_{3}),
\end{equation*}
satisfies the properties. \par
Corollary~\ref{Cor:CancellationWave2} is obtained by using the permutation 
\begin{equation*}
\pi = \left( \begin{array}{ccc}
1 & 2 & 3  \\
2 & 3 & 1
\end{array} \right),
\end{equation*}
and the vector $\xi = (1,1,0)$.
\end{proof}
%
%
%
%%%%%%%%%%%%%%%%%%%%%%%%%
%
%
%
\subsection{Strong discontinuities, Riemann problem and interaction estimates}
\label{Subsec:StrongDiscontinuities}
Now we present some material allowing to work with strong discontinuities (shocks or contact discontinuities) in $BV$ solutions.
The material that we present here is mainly extracted from Schochet \cite{Sc}; we recall it for better readability and to be able to be more specific on some particular aspect (see Remark~\ref{Remark:CghtCourbe} below). \par
\ \par
The first point is that the Riemann problem is locally solvable near a Majda-stable shock or a Majda-stable contact discontinuity. \par
\begin{proposition} \label{Pro:RiemannSD}
We assume that the system \eqref{Eq:SCL} is strictly hyperbolic and satisfies \eqref{Eq:GNL/LD}, as well as \eqref{Eq:LD/Normalisation} for linearly degenerate fields. Consider $(\overline{u}_{-},\overline{u}_{+})$ which is either a Majda-stable shock or a Majda-stable contact discontinuity:
\begin{equation*}
\overline{u}_{+} = T(\overline{\sigma},\overline{u}_{-}), \ \ \overline{\sigma}:=(0, \dots, 0, \overline{\sigma}_{k},0,\dots ,0).
\end{equation*}
There exist two neighborhoods $\omega_{-}$ and $\omega_{+}$ of $\overline{u}_{-}$ and $\overline{u}_{+}$ respectively, a neighborhood ${\mathcal S}$ of $\overline{\sigma}$, such that for any $u_{-} \in \omega_{-}$, any $u_{+} \in \omega_{+}$, the Riemann problem $(u_{-},u_{+})$ is uniquely solvable with a wave vector in ${\mathcal S}$. %\par 
Moreover there is a constant $C>0$ such that for all $u^{1}_{-}, u_{-}^{2}$ in $\omega_{-}$, all $u^{1}_{+}, u_{2}^{+}$ in $\omega_{+}$, if
\begin{equation*}
u_{+}^{i} = T_{n}(\sigma_{n}^{i}, \cdot ) \circ \dots \circ T_{1}(\sigma_{1}^{i}, \cdot ) \, u^{i}_{-} , \ \ i=1,2, \\
\end{equation*}
for wave vectors $(\sigma^{1}_{j})_{j=1\dots n}$ and $(\sigma^{2}_{j})_{j=1\dots n}$ in ${\mathcal S}$, then
\begin{equation} \label{Eq:RieS}
\sum_{j=1}^{n} |\sigma_{j}^{2} - \sigma_{j}^{1} | \leq C \big( |u^{2}_{-} - u_{-}^{1} | + |u^{2}_{+} - u_{+}^{1} | \big).
\end{equation}
\end{proposition}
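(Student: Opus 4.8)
\textbf{Proof plan for Proposition~\ref{Pro:RiemannSD}.}

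The plan is to set up the Riemann problem near the strong wave $(\overline u_-,\overline u_+)$ as the zero set of a smooth map and apply the implicit function theorem, then read off the Lipschitz estimate \eqref{Eq:RieS} from the invertibility of the differential. First I would parameterize a candidate solution of the Riemann problem $(u_-,u_+)$ as a composition of $n+1$ elementary steps: $n$ "weak" wave curves of small amplitude plus one "strong" wave of family $k$ with amplitude close to $\overline\sigma_k$. Concretely, I look for a map
\begin{equation*}
(u_-,u_+) \longmapsto (\sigma_1,\dots,\sigma_{k-1},\sigma_k,\sigma_{k+1},\dots,\sigma_n)
\end{equation*}
such that, writing $w_0 = u_-$, $w_i = T_i(\sigma_i,w_{i-1})$ for $i<k$, then $w_k$ obtained from $w_{k-1}$ by the strong wave of family $k$ (a point on the Hugoniot locus, resp.\ the contact curve, of amplitude $\sigma_k$ near $\overline\sigma_k$), and $w_i = T_i(\sigma_i,w_{i-1})$ for $i>k$, one has $w_n = u_+$. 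Define $G:\mathcal S\times\omega_-\times\omega_+\to\R^n$ by $G(\sigma,u_-,u_+) = w_n(\sigma,u_-) - u_+$. The pair $(\overline\sigma,\overline u_-,\overline u_+)$ is a zero of $G$ because $\overline u_+ = T(\overline\sigma,\overline u_-)$ with $\overline\sigma$ supported on the $k$-th component.

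The key step is to show that $\partial_\sigma G(\overline\sigma,\overline u_-,\overline u_+)$ is invertible. Its columns are, for $i\neq k$, the tangent vectors to the weak wave curves, which at amplitude zero are $r_i$ evaluated at $\overline u_-$ (for $i<k$) or at $\overline u_+$ (for $i>k$), by \eqref{Eq:Deriv0Ti} and \eqref{Eq:ParamTLD}; and for $i=k$ the tangent to the strong wave curve at $\overline\sigma_k$, which for a shock is $\partial_\sigma \mathcal S_k(\overline\sigma_k,\overline u_-)$ in $u$-coordinates, resp.\ $r_k$ for a contact discontinuity. These vectors are pushed forward by intervening differentials of the elementary maps, but since those differentials are invertible and depend continuously on the data, invertibility of $\partial_\sigma G$ at the base point is equivalent to linear independence of the family
\begin{equation*}
\{r_1(\overline u_-),\dots,r_{k-1}(\overline u_-)\}\cup\{\text{tangent to the strong $k$-curve}\}\cup\{r_{k+1}(\overline u_+),\dots,r_n(\overline u_+)\}.
\end{equation*}
For a Majda-stable contact discontinuity this is exactly condition \eqref{Eq:Majda3}. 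For a Majda-stable shock one checks that the tangent to $\mathcal S_k$ at $(\overline u_-,\overline u_+)$ can be written, using the Rankine--Hugoniot relations \eqref{Eq:RankineHugoniot} differentiated along the curve, as a combination of $\varphi(\overline u_+)-\varphi(\overline u_-)$ and eigenvectors of families with $\lambda_j(\overline u_\pm)$ on the appropriate side of $s$; combined with \eqref{Eq:Majda1}, condition \eqref{Eq:Majda2} then gives precisely the required linear independence. (One must be careful to translate between the $u$-coordinates used for the weak curves and the $\varphi$-coordinates appearing in \eqref{Eq:Majda2}; since $\partial\varphi/\partial u$ is invertible by \eqref{Eq:dphiInversible}, this translation preserves linear independence.) The main obstacle is precisely this bookkeeping: carefully identifying the tangent to the strong wave curve and verifying that Majda's algebraic conditions \eqref{Eq:Majda1}--\eqref{Eq:Majda3} are exactly what makes the differential invertible, including keeping track of which eigenvectors are evaluated at $\overline u_-$ versus $\overline u_+$.

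Once invertibility is established, the implicit function theorem yields neighborhoods $\omega_-,\omega_+,\mathcal S$ and a $C^1$ (indeed $C^{2,1}$, since all elementary maps have that regularity) solution map $(u_-,u_+)\mapsto\sigma(u_-,u_+)$ with $G(\sigma(u_-,u_+),u_-,u_+)=0$, unique in $\mathcal S$; shrinking the neighborhoods, this $\sigma$ has components of the right sign/magnitude so that the associated self-similar function built as in \eqref{Eq:RieChoc}--\eqref{Eq:RieRar} (with the strong $k$-wave inserted as a single admissible discontinuity) is a genuine entropy solution of the Riemann problem. Finally, \eqref{Eq:RieS} is immediate: the solution map is $C^1$ on a neighborhood of the base point, hence Lipschitz on a possibly smaller neighborhood, with
\begin{equation*}
\sum_{j=1}^n |\sigma_j^2 - \sigma_j^1| \leq C\big(|u_-^2 - u_-^1| + |u_+^2 - u_+^1|\big),
\end{equation*}
the constant $C$ being a bound for $\|D\sigma\|$ on that neighborhood. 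I would also remark, as the paper promises in Remark~\ref{Remark:CghtCourbe}, that the strong wave curve itself (its speed and endpoints) varies $C^1$-ly with the left state, which is what one gets for free from the same implicit-function setup applied to the Rankine--Hugoniot system alone.
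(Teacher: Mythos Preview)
Your plan is correct and matches the paper's proof: both verify that $\partial_\sigma T(\overline\sigma,\overline u_-)$ is invertible by reducing to Majda's conditions \eqref{Eq:Majda2}--\eqref{Eq:Majda3}, apply the inverse/implicit function theorem to the map $\Lambda:(u_-,\sigma)\mapsto(u_-,T(\sigma,u_-))$, and deduce \eqref{Eq:RieS} from the Lipschitz regularity of the resulting inverse $\Sigma$. The paper's only refinement over your sketch is in the shock case, where instead of loosely arguing that the push-forward by $\partial_u T_k$ preserves linear independence, it left-multiplies the whole Jacobian by the invertible matrix $\{\partial f/\partial u-\overline s\,\partial\varphi/\partial u\}_+$ to obtain the explicit formula \eqref{Eq:PartialTi2}, whose columns coincide (up to nonzero scalars and a harmless shear by $[\varphi(\overline u)]$) with the basis appearing in \eqref{Eq:Majda2}.
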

\begin{proof}[Proof of Proposition \ref{Pro:RiemannSD}]
As in Lax's proof in the case where all waves have small amplitude, this is a consequence of the inverse mapping theorem. To establish the first claim, it suffices to check that
\begin{equation} \label{Eq:NonSingular}
\frac{\partial T}{\partial \sigma}(\overline{\sigma},\overline{u}_{-}) \text{ is non-singular.}
\end{equation}
It is elementary to check that
\begin{equation} \label{Eq:PartialTi}
\frac{\partial T}{\partial \sigma_{i}} (\overline{\sigma},\overline{u}_{-}) = \left\{ \begin{array}{l}
r_{i}(\overline{u}_{+}) \ \ \text{ for } i >k \II \\
\displaystyle \frac{\partial T_{k}(\overline{\sigma}_{k}, \cdot)}{\partial u_{-}} \, r_{i}(\overline{u}_{-}) \ \ \text{ for } i <k \II.
\end{array} \right.
\end{equation}
$\bullet$ Now let us begin with the case of a shock.
To compute $\frac{\partial T_{k}}{\partial u_{-}}(\overline{\sigma},\overline{u}_{-})$ and $\frac{\partial T}{\partial \sigma_{k}}(\overline{\sigma},\overline{u}_{-}) = \frac{\partial T_{k}}{\partial \sigma_{k}}(\overline{\sigma},\overline{u}_{-})$, one differentiates the Rankine-Hugoniot relation \eqref{Eq:RankineHugoniot} to get
\begin{gather}
\label{Eq:PartialTkSigmak}
\frac{\partial T_{k}}{\partial \sigma_{k}}(\overline{\sigma},\overline{u}_{-}) = \frac{\partial s_{k}}{\partial \sigma_{k}}
\left\{ \frac{\partial f}{\partial u} - \overline{s} \, \frac{\partial \varphi}{\partial u} \right\}_{+}^{-1} \, \big[\varphi(\overline{u})\big], \\
\label{Eq:PartialTku}
\frac{\partial T_{k}(\overline{\sigma},\cdot)}{\partial u_{-}}(\overline{u}_{-}) =
\left\{ \frac{\partial f}{\partial u} - \overline{s} \, \frac{\partial \varphi}{\partial u} \right\}_{+}^{-1}
\Big(
\left\{ \frac{\partial f}{\partial u} - \overline{s} \, \frac{\partial \varphi}{\partial u} \right\}_{-} +
\big[\varphi(\overline{u})\big] \otimes \frac{\partial s_{k}}{\partial u_{-}} 
\Big).
\end{gather}
Here we used the notations 
\begin{equation} \label{Eq:NotSaut}
\big[\varphi(\overline{u})\big] = \varphi(\overline{u}_{+}) - \varphi(\overline{u}_{-}),
\end{equation}
and $s_{k} = s_{k}(\sigma_{k},\overline{u}_{-}) =  s(\overline{u}_{-}, T_{k}(\sigma_{k},\overline{u}_{-}))$ and the index $+/-$ means that the function has to be computed at $\overline{u}_{+}/\overline{u}_{-}$. Recall from \eqref{Eq:dphiInversible} and \eqref{Eq:Majda1} that the matrices $\left\{ \frac{\partial f}{\partial u} - \overline{s} \, \frac{\partial \varphi}{\partial u} \right\}_{\pm}$ are non-singular. %\par
We deduce that
\begin{equation} \label{Eq:PartialTi2}
\left\{ \frac{\partial f}{\partial u} - \overline{s} \, \frac{\partial \varphi}{\partial u} \right\}_{+} 
\, \frac{\partial T}{\partial \sigma_{i}} (\overline{\sigma},\overline{u}_{-}) 
= \left\{ \begin{array}{l}
(\lambda_{i}(\overline{u}_{+}) - \overline{s} ) R_{i}(\overline{u}_{+}) \ \ \text{ for } i >k, \II \\
\displaystyle \frac{\partial s_{k}}{\partial \sigma_{k}} \, \big[\varphi(\overline{u})\big]  \ \ \text{ for } i =k, \II \\
(\lambda_{i}(\overline{u}_{-}) - \overline{s} ) R_{i}(\overline{u}_{-})
+ (r_{i}\cdot \nabla_{u} s_{k}) \, \big[\varphi(\overline{u})\big]
 \ \ \text{ for } i <k. \II
\end{array} \right.
\end{equation}
The assertion \eqref{Eq:NonSingular} is now a direct consequence of \eqref{Eq:Majda2}: by the inverse mapping theorem, the mapping
\begin{equation} \label{Eq:DefLambda}
\Lambda : (u_{-}, \sigma) \mapsto (u_{-},T(\sigma,u_{-})).
\end{equation}
is locally invertible near $(\overline{u}_{-},\overline{\sigma})$.  \par
\ \\
\noindent
$\bullet$ For what concerns the case of a contact discontinuity, one has
\begin{equation*}
\frac{\partial T}{\partial \sigma_{i}} (\overline{\sigma},\overline{u}_{-})
= \left\{ \begin{array}{l}
\II r_{i}(\overline{u}_{+}) \ \ \text{ for } i \geq k \\
\II \displaystyle \frac{\partial T_{k}(\overline{\sigma}_{k}, \cdot)}{\partial u} \, r_{i}(\overline{u}_{-}) \ \ \text{ for } i <k.
\end{array} \right.
\end{equation*}
% %
Using \eqref{Eq:ParamTLD}, we infer
\begin{equation} \label{Eq:PartialTi4}
\frac{\partial T}{\partial \sigma_{i}} (\overline{\sigma},\overline{u}_{-})
= \left\{ \begin{array}{l}
r_{i}(\overline{u}_{+}) \ \ \text{ for } i >k, \II \\
r_{k}  \ \ \text{ for } i =k, \II \\
r_{i}(\overline{u}_{-}) \ \ \text{ for } i <k, \II
\end{array} \right.
\end{equation}
and one can conclude using \eqref{Eq:Majda3}. \par
Now as before we denote $\Sigma$ the above mentioned inverse of $\Lambda$, that is, the mapping $(u_{-},u_{+}) \in \omega_{-} \times \omega_{+} \rightarrow {\mathcal S}$ defined by
\begin{equation*}
\sigma = \Sigma(u_{-},u_{+}) \ \Longleftrightarrow \ u_{+} = T(\sigma,u_{-}).
\end{equation*}
Then estimate \eqref{Eq:RieS} is just a consequence of the Lipschitz character of $\Sigma$.
\end{proof}
\begin{remark}
Shrinking $\omega_{-}$ and $\omega_{+}$ if necessary, we can assume that any simple wave with endpoints in $\omega_{-}$ and $\omega_{+}$ is automatically a Majda-stable discontinuity of family $k$ and that each Riemann problem between two states in $\omega_{-}$ or between two states in $\omega_{+}$ is solvable.
\end{remark}
\begin{remark}
Equivalent formulas can be derived for wave curves from the right.
\end{remark}
The next point is to give interaction estimates for such large discontinuities. % With a slight abuse of notations, we denote $\sigma=\Sigma(u_{-},u_{+})$ when $u_{+}=T(\sigma,u_{-})$ and $u_{-}$, $u_{+}$ are close one to another, typically in the same $\omega_{-}$/$\omega_{+}$.
\begin{lemma}\label{Lem:InterChoc}
We assume that the system \eqref{Eq:SCL} is strictly hyperbolic and satisfies \eqref{Eq:GNL/LD}, as well as \eqref{Eq:LD/Normalisation} for linearly degenerate fields. Consider $({u}_{-},{u}_{+}) \in \omega_{-} \times \omega_{+}$ which is either a Majda-stable shock or a Majda-stable contact discontinuity:
\begin{equation} \label{Eq:InterChocWW}
{u}_{+} = T(\overline{\sigma},{u}_{-}), \ \ \overline{\sigma}:=(0, \dots, 0, \overline{\sigma}_{k},0,\dots ,0).
\end{equation}
{\bf 1.} {\rm (Interaction on the right)} Let $u_{r}$ in $\omega_{+}$, and introduce $\sigma':= \Sigma(u_{+},u_{r})$ and $\hat{\sigma} := \Sigma(u_{-},u_{r})$. Then
\begin{equation} \label{Eq:I1}
\hat{\sigma} = \overline{\sigma} + \left( \frac{\partial T}{\partial \sigma} \right)^{-1} \left( \sum_{i=1}^{n} \sigma'_{i} r_{i}({u}_{+})\right)
+ {\mathcal O}(|u_{r} -u_{+}|^{2}).
\end{equation}
{\bf 2.}  {\rm (Interaction on the left)} Let $u_{l}$ in $\omega_{-}$, and introduce $\sigma':= \Sigma(u_{l},u_{-})$ and $\hat{\sigma} := \Sigma(u_{l},u_{+})$. Then
\begin{equation} \label{Eq:I2}
\hat{\sigma} = \overline{\sigma} 
+  \left( \frac{\partial T}{\partial \sigma} \right)^{-1} \left( \frac{\partial T}{\partial u} \right)
\left( \sum_{i=1}^{n} \sigma'_{i} r_{i}({u}_{-})\right)
+ {\mathcal O}(|u_{-} -u_{l}|^{2}).
\end{equation}
\end{lemma}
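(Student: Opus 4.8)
\textbf{Proof plan for Lemma~\ref{Lem:InterChoc}.} The plan is to treat this as a first-order Taylor expansion of the map $\Sigma$ around the base configuration, using the fact (established in Proposition~\ref{Pro:RiemannSD}) that $\Sigma$ is a $C^{2,1}$ inverse of the map $\Lambda$ defined in \eqref{Eq:DefLambda}. Both assertions then reduce to computing a single derivative of $\sigma \mapsto T(\sigma,u_-)$ (or of $(u_-,\sigma)\mapsto T(\sigma,u_-)$) at the base point $(\overline\sigma,u_-)$ and composing with its inverse.

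For part~1, I would set $u_+ = T(\overline\sigma,u_-)$ fixed and vary $u_r \in \omega_+$. Write $\hat\sigma = \Sigma(u_-,u_r)$, so by definition $u_r = T(\hat\sigma,u_-)$; also $u_r = T(\sigma',u_+) = T(\sigma', T(\overline\sigma,u_-))$. The key observation is that near $u_+$ one has, to first order in $|u_r-u_+|$, the expansion $u_r = u_+ + \sum_i \sigma'_i r_i(u_+) + \mathcal{O}(|\sigma'|^2)$ (this is \eqref{Eq:Deriv0Ti} applied at each step of the composition, together with the fact that $|\sigma'| = \mathcal{O}(|u_r-u_+|)$ from the Lipschitz bound \eqref{Eq:RieS}). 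On the other hand, Taylor-expanding $\hat\sigma \mapsto T(\hat\sigma,u_-)$ at $\hat\sigma = \overline\sigma$ gives $u_r = u_+ + \frac{\partial T}{\partial\sigma}(\overline\sigma,u_-)(\hat\sigma - \overline\sigma) + \mathcal{O}(|\hat\sigma-\overline\sigma|^2)$, where $\frac{\partial T}{\partial\sigma}(\overline\sigma,u_-)$ is the non-singular matrix from \eqref{Eq:NonSingular}. Equating the two expressions for $u_r - u_+$, inverting $\frac{\partial T}{\partial\sigma}$, and absorbing all quadratic remainders into $\mathcal{O}(|u_r-u_+|^2)$ (legitimate since $|\hat\sigma - \overline\sigma|$ and $|\sigma'|$ are both $\mathcal{O}(|u_r-u_+|)$ by \eqref{Eq:RieS}) yields exactly \eqref{Eq:I1}.

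For part~2, I would proceed symmetrically but now with $u_+ = T(\overline\sigma,u_-)$ again the base point and the perturbation occurring on the left state. Here $u_l \in \omega_-$ varies, $\sigma' = \Sigma(u_l,u_-)$ so $u_- = T(\sigma',u_l)$, i.e. $u_l = T(-\sigma', \cdot)$-type relation giving $u_l = u_- - \sum_i \sigma'_i r_i(u_-) + \mathcal{O}(|\sigma'|^2)$ to first order (using \eqref{Eq:ConsqChoixParam} and \eqref{Eq:Deriv0Ti}). Then $u_+ = T(\hat\sigma, u_l)$ with $\hat\sigma = \Sigma(u_l,u_+)$. Taylor-expanding the map $(u_l,\hat\sigma) \mapsto T(\hat\sigma,u_l)$ at $(u_-,\overline\sigma)$: $u_+ = T(\overline\sigma,u_-) + \frac{\partial T}{\partial u}(\overline\sigma,u_-)(u_l - u_-) + \frac{\partial T}{\partial\sigma}(\overline\sigma,u_-)(\hat\sigma-\overline\sigma) + \mathcal{O}(\cdots^2)$. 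Since the left-hand side equals $u_+ = T(\overline\sigma,u_-)$ exactly, substituting $u_l - u_- = -\sum_i\sigma'_i r_i(u_-) + \mathcal{O}(|\sigma'|^2)$ and solving for $\hat\sigma - \overline\sigma$ gives $\hat\sigma - \overline\sigma = \left(\frac{\partial T}{\partial\sigma}\right)^{-1}\left(\frac{\partial T}{\partial u}\right)\left(\sum_i \sigma'_i r_i(u_-)\right) + \mathcal{O}(|u_- - u_l|^2)$, which is \eqref{Eq:I2} once one checks the sign bookkeeping (the minus from $u_l - u_-$ cancels against a minus hidden in the derivative of the composition, or is absorbed by the convention; this needs care).

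The main obstacle I anticipate is purely bookkeeping rather than conceptual: keeping track of exactly which point ($u_-$, $u_+$, or an intermediate state) each $r_i$ and each partial derivative $\frac{\partial T}{\partial\sigma}$, $\frac{\partial T}{\partial u}$ is evaluated at, and verifying that all the discrepancies between these evaluation points contribute only at quadratic order and hence disappear into the $\mathcal{O}(|u_r-u_+|^2)$ or $\mathcal{O}(|u_l-u_-|^2)$ terms. The Lipschitz estimate \eqref{Eq:RieS} is the essential tool that makes this legitimate, since it controls $|\hat\sigma - \overline\sigma|$ and $|\sigma'|$ linearly by the displacement of the endpoints, so any term quadratic in the $\sigma$'s is quadratic in the displacement. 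The sign conventions in part~2 (arising from the wave curves from the right, cf. \eqref{Eq:ParamUpsilon} and \eqref{Eq:ConsqChoixParam}) are the one place where a careless computation would flip a sign.
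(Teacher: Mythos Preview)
Your approach is correct and is essentially the same as the paper's: the paper condenses the argument into one sentence (``Taylor's formula for $\Sigma$, whose partial derivatives are easily computed from \eqref{Eq:DefLambda}, and Lax's estimates $u_r-u_+ = \sum_i \sigma'_i r_i(u_+) + \mathcal{O}(|u_r-u_+|^2)$ or $u_- - u_l = -\sum_i \sigma'_i r_i(u_-) + \mathcal{O}(|u_- - u_l|^2)$''), while you spell out the same Taylor expansion explicitly. Your sign worry in part~2 is unfounded: the minus from $u_l - u_- = -\sum_i \sigma'_i r_i(u_-) + \mathcal{O}(\cdots)$ cancels against the minus coming from solving $0 = \tfrac{\partial T}{\partial u}(u_l - u_-) + \tfrac{\partial T}{\partial \sigma}(\hat\sigma - \overline\sigma)$, exactly as you wrote, giving the stated formula with no hidden sign.
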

\begin{proof}[Proof of Lemma \ref{Lem:InterChoc}]
This is a direct consequence of Taylor's formula for $\Sigma$, whose partial derivatives are easily computed from \eqref{Eq:DefLambda}, and of Lax's estimates
\begin{equation*}
u_{r} - u_{+} = \sum_{i=1}^{n} \sigma'_{i} r_{i}({u}_{+}) + {\mathcal O}(|u_{r} - u_{+}|^{2}), \ \text{ or } \
u_{-} - u_{l} = - \sum_{i=1}^{n} \sigma'_{i} r_{i}({u}_{-}) + {\mathcal O}(|u_{-} - u_{l}|^{2}).
\end{equation*}
\end{proof}
\begin{remark} \label{Remark:CghtCourbe}
We notice that is has no importance that the actual wave curves $T_{i}$ are used in the Riemann problem $\sigma':= \Sigma(u_{2},u_{3})$. We could replace them by ${\mathcal R}_{i}$ without change or by $\Upsilon_{i}$ with a mere change of sign on $r_{i}$. In other words, we could consider $\sigma':= \Sigma^{\pi,\xi}(u_{2},u_{3})$ instead and obtain the same result on $\hat{\sigma} := \Sigma(u_{1},u_{3})$, up to a change of sign if $\xi_{i}=-1$. The same is valid for an interaction on the left.
\end{remark}
\begin{remark}
Note that from \eqref{Eq:UpsilonT}, one infers
\begin{equation} \label{Eq:UpsilonT2}
\left( \frac{\partial T}{\partial \sigma} \right)^{-1} + \left( \frac{\partial \Upsilon}{\partial \sigma} \right)^{-1} \left( \frac{\partial T}{\partial u_{+}} \right) =0.
\end{equation}
Hence the two formulas \eqref{Eq:I1} and \eqref{Eq:I2} are ``inverted'' when one replaces $T$ with $\Upsilon$.
\end{remark}

Now we distinguish the cases of a shock and of a contact discontinuity.
\begin{lemma}\label{Lem:InterChoc2}
In the situation where the $k$-th family is GNL and that $(u_{-},u_{+})$ is a Majda-stable shock, we have $\hat{\sigma} = \overline{\sigma} +\sigma$, 
with: \par
\noindent
$\bullet$ {Case 1, Interaction on the right:} 
\begin{equation} \label{Eq:I1b}
\left\{ \frac{\partial f}{\partial u} - \overline{s} \, \frac{\partial \varphi}{\partial u} \right\}_{+}
\left( \frac{\partial T}{\partial \sigma} \right) \sigma
= \sum_{i=1}^{n} \sigma_{i}' (\lambda_{i}(u_{+}) - \overline{s}) R_{i}(u_{+}) + {\mathcal O}(|\sigma'|^{2}),
\end{equation}
$\bullet$ {Case 2, Interaction on the left:} 
\begin{equation} \label{Eq:I2b}
\left\{ \frac{\partial f}{\partial u} - \overline{s} \, \frac{\partial \varphi}{\partial u} \right\}_{+}
\left( \frac{\partial T}{\partial \sigma} \right) \sigma
= \sum_{i=1}^{n} \sigma_{i}' \Big\{ (\lambda_{i}(u_{-}) - \overline{s}) R_{i}(u_{-}) + (r_{i}(u_{-}) \cdot \nabla s_{k}) [\varphi(u)] \Big\}
+ {\mathcal O}(|\sigma'|^{2}).
\end{equation}
Moreover the ``${\mathcal O}$'' are uniform as $u_{l}, u_{-}$ and $u_{+}, u_{r}$ belong to compact sets of $\omega_{-}$ and $\omega_{+}$.
\end{lemma}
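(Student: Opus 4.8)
The plan is to specialize the general interaction identities of Lemma~\ref{Lem:InterChoc} to the case of a GNL $k$-th family carrying a Majda-stable shock, and to convert everything into the ``multiplied-out'' form by applying the nonsingular matrix $\left\{ \frac{\partial f}{\partial u} - \overline{s} \, \frac{\partial \varphi}{\partial u} \right\}_{+}$ on the left. The first step is to record, from Proposition~\ref{Pro:RiemannSD}, that $\hat\sigma$ lies in the small neighborhood ${\mathcal S}$ of $\overline{\sigma}$ and differs from $\overline{\sigma}$ only by a quantity of size ${\mathcal O}(|\sigma'|)$; writing $\hat\sigma = \overline{\sigma} + \sigma$ makes $\sigma$ the unknown small vector, and the ${\mathcal O}(|u_r-u_+|^2)$ (resp. ${\mathcal O}(|u_--u_l|^2)$) remainders in \eqref{Eq:I1}, \eqref{Eq:I2} become ${\mathcal O}(|\sigma'|^2)$ via Lax's estimate $|u_r-u_+| = {\mathcal O}(|\sigma'|)$.

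For Case 1 (interaction on the right), I would start from \eqref{Eq:I1}, subtract $\overline{\sigma}$, and apply $\left( \frac{\partial T}{\partial \sigma} \right)$ on the left to get $\left( \frac{\partial T}{\partial \sigma} \right)\sigma = \sum_i \sigma'_i r_i(u_+) + {\mathcal O}(|\sigma'|^2)$. Then I multiply by $\left\{ \frac{\partial f}{\partial u} - \overline{s} \, \frac{\partial \varphi}{\partial u} \right\}_{+}$ and invoke \eqref{Eq:DefRi}, namely $R_i = \frac{\partial \varphi}{\partial u} r_i$, together with the eigenvalue relation $\frac{\partial f}{\partial u} r_i = \lambda_i \frac{\partial \varphi}{\partial u} r_i$ (which is \eqref{Eq:Hyperbolicity} rewritten), so that $\left\{ \frac{\partial f}{\partial u} - \overline{s} \, \frac{\partial \varphi}{\partial u} \right\}_{+} r_i(u_+) = (\lambda_i(u_+)-\overline{s}) \frac{\partial \varphi}{\partial u}\big|_+ r_i(u_+) = (\lambda_i(u_+)-\overline{s}) R_i(u_+)$. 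This yields \eqref{Eq:I1b} directly. Note however that in \eqref{Eq:I1} the matrix $r_i$ is evaluated at $u_+$ but in the product $\left\{ \cdots \right\}_+ r_i(u_+)$ one must be careful that the ``$+$'' subscript refers to $u_+$; since everything is evaluated at the same point $u_+$ here this is consistent, and the $r_i$'s that should really sit at $u_+$ are exactly the ones appearing.

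For Case 2 (interaction on the left), I would start from \eqref{Eq:I2}: $\hat\sigma - \overline{\sigma} = \left( \frac{\partial T}{\partial \sigma} \right)^{-1} \left( \frac{\partial T}{\partial u} \right) \sum_i \sigma'_i r_i(u_-) + {\mathcal O}(|\sigma'|^2)$, so $\left( \frac{\partial T}{\partial \sigma} \right)\sigma = \left( \frac{\partial T}{\partial u} \right)\sum_i \sigma'_i r_i(u_-) + {\mathcal O}(|\sigma'|^2)$. The term $\left( \frac{\partial T}{\partial u} \right) r_i(u_-)$ — with $T = T_n \circ \cdots \circ T_1$ and $\frac{\partial T}{\partial u}$ meaning the differential with respect to the left endpoint, so in fact $\frac{\partial T_k(\overline{\sigma}_k,\cdot)}{\partial u_-}$ composed appropriately — is computed from the differentiated Rankine--Hugoniot relation \eqref{Eq:PartialTku}. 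Multiplying \eqref{Eq:PartialTku} by $\left\{ \frac{\partial f}{\partial u} - \overline{s} \, \frac{\partial \varphi}{\partial u} \right\}_{+}$ on the left and then by $r_i(u_-)$ on the right gives $\left\{ \frac{\partial f}{\partial u} - \overline{s} \, \frac{\partial \varphi}{\partial u} \right\}_{-} r_i(u_-) + [\varphi(u)] \otimes \frac{\partial s_k}{\partial u_-} \cdot r_i(u_-)$, i.e. $(\lambda_i(u_-)-\overline{s})R_i(u_-) + (r_i(u_-)\cdot\nabla s_k)[\varphi(u)]$, using the same eigenvalue identity at $u_-$. Summing against $\sigma'_i$ produces \eqref{Eq:I2b}.

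The main obstacle — really a bookkeeping subtlety rather than a deep difficulty — is keeping straight which ``$\frac{\partial T}{\partial u}$'' and ``$\frac{\partial T}{\partial \sigma}$'' are meant in \eqref{Eq:I2} and \eqref{Eq:I1}: the full composite map $T(\sigma,\cdot)$ versus the single-shock curve $T_k(\overline{\sigma}_k,\cdot)$, and the fact that for the components $i<k$ the factor $\frac{\partial T_k(\overline{\sigma}_k,\cdot)}{\partial u_-}$ enters (as in \eqref{Eq:PartialTi}) while for $i\geq k$ it does not; the cancellations in the multiplied-out form \eqref{Eq:I1b}--\eqref{Eq:I2b} are precisely engineered so that one uniform formula covers all $i$. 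I would handle this by working componentwise using \eqref{Eq:PartialTi2} (which already packages exactly these three cases after left-multiplication) and then simply re-summing. The uniformity of the ${\mathcal O}$'s follows from the uniformity already asserted in Proposition~\ref{Pro:RiemannSD} and in Lax's estimates over compact subsets of $\omega_\pm$, so no extra argument is needed there.
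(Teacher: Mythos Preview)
Your proposal is correct and follows the same approach as the paper, which simply says the lemma is ``a direct consequence of Lemma~\ref{Lem:InterChoc} and of \eqref{Eq:PartialTkSigmak}--\eqref{Eq:PartialTku}''; you have essentially written out that one-line argument in detail. One minor remark: your ``main obstacle'' paragraph is overblown --- the left-hand side of \eqref{Eq:I1b}--\eqref{Eq:I2b} is left in the compact form $\left\{ \tfrac{\partial f}{\partial u} - \overline{s}\,\tfrac{\partial \varphi}{\partial u}\right\}_{+}\left(\tfrac{\partial T}{\partial \sigma}\right)\sigma$ and never expanded componentwise, so \eqref{Eq:PartialTi2} is not needed here (it is used only for the nonsingularity in Proposition~\ref{Pro:RiemannSD} and in later applications); for Case~2 the sole input beyond Lemma~\ref{Lem:InterChoc} is \eqref{Eq:PartialTku}, and since $\overline{\sigma}$ has only the $k$-th component nonzero, $\tfrac{\partial T}{\partial u}(\overline{\sigma},u_-)$ \emph{is} $\tfrac{\partial T_k(\overline{\sigma}_k,\cdot)}{\partial u_-}$ with no case distinction.
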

\begin{proof}[Proof of Lemma \ref{Lem:InterChoc2}]
This is a direct consequence of Lemma \ref{Lem:InterChoc} and of \eqref{Eq:PartialTkSigmak}-\eqref{Eq:PartialTku}.
\end{proof}
\begin{lemma}\label{Lem:InterChoc3}
In the situation where the $k$-th family is LD, that \eqref{Eq:LD/Normalisation} is satisfied and that $(u_{-},u_{+})$ is a Majda-stable contact discontinuity, we have $\hat{\sigma} = \overline{\sigma} +\sigma$, with in the case of an interaction on the right, respectively on the left: 
\begin{equation} \label{Eq:I1c}
\left( \frac{\partial T}{\partial \sigma} \right) \sigma
= \sum_{i=1}^{n} \sigma_{i}' r_{i}(u_{+}) + {\mathcal O}(|\sigma'|^{2}),
\ \text{ resp. } \
\left( \frac{\partial T}{\partial \sigma} \right) \sigma
= \sum_{i=1}^{n} \sigma_{i}' r_{i}(u_{-}) + {\mathcal O}(|\sigma'|^{2}).
\end{equation}
Moreover the ``${\mathcal O}$'' are uniform as $u_{l}, u_{-}$ and $u_{+}, u_{r}$ belong to compact sets of $\omega_{-}$ and $\omega_{+}$.
\end{lemma}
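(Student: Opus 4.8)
The plan is to specialize Lemma~\ref{Lem:InterChoc} to the case of a linearly degenerate $k$-th family under the normalization \eqref{Eq:LD/Normalisation}, exactly as Lemma~\ref{Lem:InterChoc2} specialized it to the GNL shock case. The starting point is that from \eqref{Eq:I1} we have, for an interaction on the right, $\hat\sigma = \overline\sigma + \left(\frac{\partial T}{\partial\sigma}\right)^{-1}\left(\sum_{i} \sigma_i' r_i(u_+)\right) + {\mathcal O}(|u_r - u_+|^2)$, and from \eqref{Eq:I2} for an interaction on the left, $\hat\sigma = \overline\sigma + \left(\frac{\partial T}{\partial\sigma}\right)^{-1}\left(\frac{\partial T}{\partial u}\right)\left(\sum_{i}\sigma_i' r_i(u_-)\right) + {\mathcal O}(|u_- - u_l|^2)$. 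Setting $\sigma := \hat\sigma - \overline\sigma$, multiplying through by the (invertible) matrix $\frac{\partial T}{\partial\sigma}(\overline\sigma, u_-)$, and using $|\sigma'| \sim |u_r - u_+|$ (resp. $\sim |u_- - u_l|$) from Lax's estimates recalled in the proof of Lemma~\ref{Lem:InterChoc}, one gets $\left(\frac{\partial T}{\partial\sigma}\right)\sigma = \sum_i \sigma_i' r_i(u_+) + {\mathcal O}(|\sigma'|^2)$ for the interaction on the right.

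The point that makes the LD case cleaner than the GNL shock case is that here $\frac{\partial T_k}{\partial u}(\overline\sigma_k, \cdot)$ acts trivially on the relevant objects. Indeed, since the $k$-th field is LD and \eqref{Eq:LD/Normalisation} holds, the wave curve is the translation $T_k(\overline\sigma_k, u) = u + \overline\sigma_k r_k$ with $r_k$ a constant unit vector, so $\frac{\partial T_k(\overline\sigma_k,\cdot)}{\partial u}$ is the identity matrix. Consequently the operator $\frac{\partial T}{\partial u}$ appearing in \eqref{Eq:I2} — which, as in \eqref{Eq:PartialTi} in the proof of Proposition~\ref{Pro:RiemannSD}, is built from $\frac{\partial T_k(\overline\sigma_k,\cdot)}{\partial u}$ — reduces to the identity in the directions that matter, so that $\frac{\partial T}{\partial u}\left(\sum_i \sigma_i' r_i(u_-)\right) = \sum_i \sigma_i' r_i(u_-)$. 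This yields $\left(\frac{\partial T}{\partial\sigma}\right)\sigma = \sum_i \sigma_i' r_i(u_-) + {\mathcal O}(|\sigma'|^2)$ for the interaction on the left, which is the second formula in \eqref{Eq:I1c}.

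More carefully, for the LD case one should invoke the explicit computation of $\frac{\partial T}{\partial\sigma_i}(\overline\sigma, \overline u_-)$ carried out in the proof of Proposition~\ref{Pro:RiemannSD}, namely \eqref{Eq:PartialTi4}: $\frac{\partial T}{\partial\sigma_i} = r_i(\overline u_+)$ for $i > k$, $= r_k$ for $i = k$, and $= r_i(\overline u_-)$ for $i < k$. Combined with $\frac{\partial T_k(\overline\sigma_k,\cdot)}{\partial u} = \mathrm{Id}$, this is exactly what is needed to rewrite \eqref{Eq:I1} and \eqref{Eq:I2} in the announced form without the conjugation by $\left\{\frac{\partial f}{\partial u} - \overline s \frac{\partial\varphi}{\partial u}\right\}_+$ that was unavoidable in Lemma~\ref{Lem:InterChoc2}. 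The uniformity of the ${\mathcal O}$-terms is inherited directly from the uniformity statements in Lemma~\ref{Lem:InterChoc} (itself coming from Lax's estimates and Taylor's formula for $\Sigma$), restricted to compact subsets of $\omega_-$ and $\omega_+$.

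The step I expect to require the most care — though it is hardly an "obstacle" — is making precise that $\frac{\partial T}{\partial u}$ in \eqref{Eq:I2} really does act as the identity on $\sum_i \sigma_i' r_i(u_-)$: one must check that the relevant derivative factors through $\frac{\partial T_k(\overline\sigma_k,\cdot)}{\partial u}$ (which is the identity by \eqref{Eq:ParamTLD} and \eqref{Eq:LD/Normalisation}) and that no genuinely-nonlinear correction terms of the type $(r_i \cdot \nabla s_k)[\varphi(u)]$ — present in \eqref{Eq:I2b} — survive, precisely because there is no shock speed variation to differentiate in the LD case (the "shock speed" is the constant characteristic speed $\lambda_k$, and $s_k \equiv \lambda_k$ is constant along the contact curve up to higher order, indeed exactly constant here). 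Once this is in hand, the proof is just a two-line substitution, as indicated by the brevity one expects of the author's argument.
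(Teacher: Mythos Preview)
Your proposal is correct and follows exactly the paper's approach: the paper's own proof is the one-line ``This is a consequence of Lemma~\ref{Lem:InterChoc} and of \eqref{Eq:PartialTi4},'' and you have simply unpacked this, observing that $T_k(\overline\sigma_k,u)=u+\overline\sigma_k r_k$ gives $\frac{\partial T_k(\overline\sigma_k,\cdot)}{\partial u}=\mathrm{Id}$ so that \eqref{Eq:I2} collapses to the claimed form. Your closing discussion of why no $(r_i\cdot\nabla s_k)[\varphi(u)]$ terms appear is slightly roundabout---the cleaner reason is not that the contact speed is constant but simply that one differentiates the explicit translation $u\mapsto u+\overline\sigma_k r_k$ directly rather than the Rankine--Hugoniot relation---but this is a matter of exposition, not correctness.
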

\begin{proof}[Proof of Lemma \ref{Lem:InterChoc3}]
This is a consequence of Lemma \ref{Lem:InterChoc} and of \eqref{Eq:PartialTi4}.
\end{proof}
Note that the matrices appearing in Lemmas \ref{Lem:InterChoc2} and \ref{Lem:InterChoc3} have been computed in the proof of Proposition~\ref{Pro:RiemannSD}, and that one can use $\Upsilon$ rather than $T$, which in some situations can simplify the writing. \par
%
%
%
%
%
%
%
%
%
%%%%%%%%%%%%%%%%%%%%%%%%%%%%%%%%%%%%%%%%%%%%%%%%%%%%%%%%%%%%%%%%%%%%%%%%%%%%%%%%%%%%%%%%%%%%%%%%%%%%%%%%%%%%%%%%%%%%%%%%%%%%%%%%%%%%
%
%
%
\section{About systems \eqref{Eq:Euler}~and~\eqref{Eq:Lagrangian}}
\label{Sec:Applications}
%
%
%
%%%%%%%%%%%%%%%%%%%%%%%%%%%%%%%%
%
%
%
\subsection{Some characteristic elements of systems \eqref{Eq:Euler} and \eqref{Eq:Lagrangian}}
\subsubsection{Eulerian system \eqref{Eq:Euler}}
For the Eulerian system \eqref{Eq:Euler} one fixes:
\begin{gather} 
\label{ES1}
u = \begin{pmatrix} \rho \\ v \\ P \end{pmatrix}, \ \ u \in \Omega = \R^{+} \times \R \times \R^{+}, \\
\label{ES2}
\varphi(u) = \left( \begin{array}{c} \rho \\ \rho v \\ \frac{\gamma-1}{2} \rho v^{2} + P \end{array} \right), \ \
%\label{ES3}
f(u)  = \left( \begin{array}{c} \rho v \\ \rho v^{2} +P \\ \frac{\gamma-1}{2} \rho v^{3} + \gamma P v \end{array} \right) ,
\end{gather}
so that $\varphi$ maps primitive coordinates to conservative ones. We have
\begin{gather} 
\label{ES4}
d\varphi(u) = \left( \begin{array}{ccc} 1 & 0 & 0 \\ v & \rho  & 0 \\ \frac{\gamma-1}{2} v^{2} & (\gamma-1) \rho v & 1 \end{array} \right), \ \ \ 
d\varphi^{-1}(u) = \left( \begin{array}{ccc} 1 & 0 & 0 \\ -v / \rho & 1/\rho  & 0 \\ \frac{\gamma-1}{2} v^{2} & - (\gamma-1) v & 1 \end{array} \right), \\
\label{ES5}
df(u)  = \left( \begin{array}{ccc} v & \rho & 0 \\ 
v^{2} & 2 \rho v & 1 \\
\frac{\gamma-1}{2} \rho v^{3} & \frac{3}{2}(\gamma-1) \rho v^{2} + \gamma P & \gamma v \end{array} \right), \ \
%\label{ES6}
(d \varphi)^{-1} df(u)  = \left( \begin{array}{ccc} v & \rho & 0 \\ 
0 & v & 1/\rho \\
0 & \gamma P & v \end{array} \right) .
\end{gather}
The characteristic speeds (i.e. the eigenvalues of $(d \varphi)^{-1} df$) are given by
\begin{equation} \label{Eq:VCE}
\lambda_{1} = v - c , \ \
\lambda_{2} = v , \ \
\lambda_{3} = v + c,
\end{equation}
with the speed of sound $c$ given by
\begin{equation} \label{Eq:VitesseSon}
c = \sqrt{\frac{\gamma P}{\rho}} .
\end{equation}
The eigenvectors of $(d \varphi)^{-1} df(u)$ are given by:
\begin{equation} \label{Eq:EVE}
r_{1} = \frac{2}{\gamma+1} \left( \begin{array}{c} -\rho/c \\ 1 \\ -\rho c \end{array} \right), \ \
r_{2} = \left( \begin{array}{c} 1 \\ 0 \\ 0 \end{array} \right), \ \
r_{3} = \frac{2}{\gamma+1} \left( \begin{array}{c} \rho/c \\ 1 \\ \rho c \end{array} \right),
\end{equation}
and those in terms of the $\varphi$ variable (that is, the eigenvectors of $df (d \varphi)^{-1}$, i.e. $R_{i} = d \varphi \cdot r_{i}$):
\begin{gather*} %\label{Eq:EVEb}
R_{1} = \frac{2 \rho}{(\gamma+1)c} \left( \begin{array}{c} 1 \\ c+v \\  (\gamma-1) vc +  c^{2} + \frac{\gamma-1}{2} v^{2} \end{array} \right), \ \
R_{2} = \left( \begin{array}{c} 1 \\ v \\ \frac{\gamma-1}{2} v^{2} \end{array} \right), \\
R_{3} = \frac{2 \rho}{(\gamma+1)c} \left( \begin{array}{c} 1 \\ c+v \\  (\gamma-1) vc -  c^{2} - \frac{\gamma-1}{2} v^{2} \end{array} \right).
\end{gather*}
These eigenvectors $r_{i}$, $i=1, 3$, satisfy in particular \eqref{Eq:GNL/Normalisation}.
The corresponding left eigenvectors of $(d \varphi)^{-1} df(u)$ are
\begin{gather} \label{Eq:LEVE}
\ell_{1} = \frac{\gamma+1}{2} \left(  0,\ \frac{1}{2},\ - \frac{1}{2 \rho c} \right), \ \
\ell_{2} = \left( 1,\ 0,\ -\frac{1}{c^{2}} \right), \ \
\ell_{3} = \frac{\gamma+1}{2} \left(  0,\ \frac{1}{2},\ \frac{1}{2 \rho c} \right).
\end{gather}
It will be useful to extend the definition of the shock speed to rarefactions/compression waves. If $u_{2}={\mathcal R}_{i}(\sigma_{i},u_{1})$, $u_{i}=(\rho_{i},v_{i},P_{i})$, $i=1,2$, we set
\begin{equation} \label{Eq:ShockSpeedEuler}
s(u_{1},u_{2}) := \frac{\rho_{2} v_{2} - \rho_{1} v_{1}}{\rho_{2}- \rho_{1}} = \frac{ \int_{0}^{\sigma_{i}} \lambda_{i}({\mathcal R}_{i}(\sigma,u_{1})) \,  R^{1}_{i}({\mathcal R}_{i}(\sigma,u_{1})) \, d \sigma}{ \int_{0}^{\sigma_{i}} R^{1}_{i}({\mathcal R}_{i}(\sigma,u_{1})) \, d \sigma },
\end{equation}
where $R_{i}^{1}$ stands for the first coordinate of $R_{i}$.
This obviously gives the shock speed for actual shocks as well. It is clear that this shock speed also satisfies \eqref{Eq:VChocVCar}, with a ${\mathcal O}$ uniform on compacts of $\Omega$ and that
\begin{equation} \label{Eq:VitRarCompr}
s(u_{1},u_{2}) \in [\lambda_{i}(u_{1}),\lambda_{i}(u_{2})] \cup  [\lambda_{i}(u_{2}),\lambda_{i}(u_{1})].
\end{equation}
Note finally that in the coordinates given by \eqref{ES1}, the $2$-contact discontinuities are given by
\begin{equation*}
\overline{u}_{+} = \overline{u}_{-} + \sigma r_{2}, \ \ \sigma \in \R,
\end{equation*}
and in particular $v$ and $P$ are preserved across the discontinuity.
\subsubsection{Lagrangian system \eqref{Eq:Lagrangian}}
\label{SSS:LS}
For the Lagrangian system \eqref{Eq:Lagrangian} one fixes:
\begin{gather} 
\label{LS1}
u = \begin{pmatrix} \tau \\ v \\ P \end{pmatrix}, \ \ 
%\label{LS2}
\varphi(u) = \left( \begin{array}{c} \tau \\ v \\ \frac{P \tau}{\gamma-1} + \frac{v^{2}}{2} \end{array} \right), \ \
f(u)  = \left( \begin{array}{c} - v \\ P \\ P v \end{array} \right),  \ \ 
u \in \Omega = \R^{+} \times \R \times \R^{+}.
\end{gather}
We have
\begin{gather} 
\label{LS4}
d\varphi(u) = \left( \begin{array}{ccc} 1 & 0 & 0 \\ 0 & 1  & 0 \\ \frac{P}{\gamma-1} & v & \frac{\tau}{\gamma-1} \end{array} \right), \ \ \ 
d\varphi^{-1}(u) = \left( \begin{array}{ccc} 1 & 0 & 0 \\ 0 & 1 & 0 \\ - \frac{P}{\tau} & - (\gamma-1) \frac{v}{\tau} & \frac{\gamma-1}{\tau} \end{array} \right), \\
\label{LS5}
df(u)  = \left( \begin{array}{ccc} 0 & -1 & 0 \\ 
0 & 0 & 1 \\
0 & P & v \end{array} \right), \ \
%\label{LS6}
(d \varphi)^{-1} df(u)  = \left( \begin{array}{ccc} 0 & -1 & 0 \\ 
0 & 0 & 1 \\
0 & \frac{\gamma P}{\tau} & 0 \end{array} \right) .
\end{gather}
The characteristic speeds are given by
\begin{equation} \label{Eq:VCL}
\lambda_{1} =  - \sqrt{\frac{\gamma P}{\tau}} = - \frac{c}{\tau} , \ \
\lambda_{2} = 0 , \ \
\lambda_{3} =  \sqrt{\frac{\gamma P}{\tau}}= \frac{c}{\tau}.
\end{equation}
with $c$ given again by \eqref{Eq:VitesseSon}, that is
\begin{equation*}
c = \sqrt{\gamma P \tau}.
\end{equation*}
The eigenvectors of $(d \varphi)^{-1} df(u)$ are given by:
\begin{equation} \label{Eq:EVL}
r_{1} = \frac{2}{\gamma+1} \left( \begin{array}{c} \frac{\tau^{2}}{c} \\ \tau \\ -c \end{array} \right), \ \
r_{2} = \left( \begin{array}{c} 1 \\ 0 \\ 0 \end{array} \right), \ \
r_{3} = \frac{2}{\gamma+1} \left( \begin{array}{c} -\frac{\tau^{2}}{c} \\ \tau \\ c \end{array} \right).
\end{equation}
These eigenvectors  $r_{i}$, $i=1,3$ satisfy \eqref{Eq:GNL/Normalisation}.
The eigenvectors in terms of the $\varphi$ variable (that is, the eigenvectors of $df (d \varphi)^{-1}$, i.e. $R_{i} = d \varphi \cdot r_{i}$) by:
\begin{equation} \label{Eq:EVLb}
R_{1} = \frac{2 \tau^{2}}{(\gamma+1)c} \left( \begin{array}{c} 1 \\ \frac{c}{\tau} \\ - P + \frac{c v}{\tau} \end{array} \right), \ \
R_{2} = \left( \begin{array}{c} 1 \\ 0 \\ \frac{P}{\gamma-1} \end{array} \right), \ \
R_{3} = \frac{2 \tau^{2}}{(\gamma+1)c} \left( \begin{array}{c} -1 \\ \frac{c}{\tau} \\ P + \frac{c v}{\tau} \end{array} \right).
\end{equation}
The left eigenvectors of $(d \varphi)^{-1} df(u)$ are
\begin{gather} \label{Eq:LEVL}
\ell_{1} = \frac{\gamma+1}{2} \left(  0  ,\ \frac{1}{2 \tau}  ,\  - \frac{1}{2 c} \right), \ \
\ell_{2} = \left( 1  ,\   0  ,\  \frac{\tau}{\gamma P} \right), \ \
\ell_{3} = \frac{\gamma+1}{2} \left(  0  ,\  \frac{1}{2 \tau}  ,\  \frac{1}{2 c} \right),
\end{gather}
and the ones of $df (d \varphi)^{-1}$ (i.e. $L_{i} = \ell_{i} \cdot (d \varphi)^{-1}$):
\begin{gather} \label{Eq:LEVLb}
L_{1} = \frac{\gamma+1}{4 \tau} \left( \frac{c}{\gamma \tau}  ,\  1 + (\gamma-1) \frac{v}{c}  ,\  - \frac{\gamma-1}{c} \right), \\
L_{2} = \left( \frac{\gamma -1}{\gamma}  ,\   -(\gamma-1) \frac{v}{\gamma P}  ,\  \frac{\gamma-1}{\gamma P} \right), \\
L_{3} = \frac{\gamma+1}{4 \tau} \left( - \frac{c}{\gamma \tau}  ,\   1 - (\gamma-1) \frac{v}{c}  ,\ \frac{\gamma-1}{c} \right).
\end{gather}
Here we extend the shock speed to rarefactions/compression waves as follows: if $u_{2}={\mathcal R}_{i}(\sigma_{i},u_{1})$, $u_{i}=(\tau_{i},v_{i},P_{i})$,  $i=1,2$, we set
\begin{equation} \label{Eq:ShockSpeedLagrange}
s(u_{1},u_{2}) := - \frac{ v_{2} - v_{1}}{\tau_{2}- \tau_{1}}  = \frac{ \int_{0}^{\sigma_{i}} \lambda_{i}({\mathcal R}_{i}(\sigma,u_{1})) \,  R^{1}_{i}({\mathcal R}_{i}(\sigma,u_{1})) \, d \sigma}{ \int_{0}^{\sigma_{i}} R^{1}_{i}({\mathcal R}_{i}(\sigma,u_{1})) \, d \sigma },
\end{equation}
where $R_{i}^{1}$ stands for the first coordinate of $R_{i}$.
Of course, \eqref{Eq:VChocVCar} and \eqref{Eq:VitRarCompr} apply here as well. \par
Finally it will be useful to have the the shock/rarefaction curves described in the Lagrangian case (though in fact they coincide with the ones in the Eulerian case after change of variables.) They can be parameterized through the coefficient %and rarefaction
\begin{equation*}
x= \frac{P_{+}}{P_{-}}
\end{equation*}
as follows (see e.g. \cite[Section 18.B]{Sm}, \cite[Section 6.4]{CW}): the shock curves are given by $\overline{u}_{+}={\mathcal S}_{i}(x,\overline{u}_{-})$ with
\begin{equation} \label{Eq:ParamShock}
\left\{ \begin{array}{l}
 \displaystyle \overline{P}_{+} = x \overline{P}_{-} , \II \\
 \displaystyle \frac{\overline{\tau}_{+}}{\overline{\tau}_{-}} = \frac{\beta +x}{\beta x +1}, \II \\
 \displaystyle \overline{v}_{+} = \overline{v}_{-} \pm \overline{c}_{-} \sqrt{ \frac{2}{\gamma (\gamma-1)} }  \frac{1-x}{\sqrt{\beta x +1}},
\end{array} \right.
\end{equation}
with $x > 1$ and the $+$ sign for the $1$-shocks, with $x < 1$ and the $-$ sign for the $3$-shocks; we have put %(resp. $x<1$) 
\begin{equation} \label{Eq:DefBeta}
\beta := \frac{\gamma+1}{\gamma-1}.
\end{equation}
Note that
\begin{equation}
\label{Eq:SignesSaut}
[P] >0, \ \ [\tau] <0, \ \ [v] <0 \ \text{ across a } 1\text{-shock,} \ \ \
[P] <0, \ \ [\tau] >0, \ \ [v] <0 \ \text{ across a } 3\text{-shock.}
\end{equation}
The corresponding shock speed is given by 
\begin{equation} \label{Eq:ShockSpeed}
s  %&=& \mp \sqrt{ \frac{\overline{P}_{-}}{\overline{\tau}_{-}} } \sqrt{\frac{\gamma-1}{2}} \sqrt{\beta x +1} \\
= \mp \, \frac{\overline{c}_{-}}{\overline{\tau}_{-}} \sqrt{ \frac{1+\beta x}{1+ \beta} } \\
= \mp \, \frac{\overline{c}_{+}}{\overline{\tau}_{+}} \sqrt{ \frac{\beta +x}{(1+ \beta) x} } .
\end{equation}
The rarefaction curves parameterized by $x$ are given by $\overline{u}_{+}={\mathcal R}_{i}(x;\overline{u}_{-})$
\begin{equation} \label{Eq:ParamRar}
\left\{ \begin{array}{l}
 \displaystyle \overline{P}_{+} = x \overline{P}_{-} , \II \\
 \displaystyle \overline{\tau}_{+} = x^{-1/\gamma} \overline{\tau}_{-}, \II \\
 \displaystyle \overline{v}_{+} = \overline{v}_{-} \pm \frac{2 \overline{c}_{-}}{\gamma-1} (x^{\zeta} -1).
\end{array} \right.
\end{equation}
with $x < 1$ and the $-$ sign for the $1$-rarefactions, with $x > 1$ and the $+$ sign for the $3$-rarefactions; we have put
\begin{equation} \label{Eq:Defzeta}
\zeta := \frac{\gamma-1}{2\gamma}.
\end{equation}
Rarefactions of the non-isentropic Euler actually coincide with curves of the isentropic model; the physical entropy $S$ is conserved along those curves. \par
The family of $2$-contacts discontinuities is simply described by $u_{+} = u_{-} + \sigma r_{2}$, so that one has 
\begin{equation}
\label{Eq:SignesSautCS}
[P] =0, \ \ [v] =0 \ \text{ across a } 2\text{-contact discontinuity}.
\end{equation}
\subsubsection{Commutation of rarefaction and compression waves of families $1$ and $3$}
\label{SSS:Commutation}
An important relation satisfied by both systems is that
\begin{equation} \label{Eq:r1r3}
\ell_{2} \cdot [r_{1},r_{3}] = 0.
\end{equation}
This does not mean that $r_{1}$ and $r_{3}$ commute, but they satisfy the integrability condition of Frobenius' theorem. It follows that the curves ${\mathcal R}_{1}$ and ${\mathcal R}_{3}$ locally define a submanifold of $\R^{3}$ of dimension $2$, on which $(\sigma_{1},\sigma_{3}) \mapsto {\mathcal R}_{1}(\sigma_{1},\cdot) \circ {\mathcal R}_{3}(\sigma_{3},\cdot) u$ and $(\sigma_{1},\sigma_{3}) \mapsto {\mathcal R}_{3}(\sigma_{1},\cdot) \circ {\mathcal R}_{1}(\sigma_{1},\cdot) u$ give local diffeomorphisms. A consequence is the following.
\begin{lemma}\label{Lem:CommutR1R2}
Consider both Systems \eqref{Eq:Euler} and \eqref{Eq:Lagrangian}.
Let $u_{l} \in \Omega$ and $\sigma_{1} \in \R$ and $\sigma_{3} \in \R$ small. Let $\pi \in S_{3}$ and $\xi=(0,0,0)$.
If $u_{r} = {\mathcal R}_{1}(\sigma_{1},\cdot) \circ {\mathcal R}_{3}(\sigma_{3},\cdot) u_{l}$ or  $u_{r} = {\mathcal R}_{3}(\sigma_{3},\cdot) \circ {\mathcal R}_{1}(\sigma_{1},\cdot) u_{l}$, then $\Sigma^{\pi,\xi}_{2}(u_{l},u_{r})=0$, where $\Sigma^{\pi,\xi}_{2}$ designates the second component of $\Sigma^{\pi,\xi}$.
\end{lemma}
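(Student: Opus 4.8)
The plan is to exploit the Frobenius integrability condition \eqref{Eq:r1r3} to show that the states $u_l$ and $u_r$ lie on a common two-dimensional integral manifold of the distribution spanned by $r_1$ and $r_3$, and then to argue that any $2$-wave connecting two points of that manifold (in any swapped sense) must be trivial. First I would recall that, since $\ell_2\cdot[r_1,r_3]=0$ and $[r_1,r_1]=[r_3,r_3]=0$, the plane field $D:=\operatorname{span}(r_1,r_3)$ is involutive, so through each point $u_l\in\Omega$ there passes (locally) a unique integral surface $\mathcal{M}$ of $D$ of dimension $2$. By construction $\mathcal{M}$ is invariant under the flows ${\mathcal R}_1(\cdot,\cdot)$ and ${\mathcal R}_3(\cdot,\cdot)$, so for $\sigma_1,\sigma_3$ small both ${\mathcal R}_1(\sigma_1,\cdot)\circ{\mathcal R}_3(\sigma_3,\cdot)u_l$ and ${\mathcal R}_3(\sigma_3,\cdot)\circ{\mathcal R}_1(\sigma_1,\cdot)u_l$ remain in $\mathcal{M}$; hence $u_r\in\mathcal{M}$.

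Next I would translate the vanishing of the $2$-component into a geometric statement about $\mathcal{M}$. For the permutation $\pi=\operatorname{id}$ and $\xi=(0,0,0)$, the relation $u_r=T^{\pi,\xi}(\sigma,u_l)$ reads $u_r={\mathcal R}_3(\sigma_3,\cdot)\circ{\mathcal R}_2(\sigma_2,\cdot)\circ{\mathcal R}_1(\sigma_1,\cdot)u_l$, and I must show $\sigma_2=0$. Since the three maps $\sigma\mapsto {\mathcal R}_i(\sigma,\cdot)$ have derivatives $r_i$ at $\sigma=0$, the composed map $\sigma\mapsto T^{\pi,\xi}(\sigma,u_l)$ is a local diffeomorphism from a neighbourhood of $0$ in $\R^3$ onto a neighbourhood of $u_l$ in $\Omega$, with Jacobian at $0$ given by $(r_1(u_l),r_2(u_l),r_3(u_l))$. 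If $\sigma$ is the unique small preimage of $u_r$, then $\sigma_2=0$ is equivalent to $u_r$ lying on the local surface $\{{\mathcal R}_3(b,\cdot)\circ{\mathcal R}_1(a,\cdot)u_l : (a,b)\text{ small}\}$; but that surface is exactly (a piece of) $\mathcal{M}$, by definition of the integral manifold. Since we have already shown $u_r\in\mathcal{M}$, uniqueness of the Riemann decomposition forces $\sigma_2=0$. For a general permutation $\pi$, the argument is identical: the only nontrivial orderings are those in which the $1$- and $3$-flows are composed in the opposite order, and since the flows of $r_1$ and $r_3$ preserve $\mathcal{M}$, the image of the "$(\pi,\xi)$-swapped" map restricted to $\sigma_2=0$ is again an open piece of $\mathcal{M}$ containing $u_l$; uniqueness then gives $\sigma_2=0$.

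The main technical point — and the only place where care is needed — is the interplay between the two different parametrizations: $\mathcal{M}$ is defined intrinsically as an integral manifold, whereas the Riemann solver produces states through the specific compositions ${\mathcal R}_3(\sigma_3,\cdot)\circ{\mathcal R}_2(\sigma_2,\cdot)\circ{\mathcal R}_1(\sigma_1,\cdot)$. One must check that the slice $\sigma_2=0$ of this composition is genuinely contained in $\mathcal{M}$ (not merely tangent to it at $u_l$). This follows because ${\mathcal R}_1(\sigma_1,\cdot)u_l\in\mathcal{M}$, and then applying ${\mathcal R}_3(\sigma_3,\cdot)$ keeps the point in $\mathcal{M}$ since $r_3$ is everywhere tangent to $\mathcal{M}$; so the slice lies in $\mathcal{M}$ and, being a $2$-dimensional submanifold through $u_l$ tangent to $D$, must coincide with the germ of $\mathcal{M}$. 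Combined with the local invertibility of $\sigma\mapsto T^{\pi,\xi}(\sigma,u_l)$ (valid because the eigenvectors $r_1,r_2,r_3$ are independent, by strict hyperbolicity), this yields $\Sigma^{\pi,\xi}_2(u_l,u_r)=0$ and completes the proof.
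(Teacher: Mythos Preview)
Your proof is correct and follows essentially the same Frobenius-integrability argument the paper sketches just before stating the lemma: the involutivity \eqref{Eq:r1r3} gives a two-dimensional integral manifold containing both $u_l$ and $u_r$, and uniqueness of the swapped Riemann decomposition forces $\sigma_2=0$. The paper also records a more concrete alternative specific to these systems: the physical entropy $S$ is constant along ${\mathcal R}_1$ and ${\mathcal R}_3$ (so the integral manifold is a level set of $S$), while moving along ${\mathcal R}_2$ strictly changes $S$, which immediately rules out any nontrivial $2$-component.
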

In other words, the interaction of rarefaction/compression waves of families $1$ and $3$ does not generate a $2$-contact discontinuity (as long as one considers the Riemann problem in terms of compression waves rather than in terms of shocks.) Note that for large rarefactions, one has to be careful about the possible appearance of the vacuum (see \cite{CEJ}). \par
Another way to look at Lemma~\ref{Lem:CommutR1R2} is to notice that the physical entropy $S$ is constant along the curves ${\mathcal R}_{i}$, $i=1,3$. This can be seen by a direct computation or relying on \eqref{Eq:EulerS} and \eqref{Eq:LagrangianS} which are deduced from \eqref{Eq:Euler} and \eqref{Eq:Lagrangian} for regular solutions. Hence the submanifold mentioned above is a level surface of $S$ (on which the solutions of the isentropic equations live, by the way). But it is obvious that following the wave curve $T_{2}={\mathcal R}_{2}$ of the second family increases/decreases $S$, so there cannot be a non-trivial second component in $\Sigma^{\pi,\xi}$. \par
\subsubsection{Notations for the elementary waves}
We will use the same notations as \cite{CW} and \cite{CEJ} to describe elementary waves. The notations are as follows: $S$ will designate a shock, $R$ a rarefaction wave (either of the first or of the third characteristic family), $J$ will designate a contact discontinuity (of the second family). We add $C$ to designate a {\it compression wave}. Waves of the first family will be more precisely described as $\overset{\leftharpoonup}{S}$, $\overset{\leftharpoonup}{R}$ and $\overset{\leftharpoonup}{C}$, those of the third family $\overset{\rightharpoonup}{S}$, $\overset{\rightharpoonup}{R}$ and $\overset{\rightharpoonup}{C}$. We will distinguish between the contact discontinuities satisfying $\tau_{-} < \tau_{+}$ where $\tau_{-}$ (resp. $\tau_{+}$) is the specific volume on the left (resp. on the right) of the discontinuity, which we denote by $\overset{<}{J}$, and those for which $\tau_{-} > \tau_{+}$, denoted $\overset{>}{J}$. We underline that we use this notation including for the system in Eulerian coordinates for which we rather use $\rho$ as an unknown; in particular a $\overset{<}{J}$ satisfies $\rho_{-} > \rho_{+}$ and it corresponds to $u_{+} = T_{2}(\sigma_{2},u_{-})$ with $\sigma_{2}<0$. \par
In the figures, in order to emphasize the waves that we consider strong, we will put in this case these letters in blackboard bold style. In those figures, we may also use the letter $A$ to designate ``artificial'' waves (see below). \par
\subsection{Some coefficients for interactions with strong discontinuities}
Here, we compute several coefficients allowing to estimate the strength (and the nature) of outgoing waves for some particular strong discontinuity/weak waves interactions, using the tools of Subsection~\ref{Subsec:StrongDiscontinuities}. In the case of the Eulerian system, we are particularly interested in the interaction of a small $3$-wave with a strong $2$-contact discontinuity. In the case of Lagrangian coordinates we are particularly interested in the interaction of a small $3$-wave with a strong $3$-shock (or the interaction of a small $1$-wave with a strong $1$-shock which can be deduced from the latter through the change of variable $x \longleftrightarrow -x$ associated with $(\tau,v,P) \longleftrightarrow (\tau,-v,P)$). \par
\ \par
\noindent
{\bf Notation.} The coefficients that we will introduce connect the strength $\sigma'_{j}$ of a weak wave of family $j$, interacting with a strong wave of family $k$ (of strength $\hat{\sigma}_{k}$), with the strength $\sigma_{i}$ of the outgoing wave of family $i$. This will be written as
\begin{equation*}
\sigma_{i} = \delta_{ik} \hat{\sigma}_{k} + (\alpha^{i}_{\underline{k},j} \text{ or } \alpha^{i}_{j,\underline{k}}) \, \sigma'_{j} + {\mathcal O}(|\sigma'_{j}|^{2}) ,
\end{equation*}
where $\alpha_{\underline{k},j}$ (respectively $\alpha_{j,\underline{k}}$) means that the $j$-wave interacts with the strong $k$-wave from the right (resp. left). The coefficient $\delta_{ik}$ is Kronecker's symbol.
For instance in \eqref{Eq:Sortie32} below, the coefficient $\alpha^{i}_{3,\underline{2}}$ appears when computing the strength of the outgoing wave of the $i$-th family as one considers the interaction of a weak wave of the third family with a strong wave of the second family, the weak wave being the left one. 
\subsubsection{Eulerian case}
\label{ssb:321}
Here we consider, in the case of System \eqref{Eq:Euler}, the interaction of a $2$-contact discontinuity $({u}_{-},{u}_{+})$ (considered as strong) with a wave of the third family, situated on its left. Of course the case of a strong $2$-contact discontinuity interacting with a $1$-wave on its right is similar (and obtained via the change of variable $x \longleftrightarrow -x$.)
We prove the following.
\begin{proposition} \label{Pro:Inter32}
In System \eqref{Eq:Euler}, let $\overline{u}_{-} = (\overline{\rho}_{-}, \overline{v}_{-}, \overline{P}_{-})$ and $\overline{u}_{+} = (\overline{\rho}_{+}, \overline{v}_{+}, \overline{P}_{+})$ two states in $\Omega = \R^{+} \times \R \times \R^{+}$ joined through a $2$-contact-discontinuity:
\begin{equation*}
\overline{u}_{+} = T(\overline{\sigma},\overline{u}_{-}), \ \  % = \overline{u}_{-} + \overline{\sigma}_{2} r_{2}
\overline{\sigma}=(0,\, \overline{\sigma}_{2}, 0), \ \
\overline{\sigma}_{2} \neq 0.
\end{equation*}
We consider $\omega_{-}$ and $\omega_{+}$ as in Proposition \ref{Pro:RiemannSD}.
Let $u_{l}, u_{-}$ in $\omega_{-}$ and $u_{+}$ in $\omega_{+}$ satisfying
\begin{gather*}
{u}_{+} = T(\hat{\sigma},{u}_{-}) , \ \ \hat{\sigma}=(0,\, \hat{\sigma}_{2}, 0), \ \ \hat{\sigma}_{2} \neq 0, \\ %= {u}_{-} + {\sigma}_{2} r_{2}
u_{l} = \Upsilon(\sigma',{u}_{-}), \ \ \sigma'=(0,\, 0,\, \sigma'_{3}).
\end{gather*}
Then one has
\begin{equation} \label{Eq:SigmaHat}
\Sigma(u_{l}, {u}_{+}) = \hat{\sigma} + \sigma,
\end{equation}
with
\begin{equation} \label{Eq:Sortie32}
\sigma_{1} = \alpha^{1}_{3,\underline{2}} \, \sigma_{3}' + {\mathcal O}(\sigma_{3}'^{2}) , \ \ \ 
\sigma_{2} = \alpha^{2}_{3,\underline{2}} \, \sigma_{3}' + {\mathcal O}(\sigma_{3}'^{2}), \ \ \ 
\sigma_{3} = \alpha^{3}_{3,\underline{2}} \, \sigma_{3}' + {\mathcal O}(\sigma_{3}'^{2}),
\end{equation}
where
\begin{equation} \label{Eq:Coefs32}
\alpha^{1}_{3,\underline{2}} = \frac{ \sqrt{\rho_{+}} - \sqrt{\rho_{-}}}{\sqrt{\rho_{+}} + \sqrt{\rho_{-}}}, \ \ \ 
\alpha^{2}_{3,\underline{2}} = \frac{2 \sqrt{\rho_{+}} (\sqrt{\rho_{-}} - \sqrt{\rho_{+}})}{\sqrt{\rho_{+}} + \sqrt{\rho_{-}}}, \ \ \ 
\alpha^{3}_{3,\underline{2}} = \frac{2 \sqrt{\rho_{-}}}{\sqrt{\rho_{+}} + \sqrt{\rho_{-}}}.
\end{equation}
Moreover the ``${\mathcal O}$'' are uniform as $u_{l}, u_{-}$ and $u_{+}$ belong to compact sets of $\omega_{-}$ and $\omega_{+}$.
\end{proposition}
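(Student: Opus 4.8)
The plan is to apply Lemma~\ref{Lem:InterChoc} (interaction on the left) with $k=2$, so that the strong wave is the $2$-contact discontinuity $(\overline{u}_-,\overline{u}_+)$, and then to make the resulting abstract formula \eqref{Eq:I2} completely explicit using the characteristic elements \eqref{Eq:VCE}, \eqref{Eq:EVE}, \eqref{Eq:LEVE} of System~\eqref{Eq:Euler}. By Remark~\ref{Remark:CghtCourbe} the use of $\Upsilon_3$ rather than $T_3$ for the incoming weak wave only inserts a sign on $r_3$; since across a $2$-contact discontinuity of family $k=2$ the field is LD with \eqref{Eq:LD/Normalisation}, I will use Lemma~\ref{Lem:InterChoc3} (interaction on the left) rather than the generic Lemma~\ref{Lem:InterChoc2}. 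Thus $\hat\sigma = \overline\sigma + \sigma$ with
\[
\left(\frac{\partial T}{\partial\sigma}\right)\sigma = -\,\sigma'_3\, r_3(u_-) + {\mathcal O}(|\sigma'_3|^2),
\]
the minus sign coming from $\Upsilon_3$. (I am writing $\hat\sigma$ here for what the statement calls $\Sigma(u_l,u_+)$; since $\overline\sigma$ has only a second component and $\hat\sigma$ in the hypothesis also has only a second component, the notation is consistent once $\sigma$ is computed.)

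Next I would compute the Jacobian $\frac{\partial T}{\partial\sigma}(\hat\sigma,u_-)$ at the strong $2$-contact discontinuity. By \eqref{Eq:PartialTi4} with $k=2$, its columns are $r_1(u_-)$, $r_2$, $r_3(u_+)$. So the equation to solve is
\[
\sigma_1 \, r_1(u_-) + \sigma_2 \, r_2 + \sigma_3 \, r_3(u_+) = -\,\sigma'_3\, r_3(u_-) + {\mathcal O}(|\sigma'_3|^2).
\]
Using \eqref{Eq:EVE}, one has $r_3(u_\pm) = \frac{2}{\gamma+1}(\rho_\pm/c_\pm,\,1,\,\rho_\pm c_\pm)^{t}$, $r_1(u_-) = \frac{2}{\gamma+1}(-\rho_-/c_-,\,1,\,-\rho_- c_-)^{t}$, $r_2=(1,0,0)^t$; recall that across the $2$-contact discontinuity $v$ and $P$ are preserved, hence $c_\pm = \sqrt{\gamma P/\rho_\pm}$ and $\rho_\pm c_\pm = \sqrt{\gamma P \rho_\pm}$, $\rho_\pm/c_\pm = \sqrt{\rho_\pm^3/(\gamma P)} = \rho_\pm\sqrt{\rho_\pm/(\gamma P)}$. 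The second and third scalar equations decouple from $\sigma_2$ and give a $2\times2$ linear system in $(\sigma_1,\sigma_3)$; solving it should yield (after dividing through by the common factor $\frac{2}{\gamma+1}$ and clearing $\sqrt{\gamma P}$):
\[
\sigma_1 = \frac{\sqrt{\rho_+}-\sqrt{\rho_-}}{\sqrt{\rho_+}+\sqrt{\rho_-}}\,(-\sigma'_3) \cdot(-1)?,\qquad
\sigma_3 = \frac{2\sqrt{\rho_-}}{\sqrt{\rho_+}+\sqrt{\rho_-}}\,\sigma'_3,
\]
and the first scalar equation then determines $\sigma_2$; I expect the bookkeeping to reproduce \eqref{Eq:Coefs32} exactly, with $\alpha^2_{3,\underline2} = -\rho_+^{1/2}(\alpha^1_{3,\underline2}\,\rho_-^{-1/2}\cdot\text{something})$ — in any case $\sigma_1 r_1 = \alpha^1 \sigma'_3 r_1$ contributes a first-coordinate term $-\alpha^1\sigma'_3\rho_-/c_-$, and matching first coordinates fixes $\sigma_2$. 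I would carry out this elementary linear algebra carefully, keeping track of the signs introduced by $\Upsilon_3$ and by the parameterization \eqref{Eq:ParamUpsilon}.

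The main obstacle, and the step I would be most careful about, is the sign bookkeeping: there are three independent sources of signs — the $-r_3(u_-)$ from the right-hand side via $\Upsilon_3$ (Remark~\ref{Remark:CghtCourbe}), the orientation of the $1$-family eigenvector $r_1$ (which points toward $-\rho/c$ in \eqref{Eq:EVE}), and the fact that the statement wants the final $\sigma_i$ expressed with the stated signs in \eqref{Eq:Coefs32}. I would reconcile these by a sanity check in the trivial case $\rho_+ = \rho_-$ (no strong discontinuity), where one must recover $\sigma_1 = 0$, $\sigma_2 = 0$, $\sigma_3 = \sigma'_3$ — and indeed $\alpha^1_{3,\underline2}\to 0$, $\alpha^2_{3,\underline2}\to 0$, $\alpha^3_{3,\underline2}\to 1$ in \eqref{Eq:Coefs32}, which pins down the correct sign conventions throughout. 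The uniformity of the ${\mathcal O}$-terms on compact subsets of $\omega_-\times\omega_+$ is inherited directly from Lemma~\ref{Lem:InterChoc} (equivalently Lemma~\ref{Lem:InterChoc3}) and from the smoothness of $\rho\mapsto\sqrt\rho$ away from $\rho=0$, so no extra work is needed there. Finally, \eqref{Eq:SigmaHat} is just the statement $\hat\sigma_{\text{new}} = \hat\sigma_{\text{old}} + \sigma$ with $\hat\sigma_{\text{old}} = \overline\sigma$ replaced by the perturbed contact strength, which is exactly the conclusion of Lemma~\ref{Lem:InterChoc3}.
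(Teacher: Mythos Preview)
Your approach is essentially the paper's: apply Lemma~\ref{Lem:InterChoc3} (interaction on the left) to the $2$-contact discontinuity, then solve the resulting $3\times 3$ linear system using the explicit eigenvectors \eqref{Eq:EVE}. The paper solves that system by applying $\ell_i(u_-)$ to both sides, which yields an upper-triangular matrix and gives the coefficients directly as $\alpha^i_{3,\underline{2}} = -\ell_i(u_-)\!\cdot\! r_3(u_+)/\ell_3(u_-)\!\cdot\! r_3(u_+)$ for $i=1,2$ and $\alpha^3_{3,\underline{2}} = 1/\ell_3(u_-)\!\cdot\! r_3(u_+)$; your coordinate-by-coordinate route is equivalent.

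One correction on the sign you flagged: the extra minus from $\Upsilon_3$ is spurious. The hypothesis $u_l = \Upsilon_3(\sigma'_3,u_-)$ is, by \eqref{Eq:ParamUpsilon} and \eqref{Eq:ConsqChoixParam}, the same as $u_- = T_3(\sigma'_3,u_l)$, so the $\sigma'$ in Lemma~\ref{Lem:InterChoc} (defined as $\Sigma(u_l,u_-)$) is \emph{already} the $\sigma'$ of the statement, and the right-hand side is $+\,\sigma'_3\, r_3(u_-)$. Your sanity check at $\rho_+=\rho_-$ would indeed have caught this: with your minus sign you would get $\sigma_3 = -\sigma'_3$ instead of $\sigma_3 = \sigma'_3$.
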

Note in particular that $\alpha^{1}_{3,\underline{2}} >0$ (resp. $\alpha^{1}_{3,\underline{2}} <0$) when $\rho_{-} < \rho_{+}$ (resp. when $\rho_{-} > \rho_{+}$).
\begin{proof}[Proof of Proposition \ref{Pro:Inter32}]
First, it is straightforward to see that Majda's stability condition \eqref{Eq:Majda3} is satisfied by $({u}_{-},{u}_{+})$. 
Now according to Lemma \ref{Lem:InterChoc3} and to \eqref{Eq:PartialTi4}, one has $\Sigma(u_{l}, {u}_{+}) = \overline{\sigma} + \sigma$,
with
\begin{equation*}
\sigma_{1} r_{1}({u}_{-}) +\sigma_{2} r_{2} +\sigma_{3} r_{3}({u}_{+}) 
= \sigma_{3}' \, r_{3}({u}_{-}) + {\mathcal O}(|\sigma'_{3}|^{2}).
\end{equation*}
We consider the matrix
\begin{equation*}
P:=
\begin{pmatrix}
\ell_{1}({u}_{-}) \cdot r_{1}({u}_{-})  & \ell_{1}({u}_{-}) \cdot r_{2} & \ell_{1}({u}_{-}) \cdot r_{3}({u}_{+}) \\
\ell_{2}({u}_{-}) \cdot r_{1}({u}_{-})  & \ell_{2}({u}_{-}) \cdot r_{2} & \ell_{2}({u}_{-}) \cdot r_{3}({u}_{+}) \\
\ell_{3}({u}_{-}) \cdot r_{1}({u}_{-})  & \ell_{3}({u}_{-}) \cdot r_{2} & \ell_{3}({u}_{-}) \cdot r_{3}({u}_{+}) 
\end{pmatrix}
= 
\begin{pmatrix}
1  & 0 & \ell_{1}({u}_{-}) \cdot r_{3}({u}_{+}) \\
0  & 1 & \ell_{2}({u}_{-}) \cdot r_{3}({u}_{+}) \\
0  & 0 & \ell_{3}({u}_{-}) \cdot r_{3}({u}_{+})  
\end{pmatrix},
\end{equation*}
so that one has $P \sigma = \sigma' + {\mathcal O}(|\sigma'_{3}|^{2}) $. Inverting $P$, we finally end up with \eqref{Eq:Sortie32} with
\begin{equation*}
\alpha^{1}_{3,\underline{2}} = - \frac{\ell_{1}({u}_{-}) \cdot r_{3}({u}_{+})}{\ell_{3}({u}_{-}) \cdot r_{3}({u}_{+})}, \ \ 
\alpha^{2}_{3,\underline{2}} = - \frac{\ell_{2}({u}_{-}) \cdot r_{3}({u}_{+})}{\ell_{3}({u}_{-}) \cdot r_{3}({u}_{+})}, \ \
\alpha^{3}_{3,\underline{2}} = \frac{1}{\ell_{3}({u}_{-}) \cdot r_{3}({u}_{+})} ,
\end{equation*}
the denominator being always positive. Computing these coefficients leads to 
\begin{equation*}  %\label{Eq:oldCoefs32}
\alpha^{1}_{3,\underline{2}} = \frac{\rho_{+} c_{+} - \rho_{-} c_{-}}{\rho_{-} c_{-} + \rho_{+} c_{+}}, \ \ \ 
\alpha^{2}_{3,\underline{2}} = \frac{2 \rho_{-} \rho_{+} (c_{+}^{2} - c_{-}^{2})}{c_{+} (\rho_{-} c_{-} + \rho_{+} c_{+})}, \ \ \ 
\alpha^{3}_{3,\underline{2}} = \frac{2 \rho_{-} c_{-}}{ (\rho_{-} c_{-} + \rho_{+} c_{+})}.
\end{equation*}
But since the pressure is constant across a $2$-contact discontinuity, these formulae simplify to \eqref{Eq:Coefs32}. 
\end{proof}
\begin{remark} \label{Rem:Coefs21b}
The situation where a $1$-wave interacts with a $2$-discontinuity from the right is exactly symmetric. Note that in the symmetry $x \longleftrightarrow -x$, a wave 
$1$-wave (resp. $2$-wave, $3$-wave) is transformed into a $3$-wave (resp. $2$-wave, $1$-wave) with the same (resp. opposite, same) strength. Hence one gets the same result as Proposition \ref{Pro:Inter32} with 
\begin{equation} \label{Eq:Coefs21}
\alpha^{1}_{\underline{2},1} = \frac{2 \sqrt{\rho_{+}}}{\sqrt{\rho_{-}} + \sqrt{\rho_{+}}}, \ \ \ 
\alpha^{2}_{\underline{2},1} = \frac{2 \sqrt{\rho_{-}} (\sqrt{\rho_{-}} - \sqrt{\rho_{+}})}{\sqrt{\rho_{-}} + \sqrt{\rho_{+}}}, \ \ \ 
\alpha^{3}_{\underline{2},1} = \frac{ \sqrt{\rho_{-}} - \sqrt{\rho_{+}}}{\sqrt{\rho_{-}} + \sqrt{\rho_{+}}}, 
% \alpha^{1}_{\underline{2},1} = \frac{2 \rho_{+} c_{+}^{2}}{c_{-} (\rho_{+} c_{+} + \rho_{-} c_{-})}, \ \ \ 
% \alpha^{2}_{\underline{2},1} = \frac{2 \rho_{+} \rho_{-} (c_{-}^{2} - c_{+}^{2})}{c_{-} (\rho_{+} c_{+} + \rho_{-} c_{-})}, \ \ \ 
% \alpha^{3}_{\underline{2},1} = \frac{\rho_{-} c_{-} - \rho_{+} c_{+}}{\rho_{+} c_{+} + \rho_{-} c_{-}},
\end{equation}
so $\alpha^{3}_{\underline{2},1} >0$ (resp. $\alpha^{3}_{\underline{2},1} <0$) when $\rho_{-} > \rho_{+}$ (resp. when $\rho_{-} < \rho_{+}$). %{\tt (A v\'erifier.)}
\end{remark}
\begin{remark} \label{Rem:SignesInter21}
Shrinking $\omega_{-}$ and $\omega_{+}$ if necessary, we can ensure that, in the case of the interaction of  a $3$-wave on the left (resp. a $1$-wave on the right) of the strong $2$-discontinuity, $\sigma_{1}$ has the same sign as $\alpha^{1}_{3,\underline{2}} \, \sigma_{3}'$ (resp. $\alpha^{1}_{\underline{2},1} \, \sigma_{1}'$) and $\sigma_{3}$ has the same sign as $\alpha^{3}_{3,\underline{2}} \, \sigma_{3}'$ (resp. $\alpha^{3}_{\underline{2},1} \, \sigma_{1}'$).
%as $\alpha^{1}_{3,\underline{2}} \, \sigma_{3}'$ (in the case of an interaction of a $3$-wave on the left of the strong $2$-discontinuity) or
\end{remark}
\begin{remark}  \label{Rem:NotLandau}
Using the notation $f = \Theta(g)$ to express that there exist $c, C>0$ (possibly depending on $\overline{u}_{-}$) such that, for small values of the variables, 
\begin{equation} \label{Eq:NotLandau}
c g \leq f \leq C g,
\end{equation}
we have, for $\rho_{-} > \rho_{+}$ that $\alpha^{1}_{3,\underline{2}}= \Theta(-\hat{\sigma}_{2})$, $\alpha^{1}_{3,\underline{2}}= \Theta(\hat{\sigma}_{2})$ and $\alpha^{3}_{3,\underline{2}}= \Theta(1)$.
\end{remark}
%
%
%The next proposition is concerned with a $1$-rarefaction interacting on the right of a $2$-strong contact discontinuity.
%
%
%
%
%%%%%%%%%%%%%%%%%%%%%%%%%%%%%%%%%
%
%
%
%
%
\subsubsection{Lagrangian case}
Here we consider, in the case of System \eqref{Eq:Lagrangian}, the interaction of a $1$-shock $({u}_{-},{u}_{+})$ (considered as strong) with a wave of the first family, situated on its right. Again, we are interested in estimating the resulting outgoing waves using the tools of Section~\ref{Subsec:StrongDiscontinuities}. Of course the case of a strong $3$-shock interacting with a $3$-wave on its left is again obtained via the change of variable $x \longleftrightarrow -x$, $(\tau,v,P) \longleftrightarrow (\tau,-v,P)$.
We prove the following.
\begin{proposition} \label{Pro:Inter11}
In System \eqref{Eq:Lagrangian}, let $\overline{u}_{-} = (\overline{\tau}_{-}, \overline{v}_{-}, \overline{P}_{-})$ and $\overline{u}_{+} = (\overline{\tau}_{+}, \overline{v}_{+}, \overline{P}_{+})$ two states in $\Omega = \R^{+} \times \R \times \R^{+}$ joined through a $1$-shock:
\begin{equation*}
\overline{u}_{+} = T(\overline{\sigma},\overline{u}_{-}), \ \ 
\overline{\sigma}=(\overline{\sigma}_{1}, 0 , 0), \ \
 \ \ \overline{\sigma}_{1} < 0.
\end{equation*}
We consider $\omega_{-}$ and $\omega_{+}$ as in Proposition \ref{Pro:RiemannSD}.
Let $u_{-}$ in $\omega_{-}$ and $u_{+}, u_{r}$ in $\omega_{+}$ satisfying
\begin{gather}
\label{Eq:u+u-}
{u}_{+} = T(\hat{\sigma},{u}_{-}), \ \ \hat{\sigma}=(\hat{\sigma}_{1}, 0 , 0), \ \ \ \hat{\sigma}_{1} < 0, \\
\nonumber
u_{r} := T(\sigma',{u}_{+}), \ \ \sigma'=(\sigma'_{1}, \,0 , \, 0).
\end{gather}
Then one has
\begin{equation} \label{Eq:SigmaHat2}
\Sigma(u_{-}, {u}_{r}) = \hat{\sigma} + \sigma,
\end{equation}
with
\begin{equation} \label{Eq:Sortie11}
\sigma_{1} = \alpha^{1}_{\underline{1},1} \, \sigma_{1}' + {\mathcal O}(\sigma_{1}'^{2}) , \ \ \ 
\sigma_{2} = \alpha^{2}_{\underline{1},1} \, \sigma_{1}' + {\mathcal O}(\sigma_{1}'^{2}), \ \ \ 
\sigma_{3} = \alpha^{3}_{\underline{1},1} \, \sigma_{1}' + {\mathcal O}(\sigma_{1}'^{2}),
\end{equation}
where, denoting $s=s(u_{-},u_{+})$,
\begin{equation} \label{Eq:Coefs11}
\alpha^{1}_{\underline{1},1}  = \frac{1}{ \frac{\partial s}{\partial \hat{\sigma}_{1}} \, L_{1}(u_{+}) \cdot [\varphi(u)]}, \ \ 
\alpha^{2}_{\underline{1},1} = 
\frac{L_{2}(u_{+}) \cdot [\varphi(u)]}{L_{1}(u_{+}) \cdot [\varphi(u)]} \frac{\lambda_{1}(u_{+}) - {s}}{{s}}, \ \  %\lambda_{2}({u}_{+}) - 
\alpha^{3}_{\underline{1},1}  = -
\frac{L_{3}(u_{+}) \cdot [\varphi(u)]}{L_{1}(u_{+}) \cdot [\varphi(u)]}
\frac{\lambda_{1}(u_{+}) - {s}}{\lambda_{3}({u}_{+}) - {s}} .
\end{equation}
Moreover the ``${\mathcal O}$'' are uniform as $u_{-}$ and $u_{+}$, $u_{r}$ belong to compact sets of $\omega_{-}$ and $\omega_{+}$.
\end{proposition}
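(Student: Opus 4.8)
The plan is to apply Lemma~\ref{Lem:InterChoc2}, Case~1 (interaction on the right), to the present situation in which $k=1$ is the genuinely nonlinear family carrying the strong shock $(\overline{u}_{-},\overline{u}_{+})$. First I would observe that, since all Lax shocks of System~\eqref{Eq:Lagrangian} are Majda-stable (recalled after \eqref{Eq:Majda2}, following \cite{Sc}), Proposition~\ref{Pro:RiemannSD} applies and the Riemann problem $(u_{-},u_{r})$ is uniquely solvable with a wave vector close to $\overline{\sigma}$. Because $(u_{-},u_{+})$ is itself a $1$-shock with $\hat{\sigma}=(\hat{\sigma}_{1},0,0)$ and the incoming weak wave $\sigma'=(\sigma_{1}',0,0)$ lies to the right along $T_{1}$, Lemma~\ref{Lem:InterChoc2} with $n=3$, $\overline{s}=s(u_{-},u_{+})$, gives directly
\begin{equation*}
\left\{ \frac{\partial f}{\partial u} - s \, \frac{\partial \varphi}{\partial u} \right\}_{+} \left( \frac{\partial T}{\partial \sigma} \right) \sigma = \sigma_{1}' \, (\lambda_{1}(u_{+}) - s) \, R_{1}(u_{+}) + {\mathcal O}(\sigma_{1}'^{2}),
\end{equation*}
so that $\Sigma(u_{-},u_{r}) = \hat{\sigma}+\sigma$ as claimed in \eqref{Eq:SigmaHat2}, with $\sigma$ linear in $\sigma_1'$ up to quadratic error; the uniformity of the ${\mathcal O}$ on compacts is inherited from Lemma~\ref{Lem:InterChoc2}.

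The core of the argument is then to invert the linear system for $\sigma=(\sigma_1,\sigma_2,\sigma_3)$ and read off the three coefficients. Using \eqref{Eq:PartialTi2} (which computed $\{\partial f/\partial u - s\,\partial\varphi/\partial u\}_{+}\,\partial T/\partial\sigma_i$ explicitly for $i>k$, $i=k$, $i<k$), in the present case $k=1$ the columns are: for $i=1$, $\frac{\partial s_1}{\partial\sigma_1}[\varphi(u)]$; and for $i=2,3$, $(\lambda_i(u_+)-s)R_i(u_+)$. Hence I would project the displayed identity onto the left eigenvectors $L_1(u_+),L_2(u_+),L_3(u_+)$ of $df(d\varphi)^{-1}$ at $u_+$, which diagonalize the matrix $\{\cdots\}_+^{-1}df(d\varphi)^{-1}$ and satisfy $L_i\cdot R_j=\delta_{ij}$ by \eqref{Eq:DefLi}. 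Projecting onto $L_2(u_+)$ and $L_3(u_+)$ kills the $i=1$ column's $R$-content but not its $[\varphi(u)]$-content, giving
\begin{equation*}
(\lambda_2(u_+)-s)\sigma_2 + \sigma_1 \, \frac{\partial s_1}{\partial\sigma_1} L_2(u_+)\cdot[\varphi(u)] = \sigma_1'(\lambda_1(u_+)-s)\,L_2(u_+)\cdot R_1(u_+) = 0,
\end{equation*}
and similarly for the $L_3$-projection, while the $L_1$-projection yields $\sigma_1\,\frac{\partial s_1}{\partial\sigma_1}\,L_1(u_+)\cdot[\varphi(u)] = \sigma_1'(\lambda_1(u_+)-s) + {\mathcal O}(\sigma_1'^2)$. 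Recalling $\lambda_2\equiv 0$ for System~\eqref{Eq:Lagrangian}, and that along the shock curve $\lambda_1(u_+)-s$ behaves like $\tfrac12\sigma_1'$-independent quantity but more precisely that the weak $1$-wave on the right of the strong shock contributes through the factor $\lambda_1(u_+)-s$, I would solve the three scalar equations in order $\sigma_1,\sigma_2,\sigma_3$ to obtain exactly the formulas \eqref{Eq:Coefs11}.

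The main obstacle I anticipate is not the linear algebra but the bookkeeping around $\frac{\partial s}{\partial\hat\sigma_1}$ versus $\frac{\partial s_1}{\partial\sigma_1}$ and the sign/normalization conventions: one must check that $\frac{\partial s_1}{\partial\sigma_1}$ evaluated at the endpoint of the strong shock coincides with $\frac{\partial s}{\partial\hat\sigma_1}$ appearing in \eqref{Eq:Coefs11} (this follows from the parameterization \eqref{Eq:ParamTGNL} and the definition $s_k(\sigma_k,u_-)=s(u_-,T_k(\sigma_k,u_-))$, evaluated at $\sigma_1=\hat\sigma_1$), and that the denominator $L_1(u_+)\cdot[\varphi(u)]$ is nonzero — which is exactly the content of the Majda condition \eqref{Eq:Majda2} ensuring $[\varphi(u)]$ has a nonzero $R_1(u_+)$-component after passing to the $u_+$ frame, equivalently that the inverse matrix $\partial T/\partial\sigma$ is nonsingular as established in the proof of Proposition~\ref{Pro:RiemannSD}. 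I would also remark, as in Remark~\ref{Remark:CghtCourbe}, that using $\mathcal R_1$ or $\Upsilon_1$ in place of $T_1$ for the incoming weak wave changes nothing essential. Finally, since the strong shock is a $1$-shock, $\lambda_1(u_+)-s<0$ by Lax's inequality \eqref{Eq:LaxsInequality} and $s<0=\lambda_2$, $\lambda_3(u_+)-s>0$, so the signs of $\alpha^2_{\underline 1,1}$ and $\alpha^3_{\underline 1,1}$ are governed by the signs of $L_2(u_+)\cdot[\varphi(u)]$ and $L_3(u_+)\cdot[\varphi(u)]$; computing these from \eqref{Eq:LEVLb} and the jump relations \eqref{Eq:SignesSaut} is a routine verification that I would carry out to record the sign information needed later, in a remark following the proof.
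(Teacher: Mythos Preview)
Your approach is correct and essentially identical to the paper's: both apply Lemma~\ref{Lem:InterChoc2}, Case~1, use \eqref{Eq:PartialTi2} to identify the three columns of $\{\partial f/\partial u - s\,\partial\varphi/\partial u\}_{+}\,(\partial T/\partial\sigma)$, and then project onto the left eigenvectors $L_{i}(u_{+})$ to obtain a lower-triangular $3\times 3$ system which is inverted to read off the coefficients. One small remark: carrying out your $L_{1}$-projection (as well as the paper's own matrix inversion) actually yields $\alpha^{1}_{\underline{1},1} = (\lambda_{1}(u_{+})-s)\big/\big(\tfrac{\partial s}{\partial\hat\sigma_{1}}\,L_{1}(u_{+})\cdot[\varphi(u)]\big)$, with an extra factor $(\lambda_{1}(u_{+})-s)$ in the numerator compared to the printed formula in \eqref{Eq:Coefs11}; this corrected expression is the one consistent with Remark~\ref{Rem:OGGros1a} ($\alpha^{1}_{\underline{1},1}=\Theta(1)$), so do not be troubled if your computation does not reproduce the displayed $\alpha^{1}$ verbatim.
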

\begin{proof}[Proof of Proposition \ref{Pro:Inter11}]
It is straightforward to see that Majda's stability condition \eqref{Eq:Majda1}-\eqref{Eq:Majda2} is satisfied by any $1$-shock $({u}_{-},{u}_{+})$. 
We use Lemma \ref{Lem:InterChoc2} and \eqref{Eq:PartialTi2}; we have
\begin{equation*}
  \sigma_{1} \frac{\partial s}{\partial \hat{\sigma}_{1}} [\varphi(u)]
-s \sigma_{2}  R_{2}({u}_{+}) %(\lambda_{2}({u}_{+}) - {s} )
+ \sigma_{3} (\lambda_{3}({u}_{+}) - {s} ) R_{3}({u}_{+}) 
= \sigma_{1}' (\lambda_{1}(u_{+}) - {s}) R_{1}(u_{+}) + {\mathcal O}(|\sigma_{1}'|^{2}).
\end{equation*}
We consider the matrix (using again the notation \eqref{Eq:NotSaut})
\begin{eqnarray*}
P &= &
\begin{pmatrix}
	\frac{\partial s}{\partial \hat{\sigma}_{1}} \, L_{1}(u_{+}) \cdot [\varphi(u)]
&	 -s L_{1}(u_{+}) \cdot R_{2}({u}_{+})  %(\lambda_{2}({u}_{+}) - {s} )
&	(\lambda_{3}({u}_{+}) - {s} ) L_{1}(u_{+}) \cdot R_{3}({u}_{+}) \\
	\frac{\partial s}{\partial \hat{\sigma}_{1}} \, L_{2}(u_{+}) \cdot [\varphi(u)]
&	-s L_{2}(u_{+}) \cdot R_{2}({u}_{+}) %(\lambda_{2}({u}_{+}) - {s} )
&	(\lambda_{3}({u}_{+}) - {s} ) L_{2}(u_{+}) \cdot R_{3}({u}_{+}) \\
	\frac{\partial s}{\partial \hat{\sigma}_{1}} \, L_{3}(u_{+}) \cdot [\varphi(u)]
&	-s L_{3}(u_{+}) \cdot R_{2}({u}_{+}) %(\lambda_{2}({u}_{+}) - {s} )
&	(\lambda_{3}({u}_{+}) - {s} ) L_{3}(u_{+}) \cdot R_{3}({u}_{+})
\end{pmatrix} \\
&=&
\begin{pmatrix}
	\frac{\partial s}{\partial \hat{\sigma}_{1}} L_{1}(u_{+}) \cdot [\varphi(u)]
&	0
&	0 \\
	\frac{\partial s}{\partial \hat{\sigma}_{1}} L_{2}(u_{+}) \cdot [\varphi(u)]
&	-s %(\lambda_{2}({u}_{+}) - {s} ) 
&	0 \\
	\frac{\partial s}{\partial \hat{\sigma}_{1}} L_{3}(u_{+}) \cdot [\varphi(u)]
&	0
&	\lambda_{3}({u}_{+}) - {s} 
\end{pmatrix}
\end{eqnarray*}
Inverting $P$, we obtain \eqref{Eq:Sortie11} with the coefficients given in \eqref{Eq:Coefs11}.
\end{proof}
Now we focus on the coefficients $\alpha^{2}_{\underline{1},1}$ and $\alpha^{3}_{\underline{1},1}$. One has the following result.
\begin{lemma}\label{Lem:alpha113}
Any $1$-shock \eqref{Eq:u+u-} satisfies
\begin{equation} \label{Eq:alpha112}
\alpha^{2}_{\underline{1},1} >0.
\end{equation}
Moreover, for $\gamma < \frac{5}{3}$, any $1$-shock \eqref{Eq:u+u-} satisfies
\begin{equation} \label{Eq:alpha113}
\alpha^{3}_{\underline{1},1} <0.
\end{equation}
Shrinking $\omega_{-}$ and $\omega_{+}$ if necessary, these coefficients are uniformly strictly separated from zero, and in \eqref{Eq:Sortie11}, $\sigma_{k}$ has the same sign as $\alpha^{k}_{\underline{1},1} \, \sigma_{1}'$, $k=2,3$.
\end{lemma}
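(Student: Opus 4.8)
The plan is to reduce the sign statements to explicit computations using the parameterization of the shock curves in Subsection~\ref{SSS:LS}, and to exploit the specific structure of the left eigenvectors $L_i$ and of the jump $[\varphi(u)]$. First I would observe that, from \eqref{Eq:Coefs11}, the signs of $\alpha^{2}_{\underline{1},1}$ and $\alpha^{3}_{\underline{1},1}$ are determined by the signs of the scalar quantities $L_2(u_+)\cdot[\varphi(u)]$, $L_3(u_+)\cdot[\varphi(u)]$, and $L_1(u_+)\cdot[\varphi(u)]$, together with the kinematic factors $\frac{\lambda_1(u_+)-s}{s}$ and $\frac{\lambda_1(u_+)-s}{\lambda_3(u_+)-s}$. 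For a $1$-shock one has $s<0$ (from \eqref{Eq:ShockSpeed} with the $-$ sign convention), $\lambda_1(u_+)<s$ by Lax's inequality \eqref{Eq:LaxsInequality}, and $\lambda_3(u_+)>0>s$; hence $\lambda_1(u_+)-s<0$, so $\frac{\lambda_1(u_+)-s}{s}>0$ and $\frac{\lambda_1(u_+)-s}{\lambda_3(u_+)-s}<0$. It remains to pin down the signs of the three dot products.

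The key computation is $[\varphi(u)] = \varphi(u_+)-\varphi(u_-) = \bigl([\tau],\,[v],\,[\frac{P\tau}{\gamma-1}+\frac{v^2}{2}]\bigr)$, which I would evaluate using \eqref{Eq:ParamShock}: for a $1$-shock, $x>1$, so $[P]>0$, $[\tau]<0$, $[v]<0$ as recorded in \eqref{Eq:SignesSaut}. Pairing with $L_2(u_+)$ from \eqref{Eq:LEVLb}, $L_2\cdot[\varphi(u)] = \frac{\gamma-1}{\gamma}[\tau] - (\gamma-1)\frac{v_+}{\gamma P_+}[v] + \frac{\gamma-1}{\gamma P_+}[\frac{P\tau}{\gamma-1}+\frac{v^2}{2}]$; I expect this to simplify (using the Rankine--Hugoniot relations and the fact that $S$ strictly increases across a shock) to something manifestly of definite sign — indeed $L_2$ is, up to a positive factor, $\nabla S$, so $L_2\cdot[\varphi(u)]$ should be $\approx [S] > 0$ for a shock (modulo the distinction that $\varphi$ is the conservative variable, which contributes a controlled correction). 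Combined with $L_1(u_+)\cdot[\varphi(u)]$, whose sign I would check to be negative for an admissible $1$-shock (this is essentially the content that makes $\alpha^1_{\underline{1},1}$ positive, consistent with a shock staying a shock under interaction; alternatively it follows from $\frac{\partial s}{\partial\hat\sigma_1}>0$ and $\alpha^1_{\underline 1,1}>0$ via Corollary~\ref{Cor:Glimm2}), this yields $\alpha^{2}_{\underline{1},1} = \frac{L_2\cdot[\varphi(u)]}{L_1\cdot[\varphi(u)]}\cdot\frac{\lambda_1(u_+)-s}{s} > 0$ since it is a product of (negative)$\times$(positive), wait — rather (positive over negative)$\times$(positive) $<0$; so I must be careful and instead check directly that $L_1(u_+)\cdot[\varphi(u)]$ and $L_2(u_+)\cdot[\varphi(u)]$ have \emph{opposite} signs, which is what makes the quotient negative and the whole expression positive. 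This sign bookkeeping is the part requiring genuine care.

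For $\alpha^{3}_{\underline{1},1}$, the analogous step is to compute $L_3(u_+)\cdot[\varphi(u)]$ using \eqref{Eq:LEVLb}: $L_3 = \frac{\gamma+1}{4\tau_+}\bigl(-\frac{c_+}{\gamma\tau_+},\,1-(\gamma-1)\frac{v_+}{c_+},\,\frac{\gamma-1}{c_+}\bigr)$. Here is where the restriction $\gamma<\frac53$ must enter. I would substitute the shock parameterization \eqref{Eq:ParamShock}, write everything as a function of $x>1$ and $\gamma$ only (after using translation invariance in $v$ to normalize, say, $v_-$ appropriately, or more cleanly by noting that the combination $L_3\cdot[\varphi(u)] / (L_1\cdot[\varphi(u)])$ should be Galilean-invariant), and reduce $\alpha^{3}_{\underline{1},1}<0$ to an inequality of the form $g_\gamma(x)>0$ (or $<0$) for all $x>1$. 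I expect $g_\gamma$ to be a rational/algebraic function in $x$ involving $\beta=\frac{\gamma+1}{\gamma-1}$, with the property that it has the desired sign precisely when $\gamma<\frac53$, with the endpoint $\gamma=\frac53$ being exactly the threshold — this matches the discussion of Chen--Endres--Jenssen \cite{CEJ} cited in the introduction, where $\gamma=\frac53$ is the borderline for the nature of wave interactions. The main obstacle, and the step I would budget the most effort for, is therefore this final scalar inequality: checking $g_\gamma(x)$ has constant sign on $(1,\infty)$ for $\gamma\in(1,\frac53)$. A Taylor expansion at $x=1$ (weak shocks) should give the leading behavior and identify why $\frac53$ is critical — the sign of some coefficient proportional to $(5-3\gamma)$ or $(3\gamma-5)$ — and then I would need either monotonicity of $g_\gamma$ or a factorization to extend the weak-shock sign to all $x>1$. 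Finally, the last sentence of the lemma (uniform separation from zero and the sign of $\sigma_k$ matching $\alpha^k_{\underline 1,1}\sigma_1'$) follows by continuity/compactness as in Remarks~\ref{Rem:SignesInter21} and~\ref{Rem:CancellationWave2}: once the leading coefficients are nonzero and of definite sign on the relevant compact set, shrinking $\omega_\pm$ absorbs the $\mathcal{O}(|\sigma_1'|^2)$ remainder.
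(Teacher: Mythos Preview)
Your overall plan is the same as the paper's: reduce everything to the signs of the three scalars $L_i(u_+)\cdot[\varphi(u)]$ together with the kinematic ratios, and for $\alpha^3_{\underline 1,1}$ plug in the explicit shock parameterization \eqref{Eq:ParamShock} to reduce the question to a one-variable inequality whose critical case is exactly $\gamma=\tfrac{5}{3}$. The final sentence is handled exactly as you say, by continuity and shrinking $\omega_\pm$.

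However, your sign bookkeeping for $\alpha^2_{\underline 1,1}$ contains a concrete error. The heuristic ``$L_2\cdot[\varphi]\approx [S]>0$'' is misleading: $L_2(u_+)$ is a positive multiple of $\nabla_U S(u_+)$, but convexity of $-S$ only gives $\nabla_U S(u_+)\cdot[U]\le [S]$, which does not determine the sign. The paper's clean device here is to multiply by $s$ and use Rankine--Hugoniot to write $s\,L_i(u_+)\cdot[\varphi]=L_i(u_+)\cdot[f]$ with $[f]=[v]\,(-1,\,s,\,P_-+sv_+)$; the computation then collapses to
\[
s\,L_2(u_+)\cdot[\varphi]=\tfrac{\gamma-1}{\gamma}\,[v]\Bigl(\tfrac{P_-}{P_+}-1\Bigr)>0,
\]
so in fact $L_2(u_+)\cdot[\varphi]<0$, not $>0$. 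Likewise $s\,L_1(u_+)\cdot[\varphi]>0$ (each term is negative, multiplied by $[v]<0$), so $L_1(u_+)\cdot[\varphi]<0$ as well. Thus $L_1$ and $L_2$ have the \emph{same} sign, not opposite signs as you conjectured; the quotient is positive, and combined with the positive factor $\tfrac{\lambda_1(u_+)-s}{s}$ this yields $\alpha^2_{\underline 1,1}>0$. For $\alpha^3_{\underline 1,1}$ the same trick reduces $s\,L_3(u_+)\cdot[\varphi]$ to $\frac{(\gamma+1)c_+}{4\gamma\tau_+^2}[v]\,h(1/x)$ with $h(t)=(\gamma-1)t+1-\gamma\sqrt{\tfrac{1+\beta t}{1+\beta}}$; one checks $h(1)=0$, $h'$ is increasing, and $h'(1)=\tfrac{3\gamma-5}{4}$, so for $\gamma<\tfrac{5}{3}$ one has $h'<0$ on $(0,1)$ hence $h>0$ on $(0,1)$, giving $\alpha^3_{\underline 1,1}<0$. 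This is exactly the monotonicity argument you anticipated, with the $(3\gamma-5)$ coefficient appearing as predicted.
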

\begin{proof}[Proof of Lemma \ref{Lem:alpha113}]
First one has clearly
\begin{equation*}
\lambda_{1}(u_{+}) - {s} < 0 , \ \
s <0  \ 
\text{ and } \
\lambda_{3}({u}_{+}) - {s} > 0,
\end{equation*}
since the Lax inequalities are valid on the whole shock curve (see \eqref{Eq:ShockSpeed}). 
Now let us determine the signs of the various $L_{i}(u_{+}) \cdot [\varphi(u)]$; actually it will be a bit simpler to work with $s L_{i}(u_{+}) \cdot [\varphi(u)] = L_{i}(u_{+}) \cdot [f(u)]$. Using the Rankine-Hugoniot relation, we find that
\begin{equation*}
[f(u)] = \begin{pmatrix} -[v] \\ [P] \\ [Pv] \end{pmatrix} = [v] \begin{pmatrix} -1 \\ s \\ P_{-} + s v_{+} \end{pmatrix}.
\end{equation*}
Now one computes $s L_{1}(u_{+}) \cdot [\varphi(u)]$ as follows:
\begin{equation*}
s L_{1}(u_{+}) \cdot [\varphi(u)]
% &=& L_{1}(u_{+}) \cdot (f({u}_{+}) - f({u}_{-})) \\ &=& 
= \frac{\gamma+1}{4 \tau_{+}} \, [v] \left( - \frac{c_{+}}{\gamma \tau_{+}} + s - \frac{\gamma-1}{c_{+}} P_{-} \right).
\end{equation*}
Note that each term inside the parentheses is negative. \par
Concerning $s L_{2}(u_{+}) \cdot [\varphi(u)]$, one has:
\begin{eqnarray*}
s L_{2}(u_{+}) \cdot [\varphi(u)]
% &=& L_{2}(u_{+}) \cdot (f({u}_{+}) - f({u}_{-})) \\
&=& - \frac{\gamma - 1}{ \gamma} (v_{+} - v_{-} ) - (\gamma-1) \frac{v_{+}}{\gamma P_{+}} (P_{+} - P_{-}) + \frac{\gamma - 1}{ \gamma P_{+}} (P_{+} v_{+} - P_{-} v_{-} ) \\
&=& \frac{\gamma - 1}{ \gamma} (v_{+} - v_{-} ) \left(\frac{P_{-}}{P_{+}} -1\right) .
\end{eqnarray*}
Using \eqref{Eq:SignesSaut}, we deduce \eqref{Eq:alpha112}. \par 
The factor $L_{3}(u_{+}) \cdot [\varphi(u)]$ is the one sensitive to $\gamma$. One has
\begin{eqnarray*}
s L_{3}(u_{+}) \cdot [\varphi(u)]
% &=& L_{3}(u_{+}) \cdot (f({u}_{+}) - f({u}_{-})) \\
&=& \frac{\gamma+1}{4 \tau_{+}} \, [v] \left( \frac{c_{+}}{\gamma \tau_{+}} + s + \frac{\gamma-1}{c_{+}} P_{-} \right) \\
&=& \frac{(\gamma+1)c_{+}}{4\gamma \tau_{+}^{2}} \, [v] \left( (\gamma-1) \frac{P_{-}}{P_{+}} +1 + \gamma \frac{\tau^{+} s}{c_{+}}\right).
\end{eqnarray*}
Now we use the representation \eqref{Eq:ParamShock} of the $1$-shock curve. We find that
\begin{equation*}
s L_{3}(u_{+}) \cdot [\varphi(u)]
= \frac{(\gamma+1)c_{+}}{4\gamma \tau_{+}^{2}} \, [v]  \left( \frac{\gamma-1}{x} +1 - \gamma \sqrt{\frac{ \beta + x}{(1+\beta)x }} \right).
\end{equation*}
To determine the sign of the last factor, we parameterize the function by $t=1/x$ and hence consider only $t \in (0,1)$. 
The function
\begin{equation*}
h(t):= (\gamma-1)t +1 - \gamma \sqrt{\frac{ 1+ \beta t}{(1+\beta) }}
\end{equation*}
vanishes at $1$; its derivative
\begin{equation*}
h'(t):= (\gamma-1) -  \frac{\gamma \beta}{2 \sqrt{1+\beta}} \frac{1}{\sqrt{ 1+\beta t}},
\end{equation*}
is increasing and
\begin{equation*}
h'(1) = \gamma-1 + \frac{\gamma+1}{ 4} < 0 \Longleftrightarrow \ \gamma < \frac{5}{3}.
\end{equation*}
It follows that for $\gamma < \frac{5}{3}$, $h'<0$ on $(0,1)$ and consequently $h>0$ on $(0,1)$. Hence we find that \eqref{Eq:alpha113} holds for $\gamma < \frac{5}{3}$. \par
Finally, we obtain a negative upper bound for the coefficients $\alpha^{2}_{\underline{1},1}$ and $\alpha^{3}_{\underline{1},1}$ by choosing small compact neighborhoods  $\tilde{\omega}_{-}$ and $\tilde{\omega}_{+}$ of $\overline{u}_{-}$ and $\overline{u}_{+}$ inside of $\omega_{-}$ and $\omega_{+}$ such that any $1$-shock from $\tilde{\omega}_{-}$ to $\tilde{\omega}_{+}$ satisfies
\begin{equation*}
\lambda_{1}(u_{+}) -s \leq -\kappa <0, \ s \leq -\kappa <0 \ \text{ and } \ \lambda_{3}(u_{+}) -s \geq \kappa >0.
\end{equation*}
\end{proof}
\begin{remark}
This implies in particular that the interaction of a strong $1$-shock with a small $1$-shock generates a rarefaction wave in the third characteristic family, a fact that is well known, see e.g. \cite[Theorem 18.8]{Sm}. For $\gamma> \frac{5}{3}$, this generates a shock, but the interaction of two strong shocks can be more complex, see Chen, Endres and Jenssen \cite{CEJ}. 
\end{remark}
\begin{remark} \label{Rem:OGGros1a}
With the notation of Remark~\ref{Rem:NotLandau}, using \eqref{Eq:VChocVCar}, we can see following the lines above that 
$\alpha^{1}_{\underline{1},1} = \Theta(1)$, $\alpha^{2}_{\underline{1},1} = \Theta(\hat{\sigma}_{1}^{2})$ and (for $\gamma < \frac{5}{3}$)
$\alpha^{3}_{\underline{1},1}= \Theta(-\hat{\sigma}_{1}^{2})$.
\end{remark}
We will be also interested in the result of the interaction of such a strong $1$-shock with a weak simple wave (of family $1$, $2$ or $3$) on its left.
\begin{proposition} \label{Pro:Inter3S2}
In System \eqref{Eq:Lagrangian}, let $\overline{u}_{-}$ and $\overline{u}_{+}$ two states in $\Omega = \R^{+} \times \R \times \R^{+}$ joined through a $1$-shock:
\begin{equation*}
\overline{u}_{+} = T(\overline{\sigma},\overline{u}_{-}), \ \ 
\overline{\sigma}=(\overline{\sigma}_{1},\, 0,\, 0), \ \ \ \ \overline{\sigma}_{1} < 0.
\end{equation*}
We consider $\omega_{-}$ and $\omega_{+}$ as in Proposition \ref{Pro:RiemannSD}.
Let $i \in \{1,2,3\}$. Let $u_{-}, u_{l}$ in $\omega_{-}$ and $u_{+}$ in $\omega_{+}$ satisfying
\begin{gather}
\label{Eq:u+u-3S2}
{u}_{+} = T(\hat{\sigma},{u}_{-}), \ \ \hat{\sigma}=(\hat{\sigma}_{1},\, 0,\, 0), \ \ \ \hat{\sigma}_{1} < 0, \\
\label{Eq:u+u-3S2b}
u_{l} := \Upsilon_{i}(\sigma'_{i},{u}_{-}). 
\end{gather}
Then one has
\begin{equation} \label{Eq:SigmaHat3S2}
\Sigma(u_{l}, {u}_{+}) = \hat{\sigma} + \sigma,
\end{equation}
with
\begin{equation} \label{Eq:Sortie3S2}
\sigma_{1} = \alpha^{1}_{i,\underline{1}} \, \sigma_{i}' + {\mathcal O}(\sigma_{i}'^{2}) , \ \ \ 
\sigma_{2} = \alpha^{2}_{i,\underline{1}} \, \sigma_{i}' + {\mathcal O}(\sigma_{i}'^{2}), \ \ \ 
\sigma_{3} = \alpha^{3}_{i,\underline{1}} \, \sigma_{i}' + {\mathcal O}(\sigma_{i}'^{2}),
\end{equation}
where
\begin{equation} \label{Eq:Coefs3S2}
\alpha^{2}_{1,\underline{1}} >0, \ \ \alpha^{3}_{1,\underline{1}} <0 \ \left(\text{if } \gamma< \frac{5}{3}\right), \ \ 
\alpha^{2}_{2,\underline{1}} >0, \ \ \alpha^{3}_{2,\underline{1}} <0, \ \ 
\alpha^{2}_{3,\underline{1}} <0 \ \text{ and } \ \alpha^{3}_{3,\underline{1}} >0.
\end{equation}
The coefficients $\alpha^{k}_{i,\underline{2}}$ are bounded and bounded away from zero uniformly and the ``${\mathcal O}$'' are uniform 
as $u_{l}, u_{-}$ and $u_{+}$ belong to compact sets of $\omega_{-}$ and $\omega_{+}$. Moreover, shrinking  $\omega_{-}$ and $\omega_{+}$ if necessary, $\sigma_{k}$ has the same sign as $\alpha^{k}_{i,\underline{1}} \, \sigma_{i}'$ in \eqref{Eq:Sortie3S2}.
\end{proposition}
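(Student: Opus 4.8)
The plan is to follow the scheme of the proof of Proposition~\ref{Pro:Inter11}, replacing the ``interaction on the right'' estimate of Lemma~\ref{Lem:InterChoc2}~(Case~1) by the ``interaction on the left'' estimate (Case~2, equation~\eqref{Eq:I2b}), and treating the three families $i\in\{1,2,3\}$ at once, since \eqref{Eq:I2b} is a single identity summed over the family index of the incoming weak wave. Since every $1$-shock of System~\eqref{Eq:Lagrangian} is Majda-stable (as recorded in the proof of Proposition~\ref{Pro:Inter11}), Proposition~\ref{Pro:RiemannSD} applies with $k=1$, so $\Sigma(u_{l},u_{+})=\hat{\sigma}+\sigma$ is well defined for $u_{l}\in\omega_{-}$, $u_{+}\in\omega_{+}$. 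As $u_{l}=\Upsilon_{i}(\sigma'_{i},u_{-})$, the wave vector $\sigma':=\Sigma(u_{l},u_{-})$ is carried by its $i$-th component only, equal to $\sigma'_{i}$, so \eqref{Eq:I2b} reads
\begin{equation*}
\left\{\frac{\partial f}{\partial u}-\overline{s}\,\frac{\partial\varphi}{\partial u}\right\}_{+}\Big(\frac{\partial T}{\partial\sigma}\Big)\sigma=\sigma'_{i}\Big\{(\lambda_{i}(u_{-})-s)R_{i}(u_{-})+(r_{i}(u_{-})\cdot\nabla s_{1})[\varphi(u)]\Big\}+{\mathcal O}(\sigma_{i}'^{2}),
\end{equation*}
while \eqref{Eq:PartialTi2} with $k=1$ identifies the columns of $\left\{\frac{\partial f}{\partial u}-\overline{s}\,\frac{\partial\varphi}{\partial u}\right\}_{+}(\partial T/\partial\sigma)$: the first is $\frac{\partial s_{1}}{\partial\sigma_{1}}[\varphi(u)]$, and for $j=2,3$ the $j$-th is $(\lambda_{j}(u_{+})-s)R_{j}(u_{+})$ (with $\lambda_{2}(u_{+})=0$). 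Pairing the two expressions with $L_{1}(u_{+})$, $L_{2}(u_{+})$, $L_{3}(u_{+})$ and using $L_{j}(u_{+})\cdot R_{k}(u_{+})=\delta_{jk}$ turns this into a lower-triangular $3\times3$ linear system for $(\sigma_{1},\sigma_{2},\sigma_{3})$, whose solution is \eqref{Eq:SigmaHat3S2}--\eqref{Eq:Sortie3S2} with
\begin{equation*}
\alpha^{1}_{i,\underline{1}}=\frac{(\lambda_{i}(u_{-})-s)\,L_{1}(u_{+})\cdot R_{i}(u_{-})+(r_{i}(u_{-})\cdot\nabla s_{1})\,L_{1}(u_{+})\cdot[\varphi(u)]}{\frac{\partial s_{1}}{\partial\sigma_{1}}\,L_{1}(u_{+})\cdot[\varphi(u)]},
\end{equation*}
and, for $j=2,3$, $\alpha^{j}_{i,\underline{1}}$ equal to $(\lambda_{i}(u_{-})-s)$ times the minor $L_{1}(u_{+})\cdot[\varphi(u)]\,L_{j}(u_{+})\cdot R_{i}(u_{-})-L_{j}(u_{+})\cdot[\varphi(u)]\,L_{1}(u_{+})\cdot R_{i}(u_{-})$, divided by $-s\,L_{1}(u_{+})\cdot[\varphi(u)]$ when $j=2$ and by $(\lambda_{3}(u_{+})-s)\,L_{1}(u_{+})\cdot[\varphi(u)]$ when $j=3$.

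For the sign statements in \eqref{Eq:Coefs3S2} I would reuse as much as possible of the proof of Lemma~\ref{Lem:alpha113}. The scalar factors are fixed by the Lax inequalities, valid along the whole $1$-shock curve: $\lambda_{i}(u_{-})-s>0$ for $i=1,2,3$ (in particular $\lambda_{2}(u_{-})-s=-s>0$) and $\lambda_{3}(u_{+})-s>0$; moreover $\frac{\partial s_{1}}{\partial\sigma_{1}}>0$ (straightforward from \eqref{Eq:ShockSpeed}), and the proof of Lemma~\ref{Lem:alpha113} already gives $sL_{1}(u_{+})\cdot[\varphi(u)]>0$ and $sL_{2}(u_{+})\cdot[\varphi(u)]>0$, hence $L_{1}(u_{+})\cdot[\varphi(u)]<0$, $L_{2}(u_{+})\cdot[\varphi(u)]<0$, while the $\gamma$-sensitive quantity $sL_{3}(u_{+})\cdot[\varphi(u)]$ has the sign governed by the function $h(t)$ studied there --- positive precisely when $\gamma<\frac{5}{3}$. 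The genuinely new ingredients are the cross-products $L_{j}(u_{+})\cdot R_{i}(u_{-})$ between left eigenvectors at $u_{+}$ and right eigenvectors at $u_{-}$ of a \emph{strong} $1$-shock: for $i=2$ they are simplest since $R_{2}=(1,0,P/(\gamma-1))$ by \eqref{Eq:EVLb}, and for $i=1,3$ one uses \eqref{Eq:EVL}, \eqref{Eq:EVLb}, \eqref{Eq:LEVLb}. Substituting the $1$-shock parameterization \eqref{Eq:ParamShock}--\eqref{Eq:ShockSpeed}, each of the six minors entering $\alpha^{2}_{i,\underline{1}}$ and $\alpha^{3}_{i,\underline{1}}$ reduces to an explicit algebraic function of the single variable $x=P_{+}/P_{-}\in(1,\infty)$ (equivalently $t=1/x\in(0,1)$), which I would show keeps a constant sign by the same elementary argument as for $h$ in Lemma~\ref{Lem:alpha113} (value at an endpoint plus monotonicity of a first derivative). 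This yields the six sign assertions of \eqref{Eq:Coefs3S2}, with the restriction $\gamma<\frac{5}{3}$ entering only through $\alpha^{3}_{1,\underline{1}}$ (via the same factor $L_{3}(u_{+})\cdot[\varphi(u)]$ as in Lemma~\ref{Lem:alpha113}), the $i=3$ coefficients coming out with signs opposite to the $i=1$ ones, and the $i=2$ coefficients $\alpha^{2}_{2,\underline{1}}>0$, $\alpha^{3}_{2,\underline{1}}<0$ holding for all $\gamma>1$.

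It then remains to note that all the quantities above are continuous functions of $(u_{-},u_{+})$ near the reference shock $(\overline{u}_{-},\overline{u}_{+})$ and none of them vanishes there: the $\alpha^{2}_{i,\underline{1}},\alpha^{3}_{i,\underline{1}}$ by the sign analysis, and the $\alpha^{1}_{i,\underline{1}}$ by direct inspection of the numerator (for weak shocks this reduces to Glimm's estimate, which gives $\alpha^{1}_{1,\underline{1}}\to1$). Hence, shrinking $\omega_{-}$ and $\omega_{+}$ if necessary, all $\alpha^{k}_{i,\underline{1}}$ are uniformly bounded and uniformly bounded away from zero on compact subsets of $\omega_{-}\times\omega_{+}$, and shrinking further so that the linear term dominates the ${\mathcal O}(\sigma_{i}'^{2})$ remainder gives the claim that $\sigma_{k}$ has the same sign as $\alpha^{k}_{i,\underline{1}}\sigma'_{i}$ in \eqref{Eq:Sortie3S2}, exactly as in Lemma~\ref{Lem:alpha113} and Remark~\ref{Rem:SignesInter21}.

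The main obstacle is precisely the sign analysis of the six coefficients $\alpha^{2}_{i,\underline{1}}$, $\alpha^{3}_{i,\underline{1}}$ ($i=1,2,3$): although the algebraic skeleton is identical to that of Lemma~\ref{Lem:alpha113}, one must carry out the three separate computations of the cross-eigenvector products $L_{j}(u_{+})\cdot R_{i}(u_{-})$ for the strong shock, simplify each of the six minors to a tractable function of $x$, and verify that it has no zero on $(1,\infty)$ --- in particular that the $i=3$ signs are genuinely opposite to the $i=1$ ones and that no constraint on $\gamma$ appears beyond the one for $\alpha^{3}_{1,\underline{1}}$. Everything else is routine linear algebra together with the strong-discontinuity interaction machinery of Subsection~\ref{Subsec:StrongDiscontinuities}.
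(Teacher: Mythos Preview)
Your scheme is the same as the paper's: apply Lemma~\ref{Lem:InterChoc2}, Case~2, together with \eqref{Eq:PartialTi2} for $k=1$, then do a sign analysis of the resulting coefficients. The linear-algebra bookkeeping differs slightly. You pair with the left eigenvectors $L_{j}(u_{+})$ and express $\alpha^{j}_{i,\underline{1}}$ through the $2\times2$ minors $L_{1}(u_{+})\!\cdot\![\varphi]\,L_{j}(u_{+})\!\cdot\!R_{i}(u_{-})-L_{j}(u_{+})\!\cdot\![\varphi]\,L_{1}(u_{+})\!\cdot\!R_{i}(u_{-})$; the paper instead writes $\alpha^{2}_{i,\underline{1}}$ and $\alpha^{3}_{i,\underline{1}}$ via Cramer's rule as ratios of $3\times3$ determinants $\det(R_{i}(u_{-}),R_{3}(u_{+}),[\varphi])$, $\det(R_{i}(u_{-}),R_{2}(u_{+}),[\varphi])$ over $\det(R_{2}(u_{+}),R_{3}(u_{+}),[\varphi])$, and then replaces $[\varphi]$ by $[f]$ using Rankine--Hugoniot before expanding. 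The two representations are of course equivalent (your minor equals $\pm\det(R_{i}(u_{-}),R_{5-j}(u_{+}),[\varphi])/\det(R_{1}(u_{+}),R_{2}(u_{+}),R_{3}(u_{+}))$), but the paper's determinant form, combined with the $[\varphi]\to[f]$ substitution, avoids the velocity-dependent terms in $L_{j}(u_{+})$ and keeps the six computations short.

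One point where your outline is slightly off: you say the constraint $\gamma<\tfrac{5}{3}$ enters ``only through'' the factor $L_{3}(u_{+})\!\cdot\![\varphi(u)]$ already handled in Lemma~\ref{Lem:alpha113}. In the paper's computation this is not how it comes out: for $i=1$ the sign of $\alpha^{3}_{1,\underline{1}}$ is governed by $\det(R_{1}(u_{-}),R_{2}(u_{+}),[f])$, which after substituting \eqref{Eq:ParamShock}--\eqref{Eq:ShockSpeed} reduces to a \emph{different} function $\tilde{h}(x)=\tfrac{\gamma}{\gamma-1}-\sqrt{\tfrac{1+\beta x}{1+\beta}}\bigl(\tfrac{1}{\gamma-1}+\tfrac{1}{x}\bigr)$ that also changes sign at $\gamma=\tfrac{5}{3}$ (via $\tilde{h}'(1)$). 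So the $\gamma$-restriction appears twice, through two distinct elementary functions, not through a single recycled factor. Apart from this, your plan is correct; the actual content of the proof is the six explicit determinant computations, which you have deferred.
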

\begin{remark} \label{Rem:InterS3S}
We can deduce as before the equivalent for weak waves interacting on the right of a strong $3$-shock through the change of variables $x \longleftrightarrow -x$. Recall that this transforms a $1$-wave (resp. $2$-wave, $3$-wave) into a $3$-wave (resp. $2$-wave, $1$-wave) with the same (resp. opposite, same) strength.
\end{remark}
\begin{proof}[Proof of Proposition \ref{Pro:Inter3S2}]
We follow the same lines as before, applying Lemma \ref{Lem:InterChoc2} and the formula \eqref{Eq:PartialTi2}. 
%
%---
We get that
\begin{multline} \nonumber %\label{Eq:}
\sigma_{1} \frac{\partial s}{\partial \hat{\sigma}_{1}} \, \big[\varphi(u)\big]
+ \sum_{j=2}^{3} \sigma_{j}(\lambda_{j}(u_{+}) - {s} ) R_{j}(u_{+})
=
- \sigma_{i}' (\lambda_{i}(u_{-}) - {s}) \Big\{R_{i}(u_{-}) 
+ (r_{i}(u_{-})\cdot \nabla_{u} s) \, \big[\varphi(u)\big] \Big\}
+ {\mathcal O}(|\sigma_{i}'|^{2}),
\end{multline}
%---
%
The matrix whose columns is formed by $[\varphi(u)]$, $R_{2}(u_{+})$ and $R_{3}(u_{+})$ is invertible.
Hence we find that \eqref{Eq:Sortie3S2} holds, and in particular we can compute the coefficients
\begin{equation*}
\alpha^{2}_{i,\underline{1}} = \frac{\lambda_{i}(u_{-}) - {s}}{- {s}} 
\frac{\det( R_{i}(u_{-}), R_{3}(u_{+}), [\varphi] )}{\det( R_{2}(u_{+}), R_{3}(u_{+}), [\varphi] )}
\ \text{ and } \ 
\alpha^{3}_{i,\underline{1}} = - \frac{\lambda_{i}(u_{-}) -s}{\lambda_{3}(u_{+})-s} 
\, \frac{\det( R_{i}(u_{-}), R_{2}(u_{+}), [\varphi] )}{\det( R_{2}(u_{+}), R_{3}(u_{+}), [\varphi] )}.
\end{equation*}
The quotients $\frac{\lambda_{i}(u_{-}) -s}{-s}$ and $\frac{\lambda_{i}(u_{-}) -s}{\lambda_{3}(u_{+})-s} $ are clearly positive for all $i$; let us determine the signs of the determinants. Let us remark that using the Rankine-Hugoniot relation, we can replace $[\varphi]$ with $[f]$ in these determinants. To simplify the writing, we will compute the determinants with the following vectors $\tilde{R}_{1}$ and $\tilde{R}_{3}$ instead of $R_{1}$ and $R_{3}$:
\begin{equation*}
\tilde{R}_{1} := \frac{(\gamma+1)c}{2 \tau^{2}} \, R_{1} = \left( \begin{array}{c} 1 \\ \frac{c}{\tau} \\ - P + \frac{c v}{\tau} \end{array} \right), \ \
\tilde{R}_{3} := \frac{(\gamma+1)c}{2 \tau^{2}} \, R_{3} = \left( \begin{array}{c} -1 \\ \frac{c}{\tau} \\ P + \frac{c v}{\tau} \end{array} \right).
\end{equation*}
Since we are only interested in the signs of the determinants and since the factor $\frac{(\gamma+1)c}{2 \tau^{2}}$ is positive, bounded and bounded away from $0$ on compacts subsets of $\Omega$, this replacement is harmless. \par
We obtain:
\begin{eqnarray*}
\det( R_{2}(u_{+}), \tilde{R}_{3}(u_{+}), [f] )
% &=& 
% \begin{vmatrix}
% 1 & -1 &- [v] \\
%  0 & \frac{c_{+}}{\tau_{+}} &[P] \\
%  \frac{P_{+}}{\gamma-1} & P_{+} + \frac{c_{+} v_{+}}{\tau_{+}} &[Pv]
% \end{vmatrix} \\
&=&
\frac{c_{+}}{\tau_{+}} [Pv] - \left(P_{+} + \frac{c_{+} v_{+}}{\tau_{+}}\right) [P] - \frac{P_{+}}{\gamma-1} [P]
+ \frac{P_{+}}{\gamma-1} \frac{c_{+}}{\tau_{+}} [v] \\
% &=&
% \frac{c_{+}}{\tau_{+}} (v_{+}[P] + P_{-} [v]) - \frac{c_{+} v_{+}}{\tau_{+}} [P] - \frac{\gamma P_{+}}{\gamma-1} [P]
% + \frac{P_{+}}{\gamma-1} \frac{c_{+}}{\tau_{+}} [v] \\
% &=&
% \frac{c_{+}}{\tau_{+}} P_{-} [v] - \frac{\gamma P_{+}}{\gamma-1} [P]
% + \frac{P_{+}}{\gamma-1} \frac{c_{+}}{\tau_{+}} [v] \\
&=&
\frac{c_{+}}{\tau_{+}} \Big( P_{-}+ \frac{P_{+}}{\gamma-1} \Big) [v] - \frac{\gamma P_{+}}{\gamma-1} [P] <0 ,
\end{eqnarray*}
since both $[v]<0$ and $[P]>0$ across a $1$-shock. \par
\ \par
\noindent
{\bf $\bullet$ Weak wave of family ${\bf i=1}$.} We compute:
\begin{eqnarray*}
\det( \tilde{R}_{1}(u_{-}), \tilde{R}_{3}(u_{+}), [f] )
% &=& 
% \begin{vmatrix}
%  1 & -1 &- [v] \\
%  \frac{c_{-}}{\tau_{-}} & \frac{c_{+}}{\tau_{+}} &[P] \\
%  -P_{-} + \frac{c_{-} v_{-}}{\tau_{-}} & P_{+} + \frac{c_{+} v_{+}}{\tau_{+}} &[Pv]
% \end{vmatrix} \\
&=&  - [v]^{2} \frac{c_{-} c_{+}}{\tau_{-} \tau_{+}} - [v] \Big( \frac{P_{+} c_{-}}{\tau_{-}} - \frac{P_{-} c_{+}}{\tau_{+}}\Big)
- [P]^{2} - [P] \Big( \frac{ c_{+} v_{+}}{\tau_{+}} - \frac{ c_{-} v_{-}}{\tau_{-}} \Big) 
+ [Pv] \Big( \frac{ c_{+} }{\tau_{+}} + \frac{ c_{-} }{\tau_{-}} \Big) \\
&=& - [v]^{2} \frac{c_{-} c_{+}}{\tau_{-} \tau_{+}} - [P]^{2} <0.
\end{eqnarray*}
Hence we deduce that $\alpha_{1,\underline{1}}^{2}> 0$.
\begin{eqnarray*}
\det( \tilde{R}_{1}(u_{-}), R_{2}(u_{+}), [f] )
% &=& 
% \begin{vmatrix}
%  1 & 1 &- [v] \\
%  \frac{c_{-}}{\tau_{-}} & 0 &[P] \\
%  -P_{-} + \frac{c_{-} v_{-}}{\tau_{-}} & \frac{P_{+}}{\gamma-1} &[Pv]
% \end{vmatrix} \\
&=& - \frac{c_{-}}{\tau_{-}} \Big( [Pv] + \frac{P_{+}}{\gamma-1} [v]\Big) 
- [P] \Big(  \frac{P_{+}}{\gamma-1} +  P_{-} - \frac{c_{-} v_{-}}{\tau_{-}} \Big) \\
&=& - \frac{c_{-}}{\tau_{-}} \frac{\gamma P_{+}}{\gamma-1} [v] - [P] \Big(  \frac{P_{+}}{\gamma-1} +  P_{-} \Big) .
% &=& \frac{c_{-}}{\tau_{-}} \Big( v_{+}[P] + P_{-}[v] + \frac{P_{+}}{\gamma-1} [v]\Big) 
% + [P] \Big(  \frac{P_{+}}{\gamma-1} +  P_{-} - \frac{c_{-} v_{-}}{\tau_{-}} \Big) \\
% &=& \frac{c_{-}}{\tau_{-}} \Big( [v][P] + P_{-}[v] + \frac{P_{+}}{\gamma-1} [v]\Big) 
% + [P] \Big(  \frac{P_{+}}{\gamma-1} +  P_{-}  \Big) \\
% &=& \frac{c_{-}}{\tau_{-}} [v][P] 
% + \big( [P] + \frac{c_{-}}{\tau_{-}} [v] \big) \Big(  \frac{P_{+}}{\gamma-1} +  P_{-}  \Big) \\
% &=& \frac{c_{-}}{\tau_{-}} [v][P] 
% + [v] \big( s - \lambda_{1}(u_{-}) \big) \Big(  \frac{P_{+}}{\gamma-1} +  P_{-}  \Big) \\
\end{eqnarray*}
We use $[P]=s[v]$ and the formulae \eqref{Eq:ParamShock} and \eqref{Eq:ShockSpeed} to obtain, with $x=P^{+}/P_{-}>1$:
\begin{equation*}
\det( \tilde{R}_{1}(u_{-}), R_{2}(u_{+}), [f] )
= - \frac{P_{+} c_{-}}{\tau_{-}} [v] 
\left( \frac{\gamma}{\gamma-1} - \sqrt{\frac{ 1+ \beta x}{1+\beta}} \left(\frac{1}{\gamma-1} + \frac{1}{x}\right) \right).
\end{equation*}
We define
\begin{equation*}
\tilde{h}(x):= \frac{\gamma}{\gamma-1} - \sqrt{\frac{ 1+ \beta x}{1+\beta}} \left(\frac{1}{\gamma-1} + \frac{1}{x}\right) ,
\end{equation*}
and observe that $\tilde{h}(1)=0$ and that
\begin{equation*}
\tilde{h}'(x) = \frac{1}{\sqrt{1+\beta} \sqrt{1+\beta x} \, x^{2}} \left( - \frac{\beta}{2(\gamma-1)} x^{2} + \frac{\beta}{2} x +1 \right).
\end{equation*}
In particular $\tilde{h}'(1) = -\frac{\beta}{1+\beta} \frac{5 \gamma - 3 \gamma^{2}}{2 (\gamma-1)^{2}}$ is negative whenever $\gamma < \frac{5}{3}$, and one checks that in that case $\tilde{h}'$ is negative on $(1,+\infty)$.
Recalling that $[v]<0$, we deduce that, provided that $\gamma < \frac{5}{3}$, 
one has $\det( \tilde{R}_{1}(u_{-}), R_{2}(u_{+}), [f] ) <0$, and hence that $\alpha_{1,\underline{1}}^{3}<0$. \par
\ \par
\noindent
{\bf $\bullet$ Weak wave of family ${\bf i=2}$.} Next:
\begin{eqnarray*}
\det( R_{2}(u_{-}), \tilde{R}_{3}(u_{+}), [f] )
% &=& 
% \begin{vmatrix}
%  1 & -1 &- [v] \\
%  0 & \frac{c_{+}}{\tau_{+}} &[P] \\
%  \frac{P_{-}}{\gamma-1} & P_{+} + \frac{c_{+} v_{+}}{\tau_{+}} &[Pv]
% \end{vmatrix} \\
&=&  \frac{c_{+}}{\tau_{+}} [Pv] - \left(P_{+} + \frac{c_{+} v_{+}}{\tau_{+}}\right) [P] +  \frac{P_{-}}{\gamma-1} \left(\frac{c_{+}}{\tau_{+}} [v] - [P] \right) \\
&=&
\frac{\gamma P_{-}}{\gamma-1} \frac{c_{+}}{\tau_{+}} [v] - \Big( P_{+}+ \frac{P_{-}}{\gamma-1} \Big) [P] <0 ,
\end{eqnarray*}
where we used \eqref{Eq:SignesSaut}. Hence $\alpha_{2,\underline{1}}^{2} > 0$. Now:
\begin{equation*}
\det( R_{2}(u_{-}), R_{2}(u_{+}), [f] ) 
% = 
% \begin{vmatrix}
% 1 &1 & - [v] \\
% 0 & 0 & [P] \\
% \frac{P_{-}}{\gamma-1} & \frac{P_{+}}{\gamma-1} & [Pv]
% \end{vmatrix}
= - \frac{[P]^{2}}{\gamma-1} <0.
\end{equation*}
It follows that $\alpha_{2,\underline{1}}^{3} <0$. \par
\ \par
\noindent
{\bf $\bullet$ Weak wave of family ${\bf i=3}$.} We have:
\begin{eqnarray*}
\det( \tilde{R}_{3}(u_{-}), \tilde{R}_{3}(u_{+}), [f] )
% &=& 
% \begin{vmatrix}
% -1 & -1 & - [v] \\
% \frac{c_{-}}{\tau_{-}} & \frac{c_{+}}{\tau_{+}} & [P] \\
% P_{-} + \frac{c_{-} v_{-}}{\tau_{-}} & P_{+} + \frac{c_{+} v_{+}}{\tau_{+}}  & [Pv]
% \end{vmatrix} \\
&=&  -[v]^{2} \frac{c_{-} c_{+}}{\tau_{-} \tau_{+}} - [v] \Big( \frac{P_{+} c_{-}}{\tau_{-}} - \frac{P_{-} c_{+}}{\tau_{+}}\Big)
+ [P]^{2} + [P] \Big( \frac{ c_{+} v_{+}}{\tau_{+}} - \frac{ c_{-} v_{-}}{\tau_{-}} \Big) 
+ [Pv] \Big( \frac{ c_{-} }{\tau_{-}} \frac{ c_{+} }{\tau_{+}} \Big) \\
&=&  -[v]^{2} \frac{c_{-} c_{+}}{\tau_{-} \tau_{+}} + [P]^{2} = [v]^{2} \left(s^{2} -\frac{c_{-} c_{+}}{\tau_{-} \tau_{+}}\right).
\end{eqnarray*}
Using \eqref{Eq:ShockSpeed}, we see that, with $x>1$,
\begin{equation*}
s^{2} -\frac{c_{-} c_{+}}{\tau_{-} \tau_{+}} = \frac{c_{-} c_{+}}{\tau_{-} \tau_{+}} \left( \sqrt{\frac{\beta +x}{\beta  +1}} -1 \right) >0.
\end{equation*}
It follows that $\alpha_{3,\underline{1}}^{2}<0$.  Finally:
\begin{eqnarray*}
\det( \tilde{R}_{3}(u_{-}), R_{2}(u_{+}), [f] )
% &=& 
% \begin{vmatrix}
% -1 & 1 & - [v] \\
% \frac{c_{-}}{\tau_{-}} & 0 & [P] \\
% P_{-} + \frac{c_{-} v_{-}}{\tau_{-}} & \frac{P_{+}}{\gamma-1} & [Pv]
% \end{vmatrix} \\
&=& - \frac{c_{-}}{\tau_{-}} \left( [Pv] + \frac{P_{+}}{\gamma-1} [v] \right) + [P] \left( \frac{P_{+}}{\gamma-1} + P_{-} + \frac{c_{-} v_{-}}{\tau_{-}}\right) \\
&=& - \frac{c_{-}}{\tau_{-}} [P][v] + \left( \frac{P_{+}}{\gamma-1} + P_{-} \right) [v] \left(s - \frac{c_{-}}{\tau_{-}}\right) >0,
\end{eqnarray*}
reasoning as before. Hence $\alpha_{3,\underline{1}}^{3}>0$ (recall that $s<0$). \par
This ends the proof of Proposition \ref{Pro:Inter3S2}.
\end{proof}
\begin{remark} \label{Rem:OGGros1b}
Here using the notation of Remark~\ref{Rem:NotLandau} and following the lines above, we deduce that 
$\alpha^{2}_{1,\underline{1}}=\Theta(\hat{\sigma}_{1}^{2})$, $\alpha^{3}_{1,\underline{1}}= \Theta(-\hat{\sigma}_{1}^{2})$ (for $\gamma< \frac{5}{3}$),
$\alpha^{2}_{2,\underline{1}}= \Theta(1)$, $\alpha^{3}_{2,\underline{1}}= \Theta(-\hat{\sigma}_{1})$, 
$\alpha^{2}_{3,\underline{1}}=\Theta(-\hat{\sigma}_{1})$ and $\alpha^{3}_{3,\underline{1}} = \Theta(1)$.
\end{remark}

\subsection{Additional cancellation waves and correction waves}
In this section, we introduce additional cancellation waves and what we will call correction waves, relying on the strong discontinuities for Systems \eqref{Eq:Euler} and \eqref{Eq:Lagrangian} described above. These waves will be compression waves.
\subsubsection{Eulerian case}
As in Paragraph~\ref{ssb:321}, the strong wave that we consider in the case of System \eqref{Eq:Euler} is a $2$-contact discontinuity. We look for compression waves of the third family, on the left of the $2$-contact discontinuity, that cancel the effect of a $1$-shock interacting on its right, in the sense that there is not outgoing $1$-wave after the interaction; see Figure \ref{fig:CW321} (where the compression wave is represented as a fan of discontinuities focusing to a point). The $2$-contact discontinuity that we consider is a $\overset{<}{\bJ}$ wave (that is, for which $\tau_{-}<\tau_{+}$) rather than a $\overset{>}{\bJ}$ one. \par
\begin{figure}[htbp]
	\begin{center}
		\input{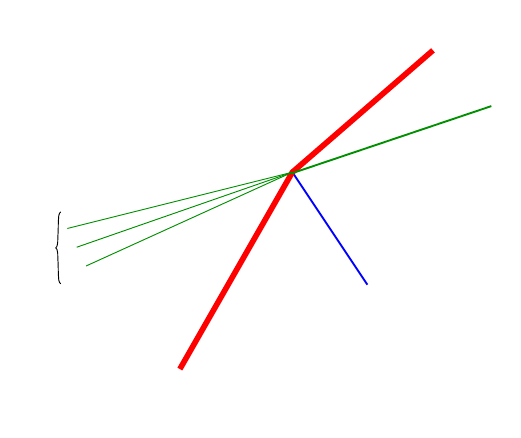_t}
	\end{center}
	\caption{A $3$-compression wave acting as a cancellation wave}
	\label{fig:CW321}
\end{figure}
Precisely, we establish the following proposition.
\begin{proposition} \label{Pro:CW3J>}
In System \eqref{Eq:Euler}, consider a $2$-contact discontinuity $(\overline{u}_{-}, \overline{u}_{+})$ as in Proposition~\ref{Pro:Inter32} with $\overline{\sigma}_{2}<0$. We consider $\omega_{-}$ and $\omega_{+}$ as in Proposition~\ref{Pro:RiemannSD}.
Let $u_{l}$ in $\omega_{-}$ and $u_{m}$, $u_{r}$ in $\omega_{+}$ satisfying
\begin{gather*}
{u}_{m} = T(\hat{\sigma},{u}_{l}) , \ \ \hat{\sigma}=(0,\, \hat{\sigma}_{2}, 0), \ \ \hat{\sigma}_{2} < 0, \\ 
u_{r} = T(\beta,{u}_{m}), \ \ \beta=(\beta_{1},\, 0,\, 0), \ \ {\beta}_{1} < 0,
\end{gather*}
with $|\beta_{1}|$ small.
There exists $\gamma_{3} <0$ such that, if $\tilde{u}_{l}:= {\mathcal R}_{3}(-\gamma_{3},u_{l})$, denoting 
${\sigma} = \Sigma(\tilde{u}_{l}, u_{r})$, one has
\begin{equation*}
{\sigma}_{1}=0,
\end{equation*}
%
%$\sigma_{3}<0$ 
and additionally
\begin{equation} \label{Eq:EstCW3J>}
\gamma_{3}  = - \frac{\alpha_{\underline{2},1}^{1} }{\alpha_{3,\underline{2}}^{1} } \, \beta_{1} +  {\mathcal O} \left( |\beta_{1}|^{2} \right),
\ \ 
\sigma_{3} = \left(\alpha_{\underline{2},1}^{3} - \alpha_{3,\underline{2}}^{3} \frac{\alpha_{\underline{2},1}^{1} }{\alpha_{3,\underline{2}}^{1} } \right)\,
 \beta_{1} +  {\mathcal O} \left( |\beta_{1}|^{2} \right).
\end{equation}
Moreover the admissible size of $|\beta_{1}|$ and the ``${\mathcal O}$'' are uniform as $u_{l}$, $u_{m}$ and $u_{r}$ belong to compact sets of $\omega_{-}$ and $\omega_{+}$.
\end{proposition}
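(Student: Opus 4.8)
The plan is to obtain $\gamma_3$ as the solution of a scalar equation produced by superposing Proposition~\ref{Pro:Inter32} with its mirror image from Remark~\ref{Rem:Coefs21b}. First I would record that the strong $2$-contact $(u_l,u_m)$ is Majda-stable (this was observed in the proof of Proposition~\ref{Pro:Inter32}), so that Proposition~\ref{Pro:RiemannSD} and Lemma~\ref{Lem:InterChoc} apply on $\omega_-\times\omega_+$; shrinking $\omega_\pm$ if necessary, the densities $\rho_l,\rho_m$ at $u_l,u_m$ satisfy $\rho_l>\rho_m$ with a uniform gap (since the contact is a $\overset{<}{\bJ}$, i.e. $\hat\sigma_2<0$), so by \eqref{Eq:Coefs32} and \eqref{Eq:Coefs21} all the coefficients $\alpha^k_{3,\underline 2}$ and $\alpha^k_{\underline 2,1}$ are uniformly bounded and bounded away from $0$; in particular $\alpha^1_{3,\underline 2}<0$ and $\alpha^1_{\underline 2,1}>0$.

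Next, I would introduce, for $(a,b)$ in a neighborhood of $(0,0)$ in $\R^2$, the scalar quantity
\begin{equation*}
F(a,b):=\Sigma_1\big(\mathcal R_3(-a,u_l),\,T_1(b,u_m)\big),
\end{equation*}
which is well defined because $\mathcal R_3(-a,u_l)\in\omega_-$ and $T_1(b,u_m)\in\omega_+$ for $|a|,|b|$ small. Since $T_1(\beta_1,u_m)=u_r$, the map $a\mapsto F(a,\beta_1)$ is exactly $a\mapsto\Sigma_1(\mathcal R_3(-a,u_l),u_r)$, so we are looking for $\gamma_3$ with $F(\gamma_3,\beta_1)=0$. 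As the wave curves $T_i$ and $\mathcal R_i$ are $C^{2,1}$, the Riemann map $\Sigma$, hence $F$, is (at least) $C^{1,1}$, and $F(0,0)=\Sigma_1(u_l,u_m)=0$. The core of the argument is the first-order expansion
\begin{equation*}
F(a,b)=\alpha^1_{3,\underline 2}\,a+\alpha^1_{\underline 2,1}\,b+\mathcal O\big(a^2+b^2\big).
\end{equation*}
Its first term is obtained by setting $b=0$ and applying Proposition~\ref{Pro:Inter32} to the interaction of the weak $3$-wave of strength $a$ with the strong $\overset{<}{\bJ}$ from the left, using Remark~\ref{Remark:CghtCourbe} to work along $\mathcal R_3$ rather than $\Upsilon_3$ (the reparameterization changes $a$ by $\mathcal O(a^3)$): this gives $F(a,0)=\alpha^1_{3,\underline 2}a+\mathcal O(a^2)$. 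Its second term is obtained by setting $a=0$ and applying Remark~\ref{Rem:Coefs21b} to the interaction of the weak $1$-wave of strength $b$ with the $\overset{<}{\bJ}$ from the right: this gives $F(0,b)=\alpha^1_{\underline 2,1}b+\mathcal O(b^2)$. The cross term $ab$ is absorbed into $\mathcal O(a^2+b^2)$ via the $C^{1,1}$ bound on $F$.

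Since $\partial_aF(0,0)=\alpha^1_{3,\underline 2}$ is bounded away from $0$, the implicit function theorem gives a unique small $\gamma_3=\gamma_3(\beta_1)$, Lipschitz in $\beta_1$ with $\gamma_3(0)=0$, solving $F(\gamma_3,\beta_1)=0$, that is $\sigma_1=\Sigma_1(\tilde u_l,u_r)=0$; and from the expansion above $\gamma_3=-\dfrac{\alpha^1_{\underline 2,1}}{\alpha^1_{3,\underline 2}}\,\beta_1+\mathcal O(\beta_1^2)$, the first identity in \eqref{Eq:EstCW3J>}. Because $\alpha^1_{\underline 2,1}>0$ and $\alpha^1_{3,\underline 2}<0$ with a uniform gap, the leading coefficient is a positive constant, so $\beta_1<0$ forces $\gamma_3<0$ once $|\beta_1|$ is small enough. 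Running the same computation on the third component yields $\Sigma_3(\mathcal R_3(-a,u_l),T_1(b,u_m))=\alpha^3_{3,\underline 2}a+\alpha^3_{\underline 2,1}b+\mathcal O(a^2+b^2)$, and substituting $a=\gamma_3$, $b=\beta_1$ gives $\sigma_3=\big(\alpha^3_{\underline 2,1}-\alpha^3_{3,\underline 2}\,\alpha^1_{\underline 2,1}/\alpha^1_{3,\underline 2}\big)\beta_1+\mathcal O(\beta_1^2)$, the second identity. All bounds, including the admissible range of $|\beta_1|$, are uniform on compact subsets of $\omega_-\times\omega_+$, since the estimates borrowed from Proposition~\ref{Pro:Inter32} and Remark~\ref{Rem:Coefs21b} are, and $\alpha^1_{3,\underline 2}$ stays uniformly away from $0$.

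The step I expect to be the most delicate is the justification of the superposition expansion for $F$: one must check that the shock/rarefaction transition of the $1$-wave curve at $\sigma_1=0$ does not spoil the regularity needed to absorb the cross term and the $\mathcal R_3$-versus-$\Upsilon_3$ reparameterization error, and that all orientation conventions (the compression wave $\mathcal R_3(-\gamma_3,u_l)$ sitting on the left, the $1$-shock $T_1(\beta_1,u_m)$ on the right, and the signs of $\beta_1$, $\alpha^1_{\underline 2,1}$, $\alpha^1_{3,\underline 2}$) combine so that the $\gamma_3$ produced is genuinely negative.
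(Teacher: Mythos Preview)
Your proof is correct and follows essentially the same route as the paper: the paper packages your scalar function $F$ into the map $G(\beta_1,\gamma_3)=(\beta_1,F(\gamma_3,\beta_1))$ and invokes the inverse function theorem rather than the implicit function theorem, but the Jacobian computation, the identification of $\partial_a F(0,0)=\alpha^1_{3,\underline 2}$ and $\partial_b F(0,0)=\alpha^1_{\underline 2,1}$, the resulting expansion for $\gamma_3$, the Taylor argument for $\sigma_3$, and the sign check via $\rho_l>\rho_m$ are identical. Your closing worry about regularity at $\sigma_1=0$ is unfounded: the paper records that the wave curves $T_i$ are $C^{2,1}$, so $\Sigma$ and hence $F$ are genuinely $C^2$, and no special care is needed.
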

By the uniformity of the admissible size of $|\beta_{1}|$, we mean that $u_{l}$ and $u_{m}$ being in fixed compacts of $\omega_{-}$ and $\omega_{+}$, there is a $\overline{\beta}>0$ such that the property is valid whenever $|\beta_{1}| \leq \overline{\beta}$. 
\begin{proof}[Proof of Proposition \ref{Pro:CW3J>}]
Given $u_{l}$ and $\hat{\sigma}_{2}$, we consider the mapping :
\begin{equation*}
G : (\beta_{1},\gamma_{3}) \in (-\varepsilon,\varepsilon)^{2} \mapsto \Big(\beta_{1}, \Sigma_{1} \big({\mathcal R}_{3}(-\gamma_{3},u_{l}), T_{1}(\beta_{1}, T_{2}(\hat{\sigma}_{2}, u_{l} )  )\big) \Big),
\end{equation*}
where as before $\Sigma_{1}$ denotes the first component of $\Sigma$ and $\varepsilon$ is a small positive number. It is clear that $G$ is $C^{2}$ and its differential at $(0,0)$ is given by
\begin{equation*}
dG(0,0) = 
\begin{pmatrix}
1 & 0 \\
\alpha_{\underline{2},1}^{1} & \alpha_{3,\underline{2}}^{1}  
\end{pmatrix}.
\end{equation*}
From \eqref{Eq:Coefs32} we know that $\alpha_{3,\underline{2}}^{1} \not =0$ and in fact, since $\hat{\sigma}_{2} < 0$ (so that $\rho_{l} > \rho_{m}$), we deduce that $\alpha_{3,\underline{2}}^{1} <0$.
Even, one can have a negative upper bound for this coefficient, shrinking $\omega_{-}$ and $\omega_{+}$ if necessary. It follows that $dG(0,0)$ is invertible, of inverse
\begin{equation} \label{dG-1}
dG(0,0)^{-1} = 
\begin{pmatrix}
1 & 0 \\
-\frac{\alpha_{\underline{2},1}^{1}}{\alpha_{3,\underline{2}}^{1}} & \frac{1}{\alpha_{3,\underline{2}}^{1}}
\end{pmatrix}.
\end{equation}
Hence the existence of $\gamma_{3}$ for $\beta_{1}$ small is the consequence of the inverse mapping theorem (and one can bound from below the size of $\beta_{1}$ for which this is possible in terms of $\| dG(0,0)^{-1}  \|$ and $\| G \|_{C^{2}}$). The first estimate in \eqref{Eq:EstCW3J>} follows from \eqref{dG-1}. The second estimate on
\begin{equation} \label{Eq:Sigma3Cancel}
\sigma_{3} = \Sigma_{3}\big({\mathcal R}_{3}(-\gamma_{3},u_{l}), T_{1}(\beta_{1}, T_{2}(\hat{\sigma}_{2}, u_{l}) ) \big),
\end{equation}
is then a first order Taylor expansion. %\par
That $\gamma_{3}<0$ comes from the computation of the coefficient
%(and also that $\sigma_{3}<0$) 
\begin{equation*}
- \frac{\alpha_{\underline{2},1}^{1} }{\alpha_{3,\underline{2}}^{1} }% = - \frac{2 \rho_{m} c_{m}^{2}}{c_{l} (\rho_{m}c_{m} - \rho_{l}c_{l})}
= - \frac{2 \sqrt{\rho_{m}} }{\sqrt{\rho_{m}} - \sqrt{\rho_{l}}}
\end{equation*}
which is positive since, is the case considered here, one has $\rho_{l} > \rho_{m}$.
\end{proof}
\begin{remark} \label{Rem:SigneOndeReflechie}
The sign of $\sigma_{3}$ for $|\beta_{1}|$ small is given by \eqref{Eq:EstCW3J>}; the coefficient can be computed as
\begin{equation*}
\alpha_{\underline{2},1}^{3} - \alpha_{3,\underline{2}}^{3} \frac{\alpha_{\underline{2},1}^{1} }{\alpha_{3,\underline{2}}^{1} } =
\frac{ (\sqrt{\rho_{m}} - \sqrt{\rho_{l}})^{2} + 4 \sqrt{\rho_{l}} \sqrt{\rho_{m}} }{\rho_{l} - \rho_{m}} ,
\end{equation*}
which is positive so $\sigma_{3}<0$. However this is not essential in the construction and we will not use this fact.
\end{remark}
\subsubsection{Lagrangian case}
For the system in Lagrangian coordinates, we consider not only cancellation waves, but also compression waves which do not cancel one of the outgoing waves, but  rather force the outgoing waves to have a prescribed sign. Hence we refer in that case to these compression waves as {\it correction waves}. \par
We prove the following result (see Figure~\ref{Fig:CorrW}).
\begin{proposition} \label{Pro:CorrWLag}
We consider System \eqref{Eq:Lagrangian}. Let us be given a $1$-shock $(\overline{u}_{-}, \overline{u}_{+})$, $\overline{u}_{+}=T_{1}(\overline{\sigma}_{1}, \overline{u}_{-})$ with $\overline{\sigma}_{1}<0$. We consider $\omega_{-}$ and $\omega_{+}$ as in Proposition \ref{Pro:RiemannSD}.
Let $u_{m}$ in $\omega_{-}$ and $u_{r}$ in $\omega_{+}$ satisfying
\begin{equation*}
{u}_{r} = T(\hat{\sigma},{u}_{m}) , \ \ \hat{\sigma}=(\hat{\sigma}_{1},\, 0,\, 0), \ \ \hat{\sigma}_{1} < 0,
\end{equation*}
and consider $u_{l}$ in $\omega_{-}$ such that $u_{l} = \Upsilon(\beta,{u}_{m})$ with $\beta$ a simple wave of the form $(\beta_{1},\, 0,\, 0)$, $(0,\, \beta_{2},\, 0)$ or $(0,\, 0,\, \beta_{3})$, with $|\beta_{i}|$ small. \par
Then there exists $\gamma_{1} \leq 0$ such that, if $\tilde{u}_{r}:= {\mathcal R}_{1}(\gamma_{1},u_{r})$, denoting 
${\sigma} = \Sigma(u_{l}, \tilde{u}_{r})$, one has
\begin{equation} \label{Eq:ButCorrWlag}
{\sigma}_{2}\leq 0, \ \ \sigma_{3} \geq 0,
\end{equation}
and 
\begin{equation} \label{Eq:EstCorrWLag}
| \gamma_{1} | =  {\mathcal O} \left( |\beta_{i}| \right).
\end{equation}
Moreover the admissible size of $|\beta_{i}|$ and the ``${\mathcal O}$'' in \eqref{Eq:EstCorrWLag} can be taken uniform as $u_{l}$, $u_{m}$ and $u_{r}$ belong to compact sets of $\omega_{-}$ and $\omega_{+}$.
\end{proposition}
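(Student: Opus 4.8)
The idea is to use the interaction estimates of Proposition~\ref{Pro:Inter3S2} to control the sign of the outgoing $2$- and $3$-waves after a weak simple wave of the family $i\in\{1,2,3\}$ interacts with the strong $1$-shock from the left, and then to post-compose with a $1$-compression wave $\mathcal{R}_1(\gamma_1,\cdot)$ placed on the right of the strong shock to correct the sign of whichever outgoing wave is going the ``wrong'' way. First I would set up the composite map, following the pattern of the proof of Proposition~\ref{Pro:CW3J>}: for a given incoming simple $i$-wave $\beta$ with $u_l=\Upsilon_i(\beta_i,u_m)$ and the strong shock $u_r=T(\hat\sigma,u_m)$, define
\begin{equation*}
G:(\beta_i,\gamma_1)\longmapsto \Big(\beta_i,\;\Sigma_?\big(u_l,\,\mathcal{R}_1(\gamma_1,u_r)\big)\Big),
\end{equation*}
where $\Sigma_?$ picks the appropriate component, and use Proposition~\ref{Pro:Inter3S2} together with Lemma~\ref{Lem:InterChoc} (the interaction on the left of a strong shock with the $1$-compression wave $\mathcal{R}_1$ on the right, which is of the same nature as in Lemma~\ref{Lem:InterChoc2}, cf.\ Remark~\ref{Remark:CghtCourbe}) to compute the first-order expansion of $\Sigma(u_l,\mathcal{R}_1(\gamma_1,u_r))$ in $(\beta_i,\gamma_1)$.

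Next I would do the sign bookkeeping case by case. By Proposition~\ref{Pro:Inter3S2}, the interaction of a weak $i$-wave (on the left) with the strong $1$-shock produces outgoing $2$- and $3$-waves with $\sigma_2=\alpha^2_{i,\underline 1}\beta_i+\mathcal O(\beta_i^2)$ and $\sigma_3=\alpha^3_{i,\underline 1}\beta_i+\mathcal O(\beta_i^2)$, with the sign pattern listed in \eqref{Eq:Coefs3S2}. The $1$-compression wave $\mathcal{R}_1(\gamma_1,\cdot)$ on the right, as it crosses the strong $1$-shock, contributes (to first order, via the same computation as in Proposition~\ref{Pro:Inter11} applied with $\sigma_1'=-\gamma_1$ and interaction on the left, or equivalently via Lemma~\ref{Lem:InterChoc2}) a term $\propto\gamma_1$ to $\sigma_2$ and $\sigma_3$ whose signs are governed by $\alpha^2_{\underline 1,1}$ and $\alpha^3_{\underline 1,1}$; by Lemma~\ref{Lem:alpha113}, $\alpha^2_{\underline 1,1}>0$ and $\alpha^3_{\underline 1,1}<0$ for $\gamma<\tfrac53$. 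Taking $\gamma_1\le 0$ (a compression wave) therefore pushes $\sigma_2$ in the negative direction and $\sigma_3$ in the positive direction --- exactly the target signs \eqref{Eq:ButCorrWlag}. The remaining question is whether one choice of $\gamma_1$ simultaneously achieves $\sigma_2\le 0$ and $\sigma_3\ge 0$ for every admissible $\beta_i$; this is where one must be slightly careful. The clean way is: if already $\sigma_2\le 0$ and $\sigma_3\ge 0$ after the interaction (for instance $i=3$, where $\alpha^2_{3,\underline 1}<0$ and $\alpha^3_{3,\underline 1}>0$ and $\beta_3\ge0$, or $\beta_i$ of the right sign), take $\gamma_1=0$; otherwise choose $\gamma_1<0$ with $|\gamma_1|=\Theta(|\beta_i|)$ just large enough (using the explicit first-order coefficients and the fact, from Lemma~\ref{Lem:alpha113}, that $\alpha^2_{\underline 1,1}$, $\alpha^3_{\underline 1,1}$ are bounded away from $0$) so that the wrong-signed component crosses zero, and note that since $\gamma_1$ acts with the \emph{correct} sign on \emph{both} components, the other component stays on the correct side. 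The bound \eqref{Eq:EstCorrWLag} then falls out of the first-order expansion.

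The step I expect to be the main obstacle is precisely this last point: ensuring that a single $\gamma_1\le 0$ fixes both inequalities at once, uniformly in $\beta_i$ over a neighborhood, rather than $\gamma_1$ correcting $\sigma_2$ only to spoil $\sigma_3$ or vice versa. Resolving it rests entirely on the sign compatibility between the interaction coefficients of Proposition~\ref{Pro:Inter3S2} and the shock-crossing coefficients $\alpha^2_{\underline1,1}>0$, $\alpha^3_{\underline1,1}<0$ of Lemma~\ref{Lem:alpha113}: a negative $\gamma_1$ simultaneously decreases the $2$-component and increases the $3$-component, so there is no conflict, and one takes $\gamma_1=\min(0,\text{the value that zeroes the offending component})$, shrinking $\omega_-$, $\omega_+$ so that the higher-order remainders (uniform ${\mathcal O}(|\beta_i|^2)$ terms) do not overturn the linear-order sign. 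Once this is in place, the inverse-mapping-theorem argument for the existence of $\gamma_1$ and the quantitative bound \eqref{Eq:EstCorrWLag} are carried out exactly as in the proof of Proposition~\ref{Pro:CW3J>}, with the $C^2$ regularity of all the wave curves and of $\Sigma$ (guaranteed by Proposition~\ref{Pro:RiemannSD}) providing the needed smoothness and uniformity.
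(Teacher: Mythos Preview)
Your approach is correct and rests on the same key observation as the paper: since $\alpha^{2}_{\underline{1},1}>0$ and $\alpha^{3}_{\underline{1},1}<0$ (Lemma~\ref{Lem:alpha113}), a negative $\gamma_{1}$ moves \emph{both} $\sigma_{2}$ and $\sigma_{3}$ in the desired direction, so there is no conflict between the two inequalities in~\eqref{Eq:ButCorrWlag}.

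The difference is that the paper avoids the inverse mapping theorem entirely. Rather than solving for the $\gamma_{1}$ that zeroes an offending component and then checking the other one, the paper simply writes down an explicit choice
\[
\gamma_{1}:=-(\mu_{i}+1)\,|\beta_{i}|, \qquad \mu_{i}:=\max\!\left(\Big|\tfrac{\alpha^{2}_{i,\underline{1}}}{\alpha^{2}_{\underline{1},1}}\Big|,\ \Big|\tfrac{\alpha^{3}_{i,\underline{1}}}{\alpha^{3}_{\underline{1},1}}\Big|\right),
\]
and plugs it into the first-order expansion $\sigma_{k}=\alpha^{k}_{i,\underline{1}}\beta_{i}+\alpha^{k}_{\underline{1},1}\gamma_{1}+{\mathcal O}(|\beta_{i}|^{2}+|\gamma_{1}|^{2})$. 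This overshoot forces $\sigma_{2}\le -|\beta_{i}|+{\mathcal O}(\beta_{i}^{2})$ and $\sigma_{3}\ge |\beta_{i}|-{\mathcal O}(\beta_{i}^{2})$, so strict inequalities hold for $|\beta_{i}|$ small, and~\eqref{Eq:EstCorrWLag} is immediate from the formula. Your route via zeroing one component works too, but requires an extra existence argument and a separate check that the second inequality survives; the paper's explicit overshoot is shorter and gives the uniformity for free. (Minor point: the $\gamma_{1}$-contribution is governed by the \emph{right}-interaction coefficients $\alpha^{k}_{\underline{1},1}$ of Proposition~\ref{Pro:Inter11} with $\sigma'_{1}=\gamma_{1}$, not $-\gamma_{1}$; your sign bookkeeping in the final paragraph is nevertheless correct.)
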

\begin{figure}[htb]
\centering
\subfigure[Correcting the effect of a $1$-rarefaction]
{\input{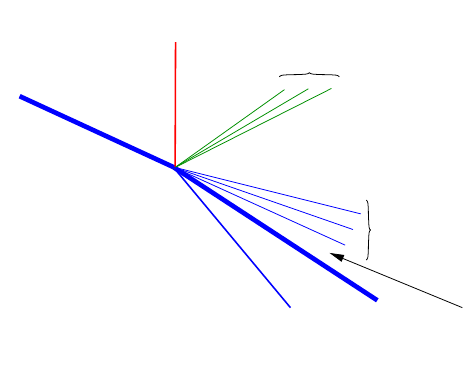_t}}
\hspace{0.5cm}
\subfigure[Inverting a $\overset{<}{J}$ wave]
{\input{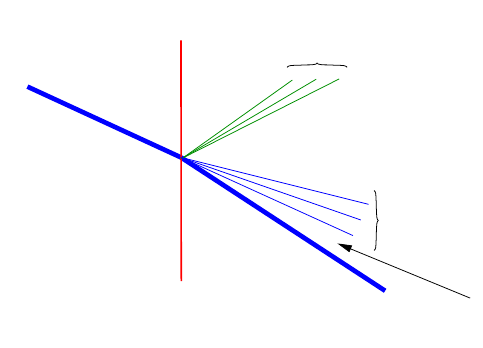_t}}
\hspace{0.5cm}
\subfigure[Correcting the effect of a $3$-shock]
{\input{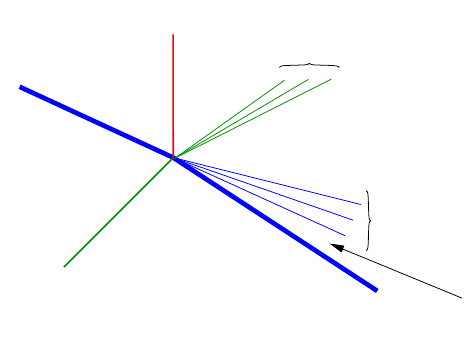_t}}
\caption{Compression waves acting as correction waves}
\label{Fig:CorrW}
\end{figure}
\begin{remark}
The cases $\beta=(\beta_{1},\, 0,\, 0)$,  ${\beta}_{1} > 0$, $\beta=(0,\, \beta_{2},\, 0)$,  ${\beta}_{2} > 0$ and $\beta=(0,\, 0,\, \beta_{3})$,  ${\beta}_{3} < 0$, are the non trivial cases (for the other ones, $\gamma_{1}=0$ works). In these situations, we could make sure that one of $\sigma_{2}$ or $\sigma_{3}$ actually vanishes. But it would not be systematically the same one, so this information is of no use to us in the construction.
\end{remark}
\begin{proof}[Proof of Proposition~\ref{Pro:CorrWLag}]
Let 
\begin{equation} \label{Eq:DefMui}
\mu_{i} := \max \left(\left| \frac{\alpha_{i,\underline{1}}^{2}}{\alpha_{\underline{1},1}^{2}}\right|, \left| \frac{\alpha_{i,\underline{1}}^{3}}{\alpha_{\underline{1},1}^{3}}\right| \right),
\end{equation}
and let us prove that
\begin{equation}  \label{Eq:DefMuiGamma1}
\gamma_{1} := - (\mu_{i}+1) |\beta_{i}|
\end{equation}
works. We compute $\sigma_{k}$, $k=2$ or $3$, by
\begin{equation*}
\sigma_{k}= G_{k}(\beta_{i},\gamma) := \Sigma_{k}\big(\Upsilon_{i}(\beta_{i}, u_{m}), {\mathcal R}_{1}(\gamma_{1}, T_{1}(\hat{\sigma}_{1}, u_{m} ) \big) .
\end{equation*}
As in the proof of Proposition \ref{Pro:CW3J>}, $G_{k}$ is of class $C^{2}$ and its differential at $(0,0)$ is
\begin{equation*}
dG_{k}(0,0) = \alpha_{i,\underline{1}}^{k} \, d \beta_{i} +  \alpha_{\underline{1},1}^{k}  \, d \gamma.
\end{equation*}
Hence
\begin{equation} \label{Eq:EstOWCorrW}
G_{k}(\beta_{i},\gamma_{1}) = \alpha_{i,\underline{1}}^{k} \beta_{i} + \alpha_{\underline{1},1}^{k}  \gamma_{1} + {\mathcal O}(|\beta_{i}|^{2} + |\gamma_{1}|^{2}).
\end{equation}
Since $\alpha_{\underline{1},1}^{2}$ is positive and $\alpha_{\underline{1},1}^{3}$ is negative (see Lemma \ref{Lem:alpha113}), one deduces that
\begin{equation}
\label{Eq:EstCorrW}
G_{2}(\beta_{i},\gamma_{3}) \leq - |\beta_{i}| + {\mathcal O}(\beta_{i}^{2}) \ \text{ and } \ 
G_{3}(\beta_{i},\gamma_{3}) \geq |\beta_{i}| - {\mathcal O}(\beta_{i}^{2}),
\end{equation}
which allows to conclude.
\end{proof}
We will also consider actual cancellation waves for System \eqref{Eq:Lagrangian}, as stated in the next proposition. We refer to Figure~\ref{Fig:CWL}. The strong shock used here is chosen not too large.
\begin{proposition} \label{Pro:CW3S}
In System \eqref{Eq:Lagrangian}, there exists $\kappa>0$ such that the following holds. Consider a $3$-shock $(\overline{u}_{-}, \overline{u}_{+})$ given by $\overline{u}_{+} = T_{3}(\overline{\sigma}_{3},\overline{u}_{-})$ with $\overline{\sigma}_{3} \in (- \kappa ,0)$ . Let $\omega_{-}$ and $\omega_{+}$ as in Proposition~\ref{Pro:RiemannSD}.
Let $u_{l}$ in $\omega_{-}$ and $u_{m}$, $u_{r}$ in $\omega_{+}$ satisfying
\begin{gather*}
{u}_{m} = T(\hat{\sigma},{u}_{l}) , \ \ \hat{\sigma}=(0,\, 0,\,  \hat{\sigma}_{3}), \ \ \hat{\sigma}_{3} < 0, \\ 
u_{r} = T(\beta,{u}_{m}), \ \ \beta=(0,\, \beta_{2},\, 0), \ \ {\beta}_{2} < 0.
\end{gather*}
Suppose, shrinking $\omega_{-}$ and $\omega_{+}$ if necessary, that $\hat{\sigma}_{3} \in (-\frac{3\kappa }{2},0)$. 
Then for small $|\beta_{2}|$, there exists $\gamma_{3} <0$ such that, if $\tilde{u}_{l}:= {\mathcal R}_{3}(-\gamma_{3},u_{l})$, denoting 
${\sigma} = \Sigma(\tilde{u}_{l}, u_{r})$, one has
\begin{equation*}
{\sigma}_{2}=0, \ \ \sigma_{1} \geq 0, 
\end{equation*}
and additionally
\begin{equation} \label{Eq:EstCW3S}
 \gamma_{3}  = - \frac{\alpha_{\underline{3},2}^{2} }{\alpha_{3,\underline{3}}^{2} } \, \beta_{2} +  {\mathcal O} \left( |\beta_{2}|^{2} \right).
\end{equation}
Moreover the admissible size of $|\beta_{2}|$ and the ``${\mathcal O}$'' are uniform as $u_{l}$, $u_{m}$ and $u_{r}$ belong to compact sets of $\omega_{-}$ and $\omega_{+}$.
\end{proposition}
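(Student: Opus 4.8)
The plan is to follow the scheme of Propositions~\ref{Pro:CW3J>} and~\ref{Pro:CorrWLag}: I would place a weak $3$-compression wave $(\tilde u_l,u_l)$ on the left of the strong $3$-shock $(u_l,u_m)$ and tune its strength $\gamma_3$, by the inverse mapping theorem, so that the $2$-wave of the outgoing Riemann problem $\Sigma(\tilde u_l,u_r)$ vanishes; the signs of $\gamma_3$ and of the outgoing $1$-wave would then be read off from the explicit size estimates of the interaction coefficients, which is exactly where the smallness of the strong shock enters.

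First I would record the two relevant interaction coefficients. Through the change of variables $x\longleftrightarrow -x$, $(\tau,v,P)\longleftrightarrow(\tau,-v,P)$ of Remark~\ref{Rem:InterS3S}, a weak $3$-wave on the left of a strong $3$-shock is the mirror of a weak $1$-wave on the right of a strong $1$-shock, so Proposition~\ref{Pro:Inter11} and Lemma~\ref{Lem:alpha113} give, for $\gamma<\tfrac{5}{3}$,
\[
\alpha^{1}_{3,\underline{3}}=\alpha^{3}_{\underline{1},1}<0,\qquad \alpha^{2}_{3,\underline{3}}=-\alpha^{2}_{\underline{1},1}<0,
\]
both of order $\hat\sigma_3^{2}$ by Remark~\ref{Rem:OGGros1a}; similarly, a weak $2$-wave on the right of a strong $3$-shock is the mirror of a weak $2$-wave on the left of a strong $1$-shock, so Proposition~\ref{Pro:Inter3S2} with $i=2$ gives
\[
\alpha^{2}_{\underline{3},2}=\alpha^{2}_{2,\underline{1}}>0,\qquad \alpha^{1}_{\underline{3},2}=-\alpha^{3}_{2,\underline{1}}>0,
\]
with $\alpha^{2}_{\underline{3},2}$ of order $1$ and $\alpha^{1}_{\underline{3},2}$ of order $|\hat\sigma_3|$ by Remark~\ref{Rem:OGGros1b}. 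The structural fact I want to exploit is that $\alpha^{1}_{\underline{3},2}$ is small (of order $|\hat\sigma_3|$) when the strong shock is small, whereas the product $\alpha^{1}_{3,\underline{3}}\,\alpha^{2}_{\underline{3},2}/\alpha^{2}_{3,\underline{3}}$ is of order $1$ and positive, being a negative quantity of order $\hat\sigma_3^{2}$ times a negative quantity of order $\hat\sigma_3^{-2}$.

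Then, fixing $u_l$ and $\hat\sigma_3$, I would set up as in the proof of Proposition~\ref{Pro:CW3J>} the $C^{2}$ map
\[
G:(\beta_2,g)\in(-\varepsilon,\varepsilon)^{2}\longmapsto\Bigl(\beta_2,\ \Sigma_2\bigl({\mathcal R}_3(-g,u_l),\,T_2(\beta_2,T_3(\hat\sigma_3,u_l))\bigr)\Bigr),
\]
noting that $u_r=T_2(\beta_2,T_3(\hat\sigma_3,u_l))$ and that $(u_l,T_3(\hat\sigma_3,u_l))=(u_l,u_m)$ is a pure strong $3$-shock, so that $G(0,0)=(0,0)$. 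Using \eqref{Eq:ConsqChoixParam} to identify the strength of $({\mathcal R}_3(-g,u_l),u_l)$ with $g$, together with Lemma~\ref{Lem:InterChoc}, I would obtain
\[
dG(0,0)=\begin{pmatrix}1 & 0\\ \alpha^{2}_{\underline{3},2} & \alpha^{2}_{3,\underline{3}}\end{pmatrix},
\]
which is invertible since $\alpha^{2}_{3,\underline{3}}$ is bounded away from $0$ after shrinking $\omega_{\pm}$. The inverse mapping theorem then produces, for $|\beta_2|$ small (with admissible size and constants uniform on compact subsets of $\omega_{\pm}$, and with $\tilde u_l={\mathcal R}_3(-\gamma_3,u_l)\in\omega_-$ by the remark following Proposition~\ref{Pro:RiemannSD}), a $C^{2}$ function $\gamma_3=\gamma_3(\beta_2)$, $\gamma_3(0)=0$, with $\Sigma_2(\tilde u_l,u_r)=0$; reading off the inverse of $dG(0,0)$ yields the expansion~\eqref{Eq:EstCW3S}. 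Because $\alpha^{2}_{\underline{3},2}>0$, $\alpha^{2}_{3,\underline{3}}<0$ and $\beta_2<0$, this forces $\gamma_3<0$ for $|\beta_2|$ small.

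It then remains to control the outgoing $1$-wave. Expanding $\Sigma_1(\tilde u_l,u_r)$ to first order gives $\sigma_1=\alpha^{1}_{\underline{3},2}\beta_2+\alpha^{1}_{3,\underline{3}}\gamma_3+{\mathcal O}(|\beta_2|^{2}+|\gamma_3|^{2})$, and substituting~\eqref{Eq:EstCW3S},
\[
\sigma_1=\Bigl(\alpha^{1}_{\underline{3},2}-\alpha^{1}_{3,\underline{3}}\tfrac{\alpha^{2}_{\underline{3},2}}{\alpha^{2}_{3,\underline{3}}}\Bigr)\beta_2+{\mathcal O}(|\beta_2|^{2}).
\]
By the orders above the coefficient in parentheses is a positive term of order $|\hat\sigma_3|$ minus a positive term of order $1$, so if $\kappa$ has been fixed small enough at the outset then, under the standing hypothesis $\hat\sigma_3\in(-\tfrac{3\kappa}{2},0)$, it is strictly negative; since $\beta_2<0$ this gives $\sigma_1\geq0$ for $|\beta_2|$ small. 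I expect this last step --- establishing $\sigma_1\ge 0$ --- to be the main obstacle: it is precisely the reason $\kappa$ must be chosen small before any state $\overline{u}_\pm$ is fixed, i.e.\ the reason the strong shock has to be ``not too large''. The remainder is the inverse mapping theorem together with the bookkeeping of signs and orders under the mirror change of variables, exactly as in the proofs of Propositions~\ref{Pro:CW3J>} and~\ref{Pro:CorrWLag}.
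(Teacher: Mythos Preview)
Your proof is correct and follows essentially the same approach as the paper: you set up the same $C^2$ map $G$, invoke the inverse mapping theorem to obtain $\gamma_3$ with $\sigma_2=0$ and the expansion~\eqref{Eq:EstCW3S}, and then determine the signs of $\gamma_3$ and $\sigma_1$ using the interaction coefficients under the horizontal symmetry together with the order-of-magnitude information from Remarks~\ref{Rem:OGGros1a} and~\ref{Rem:OGGros1b}. The only cosmetic difference is that you record the signs and orders of the $\alpha$-coefficients up front, while the paper derives them in the course of the argument; in particular your identification of the key point---that $\alpha^{1}_{\underline{3},2}$ is $O(|\hat\sigma_3|)$ while $\alpha^{1}_{3,\underline{3}}\alpha^{2}_{\underline{3},2}/\alpha^{2}_{3,\underline{3}}$ is positive of order~$1$, forcing the smallness condition on $\kappa$---is exactly the paper's reasoning.
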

\begin{figure}[htb]
\begin{center}
\input{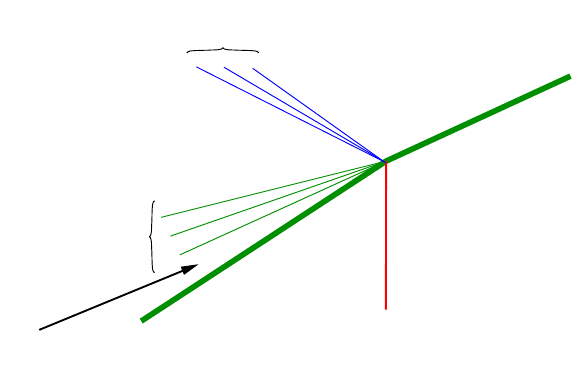_t}
\end{center}
\caption{A $3$-compression wave acting as a cancellation wave}
\label{Fig:CWL}
\end{figure}
\begin{proof}[Proof of Proposition \ref{Pro:CW3S}]
The proof is roughly the same as for Proposition \ref{Pro:CW3J>}. Here we consider 
\begin{equation*}
G : (\beta_{2},\gamma_{3}) \in (-\varepsilon,\varepsilon)^{2} \mapsto \big(\beta_{2}, \Sigma_{2}\big({\mathcal R}_{3}(-\gamma_{3},u_{l}), T_{2}(\beta_{2}, T_{3}(\hat{\sigma}_{3}, u_{l} )  \big) \big).
\end{equation*}
Again $G$ is of class $C^{2}$ and its differential at $(0,0)$ is given by
\begin{equation*}
dG(0,0) = 
\begin{pmatrix}
1 & 0 \\
\alpha_{\underline{3},2}^{2} & \alpha_{3,\underline{3}}^{2}  
\end{pmatrix}.
\end{equation*}
Hence we get as previously the existence of $\gamma_{3}$ cancelling $\sigma_{2}$ and satisfying \eqref{Eq:EstCW3S} as a consequence of the inverse mapping theorem. 
Let us now focus on the signs of $\sigma_{1}$ and $\gamma_{3}$. For $\gamma_{3}$ we have \eqref{Eq:EstCW3S} and for $\sigma_{1}$, the first-order Taylor expansion of $\Sigma_{1}({\mathcal R}_{3}(-\gamma_{3},u_{l}), T_{2}(\beta_{2}, T_{3}(\hat{\sigma}_{3}, u_{l}))$ gives:
\begin{equation} \label{Eq:DLSigma1}
\sigma_{1} = \alpha_{\underline{3},2}^{1} \beta_{2} + \alpha_{3,\underline{3}}^{1}  \gamma_{3} + {\mathcal O}(|\beta_{2}|^{2})
= \left( \alpha_{\underline{3},2}^{1} - \frac{\alpha_{\underline{3},2}^{2} }{\alpha_{3,\underline{3}}^{2} } \alpha_{3,\underline{3}}^{1} \right)  \beta_{2}
+ {\mathcal O}(|\beta_{2}|^{2}).
\end{equation}
Using the symmetry $x \longleftrightarrow -x$, we know from Propositions~\ref{Pro:Inter11} and \ref{Pro:Inter3S2} that $\alpha_{\underline{3},2}^{1}>0$, $\alpha_{3,\underline{3}}^{1}<0$, $\alpha_{\underline{3},2}^{2} >0$ and $\alpha_{3,\underline{3}}^{2} <0$. Hence we can conclude that $\gamma_{3}<0$ (if $|\beta_{2}|$ is small enough), but the two coefficients in the right hand side of \eqref{Eq:DLSigma1} are of different signs. To conclude, we use that $\hat{\sigma}_{3}$ is not too large (that is, we choose $\kappa$ small). Using Remarks~\ref{Rem:OGGros1a} and \ref{Rem:OGGros1b} and adapting them in the horizontally symmetric situation, we see that $\alpha_{\underline{3},2}^{1} = \Theta (\hat{\sigma}_{3})$, $\alpha_{3,\underline{3}}^{1} = \Theta (-\hat{\sigma}_{3}^{2}) $, $\alpha_{3,\underline{3}}^{2}= \Theta (-\hat{\sigma}_{3}^{2})$ and $\alpha_{\underline{3},2}^{2}= \Theta (1)$. It follows that in \eqref{Eq:DLSigma1}, the second term in the parentheses is predominant over the first one for small $\hat{\sigma}_{3}$. This gives the conclusion.
\end{proof}
%
%
%
%
%
%
%
%
%
%
%
%
%%%%%%%%%%%%%%%%%%%%%%%%%%%%%%%%%%%%%%%%%%%%%%%%%%%%%%%%%%%%%%%%%%%%%%%%%%%%%%%%%%%%%%%%%%%%%%%%%%%%%%%%%%%%%%%%%%%%%%%%%%%%%%%%%%%%
%
%
\section{The construction in the Eulerian case}
\label{Sec:ConstrEuler}
With the tools exposed in Sections \ref{Sec:Tools} and \ref{Sec:Applications}, we are now in position to give our method to construct front-tracking approximations leading to a relevant solution for Theorem~\ref{ThmE}. We recall that front-tracking approximations are piecewise constant functions on the space-time domain (here $\R^{+} \times (0,L)$), each ``piece'' on which the function is constant being polygonal. In this section, we only describe the algorithm that generates these approximations; we will prove in Section~\ref{Sec:ConvFT} that these approximations converge to a solution of the system, which will establish Theorem~\ref{ThmE}. \par
\ \par
The construction has some common points with the one of \cite{Glass-EI} for the controllability of the isentropic ($2 \times 2$) Euler system for compressible gas, and uses some features of Bressan's front-tracking algorithm \cite{Bressan:FT} for the generation of solutions of hyperbolic $n \times n$ systems of conservations laws with $n \geq 3$. We will first suppose, using the notations of Theorem \ref{ThmE}, that:
\begin{equation} \label{Eq:Lambda2>0}
\lambda_{1}(\overline{u}_{0}) < 0 \ \text{ and } \ 
\lambda_{2}(\overline{u}_{0}) > 0.
\end{equation}
We will explain in Paragraph~\ref{SSSec:RC} how the other cases can be treated. \par
\ \par
As in \cite{Glass-EI}, the construction consists of two successive steps. We describe these steps in separate subsections. A main point in the construction here is to let a strong $2$-discontinuity enter the domain from the left side $x=0$ and to use this strong discontinuity to ``eliminate'' the waves inside the domain, using cancellation effects. As we will see, this discontinuity eventually leaves the domain through the right side $x=L$. \par
\ \par
We let $\nu>0$ a small parameter; we construct a front-tracking approximation for each such $\nu$ and we will let $\nu$ go to $0$. We also let $\varrho>0$ another positive parameter intended to go to $0$ (depending on $\nu$).\par
%
%
%
%
%
%%%%%%%%%%%%%%%%%%%%%%%%%%%%%%%%%%%%%%%%%%%%%%%%%%%%%%%%%%%%%
%
%
\subsection{The strong $2$-discontinuity}
\label{Subsec:TS2D}
We consider $v_0^-$ such that $(v_0^-,\overline{u}_0)$ is an increasing (in terms of $\tau$) $2$-contact discontinuity $\overset{<}{\mathbb J}$:
\begin{equation} \label{Eq:La2D}
\overline{u}_{0}= T_{2}(\overline{\sigma}_{2},v_{0}^{-}), \ \ \overline{\sigma}_{2}<0.
\end{equation}
We require that it satisfies
\begin{equation} \label{Eq:La2DP1}
\lambda_{1} (v_{0}^{-}) \leq \frac{3}{4}\lambda_{1}(\overline{u}_{0}),
\end{equation}
which is clearly the case when $|\overline{\sigma}_{2}|$ is small enough.
Note that the velocity $\overline{s}$ of this discontinuity satisfies
\begin{equation} \label{Eq:Vit2CD}
\overline{s} = \lambda_{2}(\overline{u}_{0}) >0.
\end{equation}
This is the reference discontinuity on which the construction is based. Given such a $2$-discontinuity, we will determine $\varepsilon>0$ such that, if
% $TV(u_{0}) + \| u_{0} - \overline{u}_{0} \|_{L^{\infty}(0,L)} < \varepsilon$
\eqref{Eq:SmallIC} is satisfied, the following construction is valid. This will allow us to get \eqref{Eq:GoalE2} by ultimately taking this reference discontinuity small. \par
\ \par
Now, given such a discontinuity, the approximations that we are about to construct will take values in the domain:
\begin{equation} \label{Eq:P:Domaine}
{\mathcal D} = B(v_0^-;r) \cup B(\overline{u}_0;r),
\end{equation}
where $r>0$ is small enough. In particular $r$ is chosen in order that: 
\begin{itemize}
\item $B(v_0^-;r) \cap B(\overline{u}_0;r) = \emptyset$ (to simplify the discussion),
\item $\overline{{\mathcal D}} \subset \Omega$ (in particular the vacuum is avoided),
\item any two states in $B(v_0^-;r)$ or in $B(\overline{u}_0;r)$ determine a Riemann problem having a solution which avoids the vacuum, and the same is true for any ``swapped'' Riemann problem as defined in Subsection~\ref{Subsec:swapped}.
\item interactions of two simple waves in $B(v_0^-;r)$ or in $B(\overline{u}_0;r)$ conserve the sign in the sense of Corollary~\ref{Cor:Glimm2}, for any permutation of the Riemann problem, and satisfy Lemma~\ref{Lem:CommutR1R2},
\item $B(v_0^-;r) \subset \omega_{-}$, $B(\overline{u}_0;r) \subset \omega_{+}$ where $\omega_{-}$ and $\omega_{+}$ are small enough in order for Propositions~\ref{Pro:Inter32} and \ref{Pro:CW3J>} and Remark \ref{Rem:SignesInter21} to apply,
\item any simple wave leading a state of $B(v_0^-;r)$ to a state of $B(\overline{u}_0;r)$ is an increasing (in terms of $\tau$) $2$-contact discontinuity with strength $\sigma_{2}$ and speed $s$ satisfying
\begin{equation} \label{Eq:sgeqlambda2/2}
|\overline{\sigma}_{2}| /2 \leq |\sigma_{2}| \leq 2 |\overline{\sigma}_{2}|  \ \text{ and } \ s \geq \lambda_{2}(\overline{u}_{0})/2,
\end{equation}
\item for any $u$ in $B(v_0^-;r)$ 
\begin{equation} \label{Eq:lamabda1leqlambda1/2}
\lambda_{1}(u) \leq \lambda_{1}(\overline{u}_{0})/2 <0.
\end{equation}

\end{itemize}
We will in particular choose $\varepsilon>0$ in order that \eqref{Eq:SmallIC} implies that $u_{0}$ has values in $B(\overline{u}_0;r)$, but $\varepsilon$ may have to be chosen smaller in the sequel. \par
We consider $\hat{\lambda}$ a positive number such that
\begin{equation} \label{Eq:HatLambda}
\hat{\lambda} > \max_{u \in \overline{{\mathcal D}}} \, |\lambda_{3}(u)|.
\end{equation}
%
% Reducing $r$ will not oblige to modify $\hat{\lambda}$. \par
%
\ \par
We now proceed to the construction of front-tracking approximations $u^{\nu}$ of a solution to the controllability problem; these approximations are in a first time constructed only ``under/on the right'' the strong discontinuity (in the $(t,x)$ domain). In a second time, we resume the construction above/on the left of this discontinuity. \par
%
%
%
%
%
%
%%%%%%%%%%%%%%%%%%%%%%%%%%%%%%%%%%%%%%%%%%%%%%%%%%%%%%%%%%%%%
%
%
\subsection{Part~1: Construction of the approximation under/on the right of the strong discontinuity}
\label{Subsec:Part1E}
In this subsection, we describe the first part of the algorithm, which allows to construct the part of the approximation $u^{\nu}$ situated under/on the right of the strong discontinuity, as well as the value of $u^{\nu}$ immediately on the left of the discontinuity and the location of the discontinuity itself. Specifically, we construct the function $X(t)$ which represents the location of the strong discontinuity in $(0,L)$ at time $t$, and which is defined in some interval $[0,T_{1}]$, $T_{1}$ being the exit time of the strong discontinuity. This location $X(t)$ will be an increasing function of time, depending of course on $\nu$; to lighten the notation we do not make this dependence explicit. In the same time we construct the piecewise constant function $u^{\nu}$ on $\{(t,x) \in [0,T_{1}] \times [0,L] \ | \ x \geq X(t)\}$. Moreover, we also construct the state $u^{\nu}(t,X(t)^{-})$ on the left of the discontinuity, which will be exploited in Part~2. \par
\ \par
In this part of the algorithm, we will suppose that all the states at points where $x > X(t)$ belong to $B(\overline{u}_{0},r)$ and the states $u^{\nu}(t,X(t)^{-})$ belong to $B(v_{0}^{-},r)$. Our convention is that the algorithm stops at a time when this condition starts to fail. We will prove later that the algorithm does not stop provided that $\varepsilon$ is small enough. \par

\ \par
\noindent
{\bf Step 1. Approximation of the initial data and initiation of the algorithm.} \par
\ \par
We introduce a sequence of piecewise constant approximations of the initial state $(u_0^\nu)$ in $BV(0,L)$, with values in $B(\overline{u}_{0},r)$ and satisfying:
\begin{equation}
\label{Eq:ApproxCI}
TV(u_0^\nu) \leq TV(u_0), \ \| u_{0}^{\nu} - \overline{u}_{0} \|_{\infty} \leq \| u_{0} - \overline{u}_{0} \|_{\infty} \ 
 \text{ and } \ \| u_0^\nu - u_0 \|_{L^1(0,L)} \leq \nu .
\end{equation}
Now, the algorithm to construct the approximation $u^{\nu}$ defined in $\R^{+} \times (0,L)$ works as follows. \par
\ \\
{\bf a.} At discontinuity point $\overline{x}$ of $u_{0}^{\nu}$ in $(0,L)$, we approximate the solution of the Riemann problem $(u_{0}^{\nu}(\overline{x}^{-}), u_{0}^{\nu}(\overline{x}^{+}))$ by using the {\it accurate Riemann solver}, that is by defining $u^{\nu}$ around the point $(0,\overline{x})$ as the solution of the Riemann problem, where the rarefaction waves (for families $1$ or $3$) are replaced by rarefaction fans with accuracy $\nu$ (described below). On the contrary, shock waves and contact discontinuities are left unchanged. \par
The rarefaction fans are defined as follows: given a rarefaction wave between $u_-$ and $u_+=T_i(\sigma_{i},u_-)$, $\sigma_{i}>0$, $i=1,3$, we introduce the intermediate states
\begin{equation*}
\omega_{k}:=T_i \left(\frac{k}{n}\sigma_{i},u_-\right)
\text{ for } k=0, \dots, n:=\left\lceil \frac{\sigma_{i}}{\nu} \right\rceil.
\end{equation*}
The rarefaction wave \eqref{Eq:RieRar} is then replaced with the rarefaction fan
\begin{equation*}
U^{\nu}_{i}(t,x) =  \left\{ \begin{array}{l}
 u_{-} \text{ for } (x-\overline{x})/t < s(u_{-},\omega_{1}), \\
 \omega_{k} \text{ for } (x-\overline{x})/t \in \big(s(\omega_{k-1},\omega_{k}),s(\omega_{k},\omega_{k+1})\big), \ k=1, \dots, n-1, \\
 u_{+} \text{ for } (x-\overline{x})/t > s(\omega_{n-1},u_{+}),
\end{array} \right.
\end{equation*}
where the shock speed for rarefactions was defined in \eqref{Eq:ShockSpeedEuler}. 
In other words, the rarefaction fan is composed of the constant states $\omega_{k}$, separated by straight lines at shock speed $s(\omega_{k},\omega_{k+1})$. \par
\begin{figure}[htbp]
\begin{center}
	\input{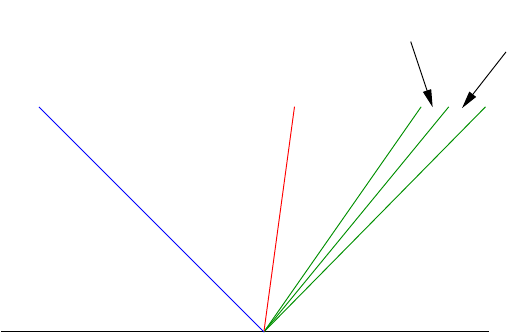_t}
\end{center}
\caption{The accurate Riemann solver}
\label{Fig:AccurateSolver}
\end{figure}
\ \\
{\bf b.} At the point $\overline{x}=0$, we solve the Riemann problem
$(v_0^-,u_0^\nu(0^+))$; we conserve only the $2$-wave and the $3$-wave, and replace this $3$-wave by a rarefaction fan with accuracy $\nu$ if this wave is a rarefaction. The $2$-contact discontinuity determines the curve $X(t)$ for small times. At the point $\overline{x}=L$, we consider that the approximation is continued with $u_{0}^{\nu}(L)$, so that there is no Riemann problem to solve. \par
\ \\
After these operations, we have in $(0,L)$ and for small times a piecewise constant function $u^{\nu}$, where the constants are separated by straight lines that we call fronts; the $2$-contact discontinuity originated from $0$ is called strong front, the other fronts being called weak. More precisely, we declare a front strong when it connects a state from $B(v_0^-;r)$ to a state in $B(\overline{u}_0;r)$; it is weak when both states belong to the same connected component of ${\mathcal D}$. \par
Moreover, all the fronts generated at this step will be called {\it physical fronts}, as opposed to {\it artificial fronts} which will be introduced in the next step. For each physical front separating the left state $u_{l}$ from the right state $u_{r}$, there exist $i \in \{1,2,3\}$ (the family of the front) and $\sigma_{i} \in \R$ (its strength) such that $u_{r}= T_{i}(\sigma_{i},u_{l})$. \par
\ \\
{\bf Step 2. Extension of the solution and interactions.} \par
\ \\
To define the approximation $u^{\nu}$ for larger $t >0$, we have to explain how to extend it over points where two fronts meet, which are called {\it interaction points}. We do not extend any front outside of the space domain $(0,L)$, so we do not have to give special rules when a front hits the boundary. \par
At an interaction point $(\overline{t},\overline{x})$, a front on the left separating the leftmost state $u_{l}$ from the middle state $u_{m}$ meets a front separating $u_{m}$ from the rightmost state $u_{r}$. Of course the left front travels faster than the right one. When both fronts are physical, one can write:
\begin{equation} \label{Eq:Interaction}
u_{m} = T_{i}(\sigma_{i},u_{l}) \ \text{ and } \ u_{r}= T_{j}(\sigma'_{j},u_{m}).
\end{equation}
\begin{remark} \label{Rk:ModifSpeed}
As in \cite{B}, we can change a little bit the speed of a front (by an amount of $\nu$ at most), in order to avoid interaction points with more than two incoming fronts involved. Even, we can ensure that all times of interaction are distinct (not that this is essential). But doing so, we choose not to modify the speed of contact discontinuities of the second family. This is always possible since two contact discontinuities travelling at shock speed cannot meet, because two contact discontinuities which are not separated by other waves travel at the exact same speed. Also, since this can be done with an arbitrarily small change of speed, we avoid systematically the meeting of two rarefaction fronts of the same family (such a meeting does not occur naturally due to the genuine nonlinearity). Finally, we will not change the speed of artificial fronts (which do not meet either).
\end{remark}
According to the situation, the front-tracking approximation $u^{\nu}$ is extended for $t \geq \overline{t}$ as follows. \par
\ \par
\noindent
{\bf A. The strong discontinuity is not involved}. We suppose that none of the two fronts involved is the strong one. In this situation, we follow \cite{B} (with a non-essential variant for the simplified solver). There are subcases.
\begin{itemize}
\item {\it Interaction with large amplitude.} We suppose that both fronts are physical and that
\begin{equation} \label{Eq:CondIntForte}
|\sigma_{i} \sigma'_{j}| \geq \varrho.
\end{equation}
In that case we extend $u^{\nu}$ for $t \geq \overline{t}$ by using the accurate Riemann solver with accuracy $\nu$ for $(u_{l},u_{m})$ at the point $(\overline{t},\overline{x})$, as in the first step. However, if one of the incoming fronts (of family $k$) is a rarefaction front, we do not split the outgoing $k$-wave in pieces (even if its strength is larger than $\nu$), and extend it as a single front sent at shock speed.
\item {\it Interaction with small amplitude.} We suppose that both fronts are physical and that
\begin{equation} \label{Eq:CondIntFaible}
|\sigma_{i} \sigma'_{j}| < \varrho.
\end{equation}
In that case we extend $u^{\nu}$ for $t \geq \overline{t}$ by using the {\it simplified Riemann solver}, as described now. 
\begin{itemize}
\item If $i\neq j$, that is, the incoming fronts are of different families, then $i>j$ because otherwise the two fronts would not meet. The solution of the Riemann problem is approximated by the succession of a $j$-front, a $i$-front and an {\it artificial front} travelling at speed $\hat{\lambda}$. For that, we consider the permutation $\pi \in S_{3}$ such that $\pi(1)=j$, $\pi(2)=i$, and set $\xi=(1,1,1)$. We let $\hat{\sigma} := \Sigma^{\pi,\xi}(u_{l},u_{r})$. Then the approximation $u^{\nu}$ is extended by a single front separating $u_{l}$ and $\tilde{u}_{m}:=T_{j}(\hat{\sigma}_{j},u_{l})$ travelling at shock speed, a single front separating $\tilde{u}_{m}$ from $\tilde{u}_{r}:=T_{i}(\hat{\sigma}_{i},\tilde{u}_{m})$ and travelling at shock speed, and finally an {\it artificial front} separating $\tilde{u}_{r}$ from $u_{r}$, and travelling at speed $\hat{\lambda}$. See Figure~\ref{fig:InterArt1}.
\item If $i= j$, that is, the incoming fronts are of the same family, then at least one of the fronts is a shock because otherwise the two fronts would not meet. The outgoing Riemann problem is approximated by a $i$-front and an artificial front as follows. Pick a permutation $\pi \in S_{3}$ such that $\pi(1)=i$, set $\xi=(1,1,1)$ and let $\hat{\sigma} := \Sigma^{\pi,\xi}(u_{l},u_{r})$. Then the approximation $u^{\nu}$ is extended by a front separating $u_{l}$ and $\tilde{u}_{r}:=T_{i}(\hat{\sigma}_{i},u_{l})$ travelling at shock speed and an {artificial front} separating $\tilde{u}_{r}$ from $u_{r}$, and travelling at speed $\hat{\lambda}$. See Figure~\ref{fig:InterArt2}.
\end{itemize}
\item {\it Artificial interaction.} We suppose that one of fronts is artificial. The second one is physical since the algorithm will guarantee that all artificial fronts under the strong front travel at the same speed $\hat{\lambda}$. Due to \eqref{Eq:HatLambda}, the artificial front is the left one $(u_{l}, u_{m})$; let us describe the right front with $u_{r}=T_{j}(\sigma_{j},u_{m})$. Then one approximates the outgoing Riemann problem by a $j$-front and an artificial front as follows (we will refer to this method as the {\it simplified Riemann solver} as well). Pick a permutation $\pi \in S_{3}$ such that $\pi(1)=j$, set $\xi=(1,1,1)$ and let $\hat{\sigma} := \Sigma^{\pi,\xi}(u_{l},u_{r})$. We define $\tilde{u}_{m}:=T_{j}(\hat{\sigma}_{j},u_{l})$, and one extends $u^{\nu}$ by a front separating $u_{l}$ and $\tilde{u}_{m}$ and travelling at shock speed and an {artificial front} separating $\tilde{u}_{m}$ from $u_{r}$, and travelling at speed $\hat{\lambda}$. See Figure~\ref{fig:InterArt3}.
\end{itemize}
\begin{figure}[htb]
\centering
\subfigure[Physical fronts, $i\neq j$]
{\label{fig:InterArt1} \input{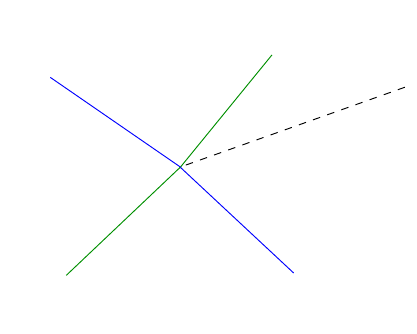_t}}
\hspace{1cm}
\subfigure[Physical fronts, $i= j$]
{\label{fig:InterArt2} \input{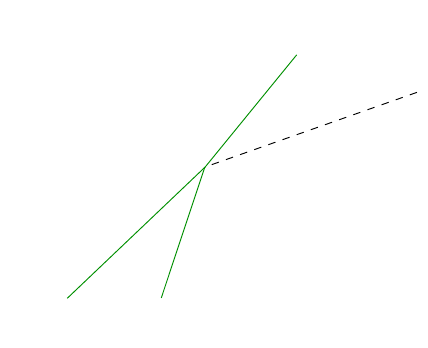_t}}
\hspace{1cm}
\subfigure[With an artificial front]
{\label{fig:InterArt3} \input{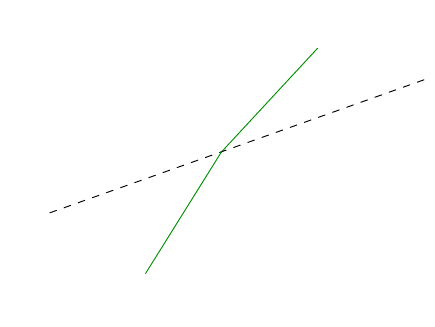_t}}
\caption{Simplified solver}
\label{fig:SimplifiedSolver}
\end{figure}
\begin{remark}
We could have used the simplified solver from \cite{B}. The (tiny) advantage here is that the interaction estimates enter the same framework as for the usual interactions, that is, Proposition~\ref{Pro:Glimm2}. One can also notice that this simplified solver respects the fact that the interaction of two rarefactions of family $3$ and $1$ does not generate a $2$-wave, and that the interaction of two shocks of family $3$ and $1$ generates a $2$-wave whose strength is of third order with respect to the incoming waves (recall \eqref{Eq:r1r3}).
\end{remark}
\ \par
\noindent
{\bf B. The strong discontinuity is involved}. There is only one front considered strong at each time $t$ in this construction, of type $\overset{<}{\bJ}$, separating a state in $B(v_{0}^{-},r)$ on the left and a state in $B(\overline{u}_{0},r)$ on the right, whose speed satisfies \eqref{Eq:sgeqlambda2/2}; otherwise the algorithm has stopped. Moreover, since in this subsection we are considering the approximation on the right of the strong discontinuity, at an interaction point the strong front is the left one. Call again $u_{l}$, $u_{m}$ and $u_{r}$ the left, middle and right states. The right front is necessarily of the first family. Indeed, if $(u_{m},u_{r})$ corresponded to a physical front of the third family or an artificial front, it would travel faster than the $2$-front $(u_{l},u_{m})$. And if $(u_{m},u_{r})$ was separated by a physical front of the second family, it would travel at the exact same speed as the $2$-front $(u_{l},u_{m})$ (recall Remark~\ref{Rk:ModifSpeed}.) \par
Hence we let $u_{r}=T_{1}(\sigma_{1},u_{m})$ and discuss according to the nature of this wave.
\begin{itemize}
\item {\it Interaction with a $1$-rarefaction front.} In that case, we use the accurate solver as described above. This generates a $1$-rarefaction above the strong $2$-discontinuity, modifies the $2$-strong discontinuity and generates a reflected $3$-wave, which is a rarefaction wave. The natures of these waves are deduced from the definition of $r$, and Remarks~\ref{Rem:Coefs21b} and \ref{Rem:SignesInter21}.  We extend the fronts of the $1$-rarefaction fan only for small times for the moment. This determines a new state $\tilde{u}_{l}$ on the left of the $2$-strong discontinuity.
See Figure \ref{Fig:2CD1R}. %We extend the two fronts of families $2$ and $3$ by making them travel at shock speed.
\begin{figure}[htbp]
\begin{center}
	\input{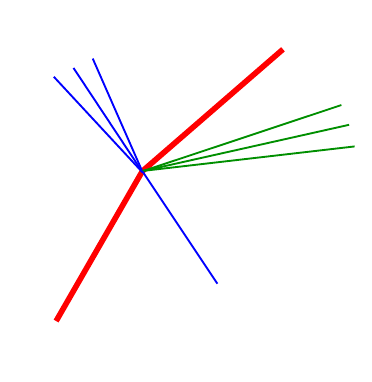_t}
\end{center}
\caption{A $1$-rarefaction crossing the $2$-strong discontinuity}
\label{Fig:2CD1R}
\end{figure}
\item {\it Interaction with a $1$-shock front.} In this case, we apply Proposition \ref{Pro:CW3J>}. We imagine that the $3$-compression wave $(\tilde{u}_{l},u_{l})$ has arrived on the left of the $2$-discontinuity exactly at $(\overline{t},\overline{x})$. The resulting Riemann problem $(\tilde{u}_{l},u_{r})$ for times $t \geq \overline{t}$ is solved by a $2$-contact discontinuity and a reflected/transmitted wave of the third family (but no $1$-wave). We know from Remark~\ref{Rem:SigneOndeReflechie} that this $3$-wave is a shock.
We extend $u^{\nu}$ above the interaction point $(\overline{t},\overline{x})$ by using the accurate solver for the Riemann problem $(\tilde{u}_{l},u_{r})$. Consequently the front of families $2$ and $3$ are sent at shock speed. This determines $\tilde{u}_{l}$ as the new state on the left of the $2$-strong discontinuity. Note that actually, we know from Remark~\ref{Rem:SigneOndeReflechie} that for a not too large $\overset{<}{\bJ}$, the $3$-wave is a shock, but this is not essential. See Figure \ref{fig:CW321}.
\end{itemize}
In both cases, we let $X(t)$ follow the $2$-discontinuity. We do not yet extend the fronts emerging above the strong $2$-contact discontinuity (i.e. in the domain $\{x < X(t) \}$), but we keep record of the state $u^{\nu}(t,X(t)^{-})$ above this discontinuity; this will be used in the second part of the construction. \par
Note that due to \eqref{Eq:sgeqlambda2/2}, $X(t)$ has a positive speed and eventually leaves the domain through $x=L$ at some finite time $T_{1}>0$ with:
\begin{equation} \label{Eq:EstT1}
T_{1} \leq \frac{2L}{\lambda_{2}(\overline{u}_{0})}.
\end{equation}
The first part of the algorithm ends here.
%
%
%
%
%
%
%%%%%%%%%%%%%%%%%%%%%%%%%%%%%%%%%%%%%%%%%%%%%%%%%%%%%%%%%%%%%
%
%
\subsection{Part~2: Construction above/on the left of the strong discontinuity}
\label{Subsec:CEP2}
At the end of Part~1, assuming that the algorithm is well-functioning (in the sense that it does not stop before $T_{1}$ and generates a finite number of fronts and interaction points), we have a front-tracking approximation defined under/on the right of the strong $2$-contact discontinuity $X$. Let us now explain how we extend this front-tracking approximation $u^{\nu}$ above/on the left of the strong $2$-contact discontinuity. \par
\ \par
\noindent
{\bf Step 1. Fronts emerging from the strong discontinuity.}
In the construction we have left above the strong $2$-contact discontinuity germs of $1$-rarefaction waves that we would like to extend forward in time and germs of $3$-compression waves that we would like to extend {\it backward in time}. This corresponds to the two following situations. At an interaction point $(\overline{t},X(\overline{t}))$ with the strong discontinuity in Part~1, the state $u^{\nu}(t,X(t)^{-})$ on the left of the discontinuity has changed, and $u_{-}:=u^{\nu}(\overline{t},X(\overline{t}^{-})^{-})$ and  $u_{+}:=u^{\nu}(\overline{t},X(\overline{t}^{+})^{-})$ are connected:
\begin{itemize}
\item[--] either by $u_{+}=T_{1}(\sigma_{1},u_{-})$ for some $\sigma_{1}>0$, when the incoming front on the right of $X$ was of type~$\overset{\leftharpoonup}{R}$,
\item[--] or by $u_{-}={\mathcal R}_{3}(\sigma_{3},u_{+})$ for some $\sigma_{3}<0$, when the incoming front on the right of $X$ was of type $\overset{\leftharpoonup}{S}$.
\end{itemize}
See Figure~\ref{fig:FinStep1}. We let fronts emerge front the strong discontinuity as follows.
\begin{itemize}
\item In the first situation, the rarefaction wave $(u_{-},u_{+})$ is treated via the usual accurate solver and consequently sent forward in time as a rarefaction fan. We could avoid to split these rarefaction waves generated by the meeting of the strong discontinuity with a rarefaction front from Part~1; but this has no importance.
\item In the second situation, the compression wave $(u_{+},u_{-})$ is split as a compression fan (with accuracy $\nu$) and sent backward in time. This is the equivalent for $t<\overline{t}$ of what does the accurate solver for rarefaction waves.
To be more precise, call $n := \left\lceil \frac{|\sigma_{3}|}{\nu} \right\rceil$ and introduce the intermediate states $\omega_{k}:= {\mathcal R}_{3}(-k \sigma_{3}/n ,u_{-})$, $k=0,\dots,n$, and the propagation speeds $s_{0} := \dot{X}(\overline{t}^{-})$ and $s_{i}:= s(\omega_{i-1}, \omega_{i})$ for $i=1, \dots, n$. Then $u^{\nu}$ is set on the left of $X$ locally at $(\overline{t},\overline{x})$ as
\begin{gather*}
u^{\nu}(t,x) = \omega_{i} \ \text{ for } \ t < \overline{t}, \ \ x < X(t) \ \text{ and } \ \frac{x-X(\overline{t})}{t-\overline{t}} \in [s_{i}, s_{i+1}], \ i=0,\dots,n-1, \\
u^{\nu}(t,x) = u_{+} \ \text{ for } \ t < \overline{t}, \ \ x < X(t) \ \text{ and } \ \frac{x-X(\overline{t})}{t-\overline{t}} >s_{n} 
\ \text{ or for } \ x < X(t) \ \text{ and } t > \overline{t}.
\end{gather*}
It is clear that $s_{1} > \lambda_{3}(u_{-}) > \lambda_{2}(u_{-})= s_{0}$ and that $s_{i+1} > s_{i}$ for $i=1, \dots, n-1$ since compression waves satisfy Lax's inequalities:
\begin{equation} \label{Eq:LaxCompression}
\lambda_{i}({\mathcal R}_{i}(\sigma_{i},u)) < s(u, {\mathcal R}_{i}(\sigma_{i},u)) <\lambda_{i}(u), \ \ \text{ for } i=1, 3, \ \sigma_{i}<0.
\end{equation}
\end{itemize}
\begin{figure}[htb]
\centering
\subfigure[At the end of Part 1]
{\label{fig:FinStep1} \input{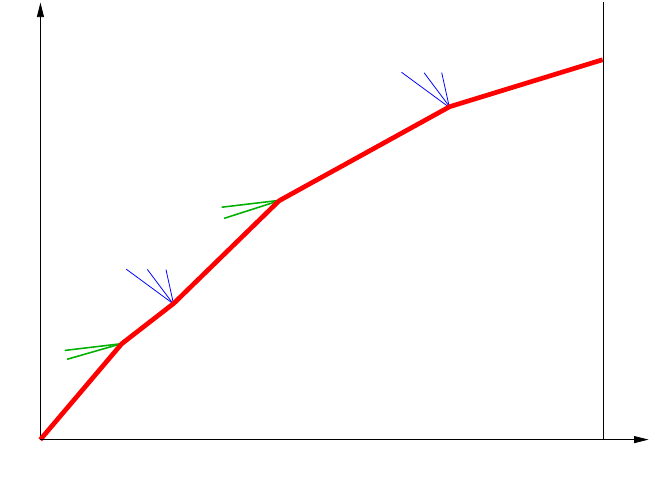_t}}
\hspace{1cm}
\subfigure[Beginning of Part 2]
{\label{fig:DebutPart2} \input{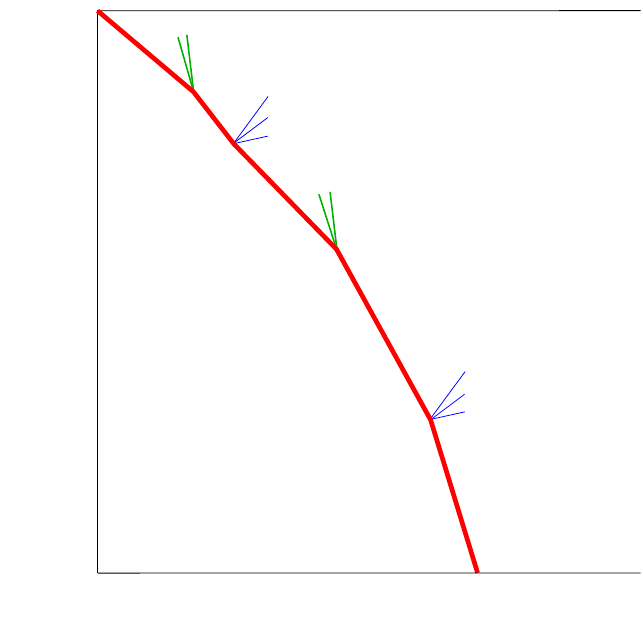_t}}
\caption{Junction of the two parts of the construction}
\label{fig:Part1Part2}
\end{figure}
\ \par
\noindent
{\bf Step 2. Extension of the fronts.}
Now we have to explain how we extend these fronts and complete the approximation $u^{\nu}$ on the whole $\R^{+} \times (0,L)$. The main point is to use $L-x$ as time variable; we are led to an initial-boundary value situation with a moving boundary, see Figure \ref{fig:DebutPart2}. In order to avoid the confusion with the actual time variable $t$, we will systematically refer to this variable $L- x$ as the {\it pseudo-time}. The ``initial data'' on $\{L \} \times [T_{1},+\infty)$ is $u^{\nu}(T_{1}^{+},L^{-})$ (this state on the left of the strong discontinuity was determined during the first part of the construction), and the entering waves from the moving boundary $\{ (t,X(t)), \ t \in [0,T_{1}]\}$ are the germs mentioned above. \par
\ \par
Now we start from the state $u^{\nu}(T_{1}^{+},L^{-})$ at the pseudo-time $L-x=0$, and let $L-x$ increase. When an interaction point with the strong discontinuity obtained in Part~1 is met, we let the fronts enter the domain as described in Step~1. Note that all these fronts evolve forward according to the pseudo-time. We have to explain how to extend the approximation $u^{\nu}$ when an interaction point inside the domain $x<X(t)$ is met.
As we will see, only one case can actually occur. \par
\ \par
\noindent
$\bullet$ {\it Interaction of a $1$-rarefaction front with a backward $3$-compression front.} Assume as in Figure~\ref{fig:SideInter} that at some pseudo-time $L-x=L-\overline{x}$ and actual time $t=\overline{t}$, a $1$-rarefaction front $u_{m}=T_{1}(\sigma_{1},u_{l})$, $\sigma_{1}>0$, meets a $3$-compression front $u_{m}= {\mathcal R}_{3}(\sigma_{3},u_{r})$, $\sigma_{3}<0$. Then one solves the swapped Riemann problem (see Subsection~\ref{Subsec:swapped}):
\begin{equation*}
u_{r} = {\mathcal R}_{1}(\sigma'_{1},\cdot) \circ {\mathcal R}_{2}(\sigma'_{2},\cdot) \circ {\mathcal R}_{3}(\sigma'_{3},\cdot) \, u_{l}.
\end{equation*}
Using Lemma~\ref{Lem:CommutR1R2}, we see that $\sigma'_{2}=0$. The fact that the waves of families $1$ and $3$ conserve their nature ($\overset{\rightharpoonup}{C}$ and $\overset{\leftharpoonup}{R}$) across the interaction point, or in other words that $\sigma'_{1}>0$ and $\sigma'_{3}<0$, is a consequence of Corollary~\ref{Cor:Glimm2} and the definition of $r$. We denote
\begin{equation*}
\tilde{u}_{m} = {\mathcal R}_{3}(\sigma'_{3},u_{l}).
\end{equation*}
The approximation $u^{\nu}$ is extended for further pseudo-times as a backward $3$-compression wave separating $u_{l}$ and $\tilde{u}_{m}$ travelling at shock speed and a forward $1$-rarefaction front separating $\tilde{u}_{m}$ and $u_{r}$ and travelling at shock speed. \par
\begin{figure}[htb]
\begin{center}
\input{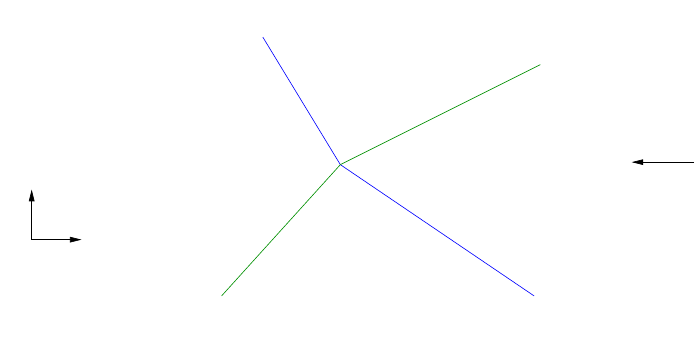_t}
\end{center}
\caption{Simplified solver for side interactions}
\label{fig:SideInter}
\end{figure}
Precisely the wave pattern for $L-x \geq L-\overline{x}$ is locally given as follows: 
\begin{equation*}
u^{\nu}(t,x)=\left\{ \begin{array}{l}
u_{l} \text{ for } \xi < - \frac{1}{s(\tilde{u}_{m},u_{l})}, \\
\tilde{u}_{m} \text{ for }  - \frac{1}{s(\tilde{u}_{m},u_{l})} < \xi < -\frac{1}{s(\tilde{u}_{m},u_{r})}, \\
u_{r} \text{ for } \xi > -\frac{1}{s(\tilde{u}_{m},u_{r})},
\end{array} \right. \ \text{ with } \xi = \frac{t-\overline{t}}{\overline{x}-x}.
\end{equation*}
We will refer to this construction as the {\it side simplified solver}. \par
\ \par
The important fact here is that there is no other interaction occurring in this part, other than of the type described above. Let us justify this.
\begin{itemize}
\item In the algorithm above, only forward $1$-rarefaction fronts and backward $3$-compression fronts enter from the boundary. Since there are no front initially (for time $L-x=0$) and since only $1$-rarefaction fronts and backward $3$-compression fronts emerge from a $\overset{\leftharpoonup}{R}$/$\overset{\rightharpoonup}{C}$ interaction point, there are only forward $1$-rarefaction fronts and backward $3$-compression fronts in the domain as long as no interaction of another type occurs.
\item In the algorithm above, no front goes back to the strong $2$-discontinuity: the forward $1$-rarefaction fronts because they go forward in time at negative speed, the backward $3$-compression fronts because, in the usual sense of time, they travel faster than the $2$-discontinuity.
\item There are no interactions of fronts within a family since $1$-rarefaction fronts traveling forward are going away one from another, so do $3$-compression fronts when going backward in time (see \eqref{Eq:LaxCompression}).
\end{itemize}
Hence the description of the algorithm is complete.
\begin{remark}
The advantage of using $3$-compression waves is that their interactions with $1$-rarefactions do not generate a wave in the second family. However we could have used fans of small $3$-shocks to replace the $3$-compression waves fan. The (small) cost would have been the appearance of artificial fronts (travelling to the left) at each $\overset{\leftharpoonup}{R}$/$\overset{\rightharpoonup}{S}$ interaction point. But since in that case, the resulting artificial front is of third order with respect to the sizes of the incoming waves, the estimate of the total strength of these artificial fronts would have been relatively easy.
\end{remark}

%
%
%%%%%%%%%%%%%%%%%%%%%%%%%%%%%%%%%%%%%%%%%%%%%%%%%%%%%%%%%%%%%%%%%%%%%%%%%%%%%%%%%%%%%%%%%%%%%%%%%%%%%%%%%%%%%%%%%%%%%%%%%%%%%%%%%%%%
%
%
%
%
\section{The construction in the Lagrangian case}
\label{Sec:ConstrLagrange}
The construction in the Lagrangian case, still relying on a front-tracking algorithm, is of different nature than in the Eulerian case, since, obviously, one cannot make a $2$-contact discontinuity travel through the domain. Here we will use two successive strong shocks: first, a $1$-shock crossing the domain from right to left, and then a $3$-shock crossing the domain from left to right. There will be three parts in the construction: first under/on the left of the strong $1$-shock, then above/on the right of the strong $1$-shock but before the entrance of the $3$-shock, and finally after the entrance of the strong $3$-shock.
A main difficulty here is to eliminate the $2$-contact discontinuities, since they have zero characteristic speed and hence do not propagate to the boundary. \par
We recall that the construction below is also valid in the Eulerian case when $\gamma < \frac{5}{3}$, with minimal changes. \par
\ \par
As before we let $\nu>0$ a small parameter and construct a front-tracking approximation $u^{\nu}$ for each $\nu$ small. We also let $\varrho>0$ another positive parameter to be determined (depending on $\nu$). \par
%
%
%
%
%
%%%%%%%%%%%%%%%%%%%%%%%%%%%%%%%%%%%%%%%%%%%%%%%%%%%%%%%%%%%%%
%
%
\subsection{The two strong shocks}
\label{Subsec:2SSh}
We consider $v_0^+$ such that $(\overline{u}_0,v_0^+)$ is $1$-shock $\overset{\leftharpoonup}{\mathbb S}$:
\begin{equation} \label{Eq:Le1S}
v_{0}^{+}= T_{1}(\overline{\sigma}_{1},\overline{u}_{0}), \ \ \overline{\sigma}_{1}<0.
\end{equation}
Its velocity $s_{1}$ satisfies
\begin{equation} \label{Eq:Vit1S}
s_{1} < \lambda_{1}(\overline{u}_{0}) <0.
\end{equation}
Then we consider $v_{1}^{-}$ such that $(v_{1}^{-},v_0^+)$ is $3$-shock $\overset{\rightharpoonup}{\mathbb S}$:
\begin{equation} \label{Eq:Le3S}
v_{0}^{+}= T_{3}(\overline{\sigma}_{3}, v_{1}^{-}), \ \ \overline{\sigma}_{3}<0.
\end{equation}
We suppose that $\overline{\sigma}_{3}$ is small enough for Proposition~\ref{Pro:CW3S} to apply. The velocity $s_{3}$ of this shock satisfies
\begin{equation} \label{Eq:Vit3S}
s_{3} > \lambda_{3}(v_{0}^{+}) >0.
\end{equation}
Given these shocks, we introduce the domain:
\begin{equation}
\label{Eq:E:Domaine}
{\mathcal D} = B(\overline{u}_0;r) \cup  B(v_0^+;r) \cup B(v_{1}^{-};r),
\end{equation}
and choose $r>0$ small enough in order that: 
\begin{itemize}
\item $B(\overline{u}_0;r)$, $B(v_0^+;r)$ and $B(v_{1}^{-};r)$ are disjoint,
\item $\overline{{\mathcal D}} \subset \Omega$ (in particular the vacuum is avoided),
\item any two states in $B(\overline{u}_0;r)$, in $B(v_0^+;r)$ or in $B(v_{1}^{-};r)$, determine a Riemann problem having a solution which avoids the vacuum, and the same is true for any ``swapped'' Riemann problem as defined in Subsection~\ref{Subsec:swapped}.
\item interactions of two simple waves in $B(\overline{u}_0;r)$, in $B(v_0^+;r)$ or in $B(v_{1}^{-};r)$, conserve the sign in the sense of Corollary~\ref{Cor:Glimm2}, for any permutation of the Riemann problem, and Remarks~\ref{Rem:CancellationWave1} and \ref{Rem:CancellationWave2} apply,
\item Propositions~\ref{Pro:Inter3S2} and \ref{Pro:CorrWLag} apply with $B(\overline{u}_0;r) \subset \omega_{-}$ and $B(v_0^+;r) \subset \omega_{+}$;
Proposition~\ref{Pro:CW3S} applies with $B(v_0^+;r) \subset \omega_{-}$ and $B(v_1^-;r) \subset \omega_{+}$,
\item any simple wave leading a state of $B(\overline{u}_0;r)$ to a state of $B(v_0^+;r)$ (resp. a state of $B(v_{1}^{-};r)$ to a state of $B(v_0^+;r)$) is a $1$-shock (resp. a $3$-shock) with strength $\sigma_{1}$ (resp. $\sigma_{3}$) and speed $s$ satisfying
\begin{gather} \label{Eq:S1Lag}
|\overline{\sigma}_{1}| /2 \leq |\sigma_{1}| \leq 2 |\overline{\sigma}_{1}|  \ \text{ and } \ s \leq \lambda_{1}(\overline{u}_{0})/2, \\
\label{Eq:S3Lag}
\text{(resp. } |\overline{\sigma}_{3}| /2 \leq |\sigma_{3}| \leq 2 |\overline{\sigma}_{3}| \ \text{ and } \ s \geq \lambda_{3}(v_{0}^{+})/2 \text{,)}
\end{gather}
and moreover Proposition~\ref{Pro:CW3S} applies to any such $3$-shock,
\item for any $u$ in $B(v_0^+;r)$ 
\begin{equation} \label{Eq:L3Lag}
\lambda_{3}(u) \geq \lambda_{3}(v_{0}^{+})/2 >0,
\end{equation}
and for $u$ in $B(v_1^-;r)$ 
\begin{equation}  \label{Eq:L1Lag}
\lambda_{1}(u) \leq \lambda_{1}(v_{1}^{-})/2 <0.
\end{equation}
\end{itemize}
We introduce $\hat{\lambda}$ satisfying:
\begin{equation} \label{Eq:HatLambda2}
\hat{\lambda} > \max_{u \in \overline{{\mathcal D}} } \, |\lambda_{1}(u)|.
\end{equation}

%
%
%
%
%
%%%%%%%%%%%%%%%%%%%%%%%%%%%%%%%%%%%%%%%%%%%%%%%%%%%%%%%%%%%%%
%
%
\subsection{Part~1: Construction below/on the left of the strong $1$-shock}
\label{Subsec:ConstrLagPart1}
In this first part of the construction, we describe the design of $u^{\nu}$ under/on the left of a strong $1$-shock, which enters the domain from $x=L$ at time $0$, and eventually leaves the domain through $x=0$. Together with $u^{\nu}$ we construct the location of this shock which is described by the function $X_{1}(t)$, defined in some interval $[0,T_{1}]$, $T_{1}$ being the exit time of the strong $1$-shock. Thus the piecewise constant function $u^{\nu}$ is determined on $\{(t,x) \in [0,T_{1}] \times [0,L] \ | \ x \leq X_{1}(t)\}$ after this part. This part of the algorithm also provides the value $u^{\nu}(t,X_{1}^{+}(t))$ immediately on the right of the strong $1$-shock. \par
Again the algorithm is supposed to generate states belonging to $B(\overline{u}_{0},r)$ in $\{(t,x) \in [0,T_{1}] \times [0,L] \ | \ x \leq X_{1}(t)\}$ and states belonging to $B(v_{0}^{+},r)$ on $(t,X_{1}^{+}(t))$; we consider that it stops as soon as it should generate another state. We will prove later that provided that $\varepsilon$ is small enough, the algorithm does not stop. \par
\ \par
\noindent
{\bf Step 1. Approximation of the initial data and initiation of the algorithm.} \par
\ \par
As in Section \ref{Sec:ConstrEuler}, we initiate the algorithm by introducing a sequence of piecewise constant approximations of the initial state $(u_0^\nu)$ in $BV(0,L)$,  with values in $B(\overline{u}_{0},r)$ and satisfying \eqref{Eq:ApproxCI}. Then we start the algorithm as follows. \par
\ \par
\noindent
{\bf a.} At a discontinuity point $\overline{x}$ of $u_{0}^{\nu}$ in $(0,L)$, we approximate the solution of the Riemann
problem $(u_{0}^{\nu}(\overline{x}^{-}), u_{0}^{\nu}(\overline{x}^{+}))$ by using the accurate Riemann solver, exactly as in Section~\ref{Sec:ConstrEuler}. \par
\ \par
\noindent
{\bf b.} On the right point $\overline{x}=L$,  we consider the Riemann problem $(u_{0}^{\nu}(L^{-}), v_{0}^{+})$ and approximate its solution by using the accurate Riemann solver, and conserve only the $1$-wave, which is a $1$-shock due to the restrictions on ${\mathcal D}$. On the contrary, $\overline{x}=0$ is considered a continuity point. \par
\ \par
\noindent
This first step determines the various states of $u^{\nu}$ and the location $X_{1}(t)$ of the $1$-shock for small times. As before, in order to define $u^{\nu}$ for later times, one must describe what happens at interaction points. As in Section~\ref{Sec:ConstrEuler}, we do not extend any front outside of the space domain $(0,L)$, so we do not give rules concerning a front hitting the boundary.  \par
\ \par
\noindent
{\bf Step 2. Extension of the approximation and interactions.} \par
\ \\
At an interaction point $(\overline{t},\overline{x})$, a front separating the leftmost state $u_{l}$ from the middle state $u_{m}$ meets a front separating $u_{m}$ from the rightmost state $u_{r}$, which we write again \eqref{Eq:Interaction} when both fronts are physical. Of course, the left front has a larger speed than the right one. Again we can change a little bit the speed of a front (of an amount at most of $\nu$), in order to avoid interaction points with more than two incoming fronts involved. We do not modify the speed of the contact discontinuities (of family $2$), though, and we avoid the meeting of rarefaction fronts of the same family. \par
\ \par
According to the situation, the front-tracking approximation $u^{\nu}$ is extended over an interaction point as follows. \par
\ \par
\noindent
{\bf A. The strong $1$-shock is not involved}. In this case, we use the exact same strategy as in Section~\ref{Sec:ConstrEuler} that is:
\begin{itemize}
\item {\it Interaction with large amplitude.} If both incoming fronts are physical and \eqref{Eq:CondIntForte} is satisfied, then we extend $u^{\nu}$  by using the accurate Riemann solver with accuracy $\nu$.
\item {\it Interaction with small amplitude.} If both incoming fronts are physical but \eqref{Eq:CondIntFaible} is satisfied, we extend $u^{\nu}$ by using the simplified Riemann solver as in Subsection~\ref{Subsec:Part1E}. However here we set $-\hat{\lambda}$ as the speed of artificial fronts, which are therefore placed on the left of the outgoing fronts. 
This means the following:
\begin{itemize}
\item if the two incoming fronts are of different families $i>j$, we use the permutation $\pi \in S_{3}$ determined by $\pi(2)=j$ and $\pi(3)=i$, $\xi=(1,1,1)$,  we consider the corresponding swapped Riemann problem, and extend the approximation via an artificial front at speed $-\hat{\lambda}$, a $j$-front and a $i$-front at shock speed,
\item if the two incoming fronts are of the same family $i=j$, we use a permutation $\pi \in S_{3}$ such that $\pi(3)=i$, $\xi=(1,1,1)$, consider the corresponding swapped Riemann problem and extend the approximation via an artificial front at speed $-\hat{\lambda}$ and a $i$-front at shock speed.
\end{itemize}
Hence the situation would be described by Figure~\ref{fig:SimplifiedSolver} after a vertical symmetry.
\item {\it Artificial interaction.} If one of the fronts is artificial, we use the simplified Riemann solver, as we have described it in Section~\ref{Sec:ConstrEuler}, taking the speed $-\hat{\lambda}$ of artificial fronts into account. Again one can think of Figure~\ref{fig:SimplifiedSolver} after a vertical symmetry. This amounts to considering a permutation $\pi \in S_{3}$ such that $\pi(3)=j$, where $j$ is the family of the incoming physical front, and $\xi=(1,1,1)$.
\end{itemize}
{\bf B. The strong $1$-shock is involved}. This is where the strategy changes. We consider the interaction of the weak physical front $(u_{l},u_{m})$, let us say,
\begin{equation} \label{Eq:WW1}
u_{m} = T_{i}(\sigma_{i},u_{l}),
\end{equation}
with the strong $1$-shock $(u_{m},u_{r})$ (which is the continuation of the $1$-shock emerging from $x=L$ at initial time). The weak wave is on the left since we construct the approximation $u^{\nu}$ under/on the left of the strong $1$-shock. There are no interactions between the strong $1$-shock and artificial fronts, since the latter are faster. \par
There are six cases according to the family $i$ of the weak wave and to its nature ($\sigma_{i}>0$ or $\sigma_{i}<0$). These six cases are gathered in two groups. \par
\begin{itemize}
\item {\it Group I: \ $\overset{\leftharpoonup}{S}$, $\overset{>}{J}$ and $\overset{\rightharpoonup}{R}$.} In this group, the incoming weak front is either a $1$-shock, a decreasing $2$-contact discontinuity or a $3$-rarefaction. %\par
In that case, we use the usual accurate Riemann solver for the outgoing waves. We do not yet extend the outgoing fronts further in time, except for the $1$-shock which describes $X_{1}$. According to Proposition~\ref{Pro:Inter3S2} and the definition of $r$, the outgoing wave in the family $2$ is a $\overset{>}{J}$ contact discontinuity and the outgoing wave in the family $3$ is a $\overset{\leftharpoonup}{R}$ rarefaction. (This is the reason which brings together these incoming waves in this group.) This determines a new state on the right of $X_{1}$. \par
\item {\it Group II: \ $\overset{\leftharpoonup}{R}$,  $\overset{<}{J}$ and $\overset{\rightharpoonup}{S}$.} In this group, the incoming weak front is either a $1$-rarefaction, a increasing $2$-contact discontinuity or a $3$-shock. Here we use a correction wave.
Indeed, we apply Proposition~\ref{Pro:CorrWLag}, and imagine that a $1$-compression wave $(u_{r},\tilde{u}_{r})$, determined as in this proposition, has arrived at $(\overline{t},\overline{x})$ on the right of the strong $1$-shock and solve the resulting Riemann problem $(u_{l},\tilde{u}_{r})$. We do not extend the outgoing fronts yet, except for the outgoing $1$-shock which continues $X_{1}$. Taking this additional $1$-compression wave into account, the outgoing wave in the family $2$ is a $\overset{>}{J}$ contact discontinuity and the outgoing wave in the family $3$ is a $\overset{\leftharpoonup}{R}$ rarefaction as well, as a consequence of Proposition~\ref{Pro:CorrWLag} and the definition of $r$. If this additional $1$-compression wave was not there, we would obtain a $\overset{<}{J}$ contact discontinuity and a $\overset{\leftharpoonup}{S}$ shock in families $2$ and $3$ respectively. This determines the new state on the right of $X_{1}$. 
\end{itemize}
These two situations allow in particular to extend the strong $1$-shock and the function $X_{1}(t)$ further in time, and to keep track of the state on the right of the strong shock (as long as the algorithm has not stopped). \par
Now, if the algorithm has generated only a finite number of interactions and if it has not stopped, then, due to the definition of $r$, the strong $1$-shock leaves the domain through $x=0$ at some time $T_{1}$ satisfying
\begin{equation} \label{Eq:T1Lag}
T_{1} \leq \frac{2L}{|\lambda_{1}(\overline{u}_{0})|}.
\end{equation}
We represent the situation at the end of Part~1 in Figure~\ref{fig:EOP1Lag}. \par
\begin{figure}[htbp]
	\begin{center}
		\input{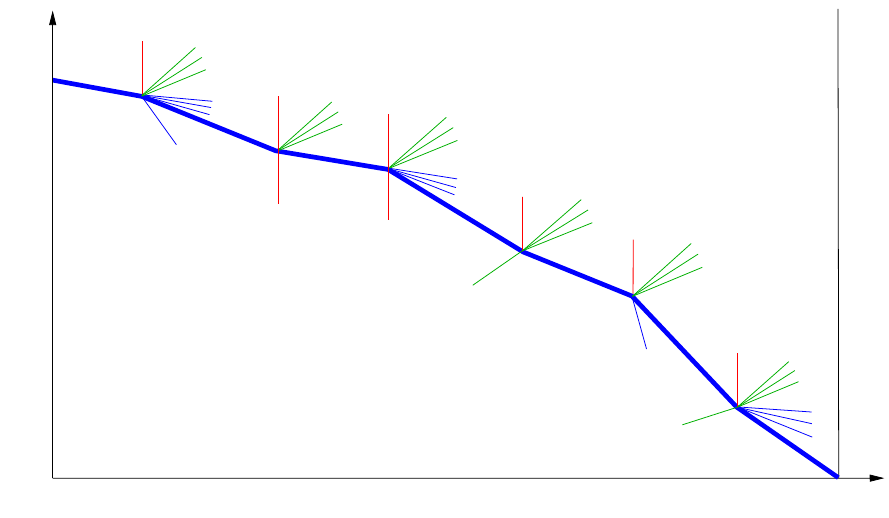_t}
	\end{center}
	\caption{End of Part~1}
	\label{fig:EOP1Lag}
\end{figure}
%
%
%%
%
%
%
%
%%%%%%%%%%%%%%%%%%%%%%%%%%%%%%%%%%%%%%%%%%%%%%%%%%%%%%%%%%%%%
%
%
\subsection{Part~2: Construction between the two strong shocks}
At the end of the first part of the construction and assuming that the algorithm is well-functioning, we have obtained a front-tracking approximation $u^{\nu}$ defined under the strong $1$-shock. Let us now explain how we extend $u^{\nu}$ above/on the right this shock. \par
To complete the approximation $u^{\nu}$ we extend the $2$-discontinuities and the $3$-rarefaction fronts forward in time and the additional $1$-compression waves backward in time. For that purpose, we will not quite use $x$ as a new time variable (though this gives the main idea), but rather the variable
\begin{equation*}
\vartheta:= x + \iota t,
\end{equation*}
with $\iota>0$ chosen small enough so that $\iota$ is smaller and strictly separated from $\{|\lambda_{1}(u)|, \ \lambda_{3}(u), \ u \in \overline{{\mathcal D}} \}$. In particular the lines $\vartheta=\mbox{constant}$ have a slope strictly separated from the one of characteristics or shocks of the three families as the states belongs to ${\mathcal D}$. For this new time variable, all the fronts emerging from $X_{1}$ (including the $2$-contact discontinuities) go ``forward''. As in Section~\ref{Sec:ConstrEuler}, we will refer to $\vartheta$ as the pseudo-time to avoid confusion with the actual time $t$. \par
We represent the situation at the beginning of Part~2 in Figure~\ref{fig:BOP2Lag}. \par
The algorithm here is supposed to generate states in $B(v_{0}^{+},r)$ on $\{x > X_{1}(t)\}$; as in the first part, we consider that it stops as soon as it generates another state.  \par
\begin{figure}[ht]
	\begin{center}
		\input{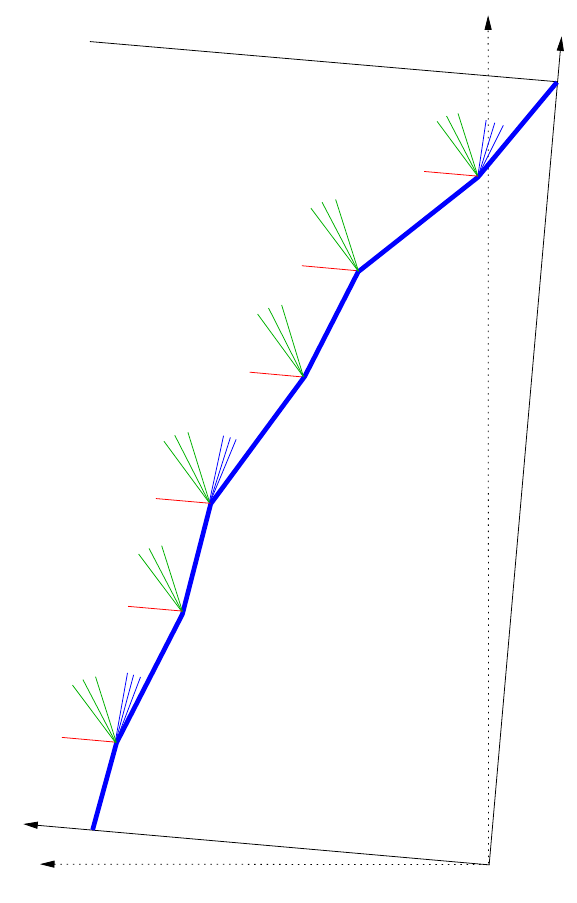_t}
	\end{center}
	\caption{Beginning of Part~2}
	\label{fig:BOP2Lag}
\end{figure}
\ \par
\noindent
{\bf Step 1. Fronts emerging from the strong $1$-shock.}
In the construction we have left above/on the right of the strong $1$-shock germs of $1$-rarefaction and $2$-contact discontinuities to be extended forward in time and germs of $3$-shock fans to be extended {backward in time}. See Figure~\ref{fig:EOP1Lag}. More precisely, at a point from where the fronts emerge, let us say $(\overline{t},\overline{x})=(\overline{t},\overline{X}_{1}(t))$, the two states $u_{-}:=u^{\nu}(\overline{t}^{-},\overline{x}^{-})$ and $u_{+}:=u^{\nu}(\overline{t}^{+},\overline{x}^{-})$ are connected through
\begin{equation*}
u_{-}= {\mathcal R}_{1}(-\sigma_{1}, \cdot) \circ T_{3}(\sigma_{3},\cdot) \circ T_{2}(\sigma_{2},\cdot) \, u_{+},
\end{equation*}
with $\sigma_{1} \leq 0$ (potentially there is no additional $1$-compression wave), $\sigma_{3} \geq 0$ and $\sigma_{2} \leq 0$. We introduce the intermediate states $u_{m}^{1}:=T_{2}(\sigma_{2},\cdot) \, u_{+}$ and $u_{m}^{2}:=T_{3}(\sigma_{3},\cdot) u_{m}^{1}$. \par
We let fronts emerge from the strong discontinuity as follows. 
\begin{itemize}
\item The contact discontinuity $(u_{+},u_{m}^{1})$ and the rarefaction wave $(u_{m}^{1},u_{m}^{2})$ are treated via the usual accurate solver and sent forward in time.
\item The compression wave $(u_{-},u_{m}^{2})$ is as in Section~\ref{Sec:ConstrEuler} split in fronts of size at most $\nu$ which are sent backward at shock speed. Again, from Lax's inequality \eqref{Eq:LaxCompression}, these fronts emerge indeed from the strong $1$-shock inside the zone $\{ x > X(t) \}$.
\end{itemize}
\noindent
{\bf Step 2. Extension of the fronts.} Now, the rest of the algorithm consists in extending fronts across interaction points; we do not extend fronts outside of the space domain $(0,L)$. In order to construct $u^{\nu}$ above/on the right of the curve $X_{1}$, we start from $u^{\nu}(T_{1},0^{+})$ for $(t,x) \in (T_{1},+\infty) \times \{ 0 \}$ (this state was determined in the first part of the algorithm) and progress with the pseudo-time variable $\vartheta$. When an emergence point on $X_{1}$ is met, we extend the fronts outgoing from $X_{1}$ as described before (these fronts all go forward according to the time variable $\vartheta$). We have to explain what we do at interaction points inside $\{ x > X_{1}(t) \}$. \par
\ \par
First, as will be clear from the algorithm, only fronts of the following nature will be produced: $3$-rarefaction fronts $\overset{\leftharpoonup}{R}$ (going forward in time $t$), $2$-contact discontinuities $\overset{>}{J}$ (going forward in time $t$) and $1$-compression fronts $\overset{\rightharpoonup}{C}$ (going backward in time $t$). It follows that there will be no interaction between these fronts and the strong $1$-shock generated by the first part of the algorithm: the waves $\overset{\leftharpoonup}{R}$ and $\overset{>}{J}$ because they go forward at a non-negative speed, the waves $\overset{\rightharpoonup}{C}$ because they go backward in time and satisfy Lax's inequalities. It follows also that there will be no interactions between fronts of the same family: the waves $\overset{\leftharpoonup}{R}$ because they go forward and spread (because they have positive strength and because of \eqref{Eq:GNL/Normalisation}), $\overset{>}{J}$ because they all go forward at the exact same speed (that is zero), the waves $\overset{\rightharpoonup}{C}$ because they go backward and satisfy Lax's inequalities. \par
\ \par

The extension of $u^{\nu}$ beyond an interaction point depends on the nature of the incoming fronts, which are all weak waves.
\begin{itemize}
\item {\it Interaction of $\overset{\rightharpoonup}{R}$ and $\overset{\leftharpoonup}{C}$.} We consider the situation where a backward $1$-compression front 
 $(u_{l},u_{m})$ with $u_{l}$ above $u_{m}$ in the $(t,x)$ plane, meets a forward $3$-rarefaction front $(u_{m},u_{r})$ with $u_{m}$ on the left of $u_{r}$. This is described in Figure~\ref{fig:LIRC}. One has $u_{l}= {\mathcal R}_{1}(\alpha,u_{m})$, $\alpha<0$ and $u_{r} = T_{3} (\beta,u_{m})$, $\beta>0$. \par
In that case, we use the same type of solver as in Subsection~\ref{Subsec:CEP2}. Precisely we solve the swapped Riemann problem
\begin{equation*}
u_{r} = {\mathcal R}_{1}(-\sigma'_{1},\cdot) \circ {\mathcal R}_{2}(\sigma'_{2},\cdot) \circ {\mathcal R}_{3}(\sigma'_{3},\cdot) \, u_{l}.
\end{equation*}
Due to Lemma~\ref{Lem:CommutR1R2}, one has $\sigma'_{2}=0$. Moreover due to Proposition~\ref{Pro:Glimm2} and the definition of $r$, one has $\sigma'_{1}<0$ and $\sigma'_{3}>0$. Setting $\tilde{u}_{m} := T_{3}(\sigma'_{3},u_{l})$, we extend the approximation for further $\vartheta$ via a single forward $3$-rarefaction front $(u_{l},\tilde{u}_{m})$ and a single backward $1$-compression front $(\tilde{u}_{m}, u_{r})$, both sent at shock speed.
\item {\it Interaction of a contact discontinuity $\overset{>}{J}$ and a rarefaction $\overset{\rightharpoonup}{R}$.}  We consider the case where a forward $3$-rarefaction front $(u_{l},u_{m})$ with $u_{l}$ on the left of $u_{m}$ in the $(t,x)$ plane, meets a forward $2$-discontinuity $(u_{m},u_{r})$ with $u_{m}$ on the left of $u_{r}$. This is described in Figure~\ref{fig:LIJR}. One has $u_{m}= {\mathcal R}_{3}(\alpha,u_{l})$, $\alpha>0$ and $u_{r} = T_{2} (\beta,u_{m})$, $\beta<0$. \par
In that case, we apply Corollary~\ref{Cor:CancellationWave1} to System~\eqref{Eq:Lagrangian}, with $k=1$. In other words, we seek to cancel the ``outgoing $1$-wave'' (outgoing for the usual time $t$). We deduce the existence of $\gamma$, such that if one sets $\tilde{u}_{r}:= {\mathcal R}_{1}(\gamma,u_{r})$, then $\Sigma_{1}(u_{l},\tilde{u}_{r})=0$, and
\begin{equation*}
|\Sigma_{2}(u_{l},\tilde{u}_{r}) - \beta| + |\Sigma_{3}(u_{l},\tilde{u}_{r}) - \alpha | = {\mathcal O}(|\alpha||\beta|), \ \ 
\gamma = - \alpha \beta \, \ell_{1} \cdot [r_{3},r_{2}] + {\mathcal O}\big((|\alpha|+|\beta|) |\alpha||\beta|\big).
\end{equation*}
Now
\begin{equation*}
\ell_{1} \cdot [r_{3},r_{2}] = - \ell_{1} \cdot \frac{\partial r_{3}}{\partial \tau} = - \frac{1}{4 \tau} <0.
\end{equation*}
Hence we deduce that $\gamma<0$, that is, $(u_{r},\tilde{u}_{r})$ is a $1$-compression wave. Due to Corollary~\ref{Cor:Glimm2} and the definition of $r$, the other two resulting waves are of type $\overset{>}{J}$ and $\overset{\rightharpoonup}{R}$. \par
We extend the approximation $u^{\nu}$ over the interaction point, for further $\vartheta$, by the four states $u_{l}$, $\tilde{u}_{l}$, $\tilde{u}_{r}$ and $u_{r}$ separated by the three (single) outgoing fronts $\overset{>}{J}$, $\overset{\rightharpoonup}{R}$ and $\overset{\leftharpoonup}{C}$ traveling at shock speed; this is forward in time for $\overset{>}{J}$ and $\overset{\rightharpoonup}{R}$, and  backward in time for $\overset{\leftharpoonup}{C}$. 
\begin{figure}[htb]
\centering
\subfigure[Interaction $\overset{\rightharpoonup}{R}$/$\overset{\leftharpoonup}{C}$]
{\label{fig:LIRC} \input{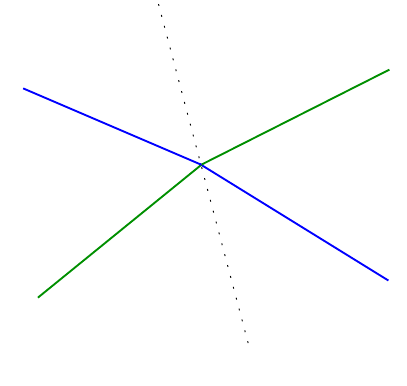_t}}
\hspace{0.6cm}
\subfigure[Interaction $\overset{>}{J}$/$\overset{\rightharpoonup}{R}$]
{\label{fig:LIJR} \input{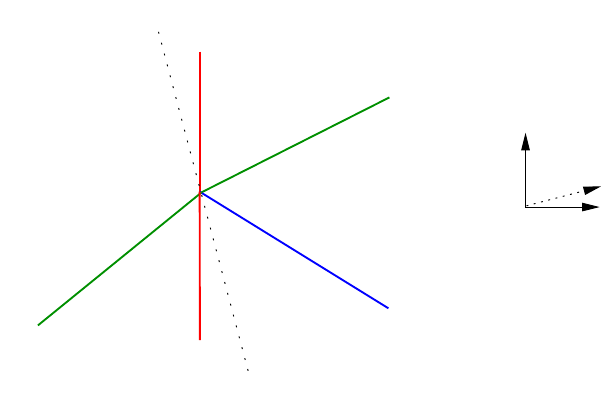_t}}
\hspace{0.1cm}
\subfigure[Interaction $\overset{>}{J}$/$\overset{\leftharpoonup}{C}$]
{\label{fig:LIJS} \input{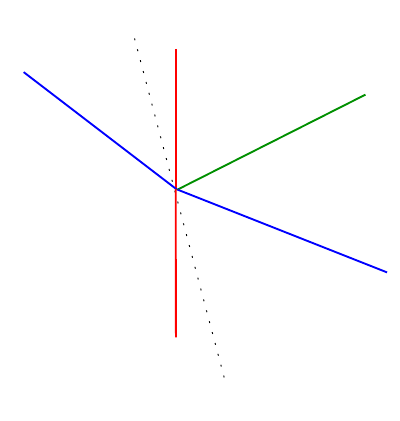_t}} %\input{}
\caption{Side interactions}
\label{fig:LIEF}
\end{figure}
\item {\it Interaction of a contact discontinuity $\overset{>}{J}$ and a compression wave $\overset{\leftharpoonup}{C}$.} We consider the case where a backward $1$-compression front $(u_{m},u_{l})$ with $u_{m}$ below $u_{l}$ in the $(t,x)$ plane, meets a forward $2$-discontinuity $(u_{m},u_{r})$ with $u_{m}$ on the left of $u_{r}$. This is described in Figure~\ref{fig:LIJS}. One has $u_{m}= {\mathcal R}_{1}(-\alpha,u_{l})$, $\alpha <0$ and $u_{r} = T_{2} (\beta,u_{m})$, $\beta<0$. \par
In that case, we apply Corollary~\ref{Cor:CancellationWave2} to System~\eqref{Eq:Lagrangian}, with $k=3$. In other words, we seek to cancel the ``incoming $3$-wave''. We deduce the existence of $\sigma$, such that 
\begin{gather*}
u_{r} = {\mathcal R}_{1}(-\sigma_{1}, \cdot) \, \circ \, T_{3}(\sigma_{3}, \cdot) \, \circ \, T_{2}(\sigma_{2}, \cdot) \, u_{l}, \\
|\sigma_{1} - \alpha| + |\sigma_{2} - \beta | = {\mathcal O}(|\alpha||\beta|), \ \ \sigma_{3} = \alpha \beta \, \ell_{3} \cdot [r_{1},r_{2}] + {\mathcal O}\big((|\alpha|+|\beta|) |\alpha||\beta|\big).
\end{gather*}
Here
\begin{equation*}
\ell_{3} \cdot [r_{1},r_{2}] = - \ell_{3} \cdot \frac{\partial r_{1}}{\partial \tau} = \frac{1}{4 \tau} >0.
\end{equation*}
Define $\tilde{u}_{m}:= T_{2}(\sigma_{2}, u_{l})$ and $\tilde{u}_{r}= T_{3}(\sigma_{3}, \tilde{u}_{m}) = {\mathcal R}_{1}(\sigma_{1},u_{r})$.
Using the definition of $r$, we deduce that $\sigma_{1} <0$, $\sigma_{2}<0$ and $\sigma_{3}>0$, that is,  $(u_{r},\tilde{u}_{r})$ is a $1$-compression wave, $(u_{l},\tilde{u}_{m})$ is a $\overset{>}{J}$ wave and $(\tilde{u}_{m},\tilde{u}_{r})$ is a $3$-rarefaction wave. %\par
We extend the approximation $u^{\nu}$ over the interaction point, for further $\vartheta$, by the states $u_{l}$, $\tilde{u}_{m}$, $\tilde{u}_{r}$ and $u_{r}$ separated by the three (single) outgoing fronts $\overset{>}{J}$, $\overset{\rightharpoonup}{R}$ and $\overset{\leftharpoonup}{C}$ traveling at shock speed, the fronts $\overset{>}{J}$ and $\overset{\rightharpoonup}{R}$ moving forward in time, and the front $\overset{\leftharpoonup}{C}$ backward in time. 
\end{itemize}
The description of the algorithm for this second part is complete since no fronts are created other than $\overset{\leftharpoonup}{C}$, $\overset{>}{J}$ and $\overset{\rightharpoonup}{R}$, and the possible interactions between all these types of fronts were covered. \par
\ \par
Now we claim the following.
\begin{lemma}\label{Lem:PlusQueJ}
Assuming that the algorithm generates an approximation for all times (with a finite number of interaction points), there are only fronts of type $\overset{>}{J}$ present in the domain $(0,L)$ for times $t \geq T_{2}$,
\begin{equation} \label{Eq:T2Lag}
T_{2} := T_{1} + \frac{2L}{\lambda_{3}(v_{0}^{+})}.
\end{equation}
\end{lemma}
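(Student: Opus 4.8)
The goal is to show that after the strong $3$-shock has entered at time $T_1$ and swept across $(0,L)$ (which by \eqref{Eq:L3Lag} and \eqref{Eq:S3Lag} takes at most $2L/\lambda_3(v_0^+)$ additional time), no weak front of type $\overset{\rightharpoonup}{R}$ or $\overset{\leftharpoonup}{C}$ survives in $(0,L)$, leaving only $\overset{>}{J}$ fronts. The idea is that all $\overset{\leftharpoonup}{R}$ fronts produced in Part~2 leave the domain through $x=0$ in finite time (they travel forward in time at strictly positive speed, bounded below by $\lambda_3(v_0^+)/2$ thanks to \eqref{Eq:L3Lag}, hence reach $x=0$ within time $2L/\lambda_3(v_0^+)$ once created), and the only mechanism creating new $\overset{\rightharpoonup}{R}$ fronts in $(0,L)$ after $T_1$ is the strong $3$-shock itself; once that shock has exited at $x=L$, no $\overset{\rightharpoonup}{R}$ front is ever created again, and the backward $\overset{\leftharpoonup}{C}$ fronts, being only emitted backward in pseudo-time from interactions involving an $\overset{\rightharpoonup}{R}$, cease to appear as well. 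The $\overset{>}{J}$ fronts have speed $0$, so they persist.

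\textbf{Key steps, in order.} First I would record that, by the construction of Part~2 and the side interactions (Figures~\ref{fig:LIRC}, \ref{fig:LIJR}, \ref{fig:LIJS}), the only fronts present in $\{x>X_1(t)\}$ before the $3$-shock enters are $\overset{\rightharpoonup}{R}$ (forward), $\overset{>}{J}$ (forward, speed $0$) and $\overset{\leftharpoonup}{C}$ (backward), and that each elementary interaction there preserves this set of three types. Second, I would describe Part~3: the strong $3$-shock enters at $(T_1,0)$ and crosses the domain; the relevant interaction estimates (Proposition~\ref{Pro:Inter3S2} and Remark~\ref{Rem:InterS3S} for a strong $3$-shock, together with the cancellation/correction waves of Propositions~\ref{Pro:CW3S} and \ref{Pro:CorrWLag}) guarantee that the waves emitted \emph{ahead} of the strong $3$-shock (i.e. in the region it has already swept, on its left in the $(t,x)$-plane) are of the "good" types, and in particular that no new $\overset{\rightharpoonup}{R}$ front is emitted into the region behind the $3$-shock once it has passed. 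Third, I would argue the exit estimate: after time $T_1$, every $\overset{\rightharpoonup}{R}$ front in $(0,L)$ has forward speed $\geq \lambda_3(v_0^+)/2$ (by the definition of $r$, i.e. \eqref{Eq:L3Lag} and the fact that only states in $B(v_0^+;r)$ occur to the right of $X_1$), so it exits through $x=0$ within $2L/\lambda_3(v_0^+)$ of its creation; since the strong $3$-shock itself exits through $x=L$ within the same time, no $\overset{\rightharpoonup}{R}$ front created after that exit can exist, and $\overset{\leftharpoonup}{C}$ fronts (emitted only from $\overset{\rightharpoonup}{R}$/$\overset{>}{J}$ or $\overset{\rightharpoonup}{R}$/$\overset{\leftharpoonup}{C}$ interactions) likewise disappear. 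Putting these together yields that for $t\geq T_2:=T_1+2L/\lambda_3(v_0^+)$ only $\overset{>}{J}$ fronts remain, which is \eqref{Eq:T2Lag}.

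\textbf{Main obstacle.} The delicate point is not the exit-time bookkeeping but verifying that the interactions occurring during the passage of the strong $3$-shock (Part~3) genuinely cannot produce any surviving $\overset{\rightharpoonup}{R}$ or $\overset{\leftharpoonup}{C}$ front in the swept region — i.e. that the sign and nature of every outgoing weak wave, after application of the appropriate correction wave, is controlled. This requires invoking the sign conclusions of Proposition~\ref{Pro:Inter3S2} (transported by the symmetry $x\leftrightarrow -x$ of Remark~\ref{Rem:InterS3S}) and of Proposition~\ref{Pro:CW3S}, and checking that the bookkeeping of which fronts go forward versus backward in the pseudo-time $\vartheta$ is consistent throughout Part~3, exactly as in Part~2. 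I would also need to be careful that the finite-interaction hypothesis (which is assumed in the statement and proved separately in Section~\ref{Sec:ConvFT}) is what allows one to speak of a well-defined configuration of fronts at each time, so that the exit argument applies uniformly. Once that is granted, the remaining argument is the straightforward geometric observation that strictly-positive-speed forward fronts confined to a bounded interval must leave it in bounded time.
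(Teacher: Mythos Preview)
Your proposal rests on a misreading of the construction. The strong $3$-shock does \emph{not} enter at time $T_{1}$: it enters at time $T_{2}$ (see the beginning of Subsection~5.4), i.e.\ \emph{after} the conclusion of this lemma. Lemma~\ref{Lem:PlusQueJ} concerns Part~2 only --- the region above/right of the strong $1$-shock $X_{1}$ and below the horizontal line $t=T_{2}$ --- and its purpose is precisely to show that by the time the $3$-shock is introduced, only $\overset{>}{J}$ fronts remain to be dealt with. So your entire invocation of Part~3 interactions (Propositions~\ref{Pro:CW3S}, \ref{Pro:Inter3S2} via Remark~\ref{Rem:InterS3S}) is irrelevant here.

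Stripping that away, the residual argument you offer (``each $\overset{\rightharpoonup}{R}$ front has speed $\geq \lambda_{3}(v_{0}^{+})/2$, hence exits within time $2L/\lambda_{3}(v_{0}^{+})$ of its creation; $\overset{\leftharpoonup}{C}$ fronts are only produced from interactions involving an $\overset{\rightharpoonup}{R}$'') has a genuine gap. In Part~2, new $\overset{\rightharpoonup}{R}$ fronts are created not only on $X_{1}$ (at times $\leq T_{1}$) but also at $\overset{\leftharpoonup}{C}/\overset{>}{J}$ interactions (Figure~\ref{fig:LIJS}). Since a $\overset{\leftharpoonup}{C}$ front travels \emph{backward} in real time from its own creation point, these secondary $\overset{\rightharpoonup}{R}$ fronts may be born at real times that are a priori uncontrolled by your bookkeeping: an $\overset{\rightharpoonup}{R}$ hits a $\overset{>}{J}$, emits a $\overset{\leftharpoonup}{C}$ going back in time, which in turn hits another $\overset{>}{J}$ and spawns a fresh $\overset{\rightharpoonup}{R}$, and so on. Your claim that ``no $\overset{\rightharpoonup}{R}$ front is ever created again'' after some fixed time is therefore not justified, and the naive exit-time bound does not close.

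The paper handles exactly this cascading by a genealogy argument in the pseudo-time $\vartheta$: one traces any $\overset{\rightharpoonup}{R}$ or $\overset{\leftharpoonup}{C}$ front existing at time $t^{1}$ back through its successive $\overset{\rightharpoonup}{R}$/$\overset{\leftharpoonup}{C}$ incarnations to an origin on $X_{1}$ at some time $t^{0}\leq T_{1}$. Along this path the spatial coordinate $x$ is \emph{monotone increasing} (both $\overset{\rightharpoonup}{R}$ going forward and $\overset{\leftharpoonup}{C}$ going backward in $t$ increase $x$), so the total $x$-gain is at most $L$. Real time $t$ increases only on the $\overset{\rightharpoonup}{R}$ segments, where $dx/dt\geq \lambda_{3}(v_{0}^{+})/2$; hence $t^{1}-t^{0}\leq \sum (t_{2i+1}-t_{2i})\leq \tfrac{2}{\lambda_{3}(v_{0}^{+})}\sum (x_{2i+1}-x_{2i})\leq 2L/\lambda_{3}(v_{0}^{+})$. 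This monotonicity of $x$ along the genealogy is the missing idea in your attempt. (Incidentally, positive-speed $\overset{\rightharpoonup}{R}$ fronts exit through $x=L$, not $x=0$.)
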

\begin{proof}[Proof of Lemma~\ref{Lem:PlusQueJ}]
Consider above $X_{1}$, at some time $t^{1}$, a $3$-rarefaction front or a $1$-compression front $\overline{\alpha}$; call $x_{\overline{\alpha}}$ its position at time $t^{1}$, and let $\vartheta^{1} = x_{\overline{\alpha}} + \iota t^{1}$ the pseudo-time associated to the point  $(t^{1},x_{\overline{\alpha}})$. We have the following algorithm to get back (according to the pseudo-time) to the ``origin'' of this front, that is, an interaction point on $X_{1}$ where we consider the front to be coming from. \par
\ \par
\noindent
{\it Step~1}. Given a front $\alpha$, we go to the ``earlier'' interaction point (according to $\vartheta$), that is,
\begin{itemize}
\item[--] if the front is a $3$-rarefaction, we go back in time $t$ to the previous interaction point,
\item[--] if the front is a $1$-compression front, we go forward in time $t$ to the next interaction point.
\end{itemize}
{\it Step~2}. At the interaction point, we discuss according to the nature of this point (see again Figure~\ref{fig:LIEF}):
\begin{itemize}
\item[--] if it is a $\overset{\leftharpoonup}{C}$/$\overset{\rightharpoonup}{R}$ interaction point, follow for earlier $\vartheta$ the incoming front of the same family as $\alpha$ and go to Step 1,
\item[--] if it is a $\overset{\leftharpoonup}{C}$/$\overset{>}{J}$ interaction point, follow for earlier $\vartheta$ the incoming $\overset{\leftharpoonup}{C}$ front and go to Step 1, (whether $\alpha$ is a $\overset{\leftharpoonup}{C}$ front or not),
\item[--] if it is a $\overset{\rightharpoonup}{R}$/$\overset{>}{J}$ interaction point, follow for earlier $\vartheta$ the incoming $\overset{\rightharpoonup}{R}$ front and go to Step 1, (whether $\alpha$ is a $\overset{\rightharpoonup}{R}$ front or not),
\item[--] if it is an interaction point with the strong shock on $X_{1}$, stop here.
\end{itemize}
Note that this algorithm stops, since we go from an interaction point to another with decreasing $\vartheta$, and there is no front coming from $x=0$, $t \geq T_{1}$. \par
\ \par
Now following the front $\alpha$ from its origin on $X_{1}$, say $(t^{0},X_{1}(t^{0}))$, to $(t^{1},x_{\overline{\alpha}})$ in increasing pseudo-time, there are pseudo-time-intervals $[\vartheta_{2i},\vartheta_{2i+1}]$ where $\alpha$ is a $3$-rarefaction front going forward in time $t$, and pseudo-time-intervals $[\vartheta_{2i+1},\vartheta_{2i+2}]$ where $\alpha$ is a $1$-shock front going backward in time $t$. Call $(t_{i},x_{i})$ the interactions points corresponding to pseudo-times $\vartheta_{i}$. \par
The real time $t$ increases during the pseudo-time-intervals $[\vartheta_{2i},\vartheta_{2i+1}]$, and decreases during the pseudo-time-intervals $[\vartheta_{2i+1},\vartheta_{2i+2}]$; the position $x$ progresses for all pseudo-time-intervals $[\vartheta_{i},\vartheta_{i+1}]$.
Hence, recalling that the $3$-characteristic speed is bounded from below by $\lambda_{3}(v_{0}^{+})/2$, we deduce:
\begin{equation*}
t^{1} - t^{0} \leq \sum_{i} t_{2i+1} - t_{2i} \leq \frac{2}{\lambda_{3}(v_{0}^{+})} \sum_{i} x_{2i+1} - x_{2i} 
\leq \frac{2L}{\lambda_{3}(v_{0}^{+})} ,
\end{equation*}
which proves the claim.
\end{proof}
We consider the second part of the algorithm to stop at time $T_{2}$, where the third and last part of the algorithm begins. \par
%
%
%
%
%
%%%%%%%%%%%%%%%%%%%%%%%%%%%%%%%%%%%%%%%%%%%%%%%%%%%%%%%%%%%%%
%
%
\subsection{Part~3: Construction using the strong $3$-shock}
Let us describe how we extend $u^{\nu}$ for times $t \geq T_{2}$. The idea here is to let a $3$-strong shock based on the reference shock  $(v_{1}^{-},v_{0}^{+})$ enter the domain from $x=0$ and eventually leave the domain through $x=L$. This $3$-strong shock will allow us to get rid of the remaining fronts, which, according to Lemma~\ref{Lem:PlusQueJ}, are $\overset{>}{J}$ fronts. \par
The algorithm here is supposed to generate states in $B(v_{0}^{+},r)$ below/on the right of the $3$-strong shock and states in $B(v_{1}^{-},r)$ above/on the left of the $3$-strong shock; it stops otherwise. \par
\ \par
\noindent
{\bf Step 1. Under/on the right the strong $3$-shock.}
\ \par
\noindent
{\bf Initial data.} At time $T_{2}$, we have vertical $\overset{>}{J}$ fronts in $(0,L)$ coming from the second part of the algorithm. To these fronts we add at $x=0$ the solution of the Riemann problem $(v_{1}^{-},u^{\nu}(T_{2},0^{+}))$, from which we conserve only the $3$-wave. Using the definition of $r$, this $3$-wave is a (strong) $3$-shock. We call $X_{3}(t)$ its position at time $t$. Now the goal in this step is to construct $u^{\nu}$ for $t \geq T_{2}$, $x \geq X_{3}(t)$, together with the position $X_{3}$ of this strong $3$-shock and with the state $u^{\nu}(t,X_{3}(t)^{-})$ on the left of this shock. \par
\ \par
\noindent
{\bf Interactions.} Since $J$ fronts do no interact between themselves, having all zero speed, we only need to specify what happens at an interaction point between the strong shock $\overset{\rightharpoonup}{\bS}$ and a weak front $\overset{>}{J}$. \par
We suppose that the strong $3$-shock $(u_{l},u_{m})$ meets a decreasing $2$-contact discontinuity $\overset{>}{J}$ with states $(u_{m},u_{r})$.
We apply Proposition~\ref{Pro:CW3S}. We deduce that there exists $\gamma_{3}<0$ such that, assuming that a $3$-compression wave $(\tilde{u}_{l},u_{l})$ with strength $\gamma_{3}$ arrives exactly at this interaction point from the left side of $X_{3}$, there is no outgoing $J$ wave, and the outgoing wave for the first family is a rarefaction one. We extend $X_{3}$ by following the outgoing $3$-shock at shock speed, and consider the state on the left of the outgoing $3$-wave in Proposition~\ref{Pro:CW3S} as the state on the left of $X_{3}$. See Figure~\ref{Fig:CWL}. \par
This allows to construct the approximation on the right of $X_{3}$. Since, assuming that the algorithm has not stopped, all the states on the right (respectively left) side of $X_{3}$ belong to $B(v_{0}^{+},r)$ (resp. $B(v_{1}^{-},r)$), the speed of the shock satisfies $\dot{X}_{3} \geq \lambda_{3}(v_{0}^{+})/2$, so the strong shock leaves the domain at some time $T_{3}$ satisfying:
\begin{equation} \label{Eq:T3Lag}
T_{3} \leq T_{2} + \frac{2L}{\lambda_{3}(v_{0}^{+})}.
\end{equation}
\ \par
\noindent
{\bf Step 2. Above/on the left of the strong $3$-shock.} \par
\ \par
We have left to extend $u^{\nu}$ above $X_{3}$. For this, we follow the same method as in Section \ref{Sec:ConstrEuler}, see Figure~\ref{fig:Part1Part2}. The only difference here is that the strong discontinuity is no longer a $2$-contact discontinuity but a $3$-shock; but this does not intervene since there are no new interactions with the strong discontinuity. \par
Hence, we use $L-x$ as a pseudo-time variable. We start from the state $u^{\nu}(T_{3},L^{-})$ at pseudo-time $L-x=0$ (on the ``space domain'' which is originally $[T_{3},+\infty)$). We let the pseudo-time $L-x$ progress until one meets an interaction point inside the domain, or on the boundary $(L-x,t) = (L-X_{3}(t),t)$. \par
\ \par
\noindent
{\bf Fronts emerging from the boundary.} At an interaction point on $X_{3}$ coming from Step 1, we have $u_{l} = u^{\nu}(t^{-},X_{3}(t))$ and $u_{r} = u^{\nu}(t^{+},X_{3}(t))$ connected via a $3$-compression wave and a $1$-rarefaction wave:
\begin{equation*}
u_{r} = T_{1}(\sigma_{1},u_{m}), \ \ u_{l} = {\mathcal R}_{3}(\gamma_{3},u_{m}), \ \ \sigma_{1}>0 \ \text{ and } \ \gamma_{3}<0. 
\end{equation*}
We extend the approximation $u^{\nu}$ for larger $L-x$ by tracing between $u_{m}$ and $u_{l}$ a backward $3$-compression fan with accuracy $\nu$ replacing the actual $3$-compression wave; as before we let the fronts go at shock speed backward in time $t$, that is forward with respect to $L-x$.
We approximate the rarefaction wave $(u_{m},u_{r})$ by using the accurate solver (splitting it in pieces no larger than $\nu$) and sending the corresponding fronts forward in time at shock speed. \par
\ \par
\noindent
{\bf Interactions.} Interactions of two fronts $\overset{\rightharpoonup}{C}$ and $\overset{\leftharpoonup}{R}$ are treated using the side simplified solver, exactly as in Subsection \ref{Subsec:CEP2} (see in particular Figure~\ref{fig:SideInter}); again due to Lemma~\ref{Lem:CommutR1R2} no $2$-wave appears here, and due to Proposition~\ref{Pro:Glimm2} the new waves are of the same nature as the incoming ones. There are no interactions between waves of the same family (the rarefaction fronts go forward in time, the compression fronts go backward in time and satisfy Lax's inequality) and no interaction with the strong $3$-shock (rarefaction fronts go forward at negative speed, shock fronts go backward in time and satisfy Lax's inequality.) \par
\ \par
This ends the algorithm in the Lagrangian case. \par
%
%
%
%
%%%%%%%%%%%%%%%%%%%%%%%%%%%%%%%%%%%%%%%%%%%%%%%%%%%%%%%%%%%%%%%%%%%%%%%%%%%%%%%%%%%%%%%%%%%%%%%%%%%%%%%%%%%%%%%%%%%%%%%%%%%%%%%%
%
%
%
%
%
\section{Proofs of the main results} 
\label{Sec:ConvFT}
In this section, we establish Theorems \ref{ThmEu1} and \ref{ThmLu1}, starting with the proofs of Theorems~\ref{ThmE} and \ref{ThmL}. For that, we prove that the wave front-tracking algorithms described above are well-functioning, in the sense that they generate an approximation $u^{\nu}$ defined for all times, with a finite number of fronts and interaction points. There could be otherwise at some time an accumulation of interaction times. At the same time, we prove estimates on the sequence $u^{\nu}$ which will allow to extract a converging subsequence and to prove that the limit is a suitable solution of the problem. Several parts are done by adapting \cite{B} to our situation. \par
We will denote as in \cite{B} the fronts by greek letters. When $\alpha$ is a front, $\sigma_{\alpha} \in \R$ denotes its strength, $k_{\alpha} \in \{0,1,2,3,4\}$ its family (with the convention that $4$ describes the family of artificial fronts in the case of Theorem~\ref{ThmE}, $0$ describes the family of artificial fronts in the case of Theorem~\ref{ThmL}). \par
%
%
%
%%%%%%%%%%%%%%%%%%%%%%%%%%%%%%%%%%%%%%%%%%%%%%%%%%%%%%%%%%%%%%%%%
%
%
%
\subsection{Eulerian case: proof of Theorem~\ref{ThmE}}
\label{Subsec:ConvEul}
There are several successive steps.
\subsubsection{$BV$ estimates}
\label{SSS:BVEstE}
We first prove uniform $BV$ estimates on the approximation $u^{\nu}$ (as long as it is well-defined and all the states belong to ${\mathcal D}$, which a posteriori will be proven to be all times). This is an adaptation of Glimm's argument \cite{G}, but here Glimm's functionals have to be defined along curves which are suited to the geometry of the construction. \par
We introduce for each time $t$ two curves $\Gamma_{t}^{1}$ and $\Gamma_{t}^{3}$, and for each $x \in [0,L]$ a curve ${\mathcal C}^{2}_{x}$, all these curves being drawn inside $\R^{+} \times [0,L]$. Our goal is to bound the total variation of the approximation $u^{\nu}$ along these curves. These curves depend on $\nu$, but to lighten the notation we do not make this dependence explicit. Recall that the $2$-strong discontinuity $X$ has a positive velocity.
\begin{itemize}
\item  Given $t \geq 0$, we define the curve $\Gamma_{t}^{1}$ as the part of the curve $X$ describing the $2$-strong discontinuity for times in $[0,t]$, glued with the horizontal curve $\{t\} \times [X(t),L]$. One should have the representation that the part of $X$ that is considered is $X(t)^{+}$, that is, the right side of the discontinuity. See Figure \ref{fig:Glimm_curves1}. We do not consider $\Gamma_{t}^{1}$ for $t$ larger than $T_{1}$. 
\item The curves ${\mathcal C}_{x}^{2}$ are obtained by gluing the part of the curve $X$ describing the $2$-strong discontinuity situated in the space interval $[0,x]$, with the vertical line segment $[X^{-1}(x),+\infty) \times \{x\}$. Here the portion of $X$ considered is the one on the left side. See Figure \ref{fig:Glimm_curves2}.
\item The curves $\Gamma_{t}^{3}$ are obtained by gluing the vertical line segment $[t,+\infty) \times \{0\}$, the horizontal line segment $\{t\} \times [0,X(t)]$ and the part of the curve $X$ describing the $2$-strong discontinuity for times in $[t,T_{1}]$. Again one has in mind that the portion of $X$ is considered on the left side. See Figure \ref{fig:Glimm_curves3}. For $t \geq T_{1}$, the curve $\Gamma_{t}^{3}$ is only composed of the vertical line segment $[t,+\infty) \times \{0\}$ and the horizontal line segment $\{t\} \times [0,L]$.
\end{itemize}
We follow $\Gamma_{t}^{1}$ from left to right according to the variable $x$, we follow ${\mathcal C}^{2}_{x}$ from bottom to top according to the variable $t$ and finally we follow $\Gamma_{t}^{3}$ by first following the vertical line segment from top to bottom, then following the rest of the curve from left to right. We will say that we follow these curves from ``left to right'' when we follow them in this way. Given two points on one of these curves, this gives a meaning to ``one being on the left of the other''. \par
\begin{figure}[htb]
\centering
\subfigure[The curve $\Gamma_{t}^{1}$]
{\label{fig:Glimm_curves1} \input{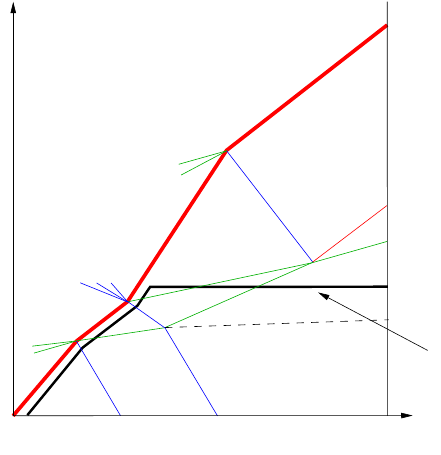_t}}
\hspace{0.5cm}
\subfigure[The curve ${\mathcal C}_{x}^{2}$]
{\label{fig:Glimm_curves2} \input{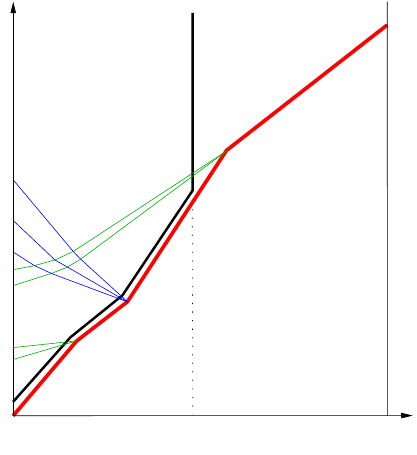_t}}
\hspace{0.5cm}
\subfigure[The curve $\Gamma_{t}^{3}$]
{\label{fig:Glimm_curves3} \input{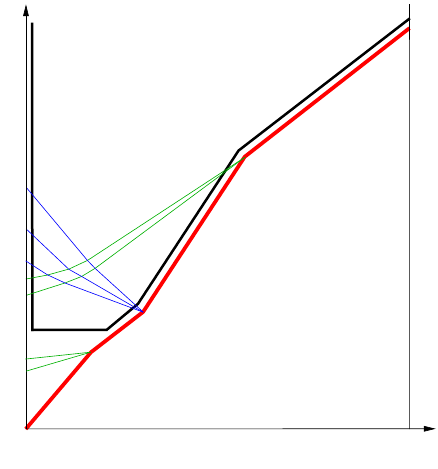_t}}
\caption{Curves for the $BV$ estimate}
\label{fig:GC}
\end{figure}
We consider the following Glimm functionals, $i=1, 3$:
\begin{equation} \label{Eq:GlimmFs}
V^{i} (t) = \sum_{\substack{ {\alpha \text{ front}} \\ {\text{cutting } \Gamma_{t}^{i}} }} \big(1+\kappa \delta_{i1} \delta_{1k_{\alpha}} \delta_{x_{\alpha} > X(t)}\big) |\sigma_{\alpha}|
\ \text{ and } \
Q^{i} (t) = \sum_{\substack{ {\alpha, \beta \text{ front cutting } \Gamma_{t}^{i}} \\ {\alpha \text{ approaching } \beta} }} |\sigma_{\alpha}|\, |\sigma_{\beta}|,
\end{equation}
and
\begin{equation} \label{Eq:GlimmFsx}
V^{2} (x) = \sum_{\substack{ {\alpha \text{ front}} \\ {\text{cutting } {\mathcal C}_{x}^{2}} }} |\sigma_{\alpha}|
\ \text{ and } \
Q^{2} (x) = \sum_{\substack{ {\alpha, \beta \text{ front cutting } {\mathcal C}_{x}^{2}} \\ {\alpha \text{ approaching } \beta} }} |\sigma_{\alpha}|\, |\sigma_{\beta}|,
\end{equation}
Let us give some precisions:
\begin{itemize}
\item We consider that a front crosses $\Gamma_{t}^{1}$ in its part coinciding with $X$ only when it leads into/emerges from $X$ on its right, not when it emerges from $X$ on its left. In the same way, only fronts emerging from $X$ on its left can cross $\Gamma_{t}^{3}$ or ${\mathcal C}^{2}_{x}$ (assuming that they touch the correct part of $X$), not a front leading into/emerging from $X$ from its right.
\item Note that a $3$-front can cut $\Gamma_{t}^{1}$ twice (once when emerging from $X(t)$ on its right, once cutting the horizontal part of $\Gamma_{t}^{1}$); it this case we count this front twice. In the same way, a $1$-front can cut $\Gamma_{t}^{3}$ (and be counted) twice.
\item We define $\alpha$ and $\beta$ cutting $\Gamma_{t}^{1}$ as approaching, when, $\alpha$ (of family $i$) being on the left of $\beta$ (of family $j$), one has $i>j$ or $i=j$ and at least one of $\alpha$ or $\beta$ is a shock. Here artificial fronts are considered of family $4$.
\item We consider $\alpha$ and $\beta$ cutting ${\mathcal C}_{x}^{2}$ as approaching when, $\alpha$ being on the left of $\beta$, the couple $(\alpha,\beta)$ is of type $(\overset{\leftharpoonup}{R},\overset{\rightharpoonup}{C})$. 
\item We consider $\alpha$ and $\beta$ cutting $\Gamma_{t}^{3}$ as approaching when, $\alpha$ being on the left of $\beta$ in the sense given above, the couple $(\alpha,\beta)$ is of type $(\overset{\rightharpoonup}{C},\overset{\leftharpoonup}{R})$. 
\item $\kappa \geq 1$ is a constant to be determined. It is used to put a slightly different weight when the front $\alpha$ corresponds to $i=1$ (a front under the strong shock), $k_{\alpha}=1$ (a front of the first family) and $x_{\alpha} > X(t)$ (the front cuts $\Gamma_{t}^{1}$ on its horizontal part, so that $\delta_{i1}$ is actually redundant). This is to take the reflections of waves by the strong discontinuity into account.
\end{itemize}
\ \par
Now applying Glimm's method we prove the following lemma.
\begin{lemma} \label{Lem:DecrFctGlimm}
For $\kappa>0$ and $K>0$ large enough, if $TV(u_{0})$ is small enough, then
\begin{equation} \label{Eq:DecrFctGlimm}
F^{1}(t):= V^{1}(t) + K Q^{1}(t) \text{ is non-increasing.}
\end{equation}
\end{lemma}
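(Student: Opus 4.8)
The strategy is the classical Glimm interaction argument, adapted to the geometry of the curve $\Gamma_t^1$, which is the right part of the strong $2$-discontinuity $X$ glued with a horizontal segment. I would first check that $F^1$ can only change at interaction times (times when two fronts meet), since between such times no front crosses $\Gamma_t^1$ appears or disappears, $V^1$ and $Q^1$ are constant, hence so is $F^1$. Next, I would bound the change $\Delta F^1 = \Delta V^1 + K \Delta Q^1$ at each interaction time and show it is nonpositive. For this one distinguishes the cases according to which solver is used at the interaction point, exactly as in the list of cases in Subsection~\ref{Subsec:Part1E}.

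\textbf{Key steps.} For an interaction away from the strong discontinuity (case A), we use Proposition~\ref{Pro:Glimm2}: the sum of outgoing strengths of a given family differs from the sum of incoming ones by a term of size $\mathcal{O}(|\sigma'||\sigma''|)$, so $\Delta V^1 \leq C |\sigma'||\sigma''|$ for some constant $C$ depending only on $\mathcal D$. Meanwhile $\Delta Q^1 \leq -|\sigma'||\sigma''| + C' V^1 |\sigma'||\sigma''|$: the term $-|\sigma'||\sigma''|$ comes from the pair $(\alpha,\beta)$ of incoming fronts which were approaching and cease to be so after the interaction, and the positive term accounts for the newly created fronts becoming approaching with the rest (whose total strength is $\leq V^1$). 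Choosing $TV(u_0)$, hence (via Step~1) $V^1(0)$, small enough that $C' V^1 \leq 1/2$, and then $K$ large enough that $K/2 \geq C$, gives $\Delta F^1 \leq (C - K/2)|\sigma'||\sigma''| \leq 0$. The artificial-interaction and small/large amplitude subcases are handled the same way, the small amplitude case being slightly better since only lower-order outgoing fronts are produced. For case B (the strong $2$-discontinuity is involved, with a weak $1$-front on its left), the interaction estimates come from Proposition~\ref{Pro:Inter32} and Proposition~\ref{Pro:CW3J>}: a $1$-rarefaction crossing $X$ produces a transmitted $1$-wave and a reflected $3$-wave whose strengths are, to leading order, $\alpha^1_{\underline 2,1}\sigma_1'$ and $\alpha^3_{\underline 2,1}\sigma_1'$ (Remark~\ref{Rem:Coefs21b}); a $1$-shock is first cancelled by a $3$-compression wave along $X$ so that \emph{no} $1$-wave is transmitted, leaving only a reflected $3$-wave. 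The crucial point here is that the $1$-front being absorbed lies on the left of $X$ and therefore carries the heavier weight $(1+\kappa)$ in $V^1$, while the reflected $3$-front lies on the horizontal part and carries weight $1$; since $|\alpha^3_{\underline 2,1}|$ (resp. the coefficient of $\sigma_3$ in \eqref{Eq:EstCW3J>}) is bounded independently of the small data, choosing $\kappa$ large makes the weighted balance $\Delta V^1 \leq -c\,\kappa\,|\sigma_1'| + C|\sigma_1'| \leq 0$; and $\Delta Q^1 \leq 0$ since a reflected wave cutting the horizontal part is approached by fewer fronts than the incoming $1$-front cutting $X$, modulo an $\mathcal O(V^1 |\sigma_1'|)$ term absorbed as before.

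\textbf{Main obstacle.} The delicate point is not any single estimate but the bookkeeping of the weights: one must verify that the weighted functional $V^1$ with the extra factor $(1+\kappa\delta_{i1}\delta_{1k_\alpha}\delta_{x_\alpha>X(t)})$ actually decreases across the reflection of a $1$-wave by the strong $2$-contact discontinuity, which forces $\kappa$ to be chosen \emph{before} $K$ and both to be chosen large while $TV(u_0)$ is chosen small \emph{last}. One has to be careful that the constants in the Glimm-type estimates (Propositions~\ref{Pro:Glimm2}, \ref{Pro:Inter32}, \ref{Pro:CW3J>}) are uniform over $\mathcal D$, which is guaranteed by the ``uniform $\mathcal O$'' clauses in those statements, and that the reflected waves' natures (shock vs.\ rarefaction, sign of strength) are as claimed, which is Remark~\ref{Rem:SignesInter21} and Remark~\ref{Rem:SigneOndeReflechie} together with the choice of $r$. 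Once the per-interaction inequality $\Delta F^1 \leq 0$ is established in all cases, summing over interaction times on $[0,t]$ and using that $F^1$ is piecewise constant yields the monotonicity claim.
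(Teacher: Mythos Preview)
Your overall strategy is exactly the paper's: show $F^1$ is piecewise constant, and at each interaction time bound $\Delta V^1 + K\Delta Q^1\le 0$ using Proposition~\ref{Pro:Glimm2} for weak--weak interactions and Propositions~\ref{Pro:Inter32}, \ref{Pro:CW3J>} for interactions with the strong discontinuity, with the extra weight $(1+\kappa)$ compensating the reflection. The weak--weak case (your case~A) is handled correctly.

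However, your treatment of case~B contains several geometric errors that make the bookkeeping wrong as written. First, the weak $1$-front meeting $X$ lies on the \emph{right} of $X$, not on the left: in Part~1 the strong $2$-discontinuity is always the left front at an interaction point (see Subsection~\ref{Subsec:Part1E}, case~B). Accordingly the weight $(1+\kappa)$ is attached to $1$-fronts with $x_\alpha>X(t)$, i.e.\ on the \emph{horizontal} part of $\Gamma_t^1$, not ``on the left of $X$''. Second, after the interaction the incoming $1$-front $\alpha$ does not disappear from the count: it now leads into $X$ from the right and so cuts $\Gamma_{t^+}^1$ on its $X$-part with weight~$1$; the net change in its weighted contribution is thus $-\kappa|\sigma_\alpha|$, not $-(1+\kappa)|\sigma_\alpha|$. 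Third, and this is the point you miss entirely, the reflected $3$-wave $\beta$ cuts $\Gamma_{t^+}^1$ \emph{twice}: once where it emerges from $X$ on its right, and once on the horizontal segment (see the third bullet after \eqref{Eq:GlimmFsx}). Hence its contribution to $\Delta V^1$ is $+2|\sigma_\beta|\le 2C_3|\sigma_\alpha|$, and one needs $\kappa>2C_3$, not $\kappa>C_3$. Fourth, your claim that $\Delta Q^1\le 0$ in case~B is not justified: the reflected $3$-wave, counted twice, becomes approaching with many fronts on the horizontal part, and the paper only obtains $\Delta Q^1\le C_4|\sigma_\alpha|\,F^1(t^-)$; this positive term is then absorbed by the strictly negative $\Delta V^1$ once $\kappa$ is large and $F^1(t^-)$ is small. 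With these corrections your argument becomes the paper's.
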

\begin{proof}[Proof of Lemma \ref{Lem:DecrFctGlimm}]
The function $F^{1}$ is piecewise constant in time. Actually it is modified at an interaction time or at a time where a front leaves the domain $(0,L)$. In the latter case, both the functionals defining $F^{1}$ decrease, so that we only need to understand what happens at interaction times. There are two distinct types of interactions.
\begin{itemize}
\item {\it Interaction of weak waves.} When at the time of interaction $t$, two weak waves $\alpha$ and $\beta$ interact, then one can perform the classical analysis (relying on Proposition~\ref{Pro:Glimm} when the accurate solver is used, and on Proposition~\ref{Pro:Glimm2} when the simplified solver is used):
\begin{equation*}
V^{1}(t^{+}) \leq V^{1}(t^{-}) + C_{1} \kappa |\sigma_{\alpha}| |\sigma_{\beta}|, \ \text{ and } \ 
Q^{1}(t^{+}) \leq Q^{1}(t^{-}) - |\sigma_{\alpha}||\sigma_{\beta}| + C_{2} |\sigma_{\alpha}| |\sigma_{\beta}| F^{1}(t^{-}).
\end{equation*}
As a consequence, given $\kappa \geq 1$, there exist $K>0$ and $\delta \in (0,1)$ such that if $F^{1}(t^{-}) \leq \delta$, then one has $F^{1}(t^{+}) \leq F^{1}(t^{-})$ when crossing an interaction of weak waves. Even, in that case, we can have
\begin{equation} \label{Eq:DecrFctGlimmBis}
F^{1}(t^{+}) - F^{1}(t^{-}) \leq - |\sigma_{\alpha}|\, |\sigma_{\beta}|.
\end{equation}
\item {\it Interaction with the strong wave.} Let us consider a time of interaction $t$, when a weak front $\alpha$ meets the strong discontinuity. The front $\alpha$ is necessarily of the first family. Whether it is a rarefaction front or a shock, its interaction with the strong discontinuity will result in a reflected $3$-wave $\beta$ which crosses $\Gamma_{t}^{1}$ twice for times just after $t$. Moreover one has the estimate (see Propositions~\ref{Pro:Inter32} and \ref{Pro:CW3J>})
\begin{equation*}
|\sigma_{\beta}| \leq C_{3} |\sigma_{\alpha}|.
\end{equation*}
Since after $t$, the front $\alpha$ does no longer cut $\Gamma_{t}^{1}$ on its horizontal part, it follows that
\begin{equation*}
V^{1}(t^{+}) \leq V^{1}(t^{-}) + 2 C_{3} |\sigma_{\alpha}| - \kappa |\sigma_{\alpha}|, \ \text{ and } \ 
Q^{1}(t^{+}) \leq Q^{1}(t^{-}) + C_{4} |\sigma_{\alpha}| F^{1}(t^{-}).
\end{equation*}
Hence one can find $\kappa >0$ such that  if $F^{1}(t^{-}) \leq 1$, then one has $F^{1}(t^{+}) \leq F^{1}(t^{-})$ when crossing an interaction of a weak wave with the strong discontinuity. Even, in that case, we can have
\begin{equation} \label{Eq:DecrFctGlimmTer}
F^{1}(t^{+}) - F^{1}(t^{-}) \leq - |\sigma_{\alpha}|.
\end{equation}
\end{itemize}
The above analysis allows to find $\delta>0$ such that if $F^{1}(0) \leq \delta$, then $F^{1}$ is non-increasing. Since one has $F^{1}(0) \leq C_{5} (\kappa \, TV(u_{0}) + K \, TV(u_{0})^{2} )$, one deduces the claim.
\end{proof}
The same method applies to ${\mathcal C}^{2}_{x}$ and $\Gamma_{t}^{3}$, which leads to the following statement.
\begin{lemma} \label{Lem:DecrFctGlimm2}
{\bf 1.} If $K>0$ is large enough and if $TV(u_{0})$ is small enough, then
\begin{gather}
\label{Eq:DecrFctGlimm2}
F^{2}(x):= V^{2}(x) + K Q^{2}(x) \text{ is non-decreasing,} \\
\label{Eq:DecrFctGlimm3}
F^{3}(t):= V^{3}(t) + K Q^{3}(t) \text{ is non-increasing.}
\end{gather}
{\bf 2.} For some $C>0$, one has
\begin{gather}
\label{Eq:Compare2F}
F^{2}(L) \leq C F^{1}(T_{1}), \\
\label{Eq:Compare2Fb}
F^{3}(0) \leq F^{2}(0) + F^{2}(L).
\end{gather}
\end{lemma}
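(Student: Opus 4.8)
The statement combines, for the curves $\mathcal C^2_x$ and $\Gamma^3_t$, a monotonicity assertion identical in structure to Lemma~\ref{Lem:DecrFctGlimm}, together with two ``transfer'' inequalities across the junctions of the three families of curves. I would prove Part~1 exactly as in the proof of Lemma~\ref{Lem:DecrFctGlimm}, and Part~2 by inspecting how a single front contributes to the relevant functionals when it crosses from one curve system to another.

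\textbf{Part 1.} For \eqref{Eq:DecrFctGlimm2}, note that $F^2$ is piecewise constant in the pseudo-time $x$ (or rather in $L-x$; but here we traverse $\mathcal C^2_x$ upward in $t$, so it is natural to let $x$ increase). The functional $V^2(x)+KQ^2(x)$ changes only at an interaction point inside $\{x<X(t)\}$ or at a point where a front leaves the strip, and in the construction of Subsection~\ref{Subsec:CEP2} the only interaction that occurs is the side simplified solver of a $\overset{\leftharpoonup}{R}$ with a backward $\overset{\rightharpoonup}{C}$. By Lemma~\ref{Lem:CommutR1R2} this generates no $2$-wave, and by Corollary~\ref{Cor:Glimm2} the two outgoing fronts keep their nature; hence by Proposition~\ref{Pro:Glimm2} (applied with the relevant permutation $\pi$ and $\xi=(0,0,0)$) one has $V^2$ increasing by at most $C\kappa|\sigma_\alpha||\sigma_\beta|$ and $Q^2$ decreasing by at least $|\sigma_\alpha||\sigma_\beta|-C|\sigma_\alpha||\sigma_\beta|F^2$. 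The key point is that the ``approaching'' couples counted by $Q^2$ are precisely the $(\overset{\leftharpoonup}{R},\overset{\rightharpoonup}{C})$ couples, so each such interaction removes one approaching pair; when $F^2$ is small enough, choosing $K$ large gives $F^2(x^+)\ge F^2(x^-)$, i.e. $F^2$ non-decreasing in $x$ (non-increasing as the pseudo-time $L-x$ increases from $0$ — the same thing). When a front emerges from $X$ on its left side, $V^2$ and $Q^2$ only increase by a controlled amount, but this is exactly accounted for by the transfer estimate \eqref{Eq:Compare2F} and is not an ``interior'' event along $\mathcal C^2_x$. The proof of \eqref{Eq:DecrFctGlimm3} is entirely parallel, with the curves $\Gamma^3_t$ and the convention that the approaching couples are of type $(\overset{\rightharpoonup}{C},\overset{\leftharpoonup}{R})$; one again uses that no further interactions are created in Part~2 other than the side simplified solver, together with the fact that fronts leaving the domain only decrease both functionals.

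\textbf{Part 2.} For \eqref{Eq:Compare2F}, observe that every front that ever crosses $\mathcal C^2_L$ (equivalently, every backward $\overset{\rightharpoonup}{C}$ or forward $\overset{\leftharpoonup}{R}$ front in Part~2) originates from an interaction point on the curve $X$, where, by Propositions~\ref{Pro:Inter32} and \ref{Pro:CW3J>}, its strength is bounded by $C_3$ times the strength of the incoming $1$-front on the right side of $X$; moreover $Q^2$ and the weak waves generated inside $\{x<X(t)\}$ are all controlled by $V^2$ of the emerging fronts through the interior interaction estimate above. Since the incoming $1$-fronts on the right side of $X$ are precisely the fronts that were counted by $V^1(T_1)$ on its part coinciding with $X$ (with the extra weight $\kappa$, which is why the factor $\kappa$ was inserted in \eqref{Eq:GlimmFs}), one gets $V^2(L)+KQ^2(L)\le C\big(V^1(T_1)+KQ^1(T_1)\big)$, that is \eqref{Eq:Compare2F}; here one uses that $F^2$ is evaluated at $x=L$, i.e. at pseudo-time $L-x=0$, where the only fronts present are the ``initial data'' $u^\nu(T_1^+,L^-)$ together with whatever has emerged from $X$ below $x=L$. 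The inequality \eqref{Eq:Compare2Fb} is then immediate: the curve $\Gamma^3_0$ meets the curve $X$ along its whole left side, and the fronts and approaching pairs counted by $F^3(0)$ are split into those that emerge from the portion of $X$ in $\{x\le X^{-1}(0)\}=\{X(t)\ge 0\}$, i.e. all of $X$ (bounded by the $\mathcal C^2$-type contribution, hence by $F^2(L)$ via \eqref{Eq:Compare2F} or directly $F^2(0)$), plus the fronts on the vertical segment $[0,+\infty)\times\{0\}$ and the horizontal segment $\{0\}\times[0,X(0)]=\emptyset$, which by the initial-data approximation \eqref{Eq:ApproxCI} and the choice of $v_0^-$ are controlled by $F^2(0)$.

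\textbf{Main obstacle.} The genuinely delicate point is the bookkeeping of the ``double-counting'' of fronts that cross a curve twice (a $3$-front crossing $\Gamma^1_t$ both when emerging from $X$ and on the horizontal part; a $1$-front crossing $\Gamma^3_t$ twice), and the precise matching of which fronts and which approaching pairs are charged to $V^1,Q^1$ versus $V^2,Q^2$ versus $V^3,Q^3$ at the two junctions. One must verify that no front or approaching pair is charged negatively or omitted, and that the extra weights $\kappa$ in \eqref{Eq:GlimmFs} exactly absorb the amplification factors $C_3$ (resp. $C$ in \eqref{Eq:Coefs32}) coming from reflection off the strong discontinuity; this is where the compatibility of the constants $\kappa$, $K$, and the smallness threshold on $TV(u_0)$ has to be pinned down, as in Lemma~\ref{Lem:DecrFctGlimm}. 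Once this combinatorial correspondence is set up, each individual estimate is a direct application of Propositions~\ref{Pro:Glimm}, \ref{Pro:Glimm2}, \ref{Pro:Inter32}, \ref{Pro:CW3J>} and Lemma~\ref{Lem:CommutR1R2}.
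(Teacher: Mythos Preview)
Your overall strategy is the paper's: Part~1 is Glimm's argument repeated for the side interactions above $X$, and Part~2 is a transfer across the strong discontinuity plus a curve-containment. However, you have misread the geometry of $\mathcal C^2_x$ at one point, and this causes you to overcomplicate the argument. When a front emerges from $X$ on its left at $(t_0,x_0)$, it does \emph{not} change $V^2$ or $Q^2$ at all: for $x>x_0$ the front is counted on the $X$-portion of $\mathcal C^2_x$, and for $x<x_0$ the same front (with the same strength) is counted on the vertical portion, so the contribution is constant across $x=x_0$. This is precisely why Part~1 here is \emph{simpler} than Lemma~\ref{Lem:DecrFctGlimm}: there is no reflection event to absorb, no $\kappa$-weight is needed in $V^2,V^3$, and no artificial fronts occur above $X$. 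Your ``main obstacle'' paragraph about matching $\kappa$ to the reflection amplification is therefore solving a problem that does not arise in this lemma; the $\kappa$ in $V^1$ was introduced solely for Lemma~\ref{Lem:DecrFctGlimm} and plays no role here.

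For Part~2 your argument for \eqref{Eq:Compare2F} is correct in substance (each front on the left of $X$ is the transmitted $1$-rarefaction or the cancellation $3$-compression associated to a single incoming $1$-front on the right, with strength controlled by Remark~\ref{Rem:Coefs21b} and Proposition~\ref{Pro:CW3J>}). But your argument for \eqref{Eq:Compare2Fb} is needlessly convoluted and the appeal to \eqref{Eq:ApproxCI} is irrelevant: since $X(0)=0$, the curve $\Gamma^3_0$ is literally the union of the vertical segment $[0,\infty)\times\{0\}$ and the left side of $X$, hence $\Gamma^3_0\subset\mathcal C^2_0\cup\mathcal C^2_L$, and the inequality follows by inclusion of the set of fronts (and approaching pairs).
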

\begin{proof}[Proof of Lemma \ref{Lem:DecrFctGlimm2}]
The first part of Lemma \ref{Lem:DecrFctGlimm2}, that is \eqref{Eq:DecrFctGlimm2} and \eqref{Eq:DecrFctGlimm3}, is analogous to Lemma \ref{Lem:DecrFctGlimm}, relying on Proposition~\ref{Pro:Glimm2}. Actually, it is even simpler since there are no artificial fronts above the strong discontinuity and no wave interact with the strong front in the second part of the construction. Hence we do not need a $\kappa \delta_{i k_{\alpha}}$ here. Note that $V^{2}$ and $Q^{2}$ do not change when $x$ corresponds to an interaction point with the strong discontinuity and only weak interaction points may affect $F^{2}$. In the same way
 at a time where a front exits through $x=0$, the functionals $V^{3}$ and $Q^{3}$ do not change (for $\overset{\rightharpoonup}{C}$ fronts) or decrease (for $\overset{\leftharpoonup}{R}$ fronts). Also at a time of interaction with the strong shock, the functionals $V^{3}$ and $Q^{3}$ do not change (if there are no $\overset{\rightharpoonup}{C}$ fronts arriving there) or decrease (otherwise). \par
Concerning \eqref{Eq:Compare2F}, the values $F^{2}(L)$ and $F^{1}(T_{1})$ measure the total strengths of the waves on the left and right sides of $X$, respectively (remark that no front leave the domain through $(T_{1},+\infty) \times \{ L \}$). To get \eqref{Eq:Compare2F}, it suffices to compare the strength of the incoming $1$-wave on the right of $X(t)$ with the wave that corresponds on the left side: a $1$-rarefaction wave if the incoming front is a rarefaction front, a $3$-compression wave if the incoming front is a shock front. The fact that the waves on the left and on the right have proportional strengths is a consequence of Remark \ref{Rem:Coefs21b} (when the incoming wave is a rarefaction wave) and Proposition \ref{Pro:CW3J>} (when the incoming wave is a shock). \par
Finally, \eqref{Eq:Compare2Fb} is obvious since $\Gamma_{0}^{3} \subset {\mathcal C}^{2}_{0} \cup {\mathcal C}^{2}_{L}$.
\end{proof}
\begin{corollary} \label{Cor:EstBV}
If $TV(u_{0})$ is small enough, then one has, as long as the algorithm is well-functioning, that
\begin{gather}
\label{Eq:EstBV}
TV\big(u^{\nu}(t,\cdot); (0,X(t))\big) +TV\big(u^{\nu}(t,\cdot); (X(t),L)\big) \leq C \, TV(u_{0}), \\
\label{Eq:EstLinfini}
\| u^{\nu}(t,\cdot) - {v}^{-}_{0} \|_{L^{\infty}(0,X(t))} + \| u^{\nu}(t,\cdot) - \overline{u}_{0} \|_{L^{\infty}(X(t),L)} 
\leq C \big( TV(u_{0}) + \| u_{0} - \overline{u}_{0}\|_{L^{\infty}(0,L)} \big).
\end{gather}
\end{corollary}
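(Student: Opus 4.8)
The plan is to derive Corollary~\ref{Cor:EstBV} directly from Lemmas~\ref{Lem:DecrFctGlimm} and \ref{Lem:DecrFctGlimm2} together with the initial-data approximation estimates \eqref{Eq:ApproxCI}. The starting observation is that at each fixed time $t$ the total strength of the weak fronts of $u^\nu(t,\cdot)$ on the interval $(X(t),L)$ is controlled by $V^1(t)$ (since on its horizontal part $\Gamma_t^1$ is exactly $\{t\}\times[X(t),L]$, and the weight $1+\kappa\delta_{i1}\delta_{1k_\alpha}\delta_{x_\alpha>X(t)}$ is between $1$ and $1+\kappa$), hence by $F^1(t)$; likewise on $(0,X(t))$ the total strength of weak fronts is controlled by $V^3(t)$, hence by $F^3(t)$. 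Since the strong discontinuity is a single front of bounded strength (it is of type $\overset{<}{\bJ}$ with $|\overline\sigma_2|/2\le|\sigma_2|\le 2|\overline\sigma_2|$ by \eqref{Eq:sgeqlambda2/2}), and since jumps and total variation in the primitive variables are equivalent to sums of wave strengths up to a uniform constant on the compact domain ${\mathcal D}$, it suffices to bound $F^1(t)$ and $F^3(t)$ uniformly.

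First I would bound $F^1$ on $[0,T_1]$: by Lemma~\ref{Lem:DecrFctGlimm}, $F^1$ is non-increasing, so $F^1(t)\le F^1(0)$; and at the initial time the only fronts are those coming from the Riemann problems at the discontinuity points of $u_0^\nu$ and at $x=0$, so $F^1(0)\le C_5(\kappa\,TV(u_0^\nu)+K\,TV(u_0^\nu)^2)\le C\,TV(u_0)$ using \eqref{Eq:ApproxCI}. Next, for $F^3$, I would chain the inequalities of Lemma~\ref{Lem:DecrFctGlimm2}: by \eqref{Eq:DecrFctGlimm3}, $F^3(t)\le F^3(0)$; by \eqref{Eq:Compare2Fb}, $F^3(0)\le F^2(0)+F^2(L)$; by \eqref{Eq:DecrFctGlimm2} (monotone non-decreasing in $x$), $F^2(0)\le F^2(L)$; and by \eqref{Eq:Compare2F}, $F^2(L)\le C\,F^1(T_1)\le C\,F^1(0)\le C\,TV(u_0)$. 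Therefore $F^3(t)\le C\,TV(u_0)$ for all $t$. Combining with the $F^1$ bound and the equivalence between wave strengths and total variation gives \eqref{Eq:EstBV}, with the understanding that for $t>T_1$ there is no strong front in $(0,L)$ and the single functional $F^3(t)$ (reduced to the vertical segment plus $\{t\}\times[0,L]$) controls $TV(u^\nu(t,\cdot);(0,L))$.

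For the $L^\infty$ estimate \eqref{Eq:EstLinfini}, I would argue as follows. On $(X(t),L)$ the value $u^\nu(t,L^-)$ equals $u_0^\nu(L^-)$ (no Riemann problem is solved at $x=L$ in Part~1, and no front leaves through $(T_1,+\infty)\times\{L\}$), hence $|u^\nu(t,L^-)-\overline u_0|\le\|u_0-\overline u_0\|_{L^\infty}$ by \eqref{Eq:ApproxCI}; then for any other point $x'\in(X(t),L)$ one has $|u^\nu(t,x')-u^\nu(t,L^-)|\le TV(u^\nu(t,\cdot);(X(t),L))\le C\,TV(u_0)$, and the triangle inequality yields the bound on $\|u^\nu(t,\cdot)-\overline u_0\|_{L^\infty(X(t),L)}$. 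On $(0,X(t))$ one proceeds similarly anchoring at the left endpoint: the state on the left of the strong discontinuity is $v_0^-$ up to a wave of strength at most $C\,TV(u_0)$ (it is obtained from $u^\nu(t,X(t)^-)$, which was tracked in Part~1 as lying in $B(v_0^-,r)$), and from the left boundary $x=0$ the solution is reconstructed through Part~2, whose total variation along the relevant curve is $F^3\le C\,TV(u_0)$; hence $\|u^\nu(t,\cdot)-v_0^-\|_{L^\infty(0,X(t))}\le C(TV(u_0)+\|u_0-\overline u_0\|_{L^\infty})$.

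The main obstacle is purely bookkeeping rather than conceptual: making precise, with uniform constants independent of $\nu$, the equivalence between ``sum of strengths of the fronts crossing the relevant Glimm curve at parameter value $t$ (or $x$)'' and ``$TV$ of $u^\nu(t,\cdot)$ on the corresponding spatial interval'', while correctly accounting for the fact that a $3$-front may cross $\Gamma^1_t$ twice and a $1$-front may cross $\Gamma^3_t$ twice (so these are overcounted, which only helps for an upper bound), and that the weight factor $1+\kappa\delta_{i1}\delta_{1k_\alpha}\delta_{x_\alpha>X(t)}$ stays in $[1,1+\kappa]$. One must also verify that the strong front contributes only a bounded amount (immediate from \eqref{Eq:sgeqlambda2/2} and the fact that there is at most one strong front at each time), and that the constant $C$ can be chosen uniform over $t$ and over the compact domain $\overline{\mathcal D}\subset\Omega$ on which the maps relating primitive variables to wave strengths are $C^{2,1}$ and non-degenerate. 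Once these uniformity points are granted, the corollary follows by the monotonicity chain above.
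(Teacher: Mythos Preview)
Your approach is essentially the same as the paper's: control $TV$ on each side of $X(t)$ by $F^1(t)$ and $F^3(t)$, then chain the monotonicity statements and \eqref{Eq:Compare2F}--\eqref{Eq:Compare2Fb} back to $F^1(0)\le C\,TV(u_0)$. The bookkeeping remarks (double counting of $3$-fronts on $\Gamma_t^1$, the bounded weight $1+\kappa$, the single strong front contributing ${\mathcal O}(|\overline\sigma_2|)$) are all correct and only help for upper bounds.

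There is one genuine slip in your $L^\infty$ argument on $(X(t),L)$. You write that $u^\nu(t,L^-)=u_0^\nu(L^-)$ because ``no front leaves through $(T_1,+\infty)\times\{L\}$''. But for $t\in[0,T_1]$ this is not the relevant time window: during Part~1, $3$-fronts and artificial fronts do exit through $x=L$, and each such exit changes the value $u^\nu(\cdot,L^-)$. So $u^\nu(t,L^-)$ is not constant in $t$, and your anchoring at $x=L$ fails as stated. The paper anchors at the other end: it observes that $F^1(t)$ bounds the distance between $u^\nu(t,x)$ (for $x>X(t)$) and $u_0^\nu(0^+)$, since the whole curve $\Gamma_t^1$ starts at $(0,0)$ and the total strength of fronts crossing it is $\le F^1(t)\le F^1(0)$. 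Then $|u_0^\nu(0^+)-\overline u_0|\le\|u_0-\overline u_0\|_\infty$ finishes the job. Your argument is easily repaired in the same spirit (or by noting that the total strength of fronts that have left through $x=L$ by time $t$ is bounded by the drop of $F^1$, hence by $F^1(0)$), but as written it is incorrect.

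For the left piece $(0,X(t))$ your argument is right in spirit but vague. The paper makes it precise by using the curves ${\mathcal C}_x^2$ rather than $\Gamma_t^3$: for $x\in(0,X(t))$ one has $|u^\nu(t,x)-u^\nu(0^+,X(0^+)^-)|\le C\,F^2(x)\le C\,F^2(L)\le C\,F^1(T_1)$, and then the state $u^\nu(0^+,X(0^+)^-)$ on the left of the strong discontinuity just after $t=0$ is compared to $v_0^-$ via the Lipschitz estimate \eqref{Eq:RieS} for the Riemann problem $(v_0^-,u_0^\nu(0^+))$, giving $|v_0^--u^\nu(0^+,X(0^+)^-)|\le C\,|u_0^\nu(0^+)-\overline u_0|\le C\,\|u_0-\overline u_0\|_\infty$. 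This is the step you gestured at with ``$v_0^-$ up to a wave of strength at most $C\,TV(u_0)$''; invoking \eqref{Eq:RieS} makes it rigorous.
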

\begin{proof}[Proof of Corollary \ref{Cor:EstBV}]
For what concerns \eqref{Eq:EstBV}, we only notice that the left hand side of can be estimated by $F^{1}(t)+F^{3}(t)$, which due to Lemma \ref{Lem:DecrFctGlimm2} can be estimated by $CF^{1}(0)$. For what concerns \eqref{Eq:EstLinfini}, we can measure the second term in the left hand side by $C(\| u_{0} - \overline{u}_{0}\|_{\infty} +F^{1}(t))$, because $F^{1}(t)$ allows to estimate the distance between $u^{\nu}(t,x)$, $t>0$, $x>X(t)$, and $u^{\nu}_{0}(0^{+})$. In the same way, concerning the first term in the left hand side of \eqref{Eq:EstLinfini}, one has
\begin{equation*}
|u^{\nu}(t,x) - u^{\nu}(0^{+},X(0^{+})^{-}) | \leq C F^{2}(x) \ \text{ for } t>0, \ x \in (0,X(t)),
\end{equation*}
where $u^{\nu}(0^{+},X(0^{+})^{-})$ is the state on the left of the strong shock just after $t=0$.
Now this state $u^{\nu}(0^{+},X(0^{+})^{-})$ results from the Riemann problem $(v_{0}^{-},u_{0}^{\nu}(0^{+}))$. It follows from \eqref{Eq:RieS} that
\begin{equation*}
|v_{0}^{-}- u^{\nu}(0^{+},X(0^{+})^{-}) | \leq C |u_{0}^{\nu}(0^{+}) - \overline{u}_{0}| \leq C \|u_{0} - \overline{u}_{0}\|_{\infty}.
\end{equation*}
Finally it is clear that for $TV(u_{0}) \leq 1$, one has $F^{1}(0) \leq C \, TV(u_{0})$.
\end{proof}
\subsubsection{Well-functioning of the algorithm; size of rarefaction and artificial fronts}
That the algorithm is well-functioning for $\varepsilon$ small enough is a consequence of the estimates above.
\begin{lemma}\label{Lem:WP}
If $TV(u_{0})$ is small enough, all the states generated by the algorithm belong to ${\mathcal D}$ and a finite number of fronts and interaction points are created.
\end{lemma}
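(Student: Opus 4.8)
The statement is essentially a bootstrap/continuation argument combining the uniform $BV$ and $L^\infty$ bounds of Corollary~\ref{Cor:EstBV} with the local validity of the construction established in Sections~\ref{Sec:ConstrEuler}, \ref{Sec:Tools} and \ref{Sec:Applications}. The plan is to argue by contradiction: suppose the algorithm ``stops'' at some time (i.e.\ one of the states it would generate fails to lie in $B(v_0^-;r)$ or $B(\overline u_0;r)$, depending on whether it is above or below the strong discontinuity), or that it generates infinitely many fronts/interaction points before some finite time. We rule out the first failure mode using the $L^\infty$ estimate \eqref{Eq:EstLinfini}, and the second using the $BV$ estimate \eqref{Eq:EstBV} together with a counting argument on interaction points in the spirit of \cite[Chapter~7]{B}.

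\textbf{Step 1: States stay in ${\mathcal D}$.} First I would note that the estimates of Corollary~\ref{Cor:EstBV} were derived under the standing assumption that the algorithm is well-functioning, i.e.\ that all states belong to ${\mathcal D}$; so one argues on the maximal interval $[0,\tau^\ast)$ on which this holds. On this interval \eqref{Eq:EstLinfini} gives
\begin{equation*}
\| u^{\nu}(t,\cdot) - {v}^{-}_{0} \|_{L^{\infty}(0,X(t))} + \| u^{\nu}(t,\cdot) - \overline{u}_{0} \|_{L^{\infty}(X(t),L)}
\leq C \big( TV(u_{0}) + \| u_{0} - \overline{u}_{0}\|_{L^{\infty}(0,L)} \big) \leq C \varepsilon.
\end{equation*}
Choosing $\varepsilon$ small enough that $C\varepsilon < r/2$ forces every state $u^\nu(t,x)$ with $x>X(t)$ to lie in $B(\overline u_0;r/2)\subset B(\overline u_0;r)$ and every state with $x<X(t)$ to lie in $B(v_0^-;r/2)$; similarly, during Part~2 the states above/left of the strong discontinuity are controlled by $F^2$ and the bound $|v_0^- - u^\nu(0^+,X(0^+)^-)|\le C\|u_0-\overline u_0\|_\infty$ from the proof of Corollary~\ref{Cor:EstBV}. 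Hence the algorithm never has to generate a state outside ${\mathcal D}$, so it cannot stop for that reason: this means $\tau^\ast = +\infty$ provided only finitely many interaction points accumulate before any finite time, which is Step~2. One must also check here that the strong front stays of type $\overset{<}{\mathbb J}$ with the required strength and speed \eqref{Eq:sgeqlambda2/2} and \eqref{Eq:lamabda1leqlambda1/2}, which follows from the choice of $r$ (and hence of $\varepsilon$) exactly as in Subsection~\ref{Subsec:TS2D}.

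\textbf{Step 2: Finitely many fronts and interaction points.} This is the main obstacle, and I would handle it by the standard dichotomy of Bressan's algorithm \cite{B}. Observe first that no front is ever split (sent as a genuine rarefaction fan of several fronts) except at the initialization step, at a large-amplitude interaction \eqref{Eq:CondIntForte}, or when a rarefaction/compression germ emerges from the strong discontinuity; so I would bound the number of new physical fronts produced by these ``splitting'' events. Because the Glimm functional $F^1$ (and $F^2,F^3$) strictly decreases by at least $|\sigma_\alpha||\sigma_\beta|$ at each weak interaction and by at least $|\sigma_\alpha|$ at each interaction with the strong front (see \eqref{Eq:DecrFctGlimmBis}--\eqref{Eq:DecrFctGlimmTer}), the condition \eqref{Eq:CondIntForte} $|\sigma_i\sigma'_j|\ge\varrho$ can be met only finitely many times --- at most $F^1(0)/\varrho$ times --- so only finitely many large-amplitude interactions occur, each producing at most $\lceil (\operatorname{diam}{\mathcal D})/\nu\rceil$ new fronts; the same counting bounds the splittings of emerging rarefaction/compression germs, since these are in bijection with the (finitely many) interactions of weak fronts with the strong discontinuity in Part~1. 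After the last splitting event, no new physical fronts are created, and the number of artificial fronts is likewise finite (an artificial front is created only at a small-amplitude or artificial interaction, and such interactions decrease $F^1$ by a quantifiable amount, or one invokes directly the bookkeeping of \cite[Chapter~7]{B}). Finally, with a bounded total number of fronts each travelling at a speed in a compact set disjoint from that of the others within each family (using Remark~\ref{Rk:ModifSpeed} to forbid triple interactions and same-family rarefaction meetings, and the geometry of Subsection~\ref{Subsec:CEP2} to forbid $\overset{\leftharpoonup}{R}/\overset{\rightharpoonup}{C}$ side-fronts from returning to the strong discontinuity), the number of interaction points is finite and no accumulation of interaction times can occur. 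For the second and third parts of the construction one uses the pseudo-time variable and the corresponding monotonicity of $F^2,F^3$ from Lemma~\ref{Lem:DecrFctGlimm2} in exactly the same way. This completes the proof.
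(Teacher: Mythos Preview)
Your overall strategy matches the paper's, and Step~1 is essentially the paper's argument verbatim. However, Step~2 contains two real gaps in the counting.

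\textbf{Gap 1: interactions with the strong discontinuity.} You assert these are ``finitely many'' but your justification relies on \eqref{Eq:DecrFctGlimmTer}, i.e.\ $F^1(t^+)-F^1(t^-)\le -|\sigma_\alpha|$. That inequality bounds the \emph{total strength} of weak waves hitting the strong front, not their \emph{number}: $|\sigma_\alpha|$ can be arbitrarily small, so summing these decreases does not prevent infinitely many interactions. The paper's argument is more structural: only $1$-fronts can meet the strong $2$-discontinuity (faster waves outrun it, $2$-waves travel at exactly its speed), and new $1$-fronts under $X$ are created \emph{only} at $t=0$ and at large-amplitude interactions (the simplified solver never introduces a new family, and the strong-front interaction reflects only a $3$-wave). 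Since large-amplitude interactions are finite in number, so are $1$-fronts, hence so are interactions with the strong discontinuity.

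\textbf{Gap 2: artificial fronts.} You write that artificial fronts are created at ``small-amplitude or artificial'' interactions and that ``such interactions decrease $F^1$ by a quantifiable amount''. First, artificial interactions (artificial front meeting a physical front) do \emph{not} create a new artificial front; they merely continue the existing one. Second, the decrease of $F^1$ at a small-amplitude interaction is only $|\sigma_\alpha||\sigma_\beta|<\varrho$, which is not uniformly bounded below and therefore cannot bound the count. The paper instead argues: once physical fronts are known to be finitely many (from Gap~1's resolution plus the finiteness of large-amplitude interactions and strong-wave interactions), the number of interactions involving only physical fronts is finite (two fronts of different families cross at most once; same-family interactions reduce the count). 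Artificial fronts are created only at such physical-physical interactions, hence are finite; and then all interaction points are finite.

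Both gaps are fixable along exactly these lines, so your proof is recoverable, but as written the counting is circular.
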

\begin{proof}[Proof of Lemma \ref{Lem:WP}]
If one chooses $TV(u_{0})$ small enough, \eqref{Eq:EstLinfini} ensures that as long as the algorithm allows to construct $u^{\nu}$, the states in $u^{\nu}$ belong to ${\mathcal D}$. Hence we know that the algorithm does not stop because it should generate another state. Consequently, if there is no accumulation of interaction times, the algorithm is functional. \par
The proof that there is a finite number of fronts and interaction points resembles the case of the initial-value problem. New physical fronts are only generated at interaction points of weak waves with large amplitude (under the strong discontinuity) and at interaction points with the strong wave. But interaction points with large amplitude are in finite number as a consequence of \eqref{Eq:DecrFctGlimmBis}. Since new $1$-fronts under $X$ are only generated at such interaction points, we deduce that there is a finite number of $1$-fronts under $X$. Therefore there is only a finite number of interaction points with the strong wave. It follows that physical fronts are in finite number. \par
Now we deduce that interaction points involving only physical fronts are in finite number, so artificial fronts are in finite number as well and finally there is a finite number of interaction points.
\end{proof}
At this stage we know that for $TV(u_{0})$ small enough, the algorithm generates a front-tracking approximation $u^{\nu}$ for all small $\nu>0$. Let us now establish estimates on the size of the fronts that will be important to prove the consistency of the algorithm. \par
\begin{lemma} \label{Lem:SizeSmallFronts}
There exists $C>0$ such that a front $\alpha$ in $u^{\nu}$ satisfies:
\begin{itemize}
\item if $\alpha$ is a rarefaction front or a compression front, then $|\sigma_{\alpha}| \leq C \nu$,
\item if $\alpha$ is an artificial front, then $|\sigma_{\alpha}| \leq C \varrho$.
\end{itemize}
\end{lemma}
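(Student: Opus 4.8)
The plan is to prove the two bounds separately, tracking how fronts are created and how their strengths evolve across interactions, using the interaction estimates of Section~\ref{Sec:Tools}.

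\textbf{Rarefaction and compression fronts.} First I would recall the places where such fronts are created. In Part~1 (Subsection~\ref{Subsec:Part1E}), rarefaction fronts originate either from the accurate Riemann solver applied to the approximate initial data $u_0^\nu$, or at large-amplitude interaction points, or at interactions with the strong $2$-discontinuity; in all these cases the accurate solver splits any rarefaction wave into pieces of strength at most $\nu$ \emph{except} when an incoming rarefaction front is simply continued as a single front. The key point is therefore that a rarefaction (or backward compression) front is born of size $\le \nu$ and can only \emph{grow} through interactions. By Corollary~\ref{Cor:Glimm2} a rarefaction front of family $k$ keeps its nature across an interaction, and by Proposition~\ref{Pro:Glimm2} (resp.\ the interaction estimates with the strong discontinuity, Propositions~\ref{Pro:Inter32}, \ref{Pro:CW3J>}) its strength changes by a quantity of the order of $|\sigma_\alpha|$ times the total strength of the waves it interacts with. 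Hence, following a given rarefaction front forward in time, if $\sigma_k(t)$ denotes its current strength, one has $\sigma_k(t^+) \le \sigma_k(t^-)\,(1 + C|\sigma_\beta|)$ at each interaction with another front $\beta$, plus possibly a contribution when it interacts with the strong front, again of multiplicative type with factor bounded by $1+C|\sigma_\beta|$. Since the total strength of all fronts that a fixed front can meet is controlled by the Glimm functional, which is $\le C\,TV(u_0) \le C\varepsilon$ (Corollary~\ref{Cor:EstBV}), the product $\prod (1+C|\sigma_{\beta_j}|) \le \exp(C\sum|\sigma_{\beta_j}|) \le \exp(C'\varepsilon) \le 2$ for $\varepsilon$ small. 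This yields $|\sigma_\alpha| \le 2\nu$ for any rarefaction front, and the same argument (now running the pseudo-time backward for $3$-compression fans, using Subsection~\ref{Subsec:CEP2} and the side simplified solver, together with \eqref{Eq:LaxCompression} to see that compression fronts of one family never meet each other) gives the bound for compression fronts; here I use that compression fans are also introduced with accuracy $\nu$.

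\textbf{Artificial fronts.} An artificial front is created only at a small-amplitude interaction point, i.e.\ when two physical fronts of strengths $\sigma_i$, $\sigma_j'$ with $|\sigma_i \sigma_j'| < \varrho$ interact (Subsection~\ref{Subsec:Part1E}, cases \textit{Interaction with small amplitude} and \textit{Artificial interaction}). By Proposition~\ref{Pro:Glimm2} applied with the relevant permutation, the outgoing artificial front $(u_r,\tilde u_r)$ has strength
\begin{equation*}
|\sigma_{\mathrm{art}}| \le C\,|\sigma_i|\,|\sigma_j'|\,\big(1 + |\sigma_i| + |\sigma_j'|\big) + (\text{lower order}) \le C\,|\sigma_i \sigma_j'| \le C\varrho,
\end{equation*}
since the leading error term in \eqref{Eq:EstGlimm2} is exactly of product type in the incoming strengths (the linear parts being absorbed into the physical outgoing fronts). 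When instead the interaction involves an already-existing artificial front and a physical front (\textit{Artificial interaction}), the outgoing artificial front is the \emph{continuation} of the incoming one: its strength is modified by a multiplicative factor $1+C|\sigma_j|$, so by the same exponential-product argument as above it stays $\le C\varrho$ for $\varepsilon$ small. Thus every artificial front has strength $\le C\varrho$ throughout its life.

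\textbf{Main obstacle.} The delicate point is the bookkeeping needed to justify the multiplicative estimate ``each interaction multiplies the strength of a tracked front by at most $1+C(\text{strength of the other front})$,'' uniformly over all the interaction types in the construction — in particular the interactions with the strong $2$-discontinuity (Propositions~\ref{Pro:Inter32} and \ref{Pro:CW3J>}), where the relevant coefficients $\alpha^i_{j,\underline 2}$ are $\Theta(1)$ and one must check they are bounded on the compact neighborhoods $\omega_\pm$, and the side interactions of Subsection~\ref{Subsec:CEP2}, where one works in pseudo-time. Once one has fixed, for each tracked front, the (finite, by Lemma~\ref{Lem:WP}) list of interactions it undergoes and bounded $\sum |\sigma_\beta|$ over that list by the monotone Glimm functionals of Corollary~\ref{Cor:EstBV}, the conclusion follows by choosing $\varepsilon$ (hence $TV(u_0)$) small enough that $\exp(C\varepsilon)\le 2$; this is exactly the same smallness already imposed for the $BV$ estimates, so no new constraint is introduced. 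I expect the proof in the paper to follow this line, modeled on the analogous estimate in Bressan \cite{B}.
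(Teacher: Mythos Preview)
Your proposal is correct and follows essentially the same approach as the paper: trace each rarefaction/compression/artificial front back to its creation (where its strength is $\le \nu$, resp.\ $\le C\varrho$), and then show that subsequent interactions only modify the strength multiplicatively by factors $1+C|\sigma_\beta|$, whose product is bounded by $\exp(C\,TV(u_0))$. The paper makes your ``delicate bookkeeping point'' precise by introducing, for each tracked front $\alpha$, the auxiliary functional
\[
V_\alpha(t)=\sum_{\beta\ \text{approaching}\ \alpha}\big(1+\kappa\,\delta_{1,k_\beta}\delta_{x_\beta>X(t)}\big)|\sigma_\beta|,
\]
and showing that $V_\alpha+CQ$ is non-increasing (the $\kappa$-weight is exactly what absorbs the reflected $3$-wave produced when a $1$-front hits the strong discontinuity, which you flagged but did not resolve); this yields $|\sigma_\alpha(t)|\exp\big(C'(V_\alpha(t)+CQ(t))\big)$ non-increasing and hence the claim. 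Apart from this explicit Lyapunov device, your outline and the paper's proof coincide.
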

\begin{proof}[Proof of Lemma \ref{Lem:SizeSmallFronts}]
Consider a front $\alpha$ which is either a rarefaction front, a compression front or an artificial front. It is clear how to trace back such a front across the various interactions that it has undergone, to its creation locus. For that, one follows $\alpha$ back in time (for rarefactions and artificial fronts) or forward in time (for compression fronts), and at an interaction point, one follows the front with same nature (family and sign). In this way we trace $\alpha$ to its creation:
\begin{itemize}
\item[--] for a rarefaction front: at $t=0$, or at an interaction point with large amplitude where no incoming front is of the same family, or at an interaction point on $X$,
\item[--] for a compression front: on the strong discontinuity $X$,
\item[--] for an artificial front: at an interaction point with small amplitude.
\end{itemize}
There is no ambiguity in this tracing process, since in $u^{\nu}$, rarefaction fronts of the same family do not interact, nor do compression fronts (which all are of the third family) or artificial fronts. \par
At is creation, a rarefaction front or a compression front $\alpha$ has a strength $|\sigma_{\alpha}| \leq \nu$; an artificial front satisfies $|\sigma_{\alpha}| \leq C \varrho$, due to Proposition~\ref{Pro:Glimm2}. \par
Now we begin with rarefaction under $X$ and artificial fronts. Given such a front $\alpha$ that one follows over time, one can construct 
\begin{equation*}
V_{\alpha}(t) = \sum_{\substack{{\beta \text{ cutting } \Gamma_{t}^{1}} \\ {\text{ approaching } \alpha} }} \big(1+\kappa \delta_{1k_{\beta}} \delta_{x_{\beta} > X(t)}\big)  |\sigma_{\beta}|.
\end{equation*}
Reasoning as before one gets that for some $C>0$, $V_{\alpha}(t) + C Q(t)$ and $Q(t)$ are non-increasing during the lifetime of the front $\alpha$.
Moreover, forward rarefaction fronts do not interact and nor do artificial fronts. The strength of a rarefaction front does not increase when it meets a shock in the same family (either the strength decreases, or the rarefaction front is killed). It follows that their strength $|\sigma_{\alpha}|$ can only increase at interaction points involving $\alpha$ and a front of another family. At such interaction times $V$ decreases and one has
\begin{equation} \label{Eq:AugmSigmaAlpha}
|\sigma_{\alpha}(t^{+})| \leq |\sigma_{\alpha}(t^{-}) | + C' |\sigma_{\alpha}(t^{-}) | \, \big( V_{\alpha}(t^{-}) - V_{\alpha}(t^{+}) \big).
\end{equation}
It follows that $t \mapsto |\sigma_{\alpha}(t)| \exp\big(C' (V_{\alpha}(t) + C Q(t))\big)$ is non-increasing and that
\begin{equation*}
|\sigma_{\alpha}(t)| \leq |\sigma_{\alpha}(s) | \exp\big(C' (V_{\alpha}(t) + C Q(t))\big) \leq |\sigma_{\alpha}(s) | \exp(\overline{C} \, TV(u_{0})),
\end{equation*}
which gives the conclusion for rarefaction under $X$ and artificial fronts. \par
The cases of compression fronts or rarefaction fronts above $X$ are similar, replacing the time variable $t$ with the pseudo-time variable $L-x$: at pseudo-times where $|\sigma_{\alpha}|$ increases, one obtains instead of \eqref{Eq:AugmSigmaAlpha}:
\begin{equation*}
|\sigma_{\alpha}(x^{-})| \leq |\sigma_{\alpha}(x^{+}) | + C' |\sigma_{\alpha}(x^{+}) | \, \big( \tilde{V}_{\alpha}(x^{+}) - \tilde{V}_{\alpha}(x^{-}) \big),
\ \text{ with } \ 
\tilde{V}_{\alpha}(x) := \sum_{\beta \in {\mathcal A}_{\alpha}(x)} |\sigma_{\beta}|,
\end{equation*}
where ${\mathcal A}_{\alpha}(x) := \{ \beta \text{ fronts cutting } {\mathcal C}_{x}^{2} \text{ and approaching } \alpha \}$. This allows to conclude as before. This ends the proof of Lemma~\ref{Lem:SizeSmallFronts}.
\end{proof}
\subsubsection{Total strength of artificial fronts}
Here we prove the following proposition.
\begin{proposition} \label{Pro:TVAF}
There exists $C>0$ such that if $TV(u_{0})$ is small enough (depending only on $\gamma$, $\overline{u}_{0}$ and $L$) and if $\varrho>0$ is small enough (depending on $\nu$ and $u_{0}^{\nu}$), then one has for all $t \geq 0$
\begin{equation} \label{Eq:TVAF}
\sum_{\substack{{\alpha \text{ artificial front}} \\ {\text{living at time } t}}} |\sigma_{\alpha}| \leq C \nu.
\end{equation}
\end{proposition}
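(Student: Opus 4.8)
\textbf{Plan for the proof of Proposition~\ref{Pro:TVAF}.}
The key point is that artificial fronts are created only at ``small amplitude'' interaction points (where both incoming physical fronts $\alpha',\beta'$ satisfy $|\sigma_{\alpha'}\sigma_{\beta'}|<\varrho$), with a strength controlled by $|\sigma_{\alpha'}||\sigma_{\beta'}|$ up to a uniform multiplicative constant, by Proposition~\ref{Pro:Glimm2}. Moreover, once created, an artificial front never interacts with another artificial front, and its strength can only grow through interactions with physical fronts, in a way controlled (as in Lemma~\ref{Lem:SizeSmallFronts}) by the decrease of a suitable Glimm-type functional along the relevant (pseudo-)time variable. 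Hence the total strength of artificial fronts living at a given time is bounded, up to a constant factor coming from this exponential-of-the-total-variation estimate, by the sum over all small-amplitude interaction points that have ever occurred of $|\sigma_{\alpha'}||\sigma_{\beta'}|$. So the whole matter reduces to bounding
\begin{equation} \label{Eq:SumSmallInteractions}
\sum_{\substack{(\overline{t},\overline{x}) \text{ small-amplitude} \\ \text{interaction point}}} |\sigma_{\alpha'}|\, |\sigma_{\beta'}| ,
\end{equation}
where the sum runs over the whole construction (Parts~1 and~2).

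\textbf{Bounding the sum \eqref{Eq:SumSmallInteractions}.}
First I would split it according to the constraint $|\sigma_{\alpha'}\sigma_{\beta'}|<\varrho$: each term is at most $\sqrt{\varrho}\,(|\sigma_{\alpha'}|+|\sigma_{\beta'}|)\,\min(|\sigma_{\alpha'}|,|\sigma_{\beta'}|)^{1/2}$, but a cleaner route is to bound each term by $\varrho$ times the \emph{number} of small-amplitude interaction points is hopeless (that number is not controlled by $\nu$ alone); instead one argues as in Bressan~\cite{B}. The standard device: at a small-amplitude interaction the approaching-pairs part of the Glimm interaction potential $Q$ strictly decreases by at least $c\,|\sigma_{\alpha'}||\sigma_{\beta'}|$ for some $c>0$, because the pair $(\alpha',\beta')$ disappears and the newly created physical fronts together with the artificial one add only higher-order potential (here one uses that the functionals $F^{1}$, $F^{2}$, $F^{3}$ are bounded by $C\,TV(u_0)$, Corollary~\ref{Cor:EstBV}, and that the increase of the potential at an interaction is controlled by $|\sigma_{\alpha'}||\sigma_{\beta'}|$ times the current functional, hence is a small fraction of the decrease once $TV(u_0)$ is small enough). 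Summing these decreases over all small-amplitude interactions of Part~1 (along the time variable $t$, using the curves $\Gamma^1_t$) and over all small-amplitude interactions of Part~2 (along the pseudo-time $L-x$, using the curves $\mathcal{C}^2_x$ and $\Gamma^3_t$, which are the settings in which $F^2$ and $F^3$ are monotone) gives
\[
\sum |\sigma_{\alpha'}||\sigma_{\beta'}| \;\leq\; \frac{1}{c}\Big( F^{1}(0) + F^{2}(0) + F^{2}(L) + F^{3}(0)\Big) \;\leq\; C\, TV(u_0) \;\leq\; C',
\]
independently of $\nu$ and $\varrho$; this already shows the left-hand side of \eqref{Eq:SumSmallInteractions} is bounded by a constant $C_0$ depending only on $\gamma$, $\overline{u}_0$, $L$.

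\textbf{From a uniform bound to a bound of order $\nu$.}
A uniform bound on \eqref{Eq:SumSmallInteractions} is not yet \eqref{Eq:TVAF}; the gain of the factor $\nu$ comes from choosing $\varrho$ small depending on $\nu$ and $u_0^\nu$. The point is that $u_0^\nu$ is a \emph{fixed} piecewise constant function with finitely many jumps, so for each fixed $\nu$ the whole front-tracking construction produces, as $\varrho\to 0$, a family of approximations whose physical fronts converge (on compact time intervals, with finitely many interaction points) to those of the construction in which the simplified solver is never invoked --- i.e. all interactions are resolved by the accurate solver. But in that limiting construction there are \emph{no} artificial fronts at all, and the total strength of new small-amplitude pairs formed is $0$. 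More quantitatively: the number of physical fronts in $u^\nu$ is bounded by a constant $N(\nu,u_0^\nu)$ independent of $\varrho$ (by Lemma~\ref{Lem:WP}, the count is driven by the large-amplitude interactions and the interactions with the strong discontinuity, which are controlled by $TV(u_0)$ and $\nu$ alone, not by $\varrho$), hence the number of interaction points is also bounded by some $M(\nu,u_0^\nu)$. Each small-amplitude interaction contributes a term $<\varrho$ to \eqref{Eq:SumSmallInteractions}, so
\[
\sum_{\text{small-amplitude pts}} |\sigma_{\alpha'}||\sigma_{\beta'}| \;\leq\; M(\nu,u_0^\nu)\, \varrho,
\]
and choosing $\varrho \leq \nu / \big(C\, M(\nu,u_0^\nu)\big)$, with $C$ the constant from the Glimm-type growth estimate for artificial fronts, yields \eqref{Eq:TVAF}. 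I would assemble these pieces: (i) control of $|\sigma_\alpha|$ for an artificial front by its creation strength times $\exp(\overline{C}\,TV(u_0))$, as in Lemma~\ref{Lem:SizeSmallFronts}; (ii) creation strength $\leq C\,|\sigma_{\alpha'}||\sigma_{\beta'}| \leq C\varrho$; (iii) the count $M(\nu,u_0^\nu)$ of interaction points; (iv) the choice of $\varrho$.

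\textbf{Main obstacle.}
The routine parts are the Glimm-functional bookkeeping (already largely set up in Lemmas~\ref{Lem:DecrFctGlimm}--\ref{Lem:DecrFctGlimm2}) and the per-front growth estimate (Lemma~\ref{Lem:SizeSmallFronts}). The delicate point I expect to be the genuine obstacle is making rigorous the claim that, for fixed $\nu$, the number of physical fronts and interaction points stays bounded \emph{uniformly in $\varrho$} as $\varrho\to 0$: one must check that shrinking $\varrho$ only turns accurate-solver interactions into simplified-solver ones and does not itself create a cascade of new physical fronts. This is true because new physical fronts are generated (a) at $t=0$ from $u_0^\nu$, (b) at large-amplitude interactions, whose total number is finite by the strict $Q$-decrease \eqref{Eq:DecrFctGlimmBis} with a threshold $\varrho$ --- and here one must note that lowering $\varrho$ \emph{increases} the set of interactions counted as ``large'', but each still consumes a definite amount of potential, so the count is still bounded by $C\,TV(u_0)^2/\varrho$, which is finite for each $\varrho>0$ but \emph{not} uniform in $\varrho$; the resolution is that the family of large-amplitude interactions is decreasing as $\varrho$ decreases only in the sense of which ones invoke the accurate solver, and the genuinely ``physically large'' interactions --- those with both incoming fronts of non-infinitesimal strength, in particular all interactions with or reflections off the strong discontinuity --- are finite in number by the argument of Lemma~\ref{Lem:WP} using $\kappa$ and the reflection coefficients, independently of $\varrho$. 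One then argues that any interaction that is ``large'' for small $\varrho$ but would be ``small'' for larger $\varrho$ involves at least one front of strength $O(\sqrt{\varrho})$, and such fronts, being rarefaction/compression/artificial (the only fronts that can be arbitrarily weak once $\varrho$ is small), have controlled total number by Lemma~\ref{Lem:SizeSmallFronts} and its proof. Pinning down this uniform-in-$\varrho$ counting carefully is the heart of the argument; everything else follows by the estimates already in place.
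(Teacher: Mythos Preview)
Your overall structure (creation strength $\le C\varrho$, growth controlled by a Glimm-type exponential, then choose $\varrho$ small) is sound, and your uniform bound on $\sum |\sigma_{\alpha'}||\sigma_{\beta'}|$ via the decrease of $F^1$ is correct. The genuine gap is exactly where you locate it, but your proposed resolution does not work.

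The claim that the total number of interaction points is bounded by some $M(\nu,u_0^\nu)$ \emph{independent of $\varrho$} is not established by Lemma~\ref{Lem:WP}, and in fact is false at the level of argument you give. As $\varrho\to 0$, more interactions are classified as ``large amplitude'' and resolved by the accurate solver, which generates physical fronts in the \emph{third} (uninvolved) family; these new fronts produce further interactions, and the only bound Lemma~\ref{Lem:WP} yields for the count of large-amplitude interactions is $F^1(0)/\varrho$, which blows up. Your workaround also contains two errors: (i) it is not true that ``the only fronts that can be arbitrarily weak once $\varrho$ is small'' are rarefaction/compression/artificial --- shock fronts can be arbitrarily weak too (they arise e.g.\ as the ``third-family'' output of a weak interaction); (ii) Lemma~\ref{Lem:SizeSmallFronts} bounds the \emph{strength} of individual rarefaction/compression/artificial fronts, not their \emph{number}, so it cannot feed a counting argument.

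The paper avoids this obstacle by a different decomposition: it assigns to each front a \emph{generation} (Bressan's device) and introduces the truncated functionals $V_k$, $Q_k$ over fronts of generation $\ge k$. The key estimate is a geometric decay $\tilde V_k + \tilde Q_k \le C\mu^k$ with $\mu\in(0,1)$ when $TV(u_0)$ is small; this uses only the interaction estimates and is independent of $\varrho$. One then picks $\bar k$ with $\tilde V_{\bar k}\le \nu/2$, so artificial fronts of generation $\ge\bar k$ contribute at most $\nu/2$ regardless of $\varrho$. For generations $<\bar k$, the number of interaction points is bounded by some ${\mathcal C}_{\bar k}(N,\nu)$ \emph{independent of $\varrho$} (this is where the generation cutoff is essential: a front of generation $k$ requires a chain of $k$ creations, and the induction runs only up to the fixed $\bar k$), and each such artificial front has strength $\le C\varrho$; choosing $\varrho$ small against ${\mathcal C}_{\bar k}(N,\nu)$ finishes the proof. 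The missing idea in your argument is precisely this generation splitting: without it, there is no way to separate a ``finitely many, independent of $\varrho$'' part from a ``small total strength, independent of $\varrho$'' part.
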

\begin{proof}[Proof of Proposition \ref{Pro:TVAF}]
Artificial fronts live only below the strong discontinuity, hence we only consider the approximation there. For this we can follow Bressan's analysis \cite{Bressan:FT,B}; we recall the argument in order to check that it fits in our situation. \par
This analysis relies on the notion of generation of fronts. This is defined as follows: to each front in $u^{\nu}$, under the strong shock, is associated a positive integer called its generation and computed by the following rules. Each front emerging from $t=0$ has generation $1$, and when two weak fronts $\alpha$ and $\beta$, of generation $g_{\alpha}$ and $g_{\beta}$ interact:
\begin{itemize}
\item if $k_{\alpha} \neq k_{\beta}$, the outgoing fronts of family $k_{\alpha}$, respectively $k_{\beta}$, resp. $k \notin \{ k_{\alpha}, k_{\beta} \}$ is of generation $g_{\alpha}$, resp. $g_{\beta}$, resp. $\max(g_{\alpha},g_{\beta})+1$,
\item if $k_{\alpha} = k_{\beta}$, the outgoing fronts of family $k_{\alpha}$, respectively $k \neq k_{\alpha}$ is of generation $\min(g_{\alpha},g_{\beta})$, resp. $\max(g_{\alpha},g_{\beta})+1$,
\end{itemize}
and when a $1$-front $\alpha$ interacts with the strong $2$-discontinuity, the outgoing fronts of family $3$ are declared of generation $g_{\alpha}$. Recall that the artificial fronts are considered of family $4$. \par
Now we can define the functionals:
\begin{equation*} 
V_{k} (t) = \sum_{ \substack{{\alpha \text{ front cutting } \Gamma_{t}^{1}} \\ {\text{of generation } \geq k}} } (1+\kappa \delta_{1k_{\alpha}} \delta_{x_{\alpha} > X(t)}) |\sigma_{\alpha}|
\ \text{ and } \
Q_{k}(t) = \sum_{\substack{ {\alpha, \beta \text{ front cutting } \Gamma_{t}^{1}} \\ {\alpha \text{ approaching } \beta} \\ {\text{with } \max(g_{\alpha},g_{\beta}) \geq k} }} |\sigma_{\alpha}| |\sigma_{\beta}|,
\end{equation*}
with $\kappa$ as in Lemma \ref{Lem:DecrFctGlimm}.
Define
\begin{equation*}
\tilde{V}_{k}(t) := \sup_{s \in [0,t]} V_{k}(s).
\end{equation*}
The function $Q_{k}(t)$ is piecewise constant hence $BV$; consequently it can be decomposed into $Q_{k}(t)=\tilde{Q}_{k}(t) - \hat{Q}_{k}(t)$ with $\tilde{Q}_{k}$ and $\hat{Q}_{k}$ non-decreasing, and with $\tilde{Q}_{k}(0)=Q_{k}(0)$. For $k \geq 2$, one has $Q_{k}(0)=0$ and since $Q_{k}(t) \geq 0$, one has $0 \leq \hat{Q}_{k}(t) \leq \tilde{Q}_{k}(t)$. \par
Reasoning as in Lemma \ref{Lem:DecrFctGlimm} we see that the only case when $V_{k}$ can increase is when two weak fronts $\alpha$ and $\beta$ with $\max(g_{\alpha},g_{\beta}) = k-1$ interact, and in that case $V_{k}(t^{+}) \leq V_{k}(t^{-}) + C [Q_{k-1}(t^{-}) - Q_{k-1}(t^{+})]$. With $V_{k}(0)=0$ for $k \geq 2$, we deduce that for some $C>0$, one has for all $k \geq 3$,
\begin{equation*}
\tilde{V}_{k}(t) \leq C\, \hat{Q}_{k-1}(t) \leq C\, \tilde{Q}_{k-1}(t).
\end{equation*}
Now $Q_{k}$ is modified only when a front leaves the domain (in which case it decreases) and at interaction times. Consider such an interaction time $t$ involving weak fronts $\alpha$ and $\beta$. Reasoning as in Lemma \ref{Lem:DecrFctGlimm} we get:
\begin{itemize}
\item if $\max(g_{\alpha},g_{\beta}) \geq k$, then $Q_{k}(t^{+}) - Q_{k}(t^{-}) \leq 0$,
\item if $\max(g_{\alpha},g_{\beta}) = k-1$, then $Q_{k}(t^{+}) - Q_{k}(t^{-}) \leq C V(t^{-}) [Q_{k-1}(t^{-}) - Q_{k-1}(t^{+})]$,
\item if $\max(g_{\alpha},g_{\beta}) \leq k-2$, then $Q_{k}(t^{+}) - Q_{k}(t^{-})  \leq C V_{k}(t^{-})[Q(t^{-}) - Q(t^{+})]$,
\end{itemize}
When a $1$-front $\alpha$ hits the strong $2$-discontinuity, one has, due to the reflected $3$-wave:
\begin{itemize}
\item if $g_{\alpha} \geq k$, then $Q_{k}(t^{+}) - Q_{k}(t^{-}) \leq  C |\sigma_{\alpha}| V(t^{-}) \leq C  V(t^{-})  [ V_{k}(t^{-})  - V_{k}(t^{+}) ]$,
\item if $g_{\alpha} \leq k-1$, then $Q_{k}(t^{+}) - Q_{k}(t^{-}) \leq  C |\sigma_{\alpha}| V_{k}(t^{-}) \leq C  V_{k}(t^{-})  [ V(t^{-})  - V(t^{+}) ]$.
\end{itemize}
Summing these inequalities over all possible interactions making $Q_{k}$ increase, we get (for $k \geq 3$):
\begin{equation*}
\tilde{Q}_{k}(t) \leq C F^{1}(0) \big( \tilde{Q}_{k-1}(t) + \tilde{V}_{k}(t) \big) \leq C F^{1}(0) \tilde{Q}_{k-1}(t) .
\end{equation*}
Hence, if $TV(u_{0})$ is small enough, one has for some $\mu \in (0,1)$ that $V_{k}(t) + \tilde{Q}_{k}(t) \leq C \mu^{k}$.
In particular, there is some $\overline{k}$ for which
\begin{equation*}
\tilde{V}_{\overline{k}}(t) \leq \frac{\nu}{2}. 
\end{equation*}
Now if initially $u_{0}^{\nu}$ generates $N$ fronts, then there are at most $N^{2}$ interactions of fronts of first generation, and hence at most $C(1 + \frac{1}{\nu}) N^{2}$ fronts of second generation, and by induction there are at most ${\mathcal C}_{k}(N,\nu)$ interaction points involving fronts of the $k$-th generation at most (the point being that this function ${\mathcal C}_{k}$ does not depend on $\varrho$).
The total strength of artificial fronts of generation $\geq \overline{k}$ is measured by $\tilde{V}_{\overline{k}}$, the one of artificial fronts of generation $< \overline{k}$ is measured by $C \varrho \, {\mathcal C}_{\overline{k}-1} (N,\nu)$ for some positive constant $C$. Hence for $\varrho$ small enough, the latter is less than $\frac{\nu}{2}$, which establishes \eqref{Eq:TVAF}.  
\end{proof}

\subsubsection{Passage to the limit}
{\bf Extraction of a converging subsequence.} Adding the strength of the strong discontinuity to \eqref{Eq:EstBV} and using the definition of $r$, we deduce that $(u^\nu)_{\nu >0}$ is uniformly bounded in the space $L^{\infty}(\R^{+};BV(0,L))$. 
Now, using that all the fronts in our approximation (including the artificial ones) have bounded finite speed, we classically deduce that the family is
uniformly Lipschitz in time with values in $L^1(0,L)$:
\begin{equation}
\label{Eq:EstLipL1}
\| u^\nu (t) - u^\nu(s) \|_{L^1(0,L)} \leq C |t-s| \max_{\tau \in [s,t]}TV(u^\nu(\tau,\cdot)).
\end{equation}
It follows then from Helly's compactness theorem that one can extract from $(u^{\nu})$ a converging subsequence $(u^{\nu_{n}})$ in $L^{1}$ locally in time and, reextracting if necessary, almost everywhere:
\begin{equation}
\label{Convergence}
u^{\nu_{n}} \longrightarrow \overline{u} \text{ a.e. and in } L^1((0,T) \times (0,L)), \ \ \forall T>0,
\end{equation}
and the limit $\overline{u}$ belongs to $L^{\infty}(\R^{+};BV(0,L))$ and to $\Lip(\R^{+};L^{1}(0,L))$. \par
\ \par 
\noindent
{\bf The limit point $\overline{u}$ is an entropy solution.}
We now prove that the limit point $\overline{u}$ is a weak solution of the system and satisfies the entropy inequalities. For that, we first get back to conservative variables. We denote $U^{\nu}$ and $\overline{U}$ the functions $u^{\nu}$ and $\overline{u}$ translated in conservative variables. 
Using the $L^{\infty}$ bound on $U^{\nu_{n}}$ and Lebesgue's dominated convergence theorem, we get
\begin{equation}
\label{Convergence2}
U^{\nu_{n}} \longrightarrow \overline{U} \ \text{ a.e. and in } L^1((0,T) \times (0,L)), \ \ \forall T>0.
\end{equation}
Note that, since $BV$ is an algebra, $(U^{\nu_{n}})$ is uniformly bounded in $L^{\infty}(\R^{+};BV(0,L))$, so $\overline{U}$ belongs to this space as well.  Using the $L^{\infty}(\R^{+} \times (0,L))$ and $\Lip(\R^{+};L^{1}(0,L))$ bounds on $u^{\nu}$, we deduce that $(U^{\nu_{n}})$ is uniformly bounded in $\Lip(\R^{+};L^{1}(0,L))$, so $\overline{U}$ also belongs to $\Lip(\R^{+};L^{1}(0,L))$. \par
Now we consider $(\eta,q)$ an entropy/entropy flux pair, with $\eta$ convex. We include $\eta(U)=\pm U$ and $q(U)=\pm f(U)$ in the discussion; this will give us that $\overline{U}$ is a distributional solution. \par
In order to prove the entropy inequality associated to $(\eta,q)$, it is enough to prove that for all $\varphi \in {\mathcal D}((0,T) \times (0,L))$ with $\varphi \geq 0$, one has
\begin{equation} \label{Eq:liminfJn}
\liminf_{n \rightarrow +\infty} {\mathcal J}_n := 
\int_{(0,T) \times (0,L)} \big[\varphi_t \eta(U^{\nu_{n}}) + \varphi_x q(U^{\nu_{n}})\big] \, dt \, dx  \geq 0.
\end{equation}
We describe the discontinuities at time $t$ by the family of fronts $\{\alpha, \ \alpha \in {\mathcal A} \}$; each front $\alpha$ has position $x_{\alpha}(t)$ at time $t$, and we denote $[h]_{\alpha}(t)$ the jump of the quantity $h$ through the jump $\alpha(t)$. Classically we have, integrating by parts:
\begin{eqnarray}
\nonumber
{\mathcal J}_n &=& \int_{0}^T \sum_{\alpha} \varphi (t,x_\alpha(t)) 
\Big\{ \dot{x}_\alpha(t) \cdot [ \eta(U^{\nu_{n}})]_{\alpha} (t) 
- [ q(U^{\nu_{n}}) ]_{\alpha} (t)  \Big\} \, dt \\
\label{Eq:EntrTG}
&\geq& \sum_{\substack{\alpha \\ \text{weak front}}}\int_{0}^T \varphi (t,x_\alpha(t)) 
\Big\{ \dot{x}_\alpha(t) \cdot [\eta(U^{\nu_{n}})]_{\alpha} (t) 
- [q(U^{\nu_{n}})]_{\alpha} (t)  \Big\} \, dt =: \sum_{\substack{\alpha \\ \text{weak front}}} J_{\alpha}.
\end{eqnarray}
Here we used the fact that the strong $2$-discontinuity (which travels at exact speed) satisfies the entropy inequality (actually, even as an equality here):
\begin{equation} \label{Eq:ECF}
 s \, [ \eta(U^{\nu_{n}})]_{\alpha}(t)  - [ q(U^{\nu_{n}})]_{\alpha}(t) \geq 0.
\end{equation}
This fact is general for any $2$-contact discontinuity traveling at shock speed. A way to prove it is as follows. Denoting $U_{+}:={\mathcal S}_{k}(\sigma;U_{-})$ (with here $k=2$), we differentiate 
\begin{equation*}
s [\eta] - [q] = s(U_{-},U_{+}) \, \big(\eta(U_{+}) - \eta(U_{-})\big) - \big(q(U_{+}) - q(U_{-})\big)
\end{equation*}
with respect to $\sigma$ and use the Rankine-Hugoniot relation to obtain
\begin{equation} \label{Eq:EvolEntropieSurLaCourbeChoc}
\frac{\partial }{\partial \sigma} \big( s [\eta] - [q] \big) 
= \frac{\partial s}{\partial \sigma} \, \big( \eta(U_{+}) - \eta(U_{-}) - D \eta(U_{+}) \cdot(U_{+} - U_{-})\big).
\end{equation}
Here we have $\frac{\partial s}{\partial \sigma}=0$, which establishes \eqref{Eq:ECF} as an equality. \par
Now, let us consider the term $J_\alpha$ in \eqref{Eq:EntrTG} and discuss according to the nature of the weak front $\alpha$:
\begin{itemize}
\item if $\alpha$ is a shock, it would satisfy the entropy inequality if it was traveling at the exact shock speed; but since it moves at shock speed up to a small change of $\nu_{n}$, we have in general
$ J_{\alpha} \geq - {\mathcal O}(1) \nu_{n} |\sigma_\alpha|.$
\item if $\alpha$ is a rarefaction front or a compression front, one sees easily by differentiation that, $s$ being the shock speed \eqref{Eq:ShockSpeedEuler}, one has
\begin{equation*}
[q(U^{\nu_{n}})]_{\alpha}(t) - s [\eta(U^{\nu_{n}})]_{\alpha}(t)
= {\mathcal O}(|\sigma_\alpha|^2),
\end{equation*}
which yields
$$ J_\alpha \geq - {\mathcal O}(1) \nu_{n} |\sigma_\alpha|.$$
(This could be improved in the case of compression fronts, but this has no importance.)
\end{itemize}
Using the uniform bound on the total strength of physical waves and of artificial waves, this yields ${\mathcal J}_{n} \geq -C \nu_{n}$,
which establishes \eqref{Eq:liminfJn}. \par
\ \par
\noindent
{\bf The solution $\overline{u}$ is constant at some time $T>0$.}
In our construction, after the exit time $T_{1}$ (which satisfies \eqref{Eq:EstT1}), there are only $1$-rarefaction fronts in the approximation $u^{\nu}$. Indeed, the compression fronts emerge from the strong $2$-discontinuity only and travel backward in time. Moreover, due to \eqref{Eq:lamabda1leqlambda1/2} the rarefaction fronts travel at speed $s$ less than
\begin{equation*}
s \leq - \frac{|\lambda_{1}(\overline{u}_{0})|}{2} + \nu.
\end{equation*}
Consider only $\nu \leq \frac{|\lambda_{1}(\overline{u}_{0})|}{4}$.
Hence after the time $T_{2}$ defined by
\begin{equation} \label{Eq:T2Eu}
T_{2} := \frac{2L}{\lambda_{2}(\overline{u}_{0})} + \frac{2L}{|\lambda_{1}(\overline{u}_{0})| - 2\nu},
\end{equation}
there is no front inside the domain. Hence all the approximations $u^{\nu}$ are constant in space after some uniform time $\overline{T}_{2}$; consequently so is $\overline{u}$. \par
\subsubsection{Remaining cases}
\label{SSSec:RC}
We have yet to explain how we treat the cases which are not covered by \eqref{Eq:Lambda2>0}. The case where \eqref{Eq:Lambda2>0} holds will be referred to as Case {\bf 1}. The other cases are as follows. \par
\ \par
\noindent
{\bf Case 2. $\lambda_{1}(\overline{u}_{0}) > 0$ and $\lambda_{2}(\overline{u}_{0}) > 0$.} This (supersonic) case is in fact by far the simplest. Indeed, in this case, introduce $r>0$ such that one has $\lambda_{1}(u) \geq \lambda_{1}(\overline{u}_{0})/2$ on $B(\overline{u}_{0},r)$. Given $u_{0}$ one can define on $\R$ the following initial data:
\begin{equation} \label{Eq:TVCase2}
\tilde{u}_{0} = u_{0} \text{ on } (0,L) \ \text{ and } \ \tilde{u}_{0}= \overline{u}_{0} \text{ on } \R \setminus (0,L).
\end{equation}
Then if $u_{0}$ satisfies \eqref{Eq:SmallIC} with $\varepsilon>0$ small enough, one can associate to this initial condition the unique entropy solution $u$ in $\R^{+} \times \R$, as in \cite{B}; moreover for $\varepsilon$ small enough, $u$ has values in $B(\overline{u}_{0},r)$. For instance, one can obtain $u$ as a limit of front-tracking approximations. The restriction of this solution to $(0,T_{1}) \times (0,L)$ is convenient, where
\begin{equation*}
T_{1} := \frac{2L}{\lambda_{1}(\overline{u}_{0})}.
\end{equation*}
Indeed, all the fronts have a velocity larger than $\lambda_{1}(\overline{u}_{0})/2$, hence leave the domain before $T_{1}$, so $u^{\nu}(t, \cdot)$ is constant for all $\nu$ for times $t \geq T_{1}$. \par
\ \par
\noindent
{\bf Case 3. $\lambda_{2}(\overline{u}_{0}) < 0$ and $\lambda_{3}(\overline{u}_{0}) > 0$ \& Case 4. $\lambda_{2}(\overline{u}_{0}) < 0$ and $\lambda_{3}(\overline{u}_{0}) < 0$. }
This cases are obtained from the Cases {\bf 1} or {\bf 2} above by using the change of variable $x \longleftrightarrow -x$. \par
\ \par
\noindent
{\bf Case 5. $\lambda_{2}(\overline{u}_{0})=0$.} In that case of course, $\lambda_{1}(\overline{u}_{0}) < 0$ and $\lambda_{3}(\overline{u}_{0}) > 0$. We let a large (and after all, not so large) $3$-shock enter through the left side. In other words, we introduce $w_{0}= \Upsilon_{3}(\overline{\sigma}_{3},\overline{u}_{0})$, with $\overline{\sigma}_{3}$ negative and small. We make sure that is speed satisfies $s \geq 3 \lambda_{3}(\overline{u}_{0})/4$. Then we introduce the solution $u$ associated to the initial data
\begin{equation*}
\tilde{u}_{0} = w_{0} \text{ on } \R^{-}, \ \tilde{u}_{0} = u_{0} \text{ on } (0,L) \ \text{ and } \ \tilde{u}_{0}= \overline{u}_{0} \text{ on }  (L,+\infty).
\end{equation*}
This solution can be constructed by the front-tracking method described above; in particular one can follow the $3$-strong shock inside the domain by a curve $X_{3}(t)$ and get, provided that $TV(u_{0}) + \| u_{0} - \overline{u}_{0} \|_{L^{\infty}}$ is small enough, that the approximations satisfy
\begin{equation} \label{Eq:TVX3}
TV(u^{\nu}(t,\cdot); (0,X_{3}(t))) +TV(u^{\nu}(t,\cdot); (X_{3}(t),L)) \leq C \, TV(u_{0}).
\end{equation}
When the $3$-shock issued from $x=0$ has left the domain $(0,L)$ (for instance at a time $T = 2L/\lambda_{3}(\overline{u}_{0})$), we are left with a state $u(T,\cdot)$ in $(0,L)$ which satisfies:
\begin{equation} \label{Eq:EtatIntermediaire}
\| u(T,\cdot) - w_{0} \|_{L^{\infty}(0,L)} + TV(u(T,\cdot)) \leq K \, TV (u_{0}).
\end{equation}
This is proven as Corollary~\ref{Cor:EstBV}. Now, if $|\overline{\sigma}_{3}|$ was small enough, one has
\begin{equation} \label{Eq:Condw01}
\lambda_{1}(w_{0})<0 \ \text{ and  } \lambda_{3}(w_{0}) >0,
\end{equation}
and moreover, due to
\begin{equation*}
r_{3} \cdot \nabla \lambda_{2} = \frac{2}{\gamma+1} >0,
\end{equation*}
one has
\begin{equation} \label{Eq:Condw02}
\lambda_{2}(w_{0}) >0,
\end{equation}
so we are now in position to apply Case {\bf 1}. \par
\subsubsection{Smallness of the solution}
\label{SSS:Smallness}
The last part of the proof consists in proving \eqref{Eq:GoalE2}, provided that $\varepsilon>0$ is small enough and that the large $2$-discontinuity (and possibly the preliminary $3$-shock of Case {\bf 5}) is (are) well-chosen. This depends a bit on the cases described in Paragraph~\ref{SSSec:RC}. \par
\ \par
\noindent
{\bf Cases 2. \& 4.} In those cases, the solution that we construct is obtained by the restriction to $(0,L)$ of a solution defined on $\R$ and whose initial data has a total variation less than $TV(u_{0}) + 2 \| u_{0} - \overline{u}_{0} \|_{\infty}$ (see \eqref{Eq:TVCase2}). Due to Glimm's estimates, the solution satisfies that
\begin{equation*}
\sup_{t} \| u(t,\cdot) - \overline{u}_{0} \|_{L^{\infty}(\R)} + TV(u(t,\cdot)) \leq K \, \big(TV (u_{0}) + \| u_{0} - \overline{u}_{0} \|_{\infty}\big) .
\end{equation*}
Hence \eqref{Eq:GoalE2} follows, and here $\eta$ is a linear function of $\varepsilon$. \par
\ \par
\noindent
{\bf Cases 1. \& 3.} 
We only consider Case {\bf 1} by symmetry. What we have established in this case is that if
\begin{equation*}
\overline{u}_{0} = T_{2} (\overline{\sigma}_{2},v^{-}_{0}), \ \ \overline{\sigma}_{2}<0, 
\end{equation*}
there exists $\overline{\varepsilon}_{2}=\overline{\varepsilon}_{2}(\overline{\sigma}_{2})>0$ such that for any $\varepsilon \in (0,\overline{\varepsilon}_{2}]$ if $u_{0}$ satisfies
\begin{equation} \label{Eq:DI}
\| u_{0} - \overline{u}_{0} \|_{L^{\infty}(0,L)} + TV(u_{0}) \leq \varepsilon,
\end{equation}
then the construction given in Section~\ref{Sec:ConstrEuler} with this strong $2$-discontinuity is valid and due to Corollary \ref{Cor:EstBV} and the definition of $r$ there is $K=K(\overline{\sigma}_{2})$ such that, including the strong discontinuity one has:
\begin{equation} \label{Eq:TheBVEst}
\sup_{t} TV(u(t,\cdot)) \leq C |\overline{\sigma}_{2}| + K(\overline{\sigma}_{2}) \varepsilon.
\end{equation}
We choose $\overline{\sigma}_{2}$ such that $|\overline{\sigma}_{2}| \leq \eta/2C$. Then we choose $\varepsilon_{2} \in (0,\overline{\varepsilon}_{2}]$ such that $K(\overline{\sigma}_{2}) \varepsilon_{2} \leq \eta/2$ and we are done. \par
\ \par
\noindent
{\bf Case 5.} In this case, there is a preliminary phase before getting into Case {\bf 1}.
In the same way as before, given $\overline{\sigma}_{3}<0$ small enough such that  $w_{0}= \Upsilon_{3}(\overline{\sigma}_{3},\overline{u}_{0})$ satisfies \eqref{Eq:Condw01}-\eqref{Eq:Condw02}, if $\varepsilon$ is small enough and if $u_{0}$ satisfies \eqref{Eq:DI}, then the solution that we construct satisfies
\begin{gather*} 
\sup_{t} TV(u(t,\cdot)) \leq C|\overline{\sigma}_{3}| + K' \varepsilon, \\
\| u(T,\cdot) - w_{0} \|_{L^{\infty}(0,L)} + TV(u(T,\cdot)) \leq K \, TV (u_{0}), \ \ T = 2L/\lambda_{3}(u_{0}).
\end{gather*}
Here $K'$ can be chosen independent of $\overline{\sigma}_{3}$, by using Glimm estimates. Above, we used cancellation/correction waves for which the constant worsens as the strong shock becomes small; here this is not the case. \par
Now, we first choose $\overline{\sigma}_{3}<0$ and $w_{0}$ such that $C|\overline{\sigma}_{3}| \leq \eta/2$. Then reasoning as before, one can find a size of $\overline{\sigma}_{2}$ and an $\varepsilon_{2}$ corresponding to the second phase, with $w_{0}$ as a reference state, in order for \eqref{Eq:GoalE2} to be valid during this second phase. Then we find a size of $\varepsilon_{3}>0$ corresponding to the first phase, in order that $K' \varepsilon_{3} \leq \eta/2$ and that the state at the beginning of the second phase is small enough to satisfy \eqref{Eq:SmallIC} with $w_{0}$ as a reference constant state and right hand side $\varepsilon_{2}$.
%
%
%
%
%%%%%%%%%%%%%%%%%%%%%%%%%%%%%%%%%%%%%%%%%%%%%%%%%%%%%%%%%%%%%%%%%%%
%
%
%
%
\subsection{Lagrangian case: proof of Theorem~\ref{ThmL}}
\label{Subsec:ConvLag}
In this subsection, we prove Theorem~\ref{ThmL} by adapting the arguments of Subsection~\ref{Subsec:ConvEul} in the situation given by the construction of Section~\ref{Sec:ConstrLagrange}. \par

\subsubsection{$BV$ estimates}
The first point is to prove uniform $BV$ estimates on the approximation $u^{\nu}$ (again, as long as it is well-defined and all the states belong to ${\mathcal D}$). \par
For that we introduce six families of curves drawn in $\R^{+} \times [0,L]$, defined for fixed $\nu$ and for fixed $t$ or $x$. We recall that the strong $1$-shock (resp. $3$-shock) is represented by $X_{1}$ (resp. $X_{3}$), has a negative (resp. positive) speed, that it enters the domain at time $0$ (resp. $T_{2}$) and leaves it at time $T_{1}$ (resp. $T_{3}$). The curves are the following.
\begin{itemize}
\item Given $t \in [0,T_{1}]$, we define the curve $\Gamma_{t}^{1}$ as the horizontal line segment $\{t \} \times [0, X_{1}(t)]$, glued with the part of curve $X_{1}$ from $(t,X_{1}(t))$ to $(0,L)$ (or a curve very close on the left of $X_{1}$). We do not consider $\Gamma_{t}^{1}$ for $t$ larger than $T_{1}$. 
\item The curve ${\mathcal C}^{2}_{x}$, defined for $x \in [0,L]$, is obtained by gluing the vertical line segment $[X_{1}^{-1}(x), T_{2}] \times \{x\}$ and the portion of $X_{1}$ between $(X_{1}^{-1}(x),x)$ and $(0,L)$ (with in mind that this portion is ``above'' $X_{1}$).
\item The curve $\Gamma_{t}^{3}$, defined for $t \in [0,T_{2}]$, is obtained by gluing the part of the curve $X_{1}$ for times in $[t,T_{1}]$ (or a curve very close on its right), the horizontal curve $\{t\} \times [X_{1}(t),L]$ and the vertical line segment $[t,T_{2}] \times \{L\}$. After time $T_{1}$, only the horizontal and vertical parts remain.
\item Given $t \in [T_{2},T_{3}]$, we define the curve $\Gamma_{t}^{4}$ as  the part of curve $X_{3}$ from $(T_{2},0)$ to $(t,X_{3}(t))$ (or a curve very close on the right of $X_{3}$) glued with the horizontal part $\{t \} \times [X_{3}(t),L]$. We do not consider $\Gamma_{t}^{4}$ for $t$ larger than $T_{3}$.
\item The curve ${\mathcal C}^{5}_{x}$, defined for $x \in [0,L]$, is obtained by gluing the portion of $X_{3}$ between $(T_{2},0)$ and $(X_{3}^{-1}(x),x)$ (with in mind that this portion is ``above'' $X_{3}$) and the vertical line segment $[X_{3}^{-1}(x), +\infty) \times \{x\}$.
\item Given $t \geq T_{2}$, the curve $\Gamma_{t}^{6}$ is obtained by gluing the vertical line segment $[t,+\infty) \times \{0\}$, the horizontal line segment $\{t \} \times [0,X_{3}(t)]$ and the part of the curve $X_{3}$ from $(t,X_{3}(t))$ to  $(T_{3},L)$ (or a curve very close on the left of $X_{3}$). For times larger than $T_{3}$, it remains only the vertical line segment $[t,+\infty) \times \{0\}$ and the horizontal line segment $\{t \} \times [0,L]$.
\end{itemize}
We represented these six families of curves in Figure~\ref{fig:GCL}. \par
\begin{figure}[htb]
\centering
\subfigure[The curves $\Gamma_{t}^{i}$, $i=1,3,4,6$]
{\label{fig:Lagcurves1} \input{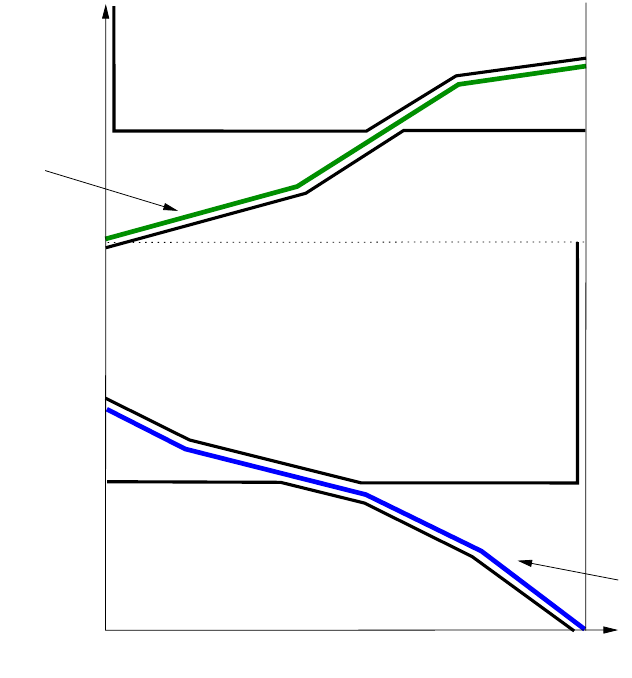_t}}
\hspace{1cm}
\subfigure[The curves ${\mathcal C}_{x}^{i}$, $i=2,5$]
{\label{fig:Lagcurves2} \input{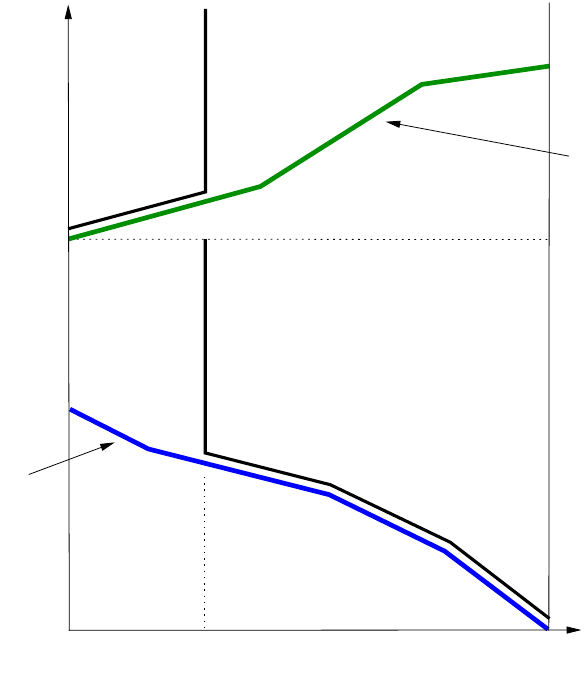_t}}
\caption{Curves for the $BV$ estimate in the Lagrangian case}
\label{fig:GCL}
\end{figure}
Let us specify how we follow these curves. For $i=1,4$, we just follow the curves for increasing $x$. For $i=2$, we first follow the vertical line segment from top to bottom, and then the part on $X_{1}$ from left to right. For $i=3$, we first follow the part of $X_{1}$ from left to right, then the horizontal line segment from left to right and finally the vertical line segment from bottom to top.  For $i=5$, we first follow the part of $X_{1}$ from left to right and then the vertical line segment from bottom to top. Finally for $i=6$, we first follow the vertical line segment from top to bottom, then the horizontal part from left to right and finally the part on $X_{1}$ from left to right. In all cases we will say that we follow the curve ``from left to right''. Given two points on one of these curves, this gives a meaning to ``one is on the left of the other''. \par
\ \par
Now in order to get a uniform estimate on the total variation in space of $u^{\nu}$ (provided that \eqref{Eq:SmallICL} holds with $\varepsilon>0$ small enough), we proceed as in Paragraph~\ref{SSS:BVEstE}. For that we introduce the functionals for $i=1,3,4,6$:
\begin{equation} \label{Eq:GlimmFs2}
V^{i} (t) = \sum_{\alpha \text{ front cutting } \Gamma_{t}^{i}} |\sigma_{\alpha}|
\ \text{ and } \
Q^{i} (t) = \sum_{\substack{ {\alpha, \beta \text{ front cutting } \Gamma_{t}^{i}} \\ {\alpha \text{ approaching } \beta} }} |\sigma_{\alpha}|, |\sigma_{\beta}|,
\end{equation}
as well as the following ones, for $i=2,5$:
\begin{equation} \label{Eq:GlimmFs3}
V^{i} (x) = \sum_{\alpha \text{ front cutting } {\mathcal C}_{x}^{i}} |\sigma_{\alpha}|
\ \text{ and } \
Q^{i} (x) = \sum_{\substack{ {\alpha, \beta \text{ front cutting } {\mathcal C}_{x}^{i}} \\ {\alpha \text{ approaching } \beta} }} |\sigma_{\alpha}|. |\sigma_{\beta}|.
\end{equation}
Let us give some precisions for these definitions.
\begin{itemize}
\item Only fronts crossing the large $1$-shock on its left (resp. right) cross $\Gamma_{t}^{1}$ (resp. ${\mathcal C}_{x}^{2}$, $\Gamma_{t}^{3}$) on its part coinciding with $X_{1}$. 
\item In the same way, only fronts crossing the large $3$-shock on its right (resp. left) cross $\Gamma_{t}^{4}$ (resp. ${\mathcal C}_{x}^{5}$, $\Gamma_{t}^{6}$) on its part coinciding with $X_{3}$. 
\item Our convention is that $2$-fronts do not cut the vertical part of ${\mathcal C}^{2}_{x}$ and of course that the strong shocks do not cut the curves.
\item If $\alpha$ (of family $i$) and $\beta$ (of family $j$) cut $\Gamma_{t}^{k}$ ($k=1,3,4,6$), $\alpha$ to the left of $\beta$, they are said to be approaching when $i>j$ or $i=j$ and at least one of $\alpha$ or $\beta$ is a shock (artificial fronts being of family $0$.) 
\item If $\alpha$ (of family $i$) and $\beta$ (of family $j$) cut ${\mathcal C}_{x}^{k}$ ($k=2,5$), $\alpha$ to the left of $\beta$, they are said to be approaching when:
\begin{itemize}
\item $k=2$: $i=1$ and $j \in \{2,3\}$ or $i=3$ and $j=2$,
\item $k=5$: $i=1$ and $j=3$.
\end{itemize}
\end{itemize}
Note that with respect to \eqref{Eq:GlimmFs2}, we do not put a weight in the functionals $V^{i}$. This is due to the fact that, as in this construction the strong waves are from extremal families, there is no reflected wave when considering the interaction ``from below'' between a weak front and one of the two large discontinuities. This allows to simplify a bit the analysis. \par
\ \par
Now we can get as previously the following result.
\begin{lemma}\label{Lem:FiDecr}
For $C>0$ and $K>0$ large enough, the following holds provided that $TV(u_{0})$ is small enough. \par
\noindent
{\bf 1.} For $k=1,3,4,6$, the functional 
\begin{equation*}
F^{i} (t) := V^{i}(t) +  K Q^{i}(t) \ \text{ is non-increasing over time.}
\end{equation*}
and
\begin{gather*}
F^{2} (x) := V^{2}(x) +  K Q^{2}(x) \ \text{ is non-increasing,} \\
F^{5} (x) := V^{5}(x) +  K Q^{5}(x) \ \text{ is non-decreasing}.
\end{gather*}
{\bf 2.} One has the relations:
\begin{gather}
\label{Eq:ComparF1}
F^{2}(0) \leq C F^{1}(T_{1}) \ \text{ and } \ F^{5}(L) \leq C F^{4}(T_{3}), \\
\label{Eq:ComparF2}
F^{3}(0) \leq F^{2}(0) + F^{2}(L), \ \ F^{4}(T_{2}) = F^{3}(T_{2}) \ 
\text{ and } \ F^{6}(T_{2}) \leq F^{5}(0) + F^{5}(L).
\end{gather}
\end{lemma}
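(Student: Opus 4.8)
\textbf{Proof proposal for Lemma~\ref{Lem:FiDecr}.}

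The plan is to follow the Glimm-type argument carried out in the proof of Lemma~\ref{Lem:DecrFctGlimm} and Lemma~\ref{Lem:DecrFctGlimm2}, adapted to the six curves introduced above. For the monotonicity statements in Part~\textbf{1}, I would check separately what happens to $F^i$ across the two types of events that can modify it: an interaction of two weak fronts (handled by Proposition~\ref{Pro:Glimm} for the accurate solver and Proposition~\ref{Pro:Glimm2} for the simplified solver), and an interaction of a weak front with one of the two strong shocks $X_1$, $X_3$. The key simplification here, already emphasized in the text preceding the lemma, is that the strong shocks belong to the extremal families ($1$ and $3$), so that a weak front hitting a strong shock \emph{from below} produces no reflected wave crossing $\Gamma_t^i$ a second time; this is why no weight $\kappa$ is needed in the functionals \eqref{Eq:GlimmFs2}--\eqref{Eq:GlimmFs3}. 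Concretely: for a weak-weak interaction at time $t$ one has, exactly as in Lemma~\ref{Lem:DecrFctGlimm}, $V^i(t^+)\le V^i(t^-)+C_1|\sigma_\alpha||\sigma_\beta|$ and $Q^i(t^+)\le Q^i(t^-)-|\sigma_\alpha||\sigma_\beta|+C_2|\sigma_\alpha||\sigma_\beta|F^i(t^-)$, so $F^i=V^i+KQ^i$ decreases once $K$ is large and $F^i(t^-)$ is below a threshold; and an interaction of a weak front with $X_1$ (Group~I uses the accurate solver, Group~II uses the correction wave of Proposition~\ref{Pro:CorrWLag}) produces only outgoing waves $\overset{>}{J}$, $\overset{\leftharpoonup}{R}$ on the right of $X_1$, whose total strength is controlled by $C|\sigma_\alpha|$ via Proposition~\ref{Pro:Inter3S2} and Proposition~\ref{Pro:CorrWLag}; since the weak front ceases to cross the part of $\Gamma_t^1$ on which it lived, $V^1$ cannot increase at such an event, and $Q^1(t^+)\le Q^1(t^-)+C|\sigma_\alpha|F^1(t^-)$. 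The same works for $\Gamma_t^4$ with $X_3$ (using the $x\leftrightarrow-x$ symmetry of those propositions). For the curves ${\mathcal C}_x^2$, ${\mathcal C}_x^5$, $\Gamma_t^3$, $\Gamma_t^6$ the relevant ``time'' is the spatial variable $x$ (respectively the pseudo-time $L-x$ or $\vartheta$), and since above/on the right of the strong shocks only the three wave types $\overset{\leftharpoonup}{C}$, $\overset{>}{J}$, $\overset{\rightharpoonup}{R}$ occur and the strong fronts do not cross these curves, the interaction analysis is the one of the side simplified solver and of Corollaries~\ref{Cor:CancellationWave1}--\ref{Cor:CancellationWave2}: each such interaction decreases $V+KQ$ (resp.\ increases it along ${\mathcal C}^5$, for which one follows the pseudo-time in the reverse orientation, as with ${\mathcal C}^2_x$ in Lemma~\ref{Lem:DecrFctGlimm2}). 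One should also note that fronts leaving the domain $(0,L)$ only decrease the functionals, and that interactions with a strong shock that are ``trivial'' for a given curve (e.g.\ a front merely emerging from $X_1$ into a region not met by ${\mathcal C}^2_x$) leave the corresponding functional unchanged — exactly the bookkeeping remarks made after Lemma~\ref{Lem:DecrFctGlimm2}. Finally the smallness threshold is met at the ``starting end'' of each curve: $F^1(0)\le C_5(TV(u_0)+KTV(u_0)^2)$ by \eqref{Eq:ApproxCI}, and likewise $F^4(T_2)$, $F^2(L)$, $F^5(0)$, $F^3$, $F^6$ are controlled through the comparison inequalities of Part~\textbf{2} (see below), so a single smallness assumption on $TV(u_0)$ makes all six functionals stay below threshold for all times, which justifies the monotonicity a posteriori.

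For Part~\textbf{2}, the comparison $F^2(0)\le CF^1(T_1)$ is obtained exactly as \eqref{Eq:Compare2F} in Lemma~\ref{Lem:DecrFctGlimm2}: $F^1(T_1)$ and $F^2(0)$ measure the total strengths of the waves hitting $X_1$ from the right and the waves emerging from $X_1$ on its left, and each incoming weak front of family $i$ produces, on the left side, outgoing fronts of families $1$ (a compression wave, possibly zero), $2$, $3$ whose strengths are proportional to $|\sigma_i|$ — this is the content of Proposition~\ref{Pro:Inter3S2} together with Proposition~\ref{Pro:CorrWLag} when a correction wave is inserted (Group~II). The mirror statement $F^5(L)\le CF^4(T_3)$ follows by the $x\leftrightarrow -x$, $(\tau,v,P)\leftrightarrow(\tau,-v,P)$ symmetry applied to the strong $3$-shock, using Proposition~\ref{Pro:CW3S} for the cancellation wave inserted at each $\overset{>}{J}$/$X_3$ interaction. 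The three relations in \eqref{Eq:ComparF2} are essentially geometric/combinatorial: $\Gamma_0^3$ is contained in ${\mathcal C}^2_0\cup{\mathcal C}^2_L$ (so $F^3(0)\le F^2(0)+F^2(L)$ by summing the jumps and the products of approaching pairs — noting that a pair approaching across $\Gamma_0^3$ is either split between the two ${\mathcal C}^2$ pieces, in which case their product is dominated by $V^2(0)V^2(L)\le$ a constant times the sum, or counted inside one of them); $F^4(T_2)=F^3(T_2)$ because at time $T_2$ both curves reduce to the vertical segment $[T_2,+\infty)\times\{0\}$ glued with $\{T_2\}\times[0,L]$ and no front has yet emerged from $X_3$, so they cross exactly the same fronts; and $F^6(T_2)\le F^5(0)+F^5(L)$ because $\Gamma_{T_2}^6$ is likewise contained in ${\mathcal C}^5_0\cup{\mathcal C}^5_L$, the same covering argument applying.

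The step I expect to be the main obstacle is the verification, for the curves parameterized by $x$ (or by the pseudo-times $L-x$ and $\vartheta$), that the notion of ``approaching pair'' I have fixed is the right one so that $Q$ genuinely decreases (resp.\ increases for ${\mathcal C}^5$) across \emph{every} kind of interaction that can occur above/on the right of a strong shock — in particular the three side interactions $\overset{\rightharpoonup}{R}$/$\overset{\leftharpoonup}{C}$, $\overset{>}{J}$/$\overset{\rightharpoonup}{R}$, $\overset{>}{J}$/$\overset{\leftharpoonup}{C}$ of Part~2 of the Lagrangian construction and the analogous ones in Part~3 — and also that such interactions do not increase the ``potential'' seen by the $\Gamma^i$ curves on the other side through the fronts that later come back down (they do not, since by the structure of the construction no front above/right of $X_1$ ever returns to $X_1$, a fact already proved when describing the algorithm). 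Getting the orientation and the approaching-pair bookkeeping consistent across the six curves, and checking the boundary cases where a front crosses a curve twice (a $1$-front can cut $\Gamma_t^3$ twice, a $3$-front can cut $\Gamma_t^1$ twice, etc., as in the Eulerian case), is where the care is needed; once that is set up, each individual estimate is a routine application of Propositions~\ref{Pro:Glimm}, \ref{Pro:Glimm2}, \ref{Pro:Inter3S2}, \ref{Pro:CorrWLag}, \ref{Pro:CW3S} and Corollaries~\ref{Cor:Glimm2}, \ref{Cor:CancellationWave1}, \ref{Cor:CancellationWave2}.
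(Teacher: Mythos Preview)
Your approach is essentially the same as the paper's: reduce Part~\textbf{1} to the Glimm-type analysis of Lemmas~\ref{Lem:DecrFctGlimm}--\ref{Lem:DecrFctGlimm2} (weak--weak interactions via Propositions~\ref{Pro:Glimm}--\ref{Pro:Glimm2}, interactions with the strong shocks via Propositions~\ref{Pro:Inter3S2}, \ref{Pro:CorrWLag}, \ref{Pro:CW3S}), exploiting that the strong shocks are of extremal families so that no reflected wave re-crosses the curve; and obtain Part~\textbf{2} by comparing the strengths on the two sides of each strong shock and by the geometric inclusions among the curves. A few small corrections: the outgoing waves on the right of $X_1$ are $\overset{>}{J}$ and $\overset{\rightharpoonup}{R}$ (family~$3$), not $\overset{\leftharpoonup}{R}$; in the Lagrangian case fronts do \emph{not} cut $\Gamma_t^1$ twice (that double-crossing phenomenon is specific to the Eulerian construction with its middle-family strong discontinuity); and $\Gamma_{T_2}^3=\Gamma_{T_2}^4=\{T_2\}\times[0,L]$ with no vertical segment attached. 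The one technical point the paper singles out and that you do not mention explicitly is the behaviour of $F^2$ when $x$ equals the (fixed) position of a $\overset{>}{J}$ front: since $2$-contact discontinuities have zero speed, \emph{all} the side interactions sitting on that vertical front are crossed simultaneously as $x$ passes through it, so one must sum the individual Glimm increments and observe in addition that the $\overset{>}{J}$ front itself drops out of $V^2$ and $Q^2$, giving an extra negative contribution. This fits into your framework without difficulty, but it is the place where the bookkeeping is genuinely different from the one-interaction-at-a-time picture.
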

\begin{proof}
As for Lemmas~\ref{Lem:DecrFctGlimm} and \ref{Lem:DecrFctGlimm2}, the first part is a consequence of Glimm's estimates for usual interactions or side interaction of weak waves; Proposition~\ref{Pro:Glimm2} can be applied to all those cases. Note in particular that at times of interaction with one of the strong shocks or at the exit time of a front, the functionals $V^{i}(t)$ and $Q_{i}(t)$ either decrease or stay constant (according to $i$ and the family of the front).
The functionals $V^{5}(x)$ and $Q^{5}(x)$ do not change when $x$ corresponds to the location of an interaction with the $3$-shock, and ${\mathcal C}_{x}^{5}$ does not meet an exit location before $x=0$.
The only thing to be careful about concerns $F^{2}$. Indeed there can be many ``simultaneous'' interaction points on ${\mathcal C}_{x}^{2}$ when $x$ corresponds to the position of a $2$-contact discontinuity. For such a $x$, we analyze each interaction separately: making the sum of the contributions gives the same result as if the interaction times were distinct; moreover the $2$-contact discontinuity disappears from the functionals which gives a supplementary negative contribution. \par
For the second part, \eqref{Eq:ComparF1} is a consequence of Schochet's estimates for interactions with a large discontinuity or estimates for cancellation/correction waves at an interaction point with a large discontinuity (Propositions~\ref{Pro:Inter11}, \ref{Pro:Inter3S2}, \ref{Pro:CorrWLag} and \ref{Pro:CW3S}); moreover we notice that no front crosses $(T_{1},T_{2}) \times \{0\}$ or $(T_{3},+\infty) \times \{L\}$. To get \eqref{Eq:ComparF2}, one just has to compare the curves on which the functionals rely.
\end{proof}
One can deduce as before the following.
\begin{corollary} \label{Cor:EstBVL}
If $TV(u_{0})$ is small enough, then one has for all times $t\leq T_{2}$ for which the algorithm is well-functioning that
\begin{gather}
\label{Eq:EstBVL1}
TV(u^{\nu}\big(t,\cdot); (0,X_{1}(t))\big) +TV\big(u^{\nu}(t,\cdot); (X_{1}(t),L)\big) \leq C \, TV(u_{0}), \\
\label{Eq:EstLinfiniL1}
\| u^{\nu}(t,\cdot) - \overline{u}_{0} \|_{L^{\infty}(0,X_{1}(t))} + \| u^{\nu}(t,\cdot) - {v}^{+}_{0} \|_{L^{\infty}(X_{1}(t),L)} 
\leq C \big( TV(u_{0}) + \| u_{0} - \overline{u}_{0}\|_{L^{\infty}(0,L)} \big),
\end{gather}
where we set $X_{1}(t)=0$ for $t \geq T_{1}$. Moreover for all times $t\geq T_{2}$ for which the algorithm is well-functioning one has
\begin{gather}
\label{Eq:EstBVL2}
TV\big(u^{\nu}(t,\cdot); (0,X_{3}(t))\big) + TV\big(u^{\nu}(t,\cdot); (X_{3}(t),L)\big) \leq C \, TV(u_{0}), \\
\label{Eq:EstLinfiniL2}
\| u^{\nu}(t,\cdot) - {v}^{-}_{1} \|_{L^{\infty}(0,X_{3}(t))} + \| u^{\nu}(t,\cdot) - v^{+}_{0} \|_{L^{\infty}(X_{3}(t),L)} 
\leq C \big( TV(u_{0}) + \| u_{0} - \overline{u}_{0}\|_{L^{\infty}(0,L)} \big),
\end{gather}
with $X_{3}(t)=L$ for $t \geq T_{3}$.
\end{corollary}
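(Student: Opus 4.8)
\textbf{Proof of Corollary~\ref{Cor:EstBVL}.}
The plan is to follow exactly the scheme of the proof of Corollary~\ref{Cor:EstBV}, using the six Glimm functionals $F^{1},\dots,F^{6}$ introduced above and their monotonicity and comparison relations established in Lemma~\ref{Lem:FiDecr}. The key observation is that the three phases of the Lagrangian construction are mapped to the three ``slabs'' of the $(t,x)$ plane delimited by $X_{1}$ and $X_{3}$, and on each slab the total variation in space along a horizontal section is controlled by a suitable combination of the $F^{i}$'s, which are in turn dominated by their initial values, themselves ${\mathcal O}(TV(u_{0}))$.

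First I would establish \eqref{Eq:EstBVL1}. For $t\le T_{1}$, a horizontal section $\{t\}\times(0,L)$ is cut by $\Gamma_{t}^{1}$ on its part $[0,X_{1}(t)]$ and by $\Gamma_{t}^{3}$ on its part $[X_{1}(t),L]$ (recall that $\Gamma_{t}^{1}$ continues along $X_{1}$ above and $\Gamma_{t}^{3}$ along $X_{1}$ below, and that only weak fronts are counted). Hence the left-hand side of \eqref{Eq:EstBVL1} is bounded by $C(V^{1}(t)+V^{3}(t))\le C(F^{1}(t)+F^{3}(t))$. By Lemma~\ref{Lem:FiDecr}, $F^{1}$ and $F^{3}$ are non-increasing, so this is bounded by $C(F^{1}(0)+F^{3}(0))$, and using $F^{3}(0)\le F^{2}(0)+F^{2}(L)\le C F^{1}(T_{1})+F^{2}(L)\le C F^{1}(0)$ (monotonicity of $F^{2}$ and $F^{1}$ together with \eqref{Eq:ComparF1}-\eqref{Eq:ComparF2}) we obtain a bound by $C F^{1}(0)$. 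For $t\in[T_{1},T_{2}]$ one sets $X_{1}(t)=0$ and only $\Gamma_{t}^{3}$ matters, with the same conclusion. Finally $F^{1}(0)\le C\,TV(u_{0})$ for $TV(u_{0})\le 1$, since $\Gamma_{0}^{1}$ consists of $\{0\}\times[0,L]$ together with the strong $1$-shock which is not counted in $V^{1}$, and the $1$-shock issuing from $x=L$ at $t=0$ solves the Riemann problem $(u_{0}^{\nu}(L^{-}),v_{0}^{+})$ whose $2$- and $3$-components vanish. For \eqref{Eq:EstLinfiniL1}, as in Corollary~\ref{Cor:EstBV}: for $x>X_{1}(t)$ one has $|u^{\nu}(t,x)-u_{0}^{\nu}(L^{-})|\le C F^{3}(t)$ so $\|u^{\nu}(t,\cdot)-v_{0}^{+}\|_{L^{\infty}(X_{1}(t),L)}\le C(\|u_{0}-\overline{u}_{0}\|_{\infty}+F^{1}(0))$ after noting $|u_{0}^{\nu}(L^{-})-v_{0}^{+}|\le C\|u_{0}-\overline u_{0}\|_{\infty}$ via \eqref{Eq:RieS}; for $x<X_{1}(t)$ one has $|u^{\nu}(t,x)-u^{\nu}(T_{1}^{-},X_{1}(T_{1})^{+})|\le C F^{2}(x)$ (reading along ${\mathcal C}_{x}^{2}$ from the ``lower'' side), and the state $u^{\nu}(T_{1}^{-},0^{+})$ on the right of $X_{1}$ just before it exits is close to $\overline{u}_{0}$ because it is reached from $\overline{u}_{0}$ along $\Gamma_{t}^{1}$-type curves whose variation is $\le C F^{1}(0)$; combining, one gets the claimed bound.

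Next I would treat \eqref{Eq:EstBVL2}-\eqref{Eq:EstLinfiniL2} in the same way on the third slab $\{t\ge T_{2}\}$, with $\Gamma_{t}^{4}$ playing the role of $\Gamma_{t}^{1}$ (it runs along $X_{3}$ below and horizontally to the right), $\Gamma_{t}^{6}$ playing the role of $\Gamma_{t}^{3}$, and ${\mathcal C}_{x}^{5}$ playing the role of ${\mathcal C}_{x}^{2}$. The horizontal section at time $t\ge T_{2}$ is cut by $\Gamma_{t}^{4}$ on $(X_{3}(t),L)$ and by $\Gamma_{t}^{6}$ on $(0,X_{3}(t))$, so the left side of \eqref{Eq:EstBVL2} is bounded by $C(F^{4}(t)+F^{6}(t))$, which by Lemma~\ref{Lem:FiDecr} is $\le C(F^{4}(T_{2})+F^{6}(T_{2}))\le C(F^{3}(T_{2})+F^{5}(0)+F^{5}(L))$; then $F^{3}(T_{2})\le F^{3}(0)\le C F^{1}(0)\le C\,TV(u_{0})$ from the first part, and $F^{5}(0)\le F^{5}(L)\le C F^{4}(T_{3})\le C F^{4}(T_{2})=C F^{3}(T_{2})$, so everything is controlled by $C\,TV(u_{0})$. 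Estimate \eqref{Eq:EstLinfiniL2} is obtained as before: for $x>X_{3}(t)$, $|u^{\nu}(t,x)-u^{\nu}(T_{3}^{-},L^{-})|\le C F^{6}(t)$ and the state on the right of $X_{3}$ at exit is close to $v_{0}^{+}$ by the variation bound along $\Gamma^{4}$-type curves; for $x<X_{3}(t)$, $|u^{\nu}(t,x)-u^{\nu}(T_{2},0^{+})^{\mathrm{left}}|\le C F^{5}(x)$ and the state $u^{\nu}(T_{2},0^{+})$ on the left of $X_{3}$ just after it enters solves the Riemann problem $(v_{1}^{-},u^{\nu}(T_{2},0^{+}))$, so $|v_{1}^{-}-u^{\nu}(T_{2},X_{3}(T_{2})^{-})|\le C|u^{\nu}(T_{2},0^{+})-v_{0}^{+}|\le C(TV(u_{0})+\|u_{0}-\overline{u}_{0}\|_{\infty})$ by \eqref{Eq:EstLinfiniL1} applied at $t=T_{2}$ and \eqref{Eq:RieS}.

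The only genuinely new point compared with the Eulerian case is the bookkeeping: one must check that every horizontal section of each slab is indeed covered by exactly one (or two) of the curves $\Gamma_{t}^{i}$ in such a way that all weak fronts present at that time are counted, and that the chain of inequalities \eqref{Eq:ComparF1}-\eqref{Eq:ComparF2} closes up without circularity — this is where the choice of orientation of the curves along the strong shocks (``above'' versus ``below'') matters, precisely as made explicit before Lemma~\ref{Lem:FiDecr}. I expect this combinatorial matching, together with the verification that $F^{2}$ is well-behaved across the ``simultaneous'' interaction points created by stacked $2$-contact discontinuities (already handled in the proof of Lemma~\ref{Lem:FiDecr}), to be the main thing to get right; the interaction estimates themselves are supplied by Propositions~\ref{Pro:Inter11}, \ref{Pro:Inter3S2}, \ref{Pro:CorrWLag} and \ref{Pro:CW3S} and require no further work. $\square$
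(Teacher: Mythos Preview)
Your treatment of the $BV$ bounds \eqref{Eq:EstBVL1} and \eqref{Eq:EstBVL2} is correct and matches the paper's intent (the paper simply writes ``as before''). The chain $F^{3}(0)\le F^{2}(0)+F^{2}(L)\le CF^{1}(0)$ and its analogue for $F^{4},F^{5},F^{6}$ are exactly what Lemma~\ref{Lem:FiDecr} provides.

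However, in the $L^{\infty}$ estimates you have systematically swapped the two sides of the strong shock. For \eqref{Eq:EstLinfiniL1}: the region $x<X_{1}(t)$ lies \emph{below} $X_{1}$ (values near $\overline{u}_{0}$), and the region $x>X_{1}(t)$ lies \emph{above} $X_{1}$ (values near $v_{0}^{+}$). Your claim ``$|u_{0}^{\nu}(L^{-})-v_{0}^{+}|\le C\|u_{0}-\overline{u}_{0}\|_{\infty}$'' is false: $u_{0}^{\nu}(L^{-})\in B(\overline{u}_{0},r)$ while $v_{0}^{+}=T_{1}(\overline{\sigma}_{1},\overline{u}_{0})$, so their distance is $\sim|\overline{\sigma}_{1}|$, not controlled by the data. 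Likewise ``the state on the right of $X_{1}$ just before it exits is close to $\overline{u}_{0}$'' is wrong (it lies in $B(v_{0}^{+},r)$), and for $x<X_{1}(t)$ the point $(t,x)$ does not lie on ${\mathcal C}_{x}^{2}$ at all, since that curve lives above $X_{1}$.

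The correction is a straight transcription of the Eulerian argument with the geometry reflected: for $x<X_{1}(t)$ follow $\Gamma_{t}^{1}$ (left side of $X_{1}$) back to $(0,L^{-})$ to get $|u^{\nu}(t,x)-u_{0}^{\nu}(L^{-})|\le CF^{1}(t)$, then $|u_{0}^{\nu}(L^{-})-\overline{u}_{0}|\le\|u_{0}-\overline{u}_{0}\|_{\infty}$; for $x>X_{1}(t)$ follow ${\mathcal C}_{x}^{2}$ (right side of $X_{1}$) back to the state $u^{\nu}(0^{+},X_{1}(0^{+})^{+})$ immediately to the right of the strong shock at entry, getting a bound by $CF^{2}(x)\le CF^{2}(0)\le CF^{1}(0)$, and then use \eqref{Eq:RieS} on the Riemann problem $(u_{0}^{\nu}(L^{-}),v_{0}^{+})$ to conclude $|u^{\nu}(0^{+},X_{1}(0^{+})^{+})-v_{0}^{+}|\le C\|u_{0}-\overline{u}_{0}\|_{\infty}$. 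The same swap occurs in your treatment of \eqref{Eq:EstLinfiniL2}: for $x>X_{3}(t)$ you should use $\Gamma_{t}^{4}$ (not $\Gamma_{t}^{6}$) to connect to the state at $t=T_{2}$, which is near $v_{0}^{+}$ by \eqref{Eq:EstLinfiniL1}; your argument for $x<X_{3}(t)$ via ${\mathcal C}_{x}^{5}$ is correct.
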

\subsubsection{Well-functioning of the algorithm}
Let us continue the proof by following the lines of Subsection~\ref{Subsec:ConvEul}. Lemma~\ref{Lem:WP} is valid in the present situation:
\begin{lemma}\label{Lem:WPL}
If $TV(u_{0})$ is small enough, all the states that the algorithm generates belong to ${\mathcal D}$ and only a finite number of fronts and interaction points are created.
\end{lemma}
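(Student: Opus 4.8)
The proof of Lemma~\ref{Lem:WPL} follows exactly the same strategy as in the Eulerian case (Lemma~\ref{Lem:WP}), relying on the $BV$ estimates of Corollary~\ref{Cor:EstBVL} together with the structure of the three-part construction described in Section~\ref{Sec:ConstrLagrange}. First I would use \eqref{Eq:EstLinfiniL1}--\eqref{Eq:EstLinfiniL2}: choosing $TV(u_{0})$ (and hence $\varepsilon$) small enough, these $L^{\infty}$ bounds guarantee that every state produced by the algorithm remains inside $B(\overline{u}_{0};r)$, $B(v_{0}^{+};r)$ or $B(v_{1}^{-};r)$, i.e.\ inside ${\mathcal D}$. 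Consequently the algorithm never stops because it would have to leave ${\mathcal D}$, and it suffices to rule out an accumulation of interaction times to conclude that it is well-functioning.

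The core of the argument is thus the finiteness of fronts and interaction points. I would treat the three parts of the construction in order. In Part~1 (below/on the left of the strong $1$-shock), the analysis is identical to the Eulerian case: new physical fronts are only created at interactions of weak waves of large amplitude or at interactions with the strong $1$-shock. Large-amplitude interactions are finite in number by the strict decrease of $F^{1}$ at such points (the analogue of \eqref{Eq:DecrFctGlimmBis}, which follows from the Glimm-type estimates of Proposition~\ref{Pro:Glimm2} and Lemma~\ref{Lem:FiDecr}); new $1$-fronts hitting $X_{1}$ are generated only at such points (recall that, unlike the Eulerian case, the interaction of a weak wave with the strong extremal shock from below produces \emph{no} reflected wave), hence interactions with the strong $1$-shock are also finite in number. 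Therefore all physical fronts in Part~1 are finite in number, and then interactions of purely physical fronts are finite, hence artificial fronts are finite, hence all interaction points in Part~1 are finite. Part~2 is even simpler: by the description of Section~\ref{Sec:ConstrLagrange}, the only fronts present above $X_{1}$ are of type $\overset{\rightharpoonup}{C}$, $\overset{>}{J}$ and $\overset{\leftharpoonup}{R}$; each emergence point on $X_{1}$ generates finitely many such fronts; fronts of the same family never interact; and at each $\overset{\leftharpoonup}{C}$/$\overset{\rightharpoonup}{R}$, $\overset{>}{J}$/$\overset{\rightharpoonup}{R}$ or $\overset{>}{J}$/$\overset{\leftharpoonup}{C}$ interaction the number of fronts does not increase (the outgoing configuration has the same number of fronts, by Corollaries~\ref{Cor:CancellationWave1}--\ref{Cor:CancellationWave2} and the side simplified solver); moreover the monotonicity of $F^{2}$ in the pseudo-time variable $\vartheta$, together with the decrease quantified at genuine interactions, bounds the number of interaction points (this is where the pseudo-time monotone functional plays the role of the $t$-functional in Part~1). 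Finally Part~3 is handled like Part~1 combined with Part~2: the strong $3$-shock interacts only with the finitely many $\overset{>}{J}$ fronts inherited from Part~2, producing at each such interaction a finite number of new fronts (via Proposition~\ref{Pro:CW3S}), and the region above $X_{3}$ contains only $\overset{\rightharpoonup}{C}$ and $\overset{\leftharpoonup}{R}$ fronts whose interactions are controlled exactly as in Subsection~\ref{Subsec:CEP2}.

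Putting these three parts together yields that the algorithm generates a front-tracking approximation $u^{\nu}$ for all times, with finitely many fronts and interaction points, which is the assertion of the lemma. The main obstacle, and the point requiring the most care, is Part~2: there the role of ``time'' is played by the auxiliary variable $\vartheta = x + \iota t$, and one must verify that the monotone functionals $F^{2}(x)$ and (implicitly) a $\vartheta$-indexed Glimm-type functional genuinely control the interaction count in this sideways framework --- in particular that every interaction either strictly decreases such a functional by a definite amount or does not create new fronts, so that no accumulation of interaction pseudo-times can occur. This is precisely the kind of bookkeeping carried out for the Eulerian Part~2 in Subsection~\ref{Subsec:CEP2} and Lemma~\ref{Lem:WP}, and it transfers here with only notational changes, using that the three front types $\overset{\rightharpoonup}{C}$, $\overset{>}{J}$, $\overset{\leftharpoonup}{R}$ form a closed class under all admissible interactions and that Lemma~\ref{Lem:PlusQueJ} confines the process to a bounded time-strip.
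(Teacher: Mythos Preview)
Your treatment of Part~1 and Part~3 is essentially correct and matches the paper, but there is a genuine gap in Part~2. Your central claim there---that ``at each $\overset{\leftharpoonup}{C}$/$\overset{\rightharpoonup}{R}$, $\overset{>}{J}$/$\overset{\rightharpoonup}{R}$ or $\overset{>}{J}$/$\overset{\leftharpoonup}{C}$ interaction the number of fronts does not increase''---is false for the two $\overset{>}{J}$-interactions. Re-read the description in Section~\ref{Sec:ConstrLagrange}: at a $\overset{>}{J}$/$\overset{\rightharpoonup}{R}$ interaction (Figure~\ref{fig:LIJR}) one applies Corollary~\ref{Cor:CancellationWave1} and extends by \emph{three} outgoing fronts $\overset{>}{J}$, $\overset{\rightharpoonup}{R}$, $\overset{\leftharpoonup}{C}$; likewise at a $\overset{>}{J}$/$\overset{\leftharpoonup}{C}$ interaction (Figure~\ref{fig:LIJS}) one applies Corollary~\ref{Cor:CancellationWave2} and again obtains three outgoing fronts. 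In both cases two fronts enter and three leave: a new $\overset{\leftharpoonup}{C}$ (respectively $\overset{\rightharpoonup}{R}$) is created, and this new front will in turn meet the $\overset{>}{J}$ fronts lying to its right, spawning further fronts. Your appeal to the Glimm functional does not rescue this: $F^{2}$ is non-increasing, but in Part~2 there is no accurate/simplified threshold $\varrho$, so nothing forces $F^{2}$ to drop by a fixed positive amount at each interaction. A non-increasing functional bounds the \emph{total interaction amount} $\sum |\sigma_{\alpha}||\sigma_{\beta}|$, not the \emph{number} of interactions. Finally, the assertion that the Eulerian Part~2 argument ``transfers here with only notational changes'' is incorrect precisely because the Eulerian Part~2 has no $J$ fronts: there only $\overset{\leftharpoonup}{R}$/$\overset{\rightharpoonup}{C}$ interactions occur, and those genuinely preserve the number of fronts.

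The paper closes this gap with a combinatorial device that has no analogue in the Eulerian proof. Since the $\overset{>}{J}$ fronts are neither created nor destroyed in Part~2, their number $K$ is fixed; label them $J_{1},\dots,J_{K}$ from left to right. One then introduces
\[
{\mathcal N}(x) := \sum_{k=1}^{K} \ \sum_{\substack{\alpha \text{ a }1\text{- or }3\text{-front}\\ \text{cutting } {\mathcal C}^{2}_{x},\ \text{on the left of } J_{k}}} 3^{-k}.
\]
This function is constant across $\overset{\rightharpoonup}{R}$/$\overset{\leftharpoonup}{C}$ interactions and across emergence points on $X_{1}$. At a position $\overline{x}$ carrying $J_{\overline{k}}$, each incoming $1$- or $3$-front loses the weight $3^{-\overline{k}}$ (it is no longer to the left of $J_{\overline{k}}$), while the new front it spawns contributes at most $\sum_{k>\overline{k}} 3^{-k} \leq 2\cdot 3^{-\overline{k}-1}$. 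Hence ${\mathcal N}$ is non-increasing and drops by at least $3^{-K}$ per $J$-interaction; since ${\mathcal N}(0)$ is finite (finitely many fronts emerge from $X_{1}$), the number of $J$-interactions is finite, and then so is the total number of fronts in Part~2. This is the missing idea.
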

\begin{proof}[Proof of Lemma~\ref{Lem:WPL}]
The first part of the statement is a direct consequence of Corollary~\ref{Cor:EstBVL}. We focus on the second part.
The proof has of course common features with the one of Lemma~\ref{Lem:WP}; let us nevertheless stress the differences. Here new physical fronts are only created at:
\begin{itemize}
\item interaction points of weak waves with large amplitude,
\item interaction points involving one of the two strong shocks,
\item interaction points involving a forward $3$-rarefaction $\overset{\rightharpoonup}{R}$ or a backward $1$-compression $\overset{\leftharpoonup}{C}$ with a $2$-contact-discontinuity $\overset{>}{J}$, in the second part of the construction. This type is new with respect to Lemma~\ref{Lem:WP}.
\end{itemize}
{\bf 1.} The proof that there is only a finite number of fronts under the strong $1$-shock is actually simpler than in Lemma~\ref{Lem:WP}: when a weak wave interacts with the strong $1$-shock, no front is reflected under the strong shock. Hence only interaction points of weak waves with large amplitude can increase the number of physical fronts under $X_{1}$; since they make the functional $F^{1}(t)$ decrease of an amount at least proportional to $\varrho$, they are of finite number.
It follows that, under the strong $1$-shock, there is only a finite number of physical fronts, hence a finite number of interactions with weak amplitude and a finite number of artificial fronts as well. \par
\ \par
\noindent
{\bf 2.} Since there is a finite number of interactions with the strong $1$-shock, there is a finite number of fronts emerging from $X_{1}$. Now between the strong $1$-shock and the strong $3$-shock, there is no interaction point that modifies the number of fronts of family $2$, hence those remain finite and do not disappear before meeting the strong $3$-shock. Let us call them from left to right (in the $x$ variable), $J_{1}$,\dots,$J_{K}$. \par
Now, define for $x \in [0,L]$ the number
\begin{equation*}
{\mathcal N}(x) := \sum_{k=1}^{K} \sum_{\substack{ {\alpha \text{ a } 1\text{ or }3 \text{-front,}} \\{ \text{cutting }  {\mathcal C}^{2}_{x},} \\ {\text{and on the left of } J_{k}} }} 3^{-k}.
\end{equation*}
We use the same convention as before to determine when a front cuts ${\mathcal C}^{2}_{x}$. Then ${\mathcal N}(0)$ is finite (since ${\mathcal C}^{2}_{0}$ coincides with $[T_{1},T_{2}] \times \{0 \}$ glued with $X_{1}$ and no front cuts $(T_{1},T_{2}) \times \{0 \}$). The number ${\mathcal N}(x)$ (for increasing $x$) can only evolve at $x$ where ${\mathcal C}^{2}_{x}$ meets an interaction point or a point where a front leaves the domain $[0,T_{2}] \times [0,L]$. Through an interaction point where a front $\overset{\rightharpoonup}{R}$ meets a front $\overset{\leftharpoonup}{C}$, ${\mathcal N}(x)$ actually stays constant. Moreover only $2$-contact discontinuities leave the domain $[0,T_{2}] \times [0,L]$ elsewhere than through $x=L$. It follows that ${\mathcal N}(x)$ changes only when $x$ corresponds to the position of a $2$-contact discontinuity. \par
Consider such a $\overline{x}$ corresponding to $J_{\overline{k}}$. Now ${\mathcal N}(\overline{x}^{+})$ differs from ${\mathcal N}(\overline{x}^{-})$ for two reasons:
\begin{itemize}
\item There are fronts $\overset{\rightharpoonup}{R}$ or $\overset{\leftharpoonup}{C}$ that existed before their interaction with $J_{\overline{k}}$ (that is for $x < \overline{x}$), but after the interaction (just on the right of $\overline{x}$) their contribution to ${\mathcal N}$ is $3^{-\overline{k}}$ less than before.
\item Each interaction point on $J_{\overline{k}}$ generates a new front of type $\overset{\rightharpoonup}{R}$ (resp. $\overset{\leftharpoonup}{C}$) when the incoming front (the corresponding front at $x < \overline{x}$) is of type $\overset{\leftharpoonup}{C}$ (resp. $\overset{\rightharpoonup}{R}$). The contribution to ${\mathcal N}(\overline{x}^{+})$ of such a new front is $\sum_{k=\overline{k}+1}^{K} 3^{-k} \leq 2 \cdot 3^{-\overline{k}-1}$.
\end{itemize}
It follows that ${\mathcal N}$ is non-increasing in $[0,L]$ and ${\mathcal N}$ loses at least $3^{-K}$ through each interaction point involving a $J$ front that it meets. Hence the number of interactions between a front $\overset{\rightharpoonup}{R}$ or $\overset{\leftharpoonup}{C}$ and a $J$ front is finite and consequently so is the number of fronts between the two strong shocks. \par
\ \par
\noindent
{\bf 3.} Since there is a finite number of fronts above the strong $1$-shock, these generate a finite number of interactions with the strong $3$-shock. Consequently a finite number of fronts emerge from the strong $3$-shock. But above this strong $3$-shock, there is no longer any creation of fronts. In final, there is a finite number of fronts, and consequently of interaction points.
\end{proof}
\subsubsection{Conclusion}
\label{Sss:ConclL}
The rest of the proof is very close to Subsection~\ref{Subsec:ConvEul}. In particular, the statement of Lemma~\ref{Lem:SizeSmallFronts} on the size of rarefaction, compression or artificial fronts is valid as it stands. The proof can be adapted without difficulty and consequently we omit it. The same is true for Proposition~\ref{Pro:TVAF} regarding the total strength of artificial fronts and the same argument can be used (even, a bit simplified by the absence of reflected waves); again there is no artificial front above the strong shock. Next the arguments allowing to pass to the limit and obtain a weak entropy solution $u$ can be entirely reproduced from Subsection \ref{Subsec:ConvEul} except for the proof that the strong shocks satisfy \eqref{Eq:ECF}. Here we use that in \eqref{Eq:EvolEntropieSurLaCourbeChoc} the second factor is positive (by convexity of $\eta$) and that the shock speed is increasing along the shock curve (see \eqref{Eq:ShockSpeed}). \par
It is finally sufficient in order to conclude to prove the following.
\begin{lemma}\label{Lem:Uconstant}
For a time $T_{4}$ satisfying
\begin{equation} \label{Eq:T4Lag}
T_{4} \leq T_{3} + \frac{2L}{|\lambda_{1}(v_{1}^{-})| - 2 \nu},
\end{equation}
one has for all $\nu>0$ small that 
\begin{equation*}
u^{\nu}(t,\cdot) \text{ is constant for } t \geq T_{4}.
\end{equation*}
\end{lemma}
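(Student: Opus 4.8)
The idea is exactly the one used to conclude the Eulerian case in Paragraph~\ref{SSS:Smallness}: after the strong $3$-shock has left the domain at time $T_3$, the only fronts remaining inside $(0,L)$ are $1$-rarefaction fronts (together with possible $1$-compression fronts), and these all have negative speed strictly separated from $0$, so they exit through $x=0$ after a controlled additional time. First I would recall the structure of the construction of Part~3: in Step~1 the strong $3$-shock absorbs all the $\overset{>}{J}$ fronts present at time $T_2$ (which by Lemma~\ref{Lem:PlusQueJ} are the only ones), producing on its left side only $3$-compression germs and $1$-rarefaction germs; in Step~2, above/on the left of $X_3$, the construction proceeds with $L-x$ as pseudo-time and only $\overset{\rightharpoonup}{C}$ and $\overset{\leftharpoonup}{R}$ fronts are created, with side interactions governed by Lemma~\ref{Lem:CommutR1R2} and Corollary~\ref{Cor:Glimm2}. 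Hence for $t \geq T_3$ (when $X_3$ has left through $x=L$) there are no more $2$-contact discontinuities and no more $3$-fronts: the only fronts in $u^\nu(t,\cdot)$ for $t \geq T_3$ are $1$-rarefaction fronts and $1$-compression fronts.

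Next I would bound the exit time of these remaining $1$-fronts. Since all states above $X_3$ lie in $B(v_1^-;r)$, estimate \eqref{Eq:L1Lag} gives $\lambda_1(u) \leq \lambda_1(v_1^-)/2 < 0$ for the states involved. A $1$-rarefaction front travels at its shock speed $s$, which by \eqref{Eq:VitRarCompr} lies between the values of $\lambda_1$ at its two endpoints; since rarefaction and compression fronts have strength at most $C\nu$ (Lemma~\ref{Lem:SizeSmallFronts}) and $\lambda_1$ is Lipschitz, $s \leq \lambda_1(v_1^-)/2 + C\nu$ for such fronts, and a similar bound holds for $1$-compression fronts by \eqref{Eq:LaxCompression}. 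Restricting to $\nu$ small enough that $C\nu \leq |\lambda_1(v_1^-)|/4$ — and taking into account the further perturbation of $\nu$ allowed in the modification of front speeds, which only costs another $\nu$ — every front inside $(0,L)$ for $t \geq T_3$ moves to the left with speed $\leq -(|\lambda_1(v_1^-)| - 2\nu)/2 < 0$. Therefore each such front reaches $x=0$ within a time at most $2L/(|\lambda_1(v_1^-)| - 2\nu)$ after $T_3$, which is \eqref{Eq:T4Lag}. Once all fronts have exited, $u^\nu(t,\cdot)$ is spatially constant, proving the lemma. I would phrase this as:

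\begin{equation*}
T_4 := T_3 + \frac{2L}{|\lambda_1(v_1^-)| - 2\nu},
\end{equation*}

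and note that $T_3$ is itself bounded by \eqref{Eq:T3Lag}, so $T_4$ is bounded uniformly in $\nu$ (for $\nu$ small), which is what is needed to pass to the limit and obtain that $\overline{u}(t,\cdot)$ is constant for $t$ larger than this uniform bound, hence equal to some constant state $\overline{u}_1$; combined with the $BV$ and $L^1$-Lipschitz estimates of Corollary~\ref{Cor:EstBVL} this finishes the proof of Theorem~\ref{ThmL}.

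\textbf{Main obstacle.} The only slightly delicate point is making sure that no $2$-contact discontinuity or $3$-rarefaction/compression front survives past $T_3$ inside $(0,L)$: this requires carefully tracking that the third part of the construction creates \emph{only} $\overset{\rightharpoonup}{C}$, $\overset{>}{J}$ and $\overset{\rightharpoonup}{R}$ germs from $X_3$ and then, above $X_3$, only $\overset{\rightharpoonup}{C}$ and $\overset{\leftharpoonup}{R}$ fronts (with $2$-waves absent by Lemma~\ref{Lem:CommutR1R2}), together with the geometric fact that $\overset{>}{J}$ fronts are fully consumed by the strong $3$-shock and that all surviving $1$-fronts are swept out of $(0,L)$ through $x=0$. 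The speed estimates themselves are routine once \eqref{Eq:L1Lag}, \eqref{Eq:LaxCompression}, \eqref{Eq:VitRarCompr} and Lemma~\ref{Lem:SizeSmallFronts} are invoked; the bookkeeping of which front types are present is where care is needed, and it mirrors the analysis already carried out in the Eulerian case in Paragraph~\ref{SSS:Smallness} and in Part~3 of Section~\ref{Sec:ConstrLagrange}.
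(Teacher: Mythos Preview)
Your approach is essentially the paper's: identify which fronts can live above $X_{3}$, observe that only $1$-fronts survive past $T_{3}$, and bound their leftward speed via \eqref{Eq:L1Lag}. One slip to fix: above $X_{3}$ the fronts are $\overset{\leftharpoonup}{R}$ ($1$-rarefactions, forward in time) and $\overset{\rightharpoonup}{C}$ ($3$-compressions, \emph{backward} in time) --- there are no ``$1$-compression fronts'' in this part of the construction. The reason no $3$-front exists for $t\geq T_{3}$ is not just bookkeeping of families but the direction of propagation: every $\overset{\rightharpoonup}{C}$ emanates from a point on $X_{3}$ and runs backward in $t$, so it lives only for $t$ below its emergence time $\leq T_{3}$; you should state this explicitly, as the paper does. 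Once that is said, only $\overset{\leftharpoonup}{R}$ fronts remain for $t\geq T_{3}$, and your speed estimate (which you can get directly from \eqref{Eq:L1Lag} and \eqref{Eq:VitRarCompr} plus the $\nu$-modification of Remark~\ref{Rk:ModifSpeed}, without invoking Lemma~\ref{Lem:SizeSmallFronts}) gives the bound \eqref{Eq:T4Lag}.
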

\begin{proof}[Proof of Lemma~\ref{Lem:Uconstant}]
Above the strong $3$-shock there are no $2$-contact discontinuity, but only fronts of type $\overset{\leftharpoonup}{R}$ and $\overset{\rightharpoonup}{C}$. Since no new fronts are created, backward compression fronts $\overset{\rightharpoonup}{C}$ live only before time $T_{3}$. It remains to consider the fronts $\overset{\leftharpoonup}{R}$, which emerge before time $T_{3}$, and travel through the domain from right to left at speed at least of $-(\lambda_{1}(v_{1}^{-})/2) - \nu$. The conclusion follows.
\end{proof}
The last part of the proof of Theorem~\ref{ThmL} consists in proving \eqref{Eq:GoalL2}. 
Again, we can prove that that given $v_{0}^{+}$ and $v_{1}^{-}$ satisfying the requirements of Subsection~\ref{Subsec:2SSh}, there exists $\overline{\varepsilon}_{0}$ such that for any $\varepsilon \leq \overline{\varepsilon}_{0}$, if $u_{0}$ satisfies \eqref{Eq:SmallICL}, then the construction above is valid. Since $v_{0}^{+}$ and $v_{1}^{-}$ can be chosen arbitrarily close to $\overline{u}_{0}$, the conclusion follows as in Paragraph~\ref{SSS:Smallness}. \par
\subsection{Moving to any constant}
\subsubsection{Eulerian case: end of the proof of Theorem~\ref{ThmEu1}}
\label{SSS:MvCSTEu}
As we mentioned earlier, once the state of system is driven to a constant, reaching any constant in the Eulerian situation (keeping an arbitrarily small total variation in $x$ uniformly in $t$) can be seen as a consequence of \cite{LR} and \cite{CGW}. Indeed, a corollary of these results is the following.
\begin{theorem}[\cite{LR,CGW}] \label{Thm:cst2cst}
Consider system \eqref{Eq:Euler}. For any $u^{*} \in \Omega$ and any $\eta >0$, there exists $\delta>0$ and a time ${T}$ such that, for any $u_{a}, u_{b} \in C^{1}([0,L])$
satisfying 
\begin{equation*}
\| u_{a} - u^{*} \|_{C^{1}([0,L])} \leq \delta \ \text{ and } \| u_{b} - u^{*} \|_{C^{1}([0,L])} \leq \delta,
\end{equation*}
there exists a $C^{1}$ solution $u$ of the system driving the state from ${u}_{a}$ to ${u}_{b}$, with $\| u \|_{C^{0}([0,T];C^{1}([0,L]))} \leq \eta$.	
\end{theorem}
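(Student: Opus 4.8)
The final statement is Theorem~\ref{Thm:cst2cst}, quoted from \cite{LR,CGW}. Since it is explicitly attributed to prior work, the ``proof'' is really a derivation: show how the abstract controllability results of Li--Rao \cite{LR} and Coron--Glass--Wang \cite{CGW} specialize to the Eulerian system \eqref{Eq:Euler} near a constant state $u^{*}$.

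The plan is as follows. First I would check that the hypotheses of the general theorem of \cite{LR} are met by \eqref{Eq:Euler} in a neighborhood of $u^{*}$. The system is strictly hyperbolic near $u^{*}$ with eigenvalues \eqref{Eq:VCE}, and each characteristic family is GNL or LD by \eqref{Eq:GNL/LD}. The general result of \cite{LR} gives exact boundary controllability between $C^{1}$ states close to a constant, provided all characteristic speeds are nonzero at that constant; this is the case for \eqref{Eq:Euler} precisely when $v^{*}\neq 0$ and $v^{*}\neq \pm c^{*}$, since $\lambda_{2}=v$. When $\lambda_{2}(u^{*})=v^{*}=0$ (or more generally when one speed vanishes at $u^{*}$ but not identically), one invokes \cite{CGW} instead, which handles exactly the case of a characteristic speed that vanishes at the reference state but is not identically zero --- here $r_{1}\cdot\nabla\lambda_{1}=r_{3}\cdot\nabla\lambda_{3}=1\neq 0$ and $r_{2}\cdot\nabla\lambda_{2}=\partial_{\rho}v=0$ but $\lambda_{2}=v$ is genuinely non-constant, so the relevant transversality condition of \cite{CGW} ($r_{i}\cdot\nabla\lambda_{i}\neq 0$ or, for the $i$-th family, a suitable sign condition on the gradient) can be verified directly from \eqref{Eq:EVE} and \eqref{Eq:VCE}. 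Either way, one obtains a local exact controllability statement: there is $\delta>0$ and $T>0$ so that any $C^{1}$ data $u_{a},u_{b}$ with $\|u_{a}-u^{*}\|_{C^{1}},\|u_{b}-u^{*}\|_{C^{1}}\le\delta$ can be joined by a $C^{1}$ solution.

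Second I would extract the smallness estimate $\|u\|_{C^{0}([0,T];C^{1}([0,L]))}\le\eta$. The constructions in \cite{LR,CGW} build the controlled trajectory by a semi-global $C^{1}$ well-posedness argument (method of characteristics, fixed point in a ball of small $C^{1}$ norm), so the resulting solution depends continuously, in $C^{1}$, on the data; more precisely one gets an a priori bound $\|u\|_{C^{0}([0,T];C^{1})}\le K(\|u_{a}-u^{*}\|_{C^{1}}+\|u_{b}-u^{*}\|_{C^{1}})$ for a constant $K=K(u^{*},T)$. Hence given $\eta>0$, one first shrinks $\delta$ so that $2K\delta\le\eta$; the time $T$ is the fixed controllability time furnished by \cite{LR} or \cite{CGW} for the chosen neighborhood of $u^{*}$, independent of the particular data within that neighborhood. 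This yields \eqref{Eq:GoalProCstE} in Proposition~\ref{Pro:CstE}: given arbitrary constants $u_{a},u_{b}\in\Omega$ and $\eta>0$, one connects $u_{a}$ to $u^{*}$ by a first such trajectory and $u^{*}$ to $u_{b}$ by a second, using constant states (which trivially satisfy the $C^{1}$-smallness requirement after translating $u^{*}$ to each of them), and concatenates them in time --- here one takes $u^{*}$ close to $u_{a}$, then a chain of intermediate constants between $u_{a}$ and $u_{b}$ each close to the previous, so that the $C^{1}$ norm of each leg stays below the threshold.

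The main obstacle is the degenerate case $\lambda_{2}(u^{*})=0$: the plain theorem of \cite{LR} does not apply, and one must carefully quote the correct variant from \cite{CGW}, verifying its structural hypothesis for \eqref{Eq:Euler}. Concretely, \cite{CGW} requires that the family with vanishing speed still ``moves'' under the control, which for the $2$-family of \eqref{Eq:Euler} follows from $\nabla\lambda_{2}=(0,1,0)\neq 0$ together with the fact that $r_{1}$ and $r_{3}$ have nonzero second components in \eqref{Eq:EVE}, so that $\lambda_{2}$ can be steered through genuinely nonlinear waves of families $1$ and $3$. Everything else (strict hyperbolicity, the $C^{1}$ a priori estimate, concatenation in time, and the reduction to constant states) is standard once the correct black-box theorem is identified, so no lengthy computation is needed beyond checking these sign conditions from the explicit formulas \eqref{Eq:VCE}--\eqref{Eq:EVE}.
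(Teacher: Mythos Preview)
Your proposal is essentially correct and follows the same route as the paper: Theorem~\ref{Thm:cst2cst} is quoted, not proved, and the only content is to check which of \cite{LR} or \cite{CGW} applies at a given $u^{*}$ and to verify the structural hypotheses of \cite{CGW} in the degenerate cases. Your discussion of the smallness estimate and of the concatenation along a curve of constants also matches how the paper derives Proposition~\ref{Pro:CstE} immediately afterward.

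One point to correct: the transversality condition you invoke for \cite{CGW} is not quite the right one in all cases. For the linearly degenerate family~$2$ (when $\lambda_{2}(u^{*})=v^{*}=0$) you are right that what is needed is $r_{1}\cdot\nabla\lambda_{2}\neq 0$, i.e.\ that waves of family~$1$ (or~$3$) genuinely move $\lambda_{2}$; this is exactly what the paper checks. But for the genuinely nonlinear families $i=1,3$ (when $\lambda_{i}(u^{*})=0$), the paper does \emph{not} rely on $r_{i}\cdot\nabla\lambda_{i}=1$; it invokes instead the cross-derivative $r_{2}\cdot\nabla\lambda_{i}\neq 0$, i.e.\ that a $2$-contact discontinuity moves the vanishing speed. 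From \eqref{Eq:VCE}--\eqref{Eq:EVE} one computes $r_{2}\cdot\nabla\lambda_{i}=\partial_{\rho}(v\mp c)=\pm c/(2\rho)\neq 0$, so the condition holds; but your write-up should cite this cross-derivative rather than genuine nonlinearity for those two families, to match the hypothesis actually used in \cite{CGW}.
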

Actually \cite{LR} allows to treat the case where $0 \notin \{\lambda_{1}(u^{*}), \lambda_{2}(u^{*}) , \lambda_{3}(u^{*}) \}$ and \cite{CGW} the remaining cases, relying on $r_{1} \cdot \nabla \lambda_{2} \not = 0$ at points where $\lambda_{2}=0$ and on $r_{2} \cdot \nabla \lambda_{i} \not = 0$ at points where $\lambda_{i}=0$, $i=1,3$. \par
\ \par
This statement applies of course to $u_{a}$ and $u_{b}$ constant, but it is local. Now we deduce a global result: given $\overline{u}_{1}, \overline{u}_{2} \in \Omega$, let us explain how to drive $\overline{u}_{1}$ to $\overline{u}_{2}$. We consider a smooth curve $\gamma(s)$ from $\overline{u}_{1}$ to $\overline{u}_{2}$. For $\eta>0$, the above statement gives us a $\delta(s)$ for each point $u^{*}:=\gamma(s)$ of this curve. By compactness of the curve, we can extract a finite (sub)cover of $\gamma$ by balls $B(\gamma(s_{k}),\delta(s_{k})/2)$. Then one can drive from $\overline{u}_{1}$ to $\overline{u_{2}}$ by successive steps leading a $\gamma(s_{k})$ to another. The resulting solution has constantly a $C^{1}$-norm in space less than $\eta$. This ends the proof of Proposition~\ref{Pro:CstE} and hence of Theorem~\ref{ThmEu1}. \par
%
%
%
%%%%%%%%%%%%%%%%%%%%%%%%%%%%%%%%%%%
%
%
%
\subsubsection{Lagrangian case: end of the proof of Theorem~\ref{ThmLu1}}
We begin by proving Proposition~\ref{Pro:CstL}.
Here we cannot apply \cite{CGW} to treat the vanishing characteristic speed $\lambda_{2}$, because it is identically zero. \par
\ \par
The first step of the proof is to go from the constant state $u_{a}$ to another constant state ${u}'$ for which $S({u}')= S(u_{b})$. This relies on the following.
\begin{lemma} \label{Lem:cst2GoodS}
Let $\mathfrak{u}_{0} = (\tau_{0},v_{0},P_{0})\in \Omega$ and $\eta>0$. For any $\chi > S(\mathfrak{u}_{0})$, there exists $T>0$ and an entropy solution $u$ with initial data $\mathfrak{u}_{0}$, such that 
\begin{equation} \label{Eq:cst2GoodS}
\forall t \in [0,T], \ \  TV(u(t,\cdot)) \leq \eta \ \text{ and } \ S(u(T)) = \chi.
\end{equation}
\end{lemma}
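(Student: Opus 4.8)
\textbf{Proof proposal for Lemma~\ref{Lem:cst2GoodS}.} The plan is to realize the entropy increase by sending a pair of strong shocks (a $1$-shock and then a $3$-shock, or the other way around) through the domain, exactly in the spirit of the construction of Section~\ref{Sec:ConstrLagrange}, but starting now from a constant state rather than from a small-$BV$ perturbation of one. The key point is that crossing a shock \emph{raises} the physical entropy $S$ (recall $-S$ is a convex entropy with zero flux, so by \eqref{Eq:EvolEntropieSurLaCourbeChoc} applied to $\eta=-S$, $q=0$, the quantity $s[-S]-[0]=-s[S]$ decreases along the Lax shock curve, which together with Lax's inequalities forces $[S]>0$ across a shock), whereas rarefactions and contact discontinuities preserve $S$. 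So a solution consisting of a single strong $1$-shock entering from $x=L$ and crossing $(0,L)$, followed by a single strong $3$-shock entering from $x=0$ and crossing $(0,L)$, brings the constant state $\mathfrak{u}_0$ to a new constant state whose entropy is strictly larger than $S(\mathfrak u_0)$; the total amount of entropy gained is a continuous, strictly increasing function of the (say, equal) strengths $|\overline\sigma_1|=|\overline\sigma_3|$ of these two shocks, vanishing as the strengths go to $0$ and going to $+\infty$ (or at least past any prescribed value) as the strengths grow, as long as one stays away from vacuum.

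First I would record the precise wave pattern: let $v^+:=T_1(\overline\sigma_1,\mathfrak u_0)$ with $\overline\sigma_1<0$, and $v^{++}:=T_3(\overline\sigma_3,v^+)$ with $\overline\sigma_3<0$; choose $\overline\sigma_1,\overline\sigma_3$ so that $\lambda_1(v^+)<0$, $\lambda_3(v^+)>0$, $\lambda_1(v^{++})<0$, $\lambda_3(v^{++})>0$ and no vacuum appears (possible for small enough strengths, and the relevant quantities vary continuously). Then the explicit solution is: $\mathfrak u_0$ in the region right of the $1$-shock $X_1$ issued from $(0,L)$; $v^+$ between $X_1$ and the $3$-shock $X_3$ issued from $(T_1,0)$ where $T_1=L/|s_1|$; and $v^{++}$ left of $X_3$, which leaves the domain at time $T_1+L/s_3=:T$. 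After $T$ the state is the constant $v^{++}$, and $S(v^{++})=S(\mathfrak u_0)+\Delta_1(\overline\sigma_1)+\Delta_3(\overline\sigma_3)$ where $\Delta_i(\sigma)>0$ for $\sigma<0$, $\Delta_i(0)=0$, $\Delta_i$ continuous and monotone on the interval of admissible strengths. Choosing $\overline\sigma_1=\overline\sigma_3$ and invoking the intermediate value theorem, pick the strength so that $S(v^{++})=\chi$ exactly; this handles every target $\chi$ in the range $(S(\mathfrak u_0),S(\mathfrak u_0)+\sup(\Delta_1+\Delta_3))$. To reach values of $\chi$ beyond what a single admissible pair of shocks delivers, iterate the construction finitely many times, each time starting from the constant reached at the end of the previous stage; since each stage can add a fixed positive amount of entropy while staying in a fixed compact region of $\Omega$, finitely many stages reach any prescribed $\chi$.

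It remains to verify the total-variation bound $TV(u(t,\cdot))\le\eta$ for all $t$. At each instant the solution is piecewise constant with exactly one jump (one of the two strong shocks), so $TV(u(t,\cdot))=|[u]|$ across that single shock, which is $\Theta(|\overline\sigma_i|)$. Thus if $\eta$ is prescribed, I would run the multi-stage construction with each individual shock of strength below a threshold ensuring $|[u]|\le\eta$ at every stage; since only finitely many stages are needed and each adds a fixed positive entropy increment once the per-step strength is fixed small, finitely many small steps still reach $\chi$. (One must also keep all intermediate constants in a fixed compact subset of $\Omega$ bounded away from vacuum; this is automatic since the total entropy gain $\chi-S(\mathfrak u_0)$ is fixed and finite, and along the way $P,\tau$ stay in a bounded region — one can, for instance, intersperse isentropic rarefaction/compression waves, which do not change $S$, to keep $(\tau,v,P)$ near a prescribed curve, though this is not strictly necessary.) The genuinely delicate point — and the one I would treat most carefully — is the \emph{monotonicity and surjectivity} of $\Delta_i$ together with the vacuum-avoidance bookkeeping across many stages: one needs that repeatedly applying small strong $1$- and $3$-shocks, each raising $S$ by a controlled positive amount, accumulates to any target entropy without the density degenerating; this follows from the explicit shock-curve formulas \eqref{Eq:ParamShock} (note $\overline\tau_+/\overline\tau_-=(\beta+x)/(\beta x+1)$ stays bounded and bounded away from $0$ for $x$ in a fixed compact of $(0,\infty)$) and the fact that $S$ strictly increases along each shock curve, but it does require writing down the entropy increment $[S]$ as an explicit increasing function of $x=P_+/P_-$ and checking it is unbounded above as the admissible range of $x$ is exhausted, or else chaining stages. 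Everything else is the same entropy-solution bookkeeping already used in Section~\ref{Sec:ConvFT}.
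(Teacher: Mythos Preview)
Your overall plan—accumulate entropy by sending Lax shocks through the domain in small increments—is the right intuition, but there is a genuine gap, and it is precisely the point you dismiss as ``not strictly necessary.''

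The problem is pressure drift. Across a $1$-shock the state in the domain goes from $u_-$ to $u_+=T_1(x_1,u_-)$ with $x_1=P_+/P_->1$, so $P$ is multiplied by $x_1$. Across a $3$-shock entering from $x=0$, the state goes from the right state $u_+$ to the left state $u_-$ with $u_+=T_3(x_3,u_-)$, $x_3<1$, so $P$ is multiplied by $1/x_3>1$. Hence every shock strictly increases the pressure; there is no choice of $(x_1,x_3)$ that brings $P$ back. If you fix the strengths (fix $x_1,x_3$), after $k$ stages $P_k$ grows geometrically, and the jump $|[P]|=P_k\,|x-1|$ across the next shock blows up, violating $TV\le\eta$. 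If instead you shrink the strengths so that $P_k\,|x-1|\le\eta$ at each step, then $P_{k+1}-P_k=\Theta(\eta)$ so $P_k\sim P_0+ck\eta$, while the entropy increment across a weak shock is cubic in the strength, i.e.\ $\Delta S_k=\Theta\big((\eta/P_k)^3\big)$. Summing, $\sum_k \Delta S_k=\Theta(\eta^2)$, which is bounded and goes to $0$ with $\eta$; you cannot reach an arbitrary $\chi>S(\mathfrak u_0)$ this way. Your assertion that ``$P,\tau$ stay in a bounded region'' automatically, from the mere finiteness of $\chi-S(\mathfrak u_0)$, is therefore false: bounded $P\tau^\gamma$ does not prevent $P\to\infty$, $\tau\to 0$.

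The paper's proof addresses exactly this. It uses a $1$-shock with ratio $x$ followed by a $1$-\emph{rarefaction} with ratio $1/x$, so that after the pair $\hat P=\overline P$ returns to its value; the net effect on $\tau$ is multiplication by $g(x)^{1/\gamma}$ with $g(x)=x\big((\beta+x)/(\beta x+1)\big)^\gamma>1$, and $\hat\tau^\gamma$ stays bounded because $S\le\chi$ and $P=P_0$ throughout. One then checks the explicit $TV$ bounds \eqref{Eq:Cst2CstTV1}--\eqref{Eq:Cst2CstTV2}, which depend only on $\overline P,\overline\tau$ (not $\overline v$), and the uniform control \eqref{Eq:CondTVEpsilon} follows. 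The rarefaction is doing essential work; see the Remark immediately after the proof in the paper. Your parenthetical suggestion to ``intersperse isentropic rarefaction/compression waves'' is in fact the missing ingredient, not an optional flourish.

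A secondary point: your bookkeeping of left/right states is inconsistent. With $v^+=T_1(\overline\sigma_1,\mathfrak u_0)$ the state $\mathfrak u_0$ is on the \emph{left} of the $1$-shock, not the right; and $v^{++}:=T_3(\overline\sigma_3,v^+)$ puts $v^+$ on the left of the $3$-shock, contradicting your wave pattern ``$v^{++}$ left of $X_3$.'' These are easy to fix but should be.
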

\begin{proof}[Proof of Lemma~\ref{Lem:cst2GoodS}]
Of course this could not be achieved via regular solutions. Here the idea is to use a succession of $1$-shocks and $1$-rarefactions. We use the parameterizations of ${\mathcal S}_{1}$ and ${\mathcal R}_{1}$ by $x$ as in Paragraph~\ref{SSS:LS}. \par
Starting from some $\overline{u}=(\overline{\tau},\overline{v},\overline{P})$, one introduces
$\check{u} := {\mathcal S}_{1}(x, \cdot ) \overline{u}$ and $\hat{u}:= {\mathcal R}_{1}(1/x, \cdot ) \check{u}$. One can construct an entropy solution from $\overline{u}$ to $\hat{u}$ by letting first the shock $(\overline{u}, \check{u})$ cross the domain from right to left during some interval $[0,T_{1}]$ and then letting the rarefaction wave $(\check{u},\hat{u})$ cross the domain from right to left during some interval $[T_{1},T_{2}]$; both waves have of course a negative speed.
We write $\check{u}=(\check{\tau},\check{v},\check{P})$ and $\hat{u}=(\hat{\tau},\hat{v},\hat{P})$. 
Using formulas \eqref{Eq:ParamShock} and \eqref{Eq:ParamRar}, we obtain that:
\begin{gather*}
\check{P}=x \overline{P} \ \text{ and } \ \hat{P} = \frac{1}{x} \check{P} = \overline{P}, \\
\check{\tau} = \frac{\beta+x}{\beta x +1} \overline{\tau}  \ \text{ and } \ \hat{\tau} = x^{1/\gamma} \check{\tau} = x^{1/\gamma} \frac{\beta+x}{\beta x +1} \overline{\tau}, \\
\check{v} = \overline{v} + \sqrt{\overline{P} \overline{\tau}} \sqrt{ \frac{2}{\gamma-1} }  \frac{1-x}{\sqrt{\beta x +1}} 
\ \text{ and } \ 
\hat{v} = \check{v} - \frac{2 \sqrt{\check{P} \check{\tau}} }{\sqrt{\gamma}(\gamma-1)} (x^{-\zeta} -1). 
\end{gather*}
It follows that
\begin{gather} \label{Eq:Cst2CstTV1}
TV(u(t,\cdot)) \leq \overline{P} (x-1) + \overline{\tau} \left(\frac{\beta+x}{\beta x +1} - 1 \right) + \sqrt{\overline{P} \overline{\tau}} \sqrt{ \frac{2}{\gamma-1} }  \frac{1-x}{\sqrt{\beta x +1}} \ \text{ for } t \in [0,T_{1}], \\
\label{Eq:Cst2CstTV2}
TV(u(t,\cdot)) \leq \overline{P} (x-1) + \overline{\tau} \frac{\beta+x}{\beta x +1} (x^{1/\gamma}-1) + \frac{2 \sqrt{\overline{P} \overline{\tau}} }{\sqrt{\gamma}(\gamma-1)} \sqrt{x \frac{\beta+x}{\beta x +1}}(1-x^{-\zeta}). \ \text{ for } t \in [T_{1},T_{2}].
\end{gather}
It is clear that the right hand sides go to zero as $x \rightarrow 1^{+}$.
Consequently there exists $\overline{x} \in (1,2]$ depending on $\eta$ such that, for any $x \in [1, \overline{x}]$, whenever
\begin{equation} \label{Eq:CondTVEpsilon}
\overline{\tau} \leq \left((\gamma-1)\frac{e^{\chi}}{P_{0}}\right)^{1/\gamma} \ \text{ and } \ \overline{P} \leq P_{0},
\end{equation}
the corresponding solution $u$ satisfies 
\begin{equation} \label{Eq:TVepsilon}
TV(u(t,\cdot)) \leq \eta.
\end{equation}
We notice that
\begin{equation*}
g(x):= x \left(\frac{\beta +x}{\beta x +1}\right)^{\gamma} \text{ is increasing as a function of } x,
\end{equation*}
so that $g(x)>1$ for $x>1$. We introduce $n$ by
\begin{equation} \label{Eq:NCst2CstLag}
n := \left\lfloor \frac{\chi - S(\mathfrak{u}_{0}) }{ \log (g(\overline{x})) }\right\rfloor.
\end{equation}
Now we obtain a solution to the claim by letting successively $n$ shocks $(\overline{u}^{i},\check{u}^{i})$/rarefaction waves $(\check{u}^{i},\hat{u}^{i})$ cross the domain, with $\overline{u}^{0}:=\mathfrak{u}_{0}$, $\check{u}^{i}={\mathcal S}_{1}(\overline{x},\overline{u}^{i})$, $\hat{u}^{i}={\mathcal R}_{1}(1/\overline{x},\check{u}^{i})$ and $\overline{u}^{i+1}=\hat{u}^{i}$, $i=0, \dots, n-1$; and then on lets a last shock $(\overline{u}^{n},\check{u}^{n+1})$ and a last rarefaction wave $(\check{u}^{n+1},\hat{u}^{n+1})$ cross the domain with $\check{u}^{n+1}={\mathcal S}_{1}(x',\overline{u}^{n})$ and $\hat{u}^{n+1}={\mathcal R}_{1}(1/x',\check{u}^{n+1})$ where $x'$ is such that
\begin{equation*}
\log(g(x'))=  \chi - S(\mathfrak{u}_{0}) - n \log (g(\overline{x})).
\end{equation*}
Note that clearly $x' \in [1, \overline{x}]$ and that after the passage of this last shock, the state has reached the entropy $\chi$. \par
Let us finally justify that the states appearing during the passage of the successive shocks/rarefaction waves satisfy each \eqref{Eq:TVepsilon}. It is easy to see that at each stage the intermediate state $\overline{u}^{k}=(\overline{\tau}_{k},\overline{v}_{k},\overline{P}_{k})$ satisfies
\begin{equation*}
\overline{\tau}_{k} ={\tau}_{0} \, g(\overline{x})^{k/\gamma} , \ \
\overline{P}_{k} = {P}_{0}  \ \text{ and } \
S(\overline{u}_{k})= S(\mathfrak{u}_{0}) + k \log(g(\overline{x})) \leq \chi.
\end{equation*}
We deduce that 
\begin{equation*}
\overline{\tau}_{k}^{\gamma} = (\gamma-1) \frac{e^{S(\overline{u}_{k})}}{\overline{P}_{k}} \leq (\gamma-1) \frac{e^{\chi}}{P_{0}}.
\end{equation*}
Hence the state $\overline{u}^{k}$ satisfies \eqref{Eq:CondTVEpsilon}. It follows that \eqref{Eq:cst2GoodS} is satisfied.
\end{proof}
\begin{remark}
The rarefactions above do not change the physical entropy $S$; however they are useful to ensure that the pressure does not become large, which would be costly in terms of total variation of the solution $u$.
% \begin{equation*}  g(x) = \left( 1 + \frac{\gamma^{2}-1}{12 \gamma^{3}} (x-1)^{3} + {\mathcal O}((x-1)^{4})\right). \end{equation*}
\end{remark}
\begin{proof}[End of the proof of Proposition~\ref{Pro:CstL}]
We apply Lemma~\ref{Lem:cst2GoodS} to drive the state $u_{a}$ to a state ${u}'=({\tau}',{v}',{P}')$ for which $S({u}')= S({u}_b)$. It remains to drive ${u}'$ to $u_{b}=(\tau_{b},v_{b},P_{b})$. Now, for the {\it isentropic} Euler equation in Lagrangian coordinates:
\begin{equation}
\label{Eq:PSystem}
\left\{ \begin{array}{l}
 \partial_t \tau - \partial_x v =0, \\
 \partial_t v + \partial_x P=0, \\
 P = S({u}_{b}) \tau^{-\gamma},
\end{array} \right.
\end{equation}
one can find a time $T>0$ and a $C^{1}$ solution $(\tau,v)$ satisfying
\begin{equation*}
(\tau,v)_{|t=0} = ({\tau}',{v}') \ \text{ and } \
(\tau,v)_{|t=T} = ({\tau}_{b},{v}_{b}),
\end{equation*}
and
\begin{equation*}
\forall t \in [0,T], \ \ TV((\tau,u)(t,\cdot)) \leq \eta.
\end{equation*}
Indeed, since here the characteristic speeds do not vanish at all, it is a consequence of \cite{LR} that Theorem~\ref{Thm:cst2cst} is valid for System~\eqref{Eq:PSystem} (see also Gugat-Leugering \cite{GLe} for a related result). As before Theorem~\ref{Thm:cst2cst} gives a local solution, but one can reason as in Paragraph~\ref{SSS:MvCSTEu} to drive the solution along a curve from $({\tau}',{v}')$ to $({\tau}_{b},{v}_{b})$. Now, this {\it regular} solution $(\tau,v)$ of the isentropic model gives a fortiori a solution of the non-isentropic model by setting $P= S({u}_{b}) \tau^{-\gamma}$.
And this solution drives ${u}'$ to ${u}_{b}$ as required, with $C^{0}([0,T];C^{1}([0,L]))$ norm bounded by $C \eta$. Reducing $\eta$ if necessary, this gives a  solution to the problem; this ends the proof of Proposition~\ref{Pro:CstL}.
\end{proof}
\begin{remark}
The fact that solutions of \eqref{Eq:PSystem} give particular solutions for \eqref{Eq:Lagrangian} is true for regular solutions but fails for weak solutions. See Saint-Raymond \cite{StR} for more details.
\end{remark}

\begin{proof}[End of the proof of Theorem~\ref{ThmLu1}]
Consider $\overline{u}_{0}$ and $\overline{u}_{1}$ as in the assumptions of Theorem~\ref{ThmLu1}. We apply Theorem~\ref{ThmL} to $\overline{u}_{0}$ and with $\delta>0$ sufficiently small to ensure that
\begin{equation*}
| u - \overline{u}_{0} | \leq \delta \ \Rightarrow \ S(u) \leq \frac{S(\overline{u}_{1}) + S(\overline{u}_{0})}{2}.
\end{equation*}
We find an $\varepsilon>0$, a $T>0$ and a solution driving $u_{0}$ to some constant state $\overline{u}_{1/2} \in B(\overline{u}_{0},\delta)$. Now with Proposition \ref{Pro:CstL} we drive $\overline{u}_{1/2}$ to $\overline{u}_{1}$ with a sufficiently small $C^{0}([0,T];C^{1}([0,L]))$ norm, and this gives a suitable solution to the problem.
\end{proof}
%
%
%
%%%%%%%%%%%%%%%%%%%%%%%%%%%%%%%%%%%%%%%%%%%%%%%%%%%%%%%%%%%%%%%%%%%%%%%%%%%%%%%%%%%%%%%%%%%%%%%%%%%%%%%%%%%%%%%%%%%%%%%%%%%%%%%%
%
%
%
%
\section{Two final remarks}
\label{Sec:ConcludingRemarks}
\subsection{About the dependence of $\varepsilon$ with respect to $\eta$}
Here we give an idea of the size of $\varepsilon$ with respect to $\eta$. Only the first phase of our construction (driving $u_{0}$ to {\it some} constant) is of interest, since for the second phase, the total variation in space can be chosen arbitrarily small uniformly in time, independently of $\varepsilon$. \par
\ \par
\noindent
{\bf Eulerian case.} We first consider the Eulerian case. Discussing as in Paragraph~\ref{SSS:Smallness}, we see that it is enough to estimate $K(\overline{\sigma}_{2})$ as a function of $\overline{\sigma}_{2}$ (Cases 1, 3 and 5). Recall that the constant $K'$ appearing in the Case 5 is independent of
the strength $\overline{\sigma}_{3}$. As we explained, the other two cases (2 and 4) give an $\varepsilon$ depending linearly of $\eta$. \par
Now the coefficients of interaction with a strong shock corresponding to $K(\overline{\sigma}_{2})$ are of two types: either coefficients corresponding to standard interaction coefficients (these are bounded as $\overline{\sigma}_{2}$ tends to zero), and those who correspond to the use of Proposition~\ref{Pro:CW3J>}. The latter coefficients are not bounded as $\overline{\sigma}_{2}$ tends to zero, but it is not difficult to check from \eqref{Eq:EstCW3J>} that $K(\overline{\sigma}_{2})$ is of order $1/|\overline{\sigma}_{2}|$. To make the right hand side of \eqref{Eq:TheBVEst} less than $\eta$, one takes $\overline{\sigma}_{2}$ of order $\eta$ and then $\varepsilon$ such that $K(\overline{\sigma}_{2}) \varepsilon \leq \eta/2$. This involves that $\varepsilon$ is of order $\sqrt{\eta}$. \par
\ \par
\noindent
{\bf Lagrangian case.} 
Now, for what concerns the Lagrangian case, the coefficients corresponding to the use of the two large shocks are of order $K(\overline{\sigma}_{i})={\mathcal O} (1/|\overline{\sigma}_{i}|^{2})$, $i=1,3$, see Propositions~\ref{Pro:CorrWLag} and \ref{Pro:CW3S}. This is due to the fact that in this case we use interactions {\it within a family} to get correction/cancellation waves. An important fact is that the fronts that are cancelled along the second shock (that is, the $3$-shock), have a total variation or order ${\mathcal O}(1)\varepsilon$, not ${\mathcal O} (\varepsilon/|\overline{\sigma}_{1}|^{2})$. Indeed, they are all $\overset{>}{J}$ waves which were generated either by a simple interaction with the strong $1$-shock (in the case referred to as Group I in Subsection~\ref{Subsec:ConstrLagPart1}) or by Proposition~\ref{Pro:CorrWLag} (in the case referred to as Group II). But in Proposition~\ref{Pro:CorrWLag}, the outgoing $\overset{>}{J}$ wave has the same order of strength as the incoming weak wave independently of $\overline{\sigma}_{1}$ (see in particular \eqref{Eq:DefMui},  \eqref{Eq:DefMuiGamma1}, \eqref{Eq:EstOWCorrW} and Remarks \ref{Rem:OGGros1a} and \ref{Rem:OGGros1b}) -- this not the case for the strength $\gamma_{1}$ of the additional $\overset{\leftharpoonup}{C}$ fan. This involves that here $\varepsilon$ is of order $\sqrt[3]{\eta}$. \par
\subsection{About the time of controllability}
We conclude with an informal discussion about the time of controllability, in particular for small $\eta$. It is clear that when $\eta$ is small, it is costly in terms of time of controllability: think for instance about the case where $\| \overline{u}_{1} - \overline{u}_{0} \|_{\infty} \gg \eta$. \par
\ \par
\noindent
{\bf Lagrangian case.} We begin with the case of System~\eqref{Eq:Lagrangian}. In that case, the time of the first phase (driving $u_{0}$ to some constant) is bounded easily using \eqref{Eq:T1Lag}, \eqref{Eq:T2Lag}, \eqref{Eq:T3Lag} and \eqref{Eq:T4Lag} by:
\begin{equation*}
\overline{T} \leq 2L  \left( \frac{1}{|\lambda_{1}(\overline{u}_{0})|} + \frac{2}{\lambda_{3}(v_{0}^{+})} 
+ \frac{1}{|\lambda_{1}(v_{1}^{-})|}.
\right).
\end{equation*}
As $\eta$ tends to zero, one can in fact let the ``$2L$'' be closer to $L$ (see \eqref{Eq:S1Lag}, \eqref{Eq:S3Lag}, \eqref{Eq:L3Lag} and \eqref{Eq:L1Lag}), and the characteristic speeds above converge to their values at $\overline{u}_{0}$. It follows that as $\eta \rightarrow 0^{+}$, one can estimate the time of this first phase as
\begin{equation*}
\overline{T} \sim 2L  \left( \frac{1}{|\lambda_{1}(\overline{u}_{0})|} + \frac{1}{\lambda_{3}(\overline{u}_{0})} \right).
\end{equation*}
In particular, the time of controllability in this first phase is not affected by $\eta$. \par
Concerning the second phase, that is, driving the constant state $\overline{u}_{1/2}$ to the final constant state $\overline{u}_{1}$, there are two parts.
The second one consists in using a $C^{1}$ solution to drive from $u'$ to $\overline{u}_{1}$ by using a $C^{1}$ solution of the isentropic equation. It is not difficult to see that one needs ${\mathcal O}(|\overline{u}_{1}-u'|/\eta)$ steps for this (for instance, one uses rarefaction waves and regular compression waves). The first part is more expensive. Indeed, to go from $\overline{u}_{1/2}$ to $u'$, one uses $n$ steps, with $n$ defined in \eqref{Eq:NCst2CstLag}. But one can see that
\begin{equation*}
g(x)= x \left(\frac{\beta + x}{\beta {x} +1}\right)^{\gamma} 
= 1 + \frac{\gamma^{2}-1}{12 \gamma^{3}} (x-1)^{3} + {\mathcal O}((x-1)^{4}),
\end{equation*}
while \eqref{Eq:Cst2CstTV1}-\eqref{Eq:Cst2CstTV2} gives a total variation of order $x-1$. It follows that here $n={\mathcal O}\big((S(\overline{u}_{1}) - S(\overline{u}_{1/2}))/\eta^{3}\big)$. Moreover, as $\eta \rightarrow 0^{+}$, $\overline{u}_{1/2}$ gets closer to $\overline{u}_{0}$, so one can roughly estimate the cost of this second phase as
\begin{equation*}
T' = {\mathcal O} \left( \frac{|\overline{u}_{1} - \overline{u}_{0}|}{\eta^{3}} \right).
\end{equation*}
\ \par
\noindent
{\bf Eulerian case.} In the case covered by \eqref{Eq:Lambda2>0} (or its vertically symmetric) the time of controllability to some constant (that is, of the first phase) is estimated by \eqref{Eq:T2Eu}. Then we let $\nu$ go to $0$, and as $\eta \rightarrow 0^{+}$, the $2L$ gets closer to $L$ (see again the definition of $r$ in this case); hence one can estimate the time of controllability in this first phase by
\begin{equation*}
\overline{T} \simeq L \left(\frac{1}{\lambda_{2}(\overline{u}_{0})} + \frac{1}{|\lambda_{1}(\overline{u}_{0})|}\right).
\end{equation*}
In the symmetric case (Case 3), one replaces $\lambda_{1}$ with $\lambda_{3}$ and puts absolute values on $\lambda_{2}$. In the supersonic Case 2 (resp. Case~4), it is easy to check that
\begin{equation*}
\overline{T} \simeq \frac{L}{\lambda_{1}(\overline{u}_{0})} \ \left(\text{resp. } \frac{L}{|\lambda_{3}(\overline{u}_{0})|} \right).
\end{equation*}
The critical Case 5 is more complex. One can use a $3$-wave to shift the characteristic speed $\lambda_{2}$, but as this wave is of order $\eta$, the resulting $\lambda_{2}$ is of order $\eta$ as well; it follows that in this case the time of the first phase depends on $\eta$ and can be estimated in the rough form
\begin{equation*}
\overline{T} = {\mathcal O}\left(\frac{1}{\eta}\right).
\end{equation*}
Now, concerning the second phase from $\overline{u}_{1/2}$ to $\overline{u}_{1}$, reasoning as before, one can see that, if there is no critical state (making a characteristic speed vanish) on the way, this second phase needs ${\mathcal O}(|\overline{u}_{1} - \overline{u}_{1/2}|/\eta)$ steps, so
\begin{equation} \label{Eq:TprimeEul}
T' = {\mathcal O} \left( \frac{|\overline{u}_{1} - \overline{u}_{0}|}{\eta} \right).
\end{equation} 
When at some point a characteristic speed $\lambda_{i}$ vanishes, we use waves of the other families to drive the state of the system away from the characteristic manifold $\{ \lambda_{i} = 0 \}$. This costs a time of order ${\mathcal O}(\kappa/\eta)$ to obtain $|\lambda_{i}| \geq \kappa$. Then one can move in the direction $r_{i}$ by letting successively $i$-rarefaction waves or $i$-compression waves cross the domain. Each step in the direction $r_{i}$ costs $1/\kappa$ in time for a displacement of order $\eta$. Hence to obtain a displacement of $\alpha r_{i}$ in this context, we use a time of
\begin{equation*}
T' = {\mathcal O} \left(\frac{\alpha}{\kappa \eta} + \frac{\kappa}{\eta}\right),
\end{equation*}
which indicates that it is favorable to take $\kappa=\sqrt{\alpha}$, and this gives a cost of order ${\mathcal O} \left( \frac{|\overline{u}_{1} - \overline{u}_{0}|^{1/2}}{\eta} \right)$. \par
%
%
%
%%%%%%%%%%%%%%%%%%%%%%%%%%%%%%%%%%%%%%%%%%%%%%%%%%%%%%%%%%%%%%%%%%%%%%%%%%%%%%%%%%%%%%%%%%%%%%%%%%%%%%%%%%%%%%%%%%%%%%%%%%%%%%%%
%
%
\ \par
\noindent
{\bf Acknowledgements.} The author is partially supported by the Agence Nationale de la Recherche, Project CISIFS, Grant ANR-09-BLAN-0213-02. \par

\end{document}